\documentclass[a4paper,12pt]{article}
\usepackage{fullpage}
\usepackage{pinlabel}
\usepackage{mathtools}
\usepackage{amsthm,amssymb}
\usepackage{dsfont,mathrsfs}
\usepackage{color}
\usepackage{float}
\usepackage[T1]{fontenc}
\usepackage{hyperref}
\hypersetup{
 pdftitle={Graph morphisms and exhaustion of curve graphs of low-genus surfaces},
 pdfauthor={Jesús Hernández Hernández},
}

\title{Graph morphisms and exhaustion of curve graphs of low-genus surfaces}
\author{Jes\'{u}s Hern\'{a}ndez Hern\'{a}ndez} 
\date{}



\newtheorem{Lema}{Lemma}[section]
\newtheorem{Teo}[Lema]{Theorem}
\newtheorem{Prop}[Lema]{Proposition}
\newtheorem{Cor}[Lema]{Corollary}
\newtheorem*{Teono}{Theorem}
\newtheorem*{Corno}{Corollary}
\newtheorem{Theo}{Theorem}

\theoremstyle{definition}

\newtheorem{Rem}[Lema]{Remark}

\newcommand{\dsty}{\displaystyle}
\newcommand{\fun}[3]{#1: #2 \rightarrow #3}
\newcommand{\ccomp}[1]{\mathcal{C}(#1)}
\newcommand{\acomp}[1]{\mathcal{A}(#1)}
\newcommand{\Mod}[1]{\mathrm{Mod}(#1)}
\newcommand{\EMod}[1]{\mathrm{Mod}^{\pm}(#1)}
\newcommand{\ColonEqq}{\dsty \mathrel{\mathop:}=}

\newcommand{\veps}{\varepsilon}

\newcommand{\Aut}[1]{\mathrm{Aut}(#1)}
\newcommand{\Int}{\mathrm{int}}

\newcommand{\Of}{\mathscr{O}}
\newcommand{\Yf}[1]{\mathscr{Y}(#1)}
\newcommand{\Xf}[1]{\mathfrak{X}(#1)}
\newcommand{\Gf}{\mathscr{G}}
\newcommand{\Df}{\mathscr{D}}
\newcommand{\Dfone}{\mathscr{D}_{1}}
\newcommand{\Dftwo}{\mathscr{D}_{2}}
\newcommand{\Cf}{\mathscr{C}}
\newcommand{\Af}{\mathscr{A}}
\newcommand{\Bf}{\mathscr{B}}
\newcommand{\Ef}{\mathscr{E}}
\newcommand{\out}[1]{\delta_{#1}}
\newcommand{\vout}[1]{\veps_{#1}}
\newcommand{\Zf}{\mathscr{Z}}

\begin{document}
\maketitle
\begin{abstract}
 This work is the extension of the results by the author in \cite{JHH1} and \cite{JHH2} for low-genus surfaces. Let $S$ be an orientable, connected surface of finite topological type, with genus $g \leq 2$, empty boundary, and complexity at least $2$; as a complement of the results of \cite{JHH2}, we prove that any graph endomorphism of the curve graph of $S$ is actually an automorphism. Also, as a complement of the results in \cite{JHH2} we prove that under mild conditions on the complexity of the underlying surfaces any graph morphism between curve graphs is induced by a homeomorphism of the surfaces.
 
 To prove these results, we construct a finite subgraph whose union of iterated rigid expansions is the curve graph $\ccomp{S}$. The sets constructed, and the method of rigid expansion, are closely related to Aramayona and Leiniger's finite rigid sets in \cite{Ara2}. Similarly to \cite{JHH1}, a consequence of our proof is that Aramayona and Leininger's rigid set also exhausts the curve graph via rigid expansions, and the combinatorial rigidity results follow as an immediate consequence, based on the results in \cite{JHH2}.\\[0.5cm]
 \textbf{2020 AMS Mathematics Subject Classification:} 57K20.\\
 \textbf{Keywords:} Curve graph, low-genus surface, rigid expansions, graph morphisms.
\end{abstract}
\section*{Introduction}
 
 In this work we suppose $S_{g,n}$ is an orientable surface of finte topological type, with genus $g \geq 0$, $n \geq 0$ punctures, and empty boundary. The \textit{(extended) mapping class group of} $S_{g,n}$, denoted by $\EMod{S_{g,n}}$ is the group of isotopy classes of self-homeomorphisms of $S_{g,n}$.
 
 In 1979 (see \cite{Harvey}) Harvey introduced what is now one of the main tools for the study of $\EMod{S_{g,n}}$: the \textit{curve graph of} $S_{g,n}$, denoted by $\ccomp{S_{g,n}}$, is  the simplicial graph whose vertices are isotopy classes of (essential simple closed) curves on $S_{g,n}$, where two vertices span an edge if the corresponding isotopy classes are different and have disjoint representatives.
 
 There is a natural action of $\EMod{S_{g,n}}$ on $\ccomp{S_{g,n}}$ by simplicial automorphisms. It is a well-known result by Ivanov (see \cite{Ivanov}), Korkmaz (see \cite{Korkmaz}), and Luo (see \cite{Luo}), that if $(g,n) \neq (1,2)$, then all the simplicial automorphisms of $\ccomp{S_{g,n}}$ are induced by homeomorphisms of $S_{g,n}$.
 
 This result was later extended by Schackleton (see \cite{Shack}) to simplicial maps that are locally injective (recall that a simplicial map is locally injective if its restriction to the star of any vertex, is injective).
 
 Afterwards, Aramayona and Leininger proved in \cite{Ara1} that there exist finite subgraphs of $\ccomp{S_{g,n}}$ with this same property, i.e. there exists a finite subgraph $\Xf{S_{g,n}} < \ccomp{S_{g,n}}$, such that all locally injective maps $\Xf{S_{g,n}} \to \ccomp{S_{g,n}}$ are restrictions to $\Xf{S_{g,n}}$ of an automorphism of $\ccomp{S_{g,n}}$. Additionally, in \cite{Ara2} they call subgraphs with this property \textit{rigid}, and prove (using geometric methods) that there exists an (set-theoretically) increasing sequence of rigid subgraphs of $\ccomp{S_{g,n}}$ whose union is $\ccomp{S_{g,n}}$.
 
 Note that this last result is \textbf{not trivial} since not any supergraph of a rigid subgraph is rigid.
 
 Later on, we reproved this result (see \cite{JHH1}) using combinatorial methods (first introduced in \cite{Ara2}). The main technique used in \cite{JHH1} is called \textit{rigid expansions}:
 
 In the curve graph, a curve $\alpha$ is \textit{uniquely determined} by a set of curves $A$ if $\alpha \notin A$ and $\alpha$ is the unique curve on $S_{g,n}$ that is disjoint from every element in $A$. Given a subgraph $Y < \ccomp{S_{g,n}}$, the \textit{first rigid expansion of} $Y$, denoted by $Y^{1}$, is the subgraph induced by all the curves in $Y$ along with all the curves uniquely determined by subsets of the vertex set of $Y$. The $n$\textit{-th rigid expansion of} $Y$ (denoted by $Y^{n}$) is defined recursively, and we define $Y^{\omega}$ as the union of $Y^{n}$ for all $n \geq 1$. See Section \ref{Prelim} for more details.
 
 In \cite{Ara2} it is proved that if $X$ is rigid, then $X^{n}$ is rigid for all $n \geq 1$, and in \cite{JHH1} we proved that if $g \geq 3$, then $\Xf{S_{g,n}}^{\omega} = \ccomp{S_{g,n}}$. The majority of this work is about extending this result to all surfaces with $\kappa(S_{g,n}) \ColonEqq 3g-3+n \geq 2$. To do this, we first prove there exists a convenient finite subgraph $\Yf{S_{g,n}}$ that exhausts the curve graph via rigid expansions.
 
  \begin{Theo}\label{TeoA}
  Let $S_{g,n}$ be an orientable, connected surface of finite topological type, with genus $g \leq 2$, $n \geq 0$ punctures, and empty boundary. If $\kappa(S_{g,n}) \geq 2$, then there exists a finite subgraph $\Yf{S_{g,n}}$ of $\ccomp{S_{g,n}}$ whose union of iterated rigid expansions is equal to $\ccomp{S_{g,n}}$.
 \end{Theo}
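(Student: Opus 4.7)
The plan is to handle the three families $S_{0,n}$ with $n\geq 5$, $S_{1,n}$ with $n\geq 2$, and $S_{2,n}$ with $n\geq 0$ by an explicit construction in each case, adapting the strategy from \cite{JHH1} to the low-genus regime. For each $(g,n)$ I would fix a base pants decomposition $P$ together with a small collection of auxiliary curves, chosen so that every complementary pair of pants of $P$ is locally visible in $\Yf{S_{g,n}}$ and so that the curves performing basic elementary moves on $P$ are captured combinatorially. The graph $\Yf{S_{g,n}}$ is then the full subgraph of $\ccomp{S_{g,n}}$ induced by this finite union. Ideally it should be a mild enlargement of (or coincide with) the Aramayona--Leininger rigid set $\Xf{S_{g,n}}$ from \cite{Ara2}, so that the exhaustion claim transfers to $\Xf{S_{g,n}}$ and yields the combinatorial rigidity consequences advertised in the abstract.

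The main argument would then proceed by induction on an intersection-number complexity with respect to $P$, showing that an arbitrary curve $\gamma\in\ccomp{S_{g,n}}$ eventually lies in $\Yf{S_{g,n}}^{k}$ for some $k$. The base case, that $P\subset\Yf{S_{g,n}}^{\omega}$, is immediate by construction. For the inductive step I would invoke a Hatcher--Thurston style argument: any curve can be reached from $P$ by a sequence of elementary moves, each replacing one pants curve by another curve supported in a single four-holed sphere or one-holed torus subsurface. The crucial claim is that the replacement curve at each step is \emph{uniquely determined}, in the sense of the definition recalled in the introduction, by a finite set of curves already in $\Yf{S_{g,n}}^{\omega}$, which then forces it into the next rigid expansion.

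The hardest part is precisely this uniqueness claim in the low-complexity regime, because small subsurfaces admit very few essential disjoint curves and so impose only weak constraints on their complement. I would address it by splitting into subcases according to the topological type of the subsurface supporting the elementary move, and by augmenting $\Yf{S_{g,n}}$ with explicit ad hoc curves whenever a symmetry of an auxiliary four-holed sphere or one-holed torus would otherwise produce a second candidate for the uniquely determined curve. The extremal cases $S_{0,5}$, $S_{1,2}$, and $S_{2,0}$, where $\kappa$ is smallest, are likely to require the most delicate hands-on verification; in these cases one may have to abandon the generic inductive step and establish $\Yf{S_{g,n}}^{\omega}=\ccomp{S_{g,n}}$ by a direct enumeration of the remaining curves and their determining sets.
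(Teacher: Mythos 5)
Your overall architecture --- a finite configuration of curves adapted to each genus, special care for the extremal cases $S_{0,5}$, $S_{1,2}$, $S_{2,0}$, and a final step in which the remaining curves are uniquely determined by finitely many curves already captured --- resembles the paper in outline, but the engine you propose for the main step is different and, as stated, has a genuine gap. The paper does not induct on intersection number with a pants decomposition via Hatcher--Thurston elementary moves. Instead it fixes a symmetric generating set $\Gf$ of $\Mod{S_{g,n}}$ consisting of (half-)twists along curves of the finite set itself, proves a key lemma of the form $\Gf\cdot\Yf{S_{g,n}}\subset\Yf{S_{g,n}}^{k}$ for an explicit $k$ (namely $3$, $6$ and $11$ in genus $0$, $1$ and $2$), and then uses the transitivity of $\Mod{S_{g,n}}$ on curves of a fixed topological type: writing any outer (resp.\ non-separating) curve as a word in $\Gf$ applied to a base curve and iterating the key lemma places it in $\Yf{S_{g,n}}^{\omega}$; the separating non-outer curves are then determined by finite sets of such curves. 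This is in fact the strategy of \cite{JHH1}, which you cite as your model but do not actually follow.

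The gap in your inductive step is the following. After an elementary move replacing $\alpha\in P_{k}$ by $\alpha'$ supported in the complexity-one subsurface $N$ complementary to $P_{k}\setminus\{\alpha\}$, the curves of $P_{k}\setminus\{\alpha\}$ alone determine only the infinite family of all curves isotopic into $N$, and since any two distinct essential curves in a four-holed sphere or one-holed torus intersect, no curve contained in $N$ other than $\alpha'$ itself can be added to the determining set. Hence every determining set for $\alpha'$ must contain curves that cross $\partial N$ and whose arcs in $N$ avoid $\alpha'$; these curves depend on $\alpha'$, and there is no a priori reason they precede $\alpha'$ in your intersection-number ordering, so the induction is circular as stated. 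Your proposed remedy (adding ad hoc curves to break symmetries of the supporting subsurface) addresses possible non-uniqueness of the candidate curve, not this circularity. To close the gap you would need either a carefully chosen complexity function for which the crossing curves are provably simpler than $\alpha'$, or the paper's device of transporting a fixed determining configuration by the group elements themselves, which is exactly what the key lemma together with Proposition \ref{Prop3-7} accomplishes.
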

 
 Afterwards, we use this result to prove that $\Xf{S_{g,n}}$ also exhausts the curve graph, with the methods varying depending on the genus of the surface.
 
 \begin{Theo}\label{TeoB}
  Let $S_{g,n}$ be an orientable, connected surface of finite topological type and empty boundary. If $\kappa(S_{g,n}) \geq 2$, then $\Xf{S_{g,n}}^{\omega} = \ccomp{S_{g,n}}$.
 \end{Theo}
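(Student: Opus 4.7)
The plan is to reduce Theorem B to Theorem A. The case $g \geq 3$ is already established in \cite{JHH1}, so I may assume $g \leq 2$ and $\kappa(S_{g,n}) \geq 2$. Under this hypothesis Theorem A produces a finite subgraph $\Yf{S_{g,n}} < \ccomp{S_{g,n}}$ satisfying $\Yf{S_{g,n}}^{\omega} = \ccomp{S_{g,n}}$. The central reduction is: if I can show $\Yf{S_{g,n}} \subseteq \Xf{S_{g,n}}^{m}$ for some finite $m$, then monotonicity of rigid expansion gives
\[
 \ccomp{S_{g,n}} = \Yf{S_{g,n}}^{\omega} \subseteq \bigl(\Xf{S_{g,n}}^{m}\bigr)^{\omega} = \Xf{S_{g,n}}^{\omega},
\]
and the reverse inclusion is automatic. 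So the task becomes a finite combinatorial check: for each vertex $\alpha \in \Yf{S_{g,n}} \setminus \Xf{S_{g,n}}$, exhibit $k \geq 0$ and a set $A \subseteq \Xf{S_{g,n}}^{k}$ that uniquely determines $\alpha$.

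To carry this out I would overlay, on a fixed topological model of $S_{g,n}$, Aramayona--Leininger's configuration defining $\Xf{S_{g,n}}$ together with the configuration defining $\Yf{S_{g,n}}$ produced in the proof of Theorem A. Then for each extra curve $\alpha \in \Yf{S_{g,n}}$ I would propose a candidate witness set $A$ consisting of curves in (iterated rigid expansions of) $\Xf{S_{g,n}}$ that are disjoint from $\alpha$, and verify that the complementary subsurface of $\bigcup A$ admits only $\alpha$ as a further essential simple closed curve. Typical witnesses are short: a pants curve together with one or two arcs-of-complement-data is usually enough to pin down a curve, so I expect that one or two rigid expansion steps ($k = 1$ or $2$) should generate the auxiliary curves needed in the majority of cases.

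The case analysis splits cleanly by genus: $g=0$ with $n \geq 5$, $g = 1$ with $n \geq 2$, and $g = 2$ with $n \geq 0$. The genus-zero block reduces to a combinatorial check on arc configurations on a punctured sphere, and the genus-one block exploits the classification of simple closed curves on a punctured torus via slopes after cutting along one nonseparating curve; in both blocks the uniqueness of $\alpha$ given a proposed $A$ can usually be read off from Euler characteristic and connectivity of the cut surface.

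The main obstacle will be the closed surface $S_{2,0}$, and secondarily the small-complexity genus-one cases, where the first few rigid expansions of $\Xf{S_{g,n}}$ contain relatively few curves and the complement of a candidate witness set is a subsurface that may support more than one essential curve. There the delicate step is to enlarge $A$ by one or two carefully chosen curves produced by an intermediate rigid expansion so that the complement of $\bigcup A$ becomes an annulus or a pair of pants realizing $\alpha$ as its unique essential curve. Handling $S_{2,0}$ cleanly, and checking the borderline $(g,n) = (1,2)$ situation (where the curve graph has exceptional automorphism behavior), is where I expect most of the work of the proof to lie.
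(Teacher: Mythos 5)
Your reduction---establish $\Yf{S_{g,n}} \subseteq \Xf{S_{g,n}}^{m}$ for some finite $m$ and conclude via Theorem \ref{TeoA} together with monotonicity of rigid expansions---is exactly the paper's proof, carried out genus by genus with $m=1$ in genus zero and genus one (by explicitly uniquely determining each auxiliary curve of $\Yf{S}$ from curves of $\Xf{S}$) and with $\Yf{S}\subseteq\Xf{S}$ holding outright in genus two. The only discrepancy is in where you locate the difficulty: the paper dispatches $S_{2,0}$ and $S_{1,2}$ immediately via the isomorphisms $\ccomp{S_{2,0}}\cong\ccomp{S_{0,6}}$ and $\ccomp{S_{1,2}}\cong\ccomp{S_{0,5}}$, so those cases are not obstacles at all.
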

 
 These results for the case $g \geq 3$, were used by the author in \cite{JHH2} 
 to prove that if $S_{1} = S_{g_{1},n_{1}}$, $S_{2} = S_{g_{2},n_{2}}$ with $g_{1} \geq 3$ and the complexity of $S_{2}$ bounded above by the complexity of $S_{1}$, then any graph morphism between $\ccomp{S_{1}}$ and $\ccomp{S_{2}}$ is induced by a homeomorphism $S_{1} \rightarrow S_{2}$.
 
 Recently, in \cite{IrmakEP} and \cite{IrmakComp2}, Irmak extended this result for the case of endomorphisms, and proved that any endomorphism of the curve graph of $S_{g,n}$ with $\kappa(S_{g,n}) \geq 2$ is induced by a homeomorphism of $S_{g,n}$.
 
 
 In Section \ref{sec5} we give a short proof of this result using Theorem \ref{TeoB} and the results from \cite{JHH1} and \cite{JHH2}.
 
 \begin{Theo}\label{TeoD}
  Let $S_{g,n}$ be an orientable, connected surface of finite topological type, with empty boundary, and $\kappa(S_{g,n}) \geq 2$. Let also $\fun{\varphi}{\ccomp{S_{g,n}}}{\ccomp{S_{g,n}}}$ be a graph (endo)morphism. Then $\varphi$ is an automorphism of $\ccomp{S_{g,n}}$.
 \end{Theo}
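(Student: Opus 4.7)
My plan is to combine Theorem~\ref{TeoB} with the techniques of \cite{JHH1} and \cite{JHH2}, which already settle the case $g \geq 3$: when $g \geq 3$, since $\varphi$ is a graph morphism from $\ccomp{S_{g,n}}$ to itself (so the codomain trivially has complexity bounded above by that of the domain), the main theorem of \cite{JHH2} directly produces a homeomorphism of $S_{g,n}$ inducing $\varphi$, whence $\varphi$ is an automorphism. The remaining work is therefore in the low-genus range $g \leq 2$ with $\kappa(S_{g,n}) \geq 2$, where I would transplant the argument of \cite{JHH2} using Theorem~\ref{TeoB} in place of the $g \geq 3$ rigid-expansion exhaustion supplied by \cite{JHH1}.

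The first step is to establish that $\varphi$ is locally injective. Because $\varphi$ sends edges to edges, $\varphi(\alpha) \neq \varphi(\beta)$ whenever $\alpha$ and $\beta$ span an edge (edges cannot collapse to loops), so the only possible failure of local injectivity is an identification of two non-adjacent curves in a common star; such a collision should be ruled out by the combinatorial lemmas of \cite{JHH2}, which now apply uniformly to all $S_{g,n}$ with $\kappa(S_{g,n}) \geq 2$ thanks to Theorem~\ref{TeoB}. With local injectivity in hand, $\varphi|_{\Xf{S_{g,n}}}$ is a locally injective map out of the Aramayona--Leininger finite rigid set, and by \cite{Ara1, Ara2} it coincides with the restriction of some automorphism $\Phi \in \Aut{\ccomp{S_{g,n}}}$.

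I would then propagate $\varphi = \Phi$ from $\Xf{S_{g,n}}$ to the whole curve graph by induction on the rigid-expansion level. If $\varphi = \Phi$ on $\Xf{S_{g,n}}^{n}$ and $\alpha \in \Xf{S_{g,n}}^{n+1}$ is uniquely determined by some $A \subseteq \Xf{S_{g,n}}^{n}$, then $\varphi(\alpha)$ is disjoint from every element of $\varphi(A) = \Phi(A)$ (edge-preservation) and is not itself in $\Phi(A)$ (no loops); since $\Phi$ is an automorphism and $\alpha$ is uniquely determined by $A$, the curve $\Phi(\alpha)$ is the unique curve with these properties, forcing $\varphi(\alpha) = \Phi(\alpha)$. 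Theorem~\ref{TeoB} then gives $\varphi = \Phi$ on $\Xf{S_{g,n}}^{\omega} = \ccomp{S_{g,n}}$, so $\varphi$ is an automorphism. The delicate step I expect to be the main obstacle is the local injectivity above: the ``find an auxiliary curve witnessing non-collision'' arguments are most comfortable in high genus, and for the boundary cases $S_{0,n}$, $S_{1,n}$ and $S_{2,0}$ with $\kappa(S_{g,n}) \geq 2$ one must check that the lemmas of \cite{JHH2} really rely only on a rigid-expansion exhaustion of $\ccomp{S_{g,n}}$ and not on any genus hypothesis per se; this verification is in essence the content of Section~\ref{sec5}.
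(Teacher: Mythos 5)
Your overall architecture coincides with the paper's: show $\varphi$ is (locally) injective on the finite rigid set $\Xf{S_{g,n}}$, invoke rigidity to get an automorphism $\Phi$ agreeing with $\varphi$ there, and propagate along rigid expansions using Theorem~\ref{TeoB}; your inductive propagation step is precisely Corollary~\ref{CorCJHH2}, so that part needs no new work. The genuine gap is the step you yourself flag as the main obstacle and then leave to ``the combinatorial lemmas of \cite{JHH2}'': local injectivity is the entire substantive content of the proof in the low-genus range, and it is not supplied. The paper proves it via a specific new lemma (Lemma~\ref{Farey}): every graph morphism between curve graphs of complexity at least $2$ (with the complexity of the target bounded by that of the source) sends Farey neighbours to intersecting curves. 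The mechanism is that two Farey neighbours $\alpha,\beta$ complete a common multicurve $M$ to two pants decompositions; since graph morphisms preserve multicurves and their cardinalities, $\varphi(\alpha)$ and $\varphi(\beta)$ land in a complexity-one subsurface, so they either coincide or intersect, and coincidence is excluded by an auxiliary curve $\gamma$ disjoint from $\alpha$ and a Farey neighbour of $\beta$. Injectivity of $\varphi|_{\Xf{S_{g,n}}}$ then follows because any two distinct vertices of $\Xf{S_{g,n}}$ are either disjoint, Farey neighbours, or linked by such an auxiliary curve. None of this is in your write-up, and it is not a routine transplant of the $g\geq 3$ arguments.

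More seriously, the auxiliary-curve step genuinely \emph{fails} for $S_{1,2}$ and $S_{2,0}$: on these surfaces there exist spherical Farey neighbours $\alpha,\beta$ such that no curve disjoint from one is a Farey neighbour of the other (Figure~\ref{fig:Sec5-0Fig1}), so Lemma~\ref{Farey} cannot be proved for them and the direct argument collapses exactly at the ``collision'' you hoped to rule out. You list $S_{2,0}$ among the boundary cases to ``check,'' but the check does not go through, and $S_{1,2}$ is not mentioned at all. The paper handles both by conjugating $\varphi$ with the isomorphisms $\ccomp{S_{1,2}}\cong\ccomp{S_{0,5}}$ and $\ccomp{S_{2,0}}\cong\ccomp{S_{0,6}}$ induced by the hyperelliptic involution, thereby reducing to the genus-zero case where the Farey argument does work. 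Without this detour (or an equivalent device) your proof is incomplete for precisely these two surfaces.
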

 
 Also in Section \ref{sec5} we give an extension of the results in \cite{Shack} and \cite{JHH2}, proving that with a restriction on the complexity of surfaces $S_{1}$ and $S_{2}$, and with $S_{1} \neq S_{0,7}$, then any graph morphism between the respective curve graphs, is induced by a homeomorphism between $S_{1}$ and $S_{2}$.
 
 \begin{Theo}\label{TeoE}
  Let $S_{1}$ and $S_{2}$ be two orientable, connected surfaces of finite topological type, with empty boundary, such that $\kappa(S_{1}) \geq \kappa(S_{2}) \geq 2$, and $S_{1}$ is not homeomorphic to a 7-punctured sphere. Let also $\fun{\varphi}{\ccomp{S_{1}}}{\ccomp{S_{2}}}$ be a graph morphism. Then, $S_{1}$ is homeomorphic to $S_{2}$ and $\varphi$ is induced by a homeomorphism $S_{1} \to S_{2}$.
 \end{Theo}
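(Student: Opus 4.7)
The case $g_1 \geq 3$ is already established in \cite{JHH2}, so the new content of Theorem \ref{TeoE} is the low-genus case $g_1 \leq 2$, and the plan is to run the strategy of \cite{JHH2} using the exhaustion provided by Theorem \ref{TeoB} as the replacement for the analogous exhaustion that was only available for $g \geq 3$ in \cite{JHH1}.

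First, I would analyze the restriction of $\varphi$ to the finite Aramayona--Leininger rigid set $\Xf{S_1}$. The hypothesis $\kappa(S_1) \geq \kappa(S_2) \geq 2$ combined with the exclusion of $S_{0,7}$ should be leveraged to show that $\varphi$ restricted to $\Xf{S_1}$ is locally injective. This step is crucial because graph morphisms are \emph{a priori} much weaker than simplicial embeddings: they preserve adjacency but not necessarily non-adjacency, and need not be injective. The low-genus cases require a case-by-case examination of curves in $\Xf{S_1}$ whose stars have small codimension in $\ccomp{S_1}$; the exception for $S_{0,7}$ reflects genuine exceptional behavior, in the same spirit as the classical $S_{1,2}$ exception in the Ivanov--Korkmaz--Luo theorem, where the local-injectivity argument simply breaks down due to the combinatorics of small, symmetric stars.

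Given local injectivity on $\Xf{S_1}$, I would then apply the Aramayona--Leininger finite rigidity theorem (with target $\ccomp{S_2}$ in place of $\ccomp{S_1}$, which requires their combinatorial argument to be robust enough to allow varying the target to a homeomorphic surface of the same complexity) to obtain a homeomorphism $h\colon S_1 \to S_2$ such that $\varphi$ and $h_{*}$ agree on $\Xf{S_1}$. The mere existence of such an $h$ forces $S_1 \cong S_2$, which is the topological content of the theorem. The final step extends the equality $\varphi = h_{*}$ from $\Xf{S_1}$ to all of $\ccomp{S_1}$ by induction on the rigid expansion $\Xf{S_1}^{n}$: if a curve $\alpha$ is uniquely determined in $S_1$ by a set $A \subset \Xf{S_1}^{n}$, then $h_{*}(\alpha)$ is the unique curve in $S_2$ disjoint from $h_{*}(A)$; since $\alpha$ is disjoint from and distinct from every element of $A$, the image $\varphi(\alpha)$ is adjacent (hence disjoint) and distinct from every $\varphi(a) = h_{*}(a)$, and uniqueness forces $\varphi(\alpha) = h_{*}(\alpha)$. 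Theorem \ref{TeoB} guarantees $\Xf{S_1}^{\omega} = \ccomp{S_1}$, so this propagates $\varphi = h_{*}$ to the entire curve graph.

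The main obstacle will be the first step: establishing local injectivity of $\varphi$ on $\Xf{S_1}$ in the low-genus regime. In contrast to the $g_1 \geq 3$ case handled in \cite{JHH2}, the low-genus surfaces have curve graphs whose vertex stars carry more symmetry and smaller combinatorial complexity, so ruling out a pathological collapse of a star under a merely adjacency-preserving map requires careful bookkeeping of the specific curves in $\Xf{S_1}$ and their mutual intersection patterns. This is presumably the precise point at which the $S_{0,7}$ hypothesis must be invoked.
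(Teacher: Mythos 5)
Your outline of the extension step (local injectivity of $\varphi$ on $\Xf{S_{1}}$, then propagation along rigid expansions using Theorem \ref{TeoB}) matches the mechanism the paper uses to prove Theorem \ref{TeoD}, but your proposal has a genuine gap exactly where the new content of Theorem \ref{TeoE} lies: establishing that $S_{1}$ is homeomorphic to $S_{2}$. You propose to obtain this as a byproduct of applying the finite rigidity of $\Xf{S_{1}}$ to the locally injective map $\Xf{S_{1}} \to \ccomp{S_{2}}$, but the rigidity of $\Xf{S_{1}}$ is a statement about maps into $\ccomp{S_{1}}$ itself; your parenthetical caveat (``varying the target to a homeomorphic surface of the same complexity'') concedes the circularity, since whether $S_{2}$ is homeomorphic to $S_{1}$ is precisely what must be proved. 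That this cannot be waved away is shown by the isomorphisms $\ccomp{S_{0,5}} \cong \ccomp{S_{1,2}}$ and $\ccomp{S_{0,6}} \cong \ccomp{S_{2,0}}$: equal complexity does not force homeomorphic surfaces. The paper fills this gap with a separate result (Theorem \ref{TopRig}), proved by a chain of lemmas showing that a graph morphism sends pants decompositions to pants decompositions and induces isomorphisms of their adjacency graphs (Lemmas \ref{Multicurves}, \ref{Farey}, \ref{Adjacency}), and preserves the topological type of curves --- non-outer separating to non-outer separating, non-separating to non-separating, outer to outer (Lemmas \ref{Non-outer}, \ref{Nonsep}, \ref{Outer}) --- followed by a genus-by-genus count that pins down $(g_{2},n_{2}) = (g_{1},n_{1})$. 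Only after this does the paper identify $S_{1}$ with $S_{2}$ and invoke Theorem \ref{TeoD}; your proposal contains no substitute for this topological analysis.

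Relatedly, your guess about where the $S_{0,7}$ hypothesis enters is off: it is not needed for local injectivity on the rigid set (Theorem \ref{TeoD} carries no such exclusion), but for the topological step. Concretely, the proof that outer curves map to outer curves (Lemma \ref{Outer}) requires, for a given outer curve $\alpha$, two disjoint non-outer separating curves $\beta$ and $\gamma$ such that $\alpha$, $\beta$ and $\gamma$ bound a pair of pants; on $S_{0,7}$ this fails by a puncture count, since $\beta$ and $\gamma$ would each need to enclose at least three of the five punctures not enclosed by $\alpha$.
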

 
 Note that, due to Theorem \ref{TeoD}, it is sufficient to prove topological rigidity, i.e. under the hypothesis of Theorem \ref{TeoE}, $S_{1}$ is homeomorphic to $S_{2}$ (see Theorem \ref{TopRig}).

 
 
 
 Also, the lower bound on the complexity of Theorem \ref{TeoD} is sharp. Indeed, if $\kappa(S_{g,n}) = 1$, there exist infinitely many graph endomorphisms of $\ccomp{S_{g,n}}$ that are surjective, but are NOT injective; thus, they cannot be induced by an automorphism.
 
 As well, the lower bound on the complexity of Theorem \ref{TeoE} is also sharp. Indeed, $\ccomp{S_{0,5}}$ and $\ccomp{S_{0,6}}$ are isomorphic to $\ccomp{S_{1,2}}$ and $\ccomp{S_{2,0}}$, respectively.

 \textbf{Reader's guide:} This work is divided as follows: in Section \ref{Prelim} we give the necessary preliminaries for this work; in Section \ref{sec2} we prove Theorems \ref{TeoA} and \ref{TeoB} for the case of genus zero; in Section \ref{sec3} we prove Theorems \ref{TeoA} and \ref{TeoB} for the case of genus one; in Section \ref{sec4} we prove Theorems \ref{TeoA} and \ref{TeoB} for the case of genus two; in Section \ref{sec5} we prove Theorems \ref{TeoD} and \ref{TeoE}.\\[0.3cm]
 \textbf{Acknowledgements:} 
  The author was supported during the creation of this article by the UNAM-PAPIIT research grants IA104620 and IN114323. The author was also supported by the CONAHCYT research grant Ciencia de Frontera 2019 CF 217392.
\section{Preliminaries}\label{Prelim}
 Suppose $S_{g,n}$ is an orientable, connected surface of finite topological type with empty boundary, genus $g \leq 2$, $n \geq 0$ punctures, and such that its complexity $\kappa(S_{g,n}) = 3g-3+n$ is at least $3$. The \textit{mapping class group of} $S_{g,n}$, denoted by $\Mod{S_{g,n}}$, is the group of all orientation-preserving self-homeomorphisms of $S_{g,n}$.
 
 In this work, a \textit{curve on} $S_{g,n}$ is a topological embedding of $\mathbb{S}^{1}$ into the surface. We often abuse notation and call ``curve'' the embedding, its image on $S_{g,n}$, or its isotopy class. The meaning is clear with the context.
 
 A curve on $S_{g,n}$ is \textit{essential} if it is neither homotopic to a point, nor to the boundary curve of a neighbourhood of a puncture. Hereafter we assume every curve to be essential unless otherwise stated. See Figure \ref{fig:PrelimFig1} for examples.
 
 The curves on $S_{g,n}$ can be classified as follows: A curve $\alpha$ on $S_{g,n}$ is \textit{separating} if $S_{g,n} \backslash \{\alpha\}$ is disconnected, and it is \textit{non-separating} otherwise. A special case of a separating curve is when $S_{g,n} \backslash \{\alpha\}$ has a connected component homeomorphic to $S_{0,3}$, in which case we say $\alpha$ is an \textit{outer curve}. See Figure \ref{fig:PrelimFig1} for examples.
 
 \begin{figure}[ht]
     \centering
     \includegraphics[height=4cm]{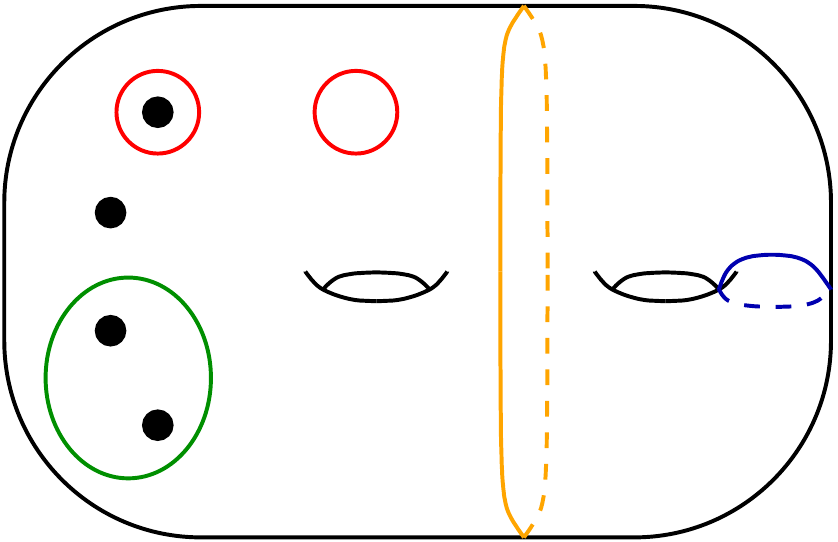}\hspace{1cm}
     \includegraphics[height=4cm]{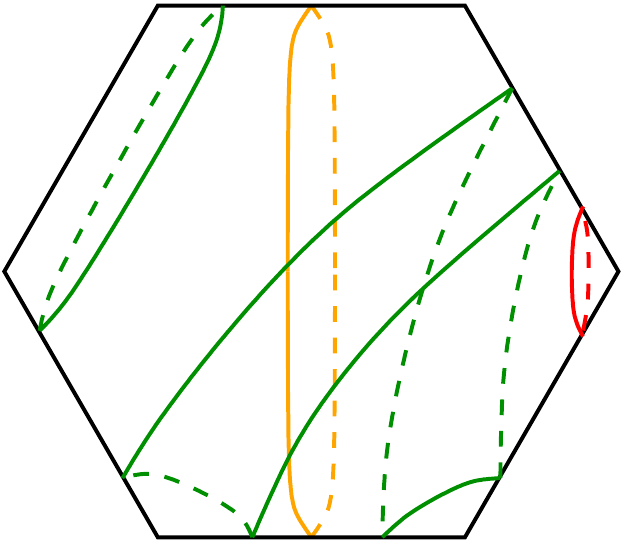}
     \caption{Examples of curves on surfaces. The red curves are non-essential, while the rest are essential. The green curves are outer curves, the orange curves are separating non-outer curves, and the blue curve is non-separating.}
     \label{fig:PrelimFig1}
 \end{figure}
 
 Note that there exist non-separating curves on $S_{g,n}$ if and only if $S_{g,n}$ has positive genus.
 
 Let $\alpha$ and $\beta$ two (isotopy classes of) curves on $S_{g,n}$. The \textit{(geometric) intersection number of} $\alpha$ \textit{and} $\beta$, denoted by $i(\alpha,\beta)$, is defined as follows: $$i(\alpha,\beta) = \min \{|a \cap b|: a \in \alpha, b \in \beta\}.$$
 
 Let $\alpha$ and $\beta$ be two curves on $S_{g,n}$. In this work, we say that $\alpha$ is \textbf{disjoint} from $\beta$ if $i(\alpha,\beta) = 0$ \textbf{and} $\alpha \neq \beta$.
 
 The \textit{curve graph of} $S_{g,n}$, denoted by $\ccomp{S_{g,n}}$, is the abstract simplicial graph whose vertices are the isotopy classes of essential curves on $S_{g,n}$, and two vertices span an edge if the corresponding curves are disjoint. We often abuse notation and denote by $\ccomp{S_{g,n}}$ both the curve graph and its vertex set.
 
\subsection{Rigid expansions}\label{subsec1-1}
 In \cite{JHH2} we gave a general description for the concept of a rigid subgraph and rigid expansions of abstract simplicial graphs. Here however, we give the definitions focused on the case of the curve graph.
 
 Given a curve $\alpha$ on $S_{g,n}$, we define the set $\mathrm{adj}(\alpha)$ as the set of all curves on $S$ that are disjoint from $\alpha$. Note that $\alpha \notin \mathrm{adj}(\alpha)$.
 
 We say a curve $\alpha \in \ccomp{S_{g,n}}$ is \textit{uniquely determined by} $A \subset \ccomp{S_{g,n}}$, denoted by $\alpha = \langle A \rangle$, if $\alpha$ is the unique curve on $S_{g,n}$ that is disjoint from every element in $A$, i.e. $$\{\alpha\} = \bigcap_{\gamma \in A} \mathrm{adj}(\gamma).$$ See Figure \ref{fig:PrelimFig2} for examples.
 
 \begin{figure}[ht]
     \centering
     \includegraphics[height=4cm]{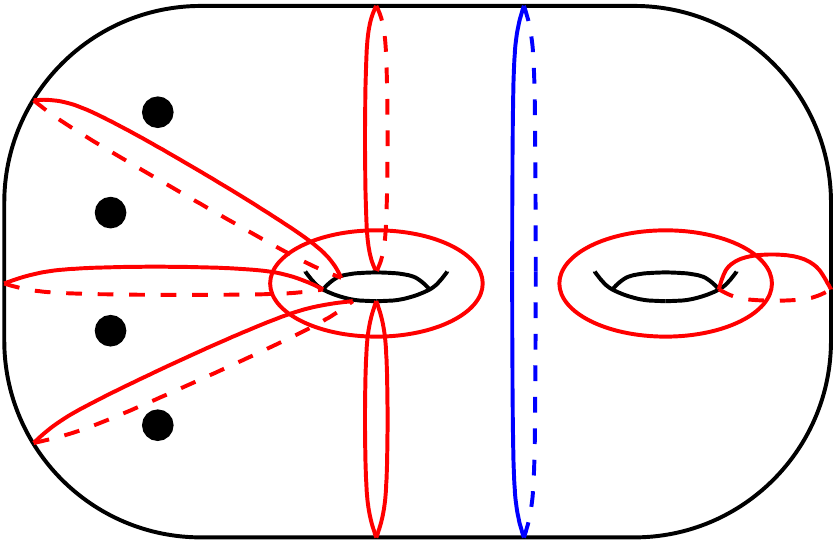}\hspace{1cm}
     \includegraphics[height=4cm]{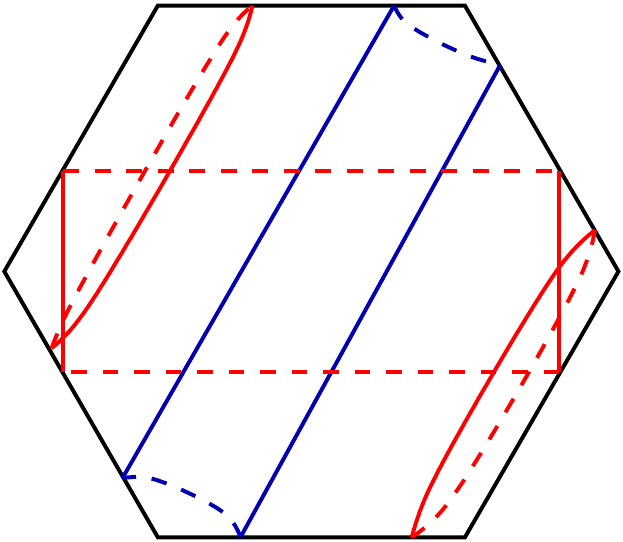}
     \caption{In both examples the blue curve is uniquely determined by the set of red curves.}
     \label{fig:PrelimFig2}
 \end{figure}
 
 Let $Y < \ccomp{S_{g,n}}$ be a induced subgraph of $\ccomp{S_{g,n}}$, and let $V(Y)$ denote its vertex set. We define the \textit{first rigid expansion of} $Y$, denoted by $Y^{1}$ as the induced subgraph of $\ccomp{S_{g,n}}$ whose vertex set if the following: $$V(Y^{1}) \ColonEqq V(Y) \cup \{\alpha \in \ccomp{S_{g,n}} : \alpha = \langle A \rangle,\text{ for some } A \subset V(Y)\}.$$
 
 We then define the $(n+1)$-th rigid expansion as $(Y^{n})^{1}$, and $Y^{\omega}$ as the induced subgraph of $\ccomp{S_{g,n}}$ induced by the following vertex set: $$V(Y^{\omega}) \ColonEqq \bigcup _{n \geq 1} V(Y^{n}).$$
 
 Recall that a graph morphism between simplicial graphs is a map between the corresponding vertex sets that also maps edges to edges.
 
 The following theorem is Theorem B in \cite{JHH2} for the special case of the curve graph, using the terminology here presented:
 
 \begin{Teo}[B in \cite{JHH2}]\label{ThmBJHH2}
  Let $Y < \ccomp{S_{g,n}}$ be an induced subgraph, and $\fun{\varphi}{Y^{\omega}}{\ccomp{S_{g,n}}}$ be a graph morphism such that there exists an automorphism of $\ccomp{S_{g,n}}$ (if $(g,n) \neq (1,2)$ it is a possibly orientation-reversing homeomorphism of $S_{g,n}$) $\fun{h}{\ccomp{S_{g,n}}}{\ccomp{S_{g,n}}}$ with $h|_{Y} = \varphi|_{Y}$. Then, $h|_{Y^{\omega}} = \varphi$.
 \end{Teo}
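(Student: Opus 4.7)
The natural approach is induction on $n \geq 0$ showing that $h|_{Y^{n}} = \varphi|_{Y^{n}}$, where $Y^{0} \ColonEqq Y$. The base case is precisely the hypothesis $h|_{Y} = \varphi|_{Y}$. For the inductive step, assume the equality holds on $Y^{n}$ and pick $\alpha \in V(Y^{n+1}) \setminus V(Y^{n})$. By the definition of rigid expansion there is a subset $A \subset V(Y^{n})$ with $\alpha = \langle A \rangle$, that is, $\alpha$ is the unique curve on $S_{g,n}$ disjoint from every element of $A$.

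The central observation is that an automorphism of $\ccomp{S_{g,n}}$ commutes with the operation $\langle \cdot \rangle$, namely $h(\langle A \rangle) = \langle h(A) \rangle$. This follows from the bijectivity of $h$ together with the fact that $h$ preserves both adjacency and non-adjacency: a curve $\beta$ is disjoint from every element of $h(A)$ if and only if $h^{-1}(\beta)$ is disjoint from every element of $A$, and there is exactly one such $h^{-1}(\beta)$, namely $\alpha$. Thus $h(\alpha)$ is the unique curve disjoint from every element of $h(A)$.

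Now, because $\varphi$ is a graph morphism and $\alpha$ is adjacent to each $\gamma \in A$ in $\ccomp{S_{g,n}}$, the image $\varphi(\alpha)$ is adjacent to each $\varphi(\gamma)$; in particular $\varphi(\alpha)$ is distinct from each $\varphi(\gamma)$, since edges of a simplicial graph have distinct endpoints. Combining adjacency and distinctness gives exactly the paper's notion of ``disjoint.'' The inductive hypothesis yields $\varphi(\gamma) = h(\gamma)$ for every $\gamma \in A$, hence $\varphi(\alpha)$ is disjoint from every element of $h(A)$. The uniqueness characterization of $h(\alpha) = \langle h(A) \rangle$ then forces $\varphi(\alpha) = h(\alpha)$, completing the inductive step. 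Taking the union over $n$ yields $h|_{Y^{\omega}} = \varphi$, as required.

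The argument is essentially a single induction, so there is no serious obstacle to speak of. The one point that merits care is ensuring that $\varphi(\alpha)$ is disjoint (in the paper's strong sense, i.e.\ distinct \emph{and} with zero intersection number) from every element of $\varphi(A)$; this is precisely where the graph-morphism hypothesis, rather than a weaker simplicial-map hypothesis allowing vertex collapses, is used.
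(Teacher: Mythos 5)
Your proof is correct and is essentially the standard argument: the paper itself imports this statement from \cite{JHH2} without reproving it, and the inductive step you give (an automorphism $h$ satisfies $h(\langle A\rangle)=\langle h(A)\rangle$, while the graph-morphism hypothesis forces $\varphi(\alpha)$ to be disjoint from every element of $\varphi(A)=h(A)$, so uniqueness pins down $\varphi(\alpha)=h(\alpha)$) is exactly the mechanism behind Theorem B there. You also correctly isolate the one delicate point, namely that edges of a simplicial graph have distinct endpoints, so $\varphi(\alpha)$ is disjoint from $\varphi(\gamma)$ in the paper's strong sense.
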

 
 Now, given a curve $\alpha$ on $S_{g,n}$, we define the star of $\alpha$, denoted by $\mathrm{star}(\alpha)$, as the subgraph whose vertex set is $\{\alpha\} \cup \mathrm{adj}(\alpha)$, and the edges are those from $\ccomp{S_{g,n}}$ that have $\alpha$ as one of its endpoints.
 
 A locally injective map $\fun{\varphi}{X}{\ccomp{S_{g,n}}}$ is a simplicial map whose restriction to $\mathrm{star}(\alpha)$ is injective for every $\alpha \in \ccomp{S_{g,n}}$.
 
 Now, a induced subgraph $X < \ccomp{S_{g,n}}$ is called \textit{rigid} if every locally injective map from $X$ to $\ccomp{S_{g,n}}$ is induced by an automorphism of $\ccomp{S_{g,n}}$ (again, if $(g,n) \neq (1,2)$ it becomes a homeomorphism of $S_{g,n}$).
 
 Note that not every supergraph of a rigid subgraph is rigid (see Proposition 3.2 in \cite{Ara2}). However, any rigid expansion of a rigid subgraph is rigid (see Proposition 3.5 in \cite{Ara2}).
 
 The following corollary is Corollary C in \cite{JHH2} for the special case of the curve graph, and it comes as a direct consequence of Theorem \ref{ThmBJHH2} and the definition of rigidity:
 
 \begin{Cor}[C in \cite{JHH2}]\label{CorCJHH2}
  Let $X < \ccomp{S_{g,n}}$ be a rigid subgraph, and $\fun{\varphi}{X^{\omega}}{\ccomp{S_{g,n}}}$ be an edge-preserving map such that $\varphi|_{X}$ is locally injective. Then $\varphi$ is the restriction to $X^{\omega}$ of an automorphism of $\ccomp{S_{g,n}}$ (if $(g,n) \neq (1,2)$, it is a homeomorphism of $S_{g,n}$).
 \end{Cor}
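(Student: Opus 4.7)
The plan is to deduce this corollary in two short moves, combining the definition of rigidity for the base subgraph $X$ with Theorem \ref{ThmBJHH2} to propagate the agreement from $X$ to all of $X^{\omega}$.

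First I would unpack the hypothesis that $\varphi|_{X}$ is locally injective. Since $X$ is rigid by assumption, the definition of rigidity produces an automorphism $\fun{h}{\ccomp{S_{g,n}}}{\ccomp{S_{g,n}}}$ (in fact a, possibly orientation-reversing, homeomorphism of $S_{g,n}$ when $(g,n) \neq (1,2)$) such that $h|_{X} = \varphi|_{X}$. This step is essentially tautological: rigidity is stated precisely so that locally injective maps from $X$ into the curve graph extend to automorphisms.

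Second, I would apply Theorem \ref{ThmBJHH2} with $Y \ColonEqq X$ to the data $(\varphi, h)$. The map $\varphi$ is a graph morphism on $X^{\omega}$ (the hypothesis ``edge-preserving map'' is exactly this), the map $h$ is an automorphism of $\ccomp{S_{g,n}}$, and we have just verified $h|_{X} = \varphi|_{X}$. The conclusion of Theorem \ref{ThmBJHH2} then yields $h|_{X^{\omega}} = \varphi$, so $\varphi$ is the restriction to $X^{\omega}$ of the automorphism $h$, which is exactly what the corollary asserts (together with the refinement to a homeomorphism of $S_{g,n}$ when $(g,n) \neq (1,2)$, inherited directly from the corresponding refinement in Theorem \ref{ThmBJHH2}).

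There is no real obstacle here beyond bookkeeping: both nontrivial ingredients, namely the rigidity of $X$ and the ``propagation'' Theorem \ref{ThmBJHH2}, do all the work. The only thing worth being careful about is to check that the hypotheses of Theorem \ref{ThmBJHH2} are exactly met, i.e. that $\varphi$ being a graph morphism on $X^{\omega}$ (not merely on $X$) is part of the data, and that the automorphism supplied by rigidity is genuinely an automorphism of the whole curve graph (not only of $X$), so that it is available to be applied to every vertex produced by iterated rigid expansion.
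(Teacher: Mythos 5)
Your proposal is correct and follows exactly the route the paper indicates: the corollary is stated there as a direct consequence of Theorem \ref{ThmBJHH2} and the definition of rigidity, which is precisely your two-step argument (rigidity of $X$ supplies the automorphism $h$ with $h|_{X}=\varphi|_{X}$, and Theorem \ref{ThmBJHH2} with $Y=X$ propagates the agreement to $X^{\omega}$). Nothing is missing.
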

\section{Genus zero case}\label{sec2}
 In this section, we assume that $S = S_{0,n}$ with $n \geq 5$ (so that $\kappa(S) \geq 2$), unless otherwise stated.
 
 The structure of this section is as follows: In Section \ref{subsec2-1} we define $\Of$ which is the basis upon which we construct $\Yf{S}$; in Section \ref{subsec2-2} we define the sets of auxiliary curves $\Dfone$ and $\Dftwo$, and we also define $\Yf{S}$; in Section \ref{subsec2-3} we give the proof of Theorem \ref{TeoA} pending the proof of a key lemma (Lemma \ref{KeyLemag0}); in Sections \ref{subsec2-lc}, \ref{subsec2-4}, \ref{subsec2-5} and \ref{subsec2-6} we give the proof of the key lemma; finally, in Section \ref{subsec2-7} we recall from \cite{Ara2} the definition of $\Xf{S}$ and prove Theorem \ref{TeoB}.
\subsection{The basis of $\Yf{S}$}\label{subsec2-1} 
 For $k\geq 2$, let $O = \{\alpha_{0}, \ldots, \alpha_{k}\}$ be a set of outer curves. We say $O$ is an \textit{outer chain} if $i(\alpha_{i},\alpha_{j}) = 2$ if $|i-j|=1$ and $\alpha_{i}$ is disjoint from $\alpha_{j}$ otherwise. This notation comes as an analogue for a chain in higher genus. We say $O$ is a \textit{closed outer chain} if (counting the subindices modulo $k+1$) $i(\alpha_{i},\alpha_{j}) = 2$ if $|i-j|=1$ and $\alpha_{i}$ is disjoint from $\alpha_{j}$ otherwise. See Figure \ref{fig:Sec2-1Fig1} for examples. In both cases, we set the \textit{length} of $O$ as its cardinality.
 
 \begin{figure}[ht]
     \centering
     \includegraphics[height=4cm]{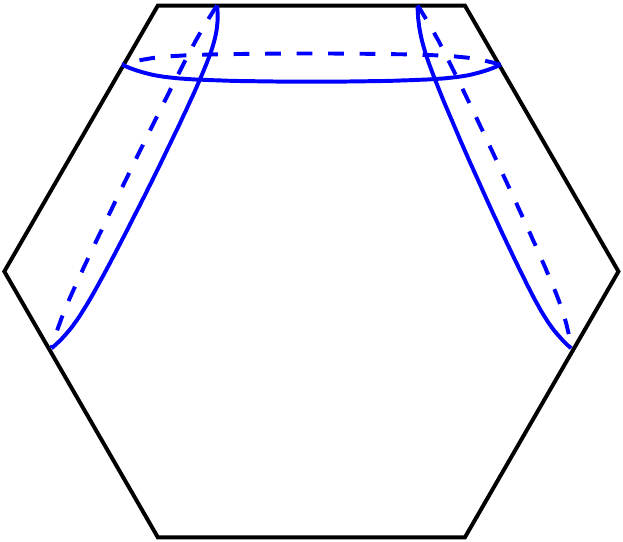}\hspace{1cm}
     \includegraphics[height=4cm]{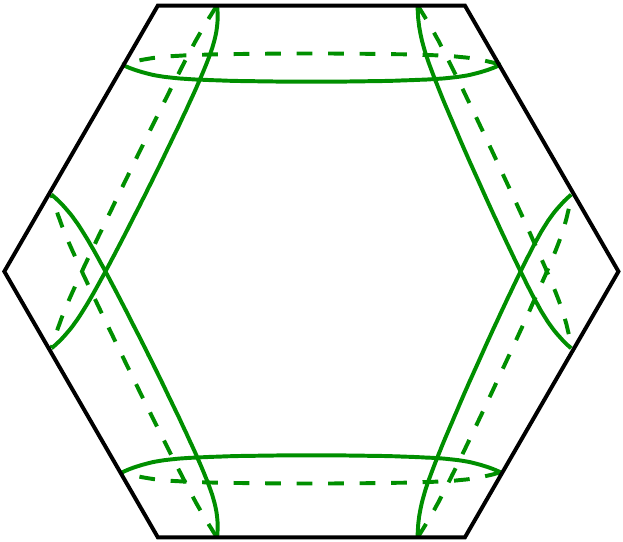}
     \caption{On the left, an example of an outer chain. On the right, an example of a closed outer chain.}
     \label{fig:Sec2-1Fig1}
 \end{figure}
 
 Note that a closed outer chain of maximal length has cardinality $n$, and its regular neighbourhood is homeomorphic to a $n$-punctured annulus.
 
 Let $\Of = \{\alpha_{0}, \ldots, \alpha_{n-1}\}$ be a closed outer chain of maximal length. While these curves seem to be a good candidate for $\Yf{S}$, they are not enough, i.e. they eventually stabilize. So, we have to add some auxiliary curves.
\subsection{Auxiliary curves}\label{subsec2-2}
 To define the auxiliary curves, we must first number the punctures of $S$ in such a way that $\alpha_{i} \in \Of$ bounds the $i$-th and $(i+1)$-th punctures (with indices modulo $n$). See Figure \ref{fig:Sec2-2Fig1} for an example.
 
 \begin{figure}[ht]
     \labellist
     \small \hair 2pt
     \pinlabel $\alpha_{i}$ [bl] at 80 165
     \pinlabel $i+1$ at 45 258
     \pinlabel $i$ [br] at 0 130
     \endlabellist
     \centering
     \includegraphics[height=4cm]{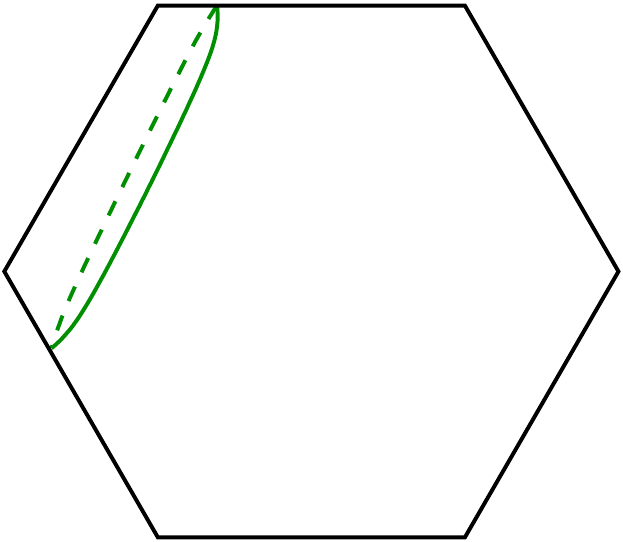}
     \caption{The $i$-th and $(i+1)$-th puncture along with the curve $\alpha_{i}$.}
     \label{fig:Sec2-2Fig1}
 \end{figure}
 
 Now, let $\alpha$ be an outer curve on $S$. We call the \textit{arc corresponding to} $\alpha$, to the unique arc ``joining'' the punctures bounded by $\alpha$. Similarly, let $a$ be an arc on $S$ with different endpoints; we call the \textit{outer curve corresponding to} $a$, to the unique outer curve that bounds the endpoints of $a$ and contains $a$ in its interior. See Figure \ref{fig:Sec2-2Fig2} for examples.
 
 \begin{figure}[ht]
     \centering
     \includegraphics[height=4cm]{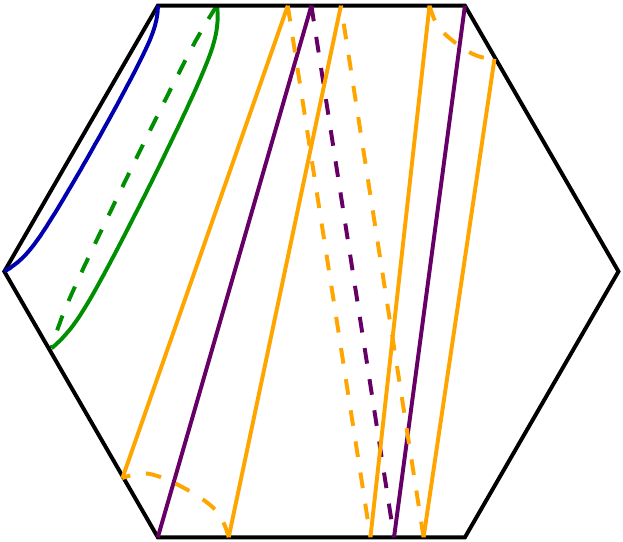}
     \caption{The blue and purple arcs have the green and orange curves as their (respective) corresponding outer curves.}
     \label{fig:Sec2-2Fig2}
 \end{figure}
 
 Note that there is a bijective correspondance between the set of all outer curves on $S$, and the set of all arcs on $S$ with differing endpoints.
 
 Let $A = \{a_{i}\}$ be the set of arcs $a_{i}$ corresponding to $\alpha_{i} \in \Of$. Let also $H_{1}$ and $H_{2}$ be two identical $n$-gons with numbered vertices such that $S$ can be identified with the result of gluing $H_{1}$ and $H_{2}$ together by their sides (and eliminating the vertices), and the labeling of the vertices in $H_{1}$ and $H_{2}$ coincides with the labeling of the punctures on $S$. Moreover, we particularly choose the identification, so that the (glued together) sides of $H_{1}$ and $H_{2}$ are identified with the elements of $A$. See Figure \ref{Sec2-2Fig3}.
 \vspace{2mm}
 \begin{figure}[ht]
     \labellist
     \pinlabel $H_{1}$ at 146 133
     \pinlabel $H_{2}$ at 518 133
     \pinlabel $1$ [br] at 70 255
     \pinlabel $2$ [bl] at 229 255
     \pinlabel $3$ [l] at 295 130
     \pinlabel $4$ [tl] at 225 5
     \pinlabel $5$ [tr] at 72 5
     \pinlabel $6$ [r] at 5 130
     \pinlabel $1$ [br] at 435 255
     \pinlabel $2$ [bl] at 594 255
     \pinlabel $3$ [l] at  665 130
     \pinlabel $4$ [tl] at 590 5
     \pinlabel $5$ [tr] at 437 5
     \pinlabel $6$ [r] at 370 130
     \pinlabel $\rightsquigarrow$ at 730 133
     \pinlabel $\alpha_{1}$ at 924 205
     \pinlabel $\alpha_{6}$ [bl] at 852 160
     \endlabellist
     \centering
     \includegraphics[height=35mm]{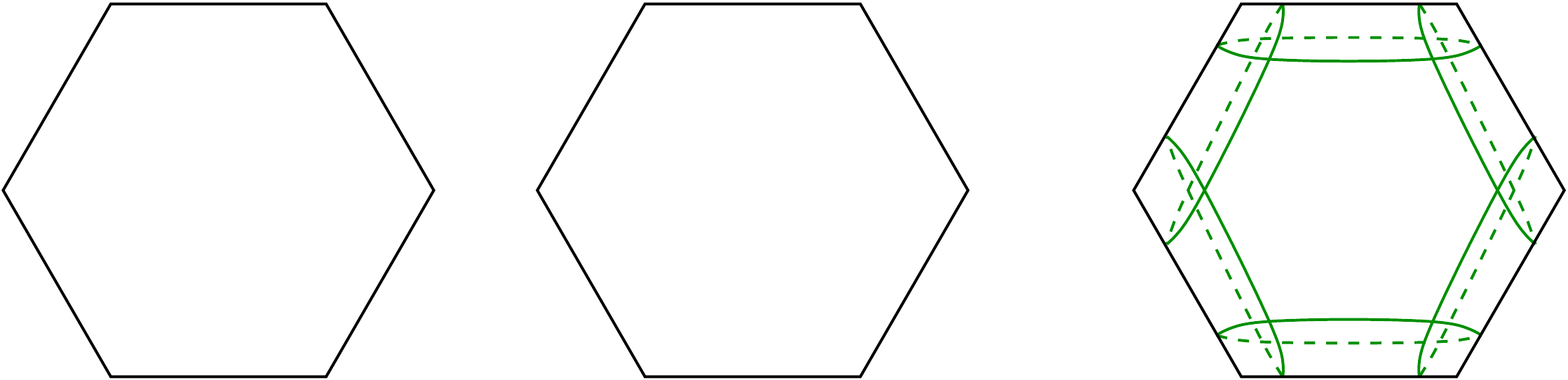}
     \caption{The $n$-gons $H_{1}$ and $H_{2}$ (left), glued together to obtain the surface $S$ (right).}
     \label{Sec2-2Fig3}
 \end{figure}
 
 For $0 \leq i < j <n$, let $b_{i,j}$ be the arc on $S$ joining the $i$-th and $j$-th punctures of $S$ that is identified with the diagonal on $H_{1}$ joining the $i$-th and $j$-th vertices. Similarly, for $0 \leq i <j <n$, let $c_{i,j}$ be the arc on $S$ joining the $i$-th and $j$-th punctures of $S$ that is identified with the diagonal on $H_{2}$ joining the $i$-th and $j$-th vertices. See Figure \ref{fig:bBetacGamma} for an example.
 
 \begin{figure}[ht]
     \labellist
     \pinlabel $b_{1,5}$ [l] at 80 130
     \pinlabel $c_{1,6}$ [b] at 517 135
     \pinlabel $\beta_{1,5}$ [bl] at 885 200
     \pinlabel $\gamma_{1,6}$ [bl] at 760 45
     \endlabellist
     \centering
     \includegraphics[height=35mm]{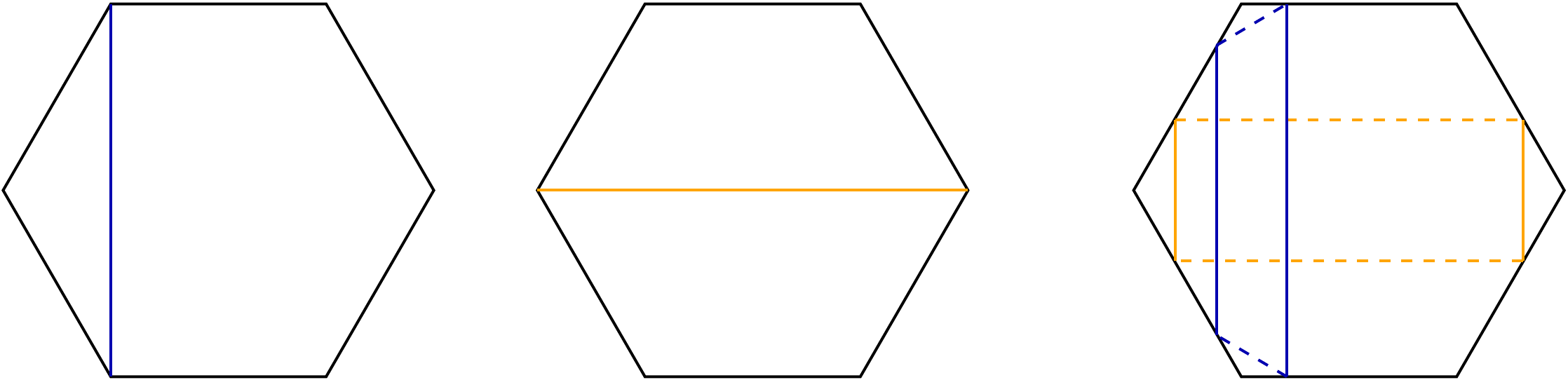}
     \caption{The arcs $b_{1,5}$ and $c_{1,6}$ that induce the curves $\beta_{1,5}$ and $\gamma_{1,6}$ respectively.}
     \label{fig:bBetacGamma}
 \end{figure}
 
 Then, let $\Dfone = \{\beta_{i,j}\}$ be the set of outer curves $\beta_{i,j}$ corresponding to the arc $b_{i,j}$, and let $\Dftwo = \{\gamma_{i,j}\}$ be the set of outer curves $\gamma_{i,j}$ corresponding to the arc $c_{i,j}$.
 
 Finally, we define: $$\Yf{S} \ColonEqq \Of \cup \Dfone \cup \Dftwo.$$
\subsection{Proof of Theorem \ref{TeoA}}\label{subsec2-3} 
 For the proof of Theorem \ref{TeoA}, we first recall some definitions and properties around outer curves on $S$ and half-twists.
 
 Let $\alpha$ be a curve on $S$. We denote by $\eta_{\alpha}$ the (left) half-twist along $\alpha$ (see Figure \ref{fig:ExaHalfTwist}); recall that this is defined if and only if $\alpha$ is an outer curve. Also recall that there is exactly one half-twist along $\alpha$ if $S \ncong S_{0,4}$.
 
 \begin{figure}[ht]
     \labellist
     \pinlabel $\beta$ [bl] at 135 60
     \pinlabel $\alpha$ [bl] at 165 210
     \pinlabel $\mapsto$ at 360 130
     \pinlabel $\eta_{\alpha}(\beta)$ [bl] at 550 160
     \endlabellist
     \centering
     \includegraphics[height=4cm]{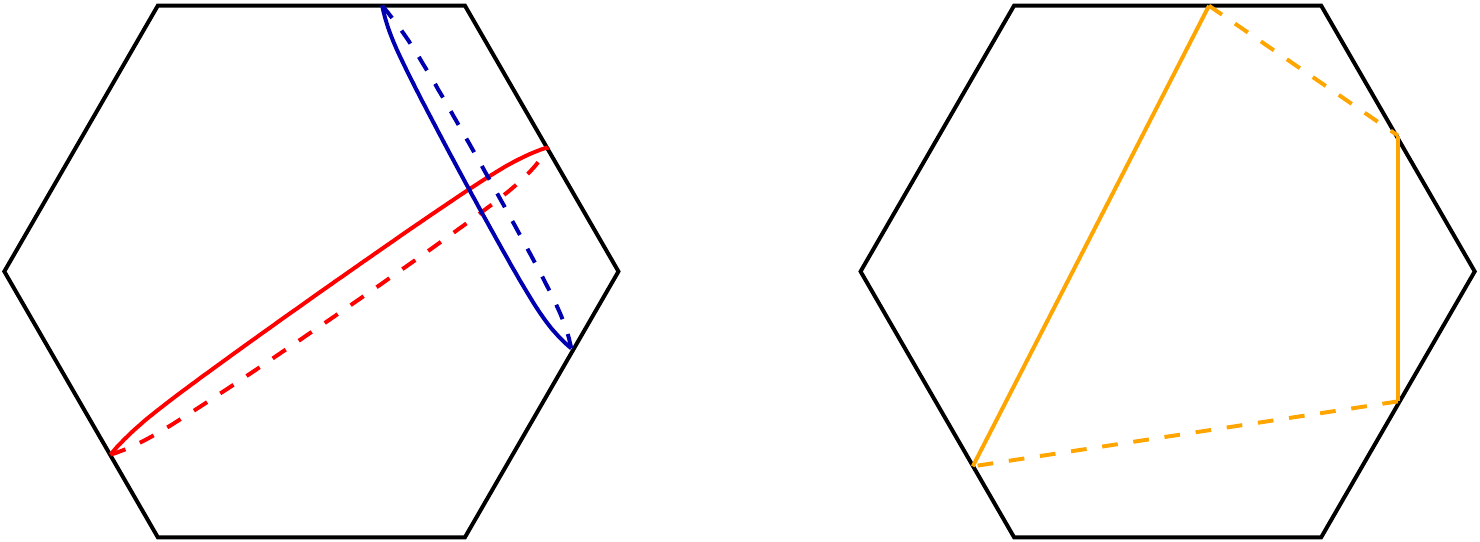}
     \caption{On the left, the outer curve $\alpha$ and the curve $\beta$; on the right, the curve $\eta_{\alpha}(\beta)$.}
     \label{fig:ExaHalfTwist}
 \end{figure}
 
\begin{Rem}\label{RemSym}
 Note that if $\alpha$ and $\beta$ are two outer curves that intersect twice, then we have that $\eta_{\alpha}(\beta) = \eta_{\beta}^{-1}(\alpha)$. This fact can also be found in Proposition 3.9 of \cite{Ara2}.
\end{Rem}
 
 Let $O$ be a set of outer curves and $A$ be a set of curves on $S$. We denote by $\eta_{O}(A)$ the following set $$\eta_{O}(A) \ColonEqq \bigcup_{\veps \in O} \eta_{\veps}(A).$$
 
 Let $\Gf \ColonEqq \{\eta_{\alpha_{i}}^{\pm 1}: \alpha_{i} \in \Of\}$. It is a well-known fact (see Theorem 4.9 in \cite{FarbMar}) that $\Gf$ is a symmetric generating set of $\Mod{S}$. With this, if $A$ is a set of curves, we denote by $\Gf \cdot A$ the following set $\Gf \cdot A \ColonEqq \eta_{\Of}^{\pm 1}(A)$.
 
 Now, similarly to the proofs in \cite{JHH1}, we state a key lemma for the proof of Theorem \ref{TeoA}. The proof of this lemma is discussed and given \textit{after} the proof of Theorem \ref{TeoA} (see Subsections \ref{subsec2-4} and \ref{subsec2-5}).
 
 \begin{Lema}\label{KeyLemag0}
  Let $\Gf$ be as above. Then $\Gf \cdot \Yf{S} \subset \Yf{S}^{3}$.
 \end{Lema}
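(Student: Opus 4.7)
My proof plan is to verify the inclusion by case analysis: for each generator $g = \eta_{\alpha_k}^{\pm 1} \in \Gf$ and each curve $\delta \in \Yf{S} = \Of \cup \Dfone \cup \Dftwo$, I need to exhibit $g(\delta)$ as an element of $\Yf{S}^{3}$, either directly or as a unique determination $\langle A \rangle$ for some set $A \subset V(\Yf{S}^{m})$ with $m \leq 2$. First I would dispose of the trivial cases: if $i(\alpha_k,\delta) = 0$, then $g(\delta) = \delta \in \Yf{S}$. In the remaining cases, $\delta$ is an outer curve intersecting $\alpha_k$; since $\eta_{\alpha_k}^{\pm 1}$ acts on punctures by the transposition $(k,k+1)$ and preserves the class of outer curves, the image $g(\delta)$ is an outer curve whose bounded puncture pair is determined by that transposition. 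The subtle point is to pin down its precise isotopy class, since outer curves in $S_{0,n}$ are not determined by their bounded puncture pair alone.

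I would then use the polygon decomposition $S = H_{1} \cup_{A} H_{2}$ from Section \ref{subsec2-2} to identify $g(\delta)$ concretely through its associated arc: arcs disjoint from $\alpha_k$ are fixed, while arcs incident to or crossing $\alpha_k$ are permuted and twisted in a controlled, computable way. This matches each image to a specific outer curve, which in some cases lies directly in $\Yf{S}$, and in others is a new outer curve winding nontrivially through $H_1$ or $H_2$. For the nontrivial images, the strategy is to exhibit a defining set $A$ with $g(\delta) = \langle A \rangle$. The main technical ingredients should be: first, the symmetry of Remark \ref{RemSym}, which lets me rewrite $\eta_{\alpha_k}(\delta) = \eta_{\delta}^{-1}(\alpha_k)$ whenever $i(\alpha_k,\delta) = 2$, so that the generator is effectively replaced by an element of $\Yf{S}$; and second, the observation that certain outer curves ``adjacent'' to an outer subchain are uniquely determined by that subchain together with a few diagonals, placing them already in $\Yf{S}^{1}$. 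Iterating this, $\Yf{S}^{2}$ becomes rich enough to supply the defining sets that pin down the targets inside $\Yf{S}^{3}$.

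The main obstacle is verifying uniqueness of each candidate defining set: I must rule out every outer curve with the correct puncture pair but an alternative winding simultaneously disjoint from every element of $A$. This requires careful combinatorial arguments using the neighboring outer curves of $\Of$ to eliminate such alternative windings, and I expect the argument to split into subcases according to the position of $\alpha_k$ relative to $\delta$ in the polygon and (as suggested by the section breakdown into \ref{subsec2-lc}, \ref{subsec2-4}, \ref{subsec2-5}, and \ref{subsec2-6}) to separate the small-$n$ cases, where the polygon has few diagonals and the auxiliary sets $\Dfone, \Dftwo$ are sparse, from the generic case where there is more room for the chain argument. The bound of three iterated expansions appears to reflect that the farthest images require first constructing auxiliary outer curves through two stages of expansion before they can themselves be identified as unique determinations in the third.
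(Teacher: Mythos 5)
Your overall architecture matches the paper's: dispose of the disjoint cases trivially, use Remark \ref{RemSym} to halve the work on inverse half-twists, treat the complexity-$2$ case separately, and stage the remaining images through $\Yf{S}^{1}$, $\Yf{S}^{2}$, $\Yf{S}^{3}$ (the paper's proof is organised into exactly the three claims $\eta_{\Of}^{\pm 1}(\Of) \subset \Yf{S}$, $\eta_{\Of}^{\pm 1}(\Dfone) \subset \Yf{S}^{2}$ and $\eta_{\Of}^{\pm 1}(\Dftwo) \subset \Yf{S}^{3}$). But there is a genuine gap at the point you yourself flag as ``the main obstacle'': you propose to compute each image $\eta_{\alpha_{k}}^{\pm 1}(\delta)$ explicitly in the polygon model and then find \emph{and verify} a defining set for that image from scratch. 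With roughly $n$ generators acting on the $O(n^{2})$ curves of $\Dfone \cup \Dftwo$, that is a large family of separate uniqueness verifications, and the proposal offers no mechanism for carrying them out beyond ``careful combinatorial arguments.''

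The missing idea is the equivariance of unique determination: if $\delta = \langle A \rangle$ and $h$ is induced by a homeomorphism, then $h(\delta) = \langle h(A) \rangle$, since $h$ acts as an automorphism of $\ccomp{S}$. This is the engine of the paper's proof. One writes each $\beta_{i,j} \in \Dfone$ \emph{once} as $\langle O \cup D \rangle$ with $O \subset \Of$ and $D \subset \Dftwo$ chosen so that the acting curve $\alpha_{k}$ is disjoint from every element of $D$; then $\eta_{\alpha_{k}}^{\pm 1}(\beta_{i,j}) = \langle \eta_{\alpha_{k}}^{\pm 1}(O) \cup D \rangle$, Claim 1 places $\eta_{\alpha_{k}}^{\pm 1}(O)$ inside $\Yf{S}$, and the image lands in $\Yf{S}^{1}$ with no new uniqueness check at all. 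The same bootstrap yields Claim 3 from Claims 1 and 2, because every $\gamma_{i,j}$ is $\langle O \cup D \rangle$ with $D \subset \Dfone$. Only a handful of exceptional configurations (e.g.\ $\eta_{\alpha_{i}}(\beta_{i,i+2})$, where no determining set avoiding $\alpha_{i}$ exists) require a genuinely new auxiliary curve and a direct picture, and even there the auxiliary curve is itself produced by a rigid expansion so that only one or two configurations per case must be checked by hand. Your plan is not wrong in principle, but as stated it leaves unaddressed the uniqueness verifications that constitute essentially the entire content of the lemma; incorporating the equivariance reduction is what collapses them to the few cases the paper actually draws.
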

 
 \begin{proof}[\textbf{Proof of Theorem \ref{TeoA}:}]
  We divide this proof into two parts: first we prove that all outer curves on $S$ are elements in $\Yf{S}^{\omega}$; then we argue that any other curve on $S$ is uniquely determined by a finite number of outer curves and therefore we obtain the desired result.
  
  \textit{First part:} Let $\beta$ be an outer curve on $S$. Since all outer curves on $S$ have the same topological type, then there exists a element $\alpha_{i} \in \Of$ and a homeomorphism $h \in \Mod{S}$ such that $h(\alpha_{i}) = \beta$. Given that $\Gf$ is a symmetric generating set of $\Mod{S}$, then there exist elements $f_{1}, \ldots, f_{k} \in \Gf$ and powers $n_{1}, \ldots, n_{k} \geq 0$ such that $$f_{1}^{n_{1}} \circ \cdots \circ f_{k}^{n_{k}} (\alpha_{i}) = \beta.$$ By an iterative use of Lemma \ref{KeyLemag0}, this implies that $\beta \in \Yf{S}^{3(n_{1} + \cdots + n_{k})}$. Thus, every outer curve on $S$ is an element of $\Yf{S}^{\omega}$.
  
  \textit{Second part:} Let $\gamma$ be a curve on $S$ that is not an outer curve. Since there are only finitely many topological types of such curves, which depend on how they separate the punctures of $S$, up to homeomorphism $\gamma$ is as in the example on Figure \ref{fig:DetSepCurves-g0}. Then, there exists two (finite) outer chains, namely $A$ and $B$, that uniquely determine $\gamma$ (see Figure \ref{fig:DetSepCurves-g0} for an example). Given that by the first part of this proof $A \cup B \subset \Yf{S}^{k}$ for some $k \geq 0$, we have that $\gamma \in \Yf{S}^{k+1}$. Therefore, every curve on $S$ that is not an outer curve, is also an element of $\Yf{S}^{\omega}$.
  
  \begin{figure}[ht]
      \centering
      \includegraphics[height=4cm]{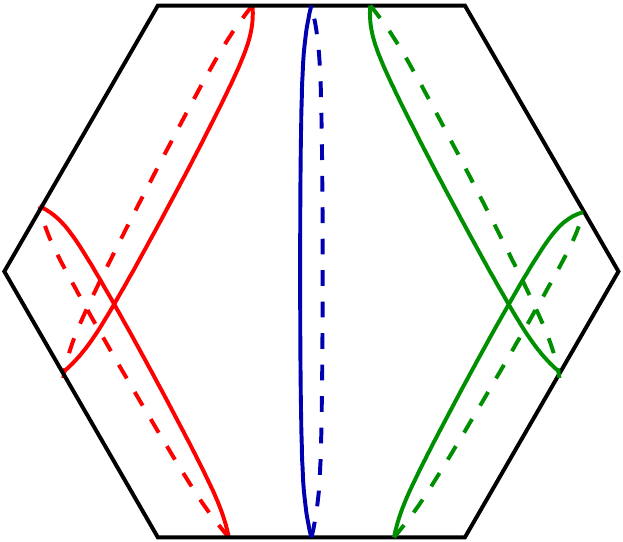}
      \caption{The set $A$ (red curves) and the set $B$ (green curves) uniquely determining the curve $\gamma$ (in blue).}
      \label{fig:DetSepCurves-g0}
  \end{figure}
 \end{proof}
 
 As for the proof of Lemma \ref{KeyLemag0}, we first prove the lemma for the complexity $2$ case in Subsection \ref{subsec2-lc}; then we prove them for complexity at least $3$ in more generality. The reason for this division is that in the complexity $2$ we ``do not have enough space'' for the general method to work.
 
 For the complexity at least $3$ case, we divide the proof into the following claims:\\
 \textbf{Claim 1:} $\eta_{\Of}^{\pm 1}(\Of) \subset \Yf{S}$\newline
 \textbf{Claim 2:} $\eta_{\Of}^{\pm 1}(\Dfone) \subset \Yf{S}^{2}$\newline
 \textbf{Claim 3:} $\eta_{\Of}^{\pm 1}(\Dftwo) \subset \Yf{S}^{3}$
\subsection{Complexity $2$ case}\label{subsec2-lc}
 Let $S$ be a $5$-punctured sphere, $\alpha \in \Of$ and $\delta \in \Yf{S}$. Note that if $\alpha$ and $\delta$ are disjoint, we have that $\eta_{\alpha}^{\pm 1}(\delta) = \delta$. Thus, we focus solely on the cases when they intersect.
 
 We can easily check that $$\eta_{\alpha_{i+1}}(\alpha_{i}) = \gamma_{i,i+2} \in \Yf{S},$$ $$\eta_{\alpha_{i-1}}(\alpha_{i}) = \beta_{i,i+2} \in \Yf{S}.$$
 
 This and Remark \ref{RemSym} imply that $$\eta_{\alpha_{i+1}}^{-1}(\alpha_{i}) = \eta_{\alpha_{i}}(\alpha_{i+1}) \in \Yf{S},$$ $$\eta_{\alpha_{i-1}}^{-1}(\alpha_{i}) = \eta_{\alpha_{i}}(\alpha_{i-1}) \in \Yf{S}.$$
 
 Hence, $\eta_{\Of}^{\pm 1}(\Of) \subset \Yf{S}$.
 
 Since $S$ is a $5$-punctured sphere, then every $\beta \in \Dfone$ is of the form either $\beta_{i,i+2}$ or $\beta_{i,i+3}$ for some $0 \leq i < n$ (with the indices modulo $5$).

 If $\beta = \beta_{i,i+3}$ we have that $$\eta_{\alpha_{i-1}}(\beta_{i,i+3}) = \alpha_{i-2} \in \Yf{S},$$ $$\eta_{\alpha_{i}}(\beta_{i,i+3}) = \langle \{\beta_{i,i+2}, \gamma_{i-1,i+2}\} \rangle \in \Yf{S}^{1}.$$ See Figure \ref{fig:Sec2-4Fig1} for an example.

 \begin{figure}[ht]
     \centering
     \includegraphics[height=4cm]{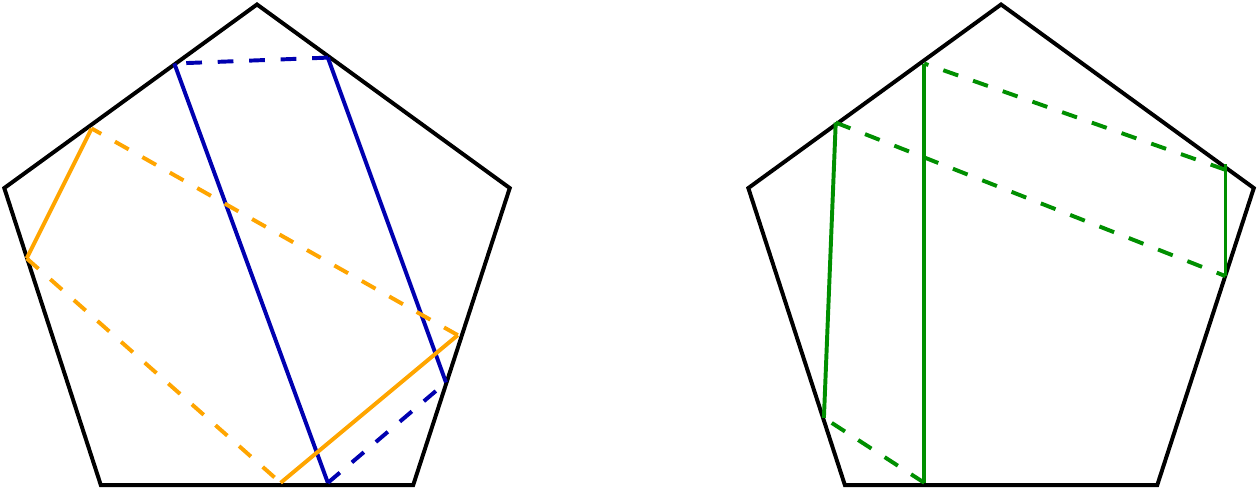}
     \caption{On the left, the curves $\gamma_{i-1,i+2}$ (orange) and $\beta_{i,i+2}$; on the right the curve they uniquely determine, $\eta_{\alpha_{i}}(\beta_{i,i+3})$, in green.}
     \label{fig:Sec2-4Fig1}
 \end{figure}
 
 If $\beta = \beta_{i,i+2}$, on one hand we have that $$\eta_{\alpha_{i-1}}(\beta_{i,i+2}) = \beta_{i-1,i+2} \in \Yf{S}.$$ On the other hand, for $\eta_{\alpha_{i}}(\beta_{i,i+2})$ we need an auxiliary curve, which we define as follows (using indices modulo $5$): $$\delta = \langle \{\beta_{i-1,i+2}, \gamma_{i-1,i+1}\} \rangle.$$ See Figure \ref{fig:Sec2-4Fig2}.
 \begin{figure}[ht]
     \centering
     \includegraphics[height=4cm]{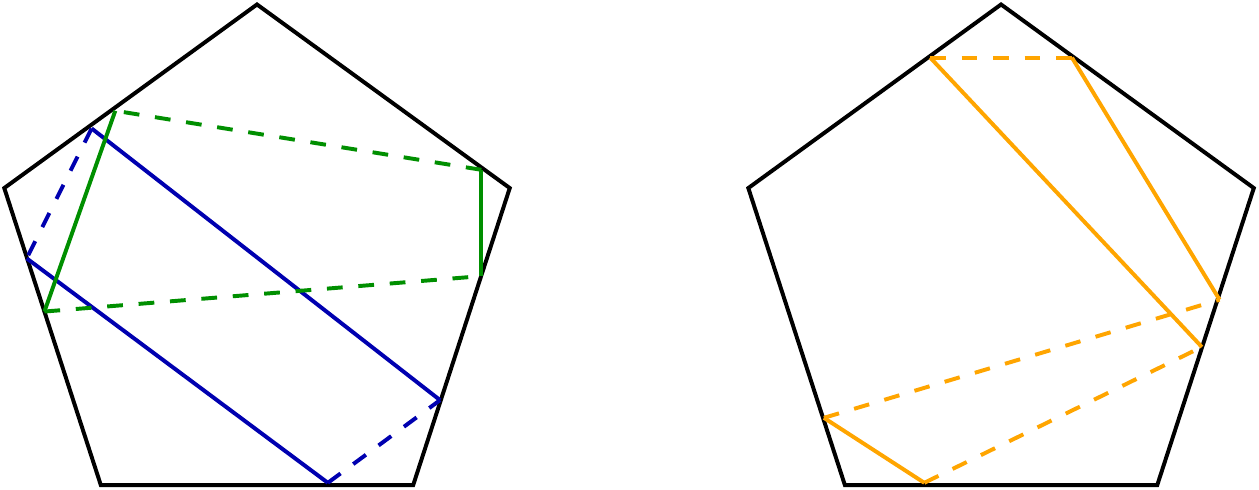}
     \caption{On the left, the curves $\beta_{i-1,i+2}$ (blue) and $\gamma_{i-1,i+1}$ (green); on the right the auxiliary curve $\delta$ (orange) which they uniquely determine.}
     \label{fig:Sec2-4Fig2}
 \end{figure}
 Then, we have that $$\eta_{\alpha_{i}}(\beta_{i,i+2}) = \langle\{\alpha_{i-2}, \delta\}\rangle \in \Yf{S}^{2}.$$ See Figure \ref{fig:Sec2-4Fig3}.
 
 \begin{figure}[ht]
     \centering
     \includegraphics[height=4cm]{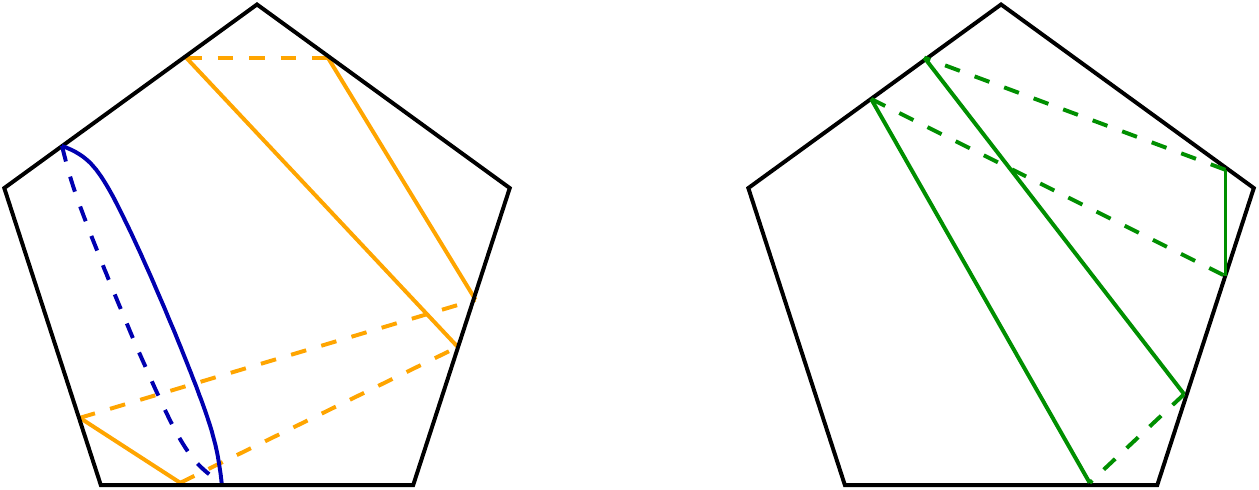}
     \caption{On the left, the curves $\alpha_{i-2}$ (blue) and the auxiliary curve $\delta$ (orange); on the right, $\eta_{\alpha_{i}}(\beta_{i,i+2})$, which they uniquely determine.}
     \label{fig:Sec2-4Fig3}
 \end{figure}
 
 Given that the rest of the cases (with $\alpha_{i+1}$, $\alpha_{i+2}$ and $\alpha_{i+3}$) are analogous to the cases above, we get that $\eta_{\Of}(\Dfone) \subset \Yf{S}^{2}$.
 
 Again, since $S$ is a $5$-punctured sphere, then every $\gamma \in \Dftwo$ is of the form either $\gamma_{i,i+2}$ or $\gamma_{i,i+3}$ for some $0 \leq i < n$ (with the indices modulo $5$).
 
 If $\gamma = \gamma_{i,i+2}$, we have that $$\eta_{\alpha_{i}}(\gamma_{i,i+2}) = \alpha_{i+1} \in \Yf{S},$$ and that the case for $\eta_{\alpha_{i-1}}(\gamma_{i,i+2})$ is analogous to the case for $\eta_{\alpha_{i}}(\beta_{i,i+3})$; hence $$\eta_{\alpha_{i-1}}(\gamma_{i,i+2}) \in \Yf{S}^{1}.$$
 
 If $\gamma = \gamma_{i,i+3}$, we have that $$\eta_{\alpha_{i}}(\gamma_{i,i+3}) = \gamma_{i+1,i+3} \in \Yf{S},$$ and that the case for $\eta_{\alpha_{i-1}}(\gamma_{i,i+3})$ is analogous to the case for $\eta_{\alpha_{i}}(\beta_{i,i+2})$; hence $$\eta_{\alpha_{i-1}}(\gamma_{i,i+3}) \in \Yf{S}^{2}.$$
 
 Analogously to the cases for $\Dfone$, the cases above imply that $\eta_{\Of}(\Dftwo) \subset \Yf{S}^{2}$.
 
 Finally, the cases for $\eta_{\Of}^{-1}(\Dfone)$ and $\eta_{\Of}^{-1}(\Dftwo)$ mirror the cases for $\eta_{\Of}(\Dftwo)$ and $\eta_{\Of}(\Dfone)$, respectively. Therefore, $\eta_{\Of}^{\pm 1}(\Yf{S}) \subset \Yf{S}^{2}$.
 
\subsection{Proof of Claim 1: $\eta_{\Of}^{\pm 1}(\Of) \subset \Yf{S}$}\label{subsec2-4} 
 Let $\alpha_{i}, \alpha_{j} \in \Of$. Note that if $\alpha_{i}$ is disjoint from $\alpha_{j}$, then $\eta_{\alpha_{i}}^{\pm 1}(\alpha_{j}) = \alpha_{j} \in \Yf{S}$. So, we only focus on the cases where $\alpha_{i}$ intersects $\alpha_{j}$, i.e. $|i-j| = 1$.
 
 For the proof of Claim 1, let $0 \leq i < n$, then we have that (using indices modulo $n$): $$\eta_{\alpha_{i+1}}(\alpha_{i}) = \gamma_{i,i+2}.$$
 Also, we have $$\eta_{\alpha_{i+1}}^{-1}(\alpha_{i}) = \beta_{i,i+2}.$$
 
 With this and Remark \ref{RemSym} we have that $\eta_{\Of}^{\pm 1}(\Of) \subset \Yf{S}$.
\subsection{Proof of Claim 2: $\eta_{\Of}^{\pm 1}(\Dfone) \subset \Yf{S}^{2}$}\label{subsec2-5}
 Let $\beta_{i,j} \in \Dfone$ and $\alpha_{k} \in \Of$. Note that if $\alpha_{k}$ is disjoint from $\beta_{i,j}$ then $\eta_{\alpha_{k}}^{\pm 1}(\beta_{i,j}) = \beta_{i,j} \in \Yf{S}$. So, we only focus on the cases where $\alpha_{k}$ intersects $\beta_{i,j}$.
 
 Now, we divide the proof of this claim into three cases according to the triple $(i,j,k)$:
 
 \textit{First case:} We have the following subcases:
 \begin{enumerate}
     \item[\textit{(i)}] Suppose $j \neq i +2$ and $i \neq j+2$ (both modulo $n$).
     \item[\textit{(ii)}] Suppose $j = i+2$ and $k \neq i, i+1$.
     \item[\textit{(iii)}] Suppose $i = j+2$ and $k \neq j,j+1$.
 \end{enumerate}
 
 Then, there exists non-empty sets $O \subset \Of$ and $D \subset \Dftwo$ such that $\beta_{i,j} = \langle O \cup D \rangle$ and $\alpha_{k}$ is disjoint from every element in $D$. Thus, $\eta_{\alpha_{k}}^{\pm 1}(\beta_{i,j}) = \langle \eta_{\alpha_{k}}^{\pm 1}(O) \cup \eta_{\alpha_{k}}^{\pm 1}(D) \rangle = \langle \eta_{\alpha_{k}}^{\pm 1}(O) \cup D\rangle$. From this, by Claim 1, we have that $\eta_{\alpha_{k}}^{\pm 1}(\beta_{i,j}) \in \Yf{S}^{1}$.
 
 The existence of these sets for each subcase is inferred from the following examples (see Figure \ref{fig:Sec2-6Fig1y2}):
 
 \begin{enumerate}
  \item[\textit{(i)}] If $j \neq i+2$ and $i \neq j+2$ (both modulo $n$), and if $k=i-1$ or $k = i$, we set $O = \{\alpha_{i+1}, \ldots, \alpha_{j-2}\} \cup \{\alpha_{j+1}, \ldots, \alpha_{i-2}\}$ and $D = \{\gamma_{j-1,j+1}\}$. The case for $k = j-1$ or $k = j$ is analogous.
  \item[\textit{(ii)}] If $j = i+2$ and $k \neq i, i+1$, we set $O = \{\alpha_{j+1}, \ldots, \alpha_{i-2}\}$ and $D = \{\gamma_{i-1,i+1}\}$.
  \item[\textit{(iii)}] The case for $i = j+2$ and $k \neq j,j+1$ is analogous to the subcase \textit{(ii)}.
 \end{enumerate}
 
 \begin{figure}[ht]
     \labellist
     \pinlabel $i$ [bl] at 230 250
     \pinlabel $k$ [br] at 70 250
     \pinlabel $j$ [r] at 70 0
     \endlabellist
     \centering
     \includegraphics[height=4cm]{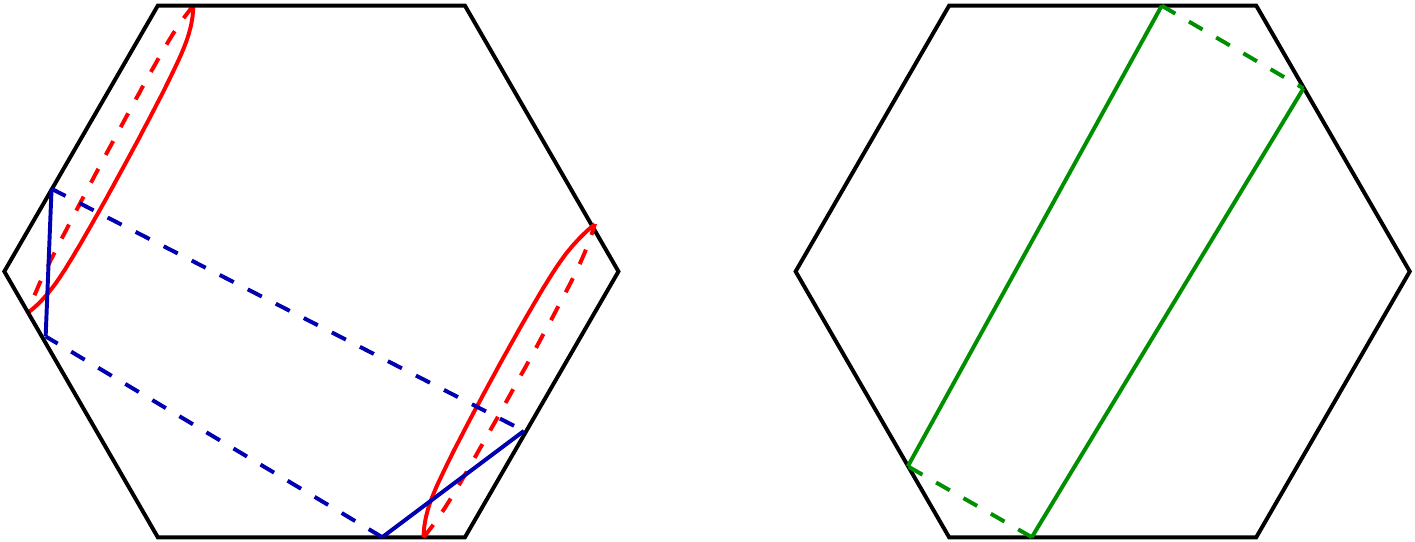}\\[5mm]
     \labellist
     \pinlabel $i$ [bl] at 230 250
     \pinlabel $j$ [l] at 225 0
     \endlabellist
     \includegraphics[height=4cm]{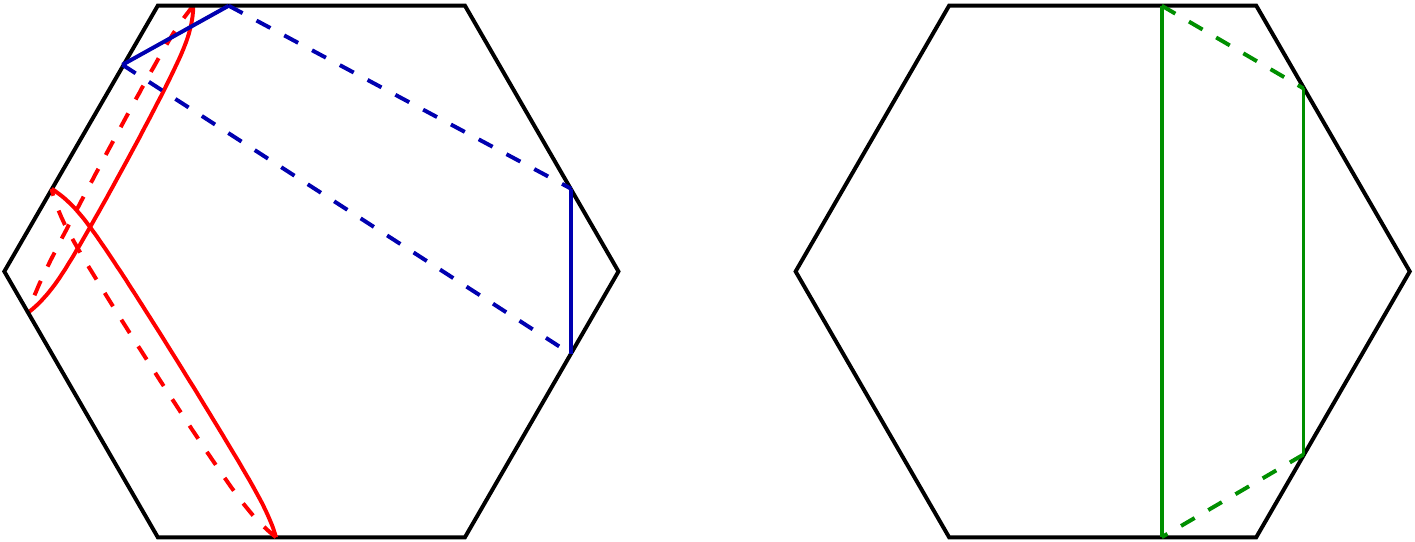}
     \caption{On top, the curves corresponding to the subcase $(i)$ and the curve $\beta_{i,j}$; on the bottom, the curves corresponding to the subcase $(ii)$. On the right, the curves $\beta_{i,j}$ they uniquely determine.}
     \label{fig:Sec2-6Fig1y2}
 \end{figure}
 
 \textit{Second case:} Suppose $j = i+2$ and $k = i$ (the case $i = j+2$ and $k =j$ is completely analogous). Then $\eta_{\alpha_{i}}^{-1}(\beta_{i,i+2}) = \alpha_{i+1}$.
 
 For $\eta_{\alpha_{i}}(\beta_{i,i+2})$ we need an auxiliary curve, which we define as follows (recall the indices are modulo $n$): $$\delta \ColonEqq \langle \{\alpha_{i+4}, \ldots \alpha_{i-2}\} \cup \{\beta_{i+2,i+4}\} \cup \{\gamma_{i-1,i+1}\}\rangle.$$ See Figure \ref{fig:Sec2-6Fig3y4} for an example.
 
 Then, we have that $$\eta_{\alpha_{i}}(\beta_{i,i+2}) = \langle \{\alpha_{i+3}, \ldots, \alpha_{i-2}\} \cup \{\delta\}\rangle \in \Yf{S}^{2}.$$ See Figure \ref{fig:Sec2-6Fig3y4} for an example.
 
 \begin{figure}[ht]
     \labellist
     \pinlabel $i=k$ [bl] at 230 250
     \pinlabel $j$ [l] at 225 5
     \pinlabel $\delta$ [bl] at 625 220
     \endlabellist
     \centering
     \includegraphics[height=4cm]{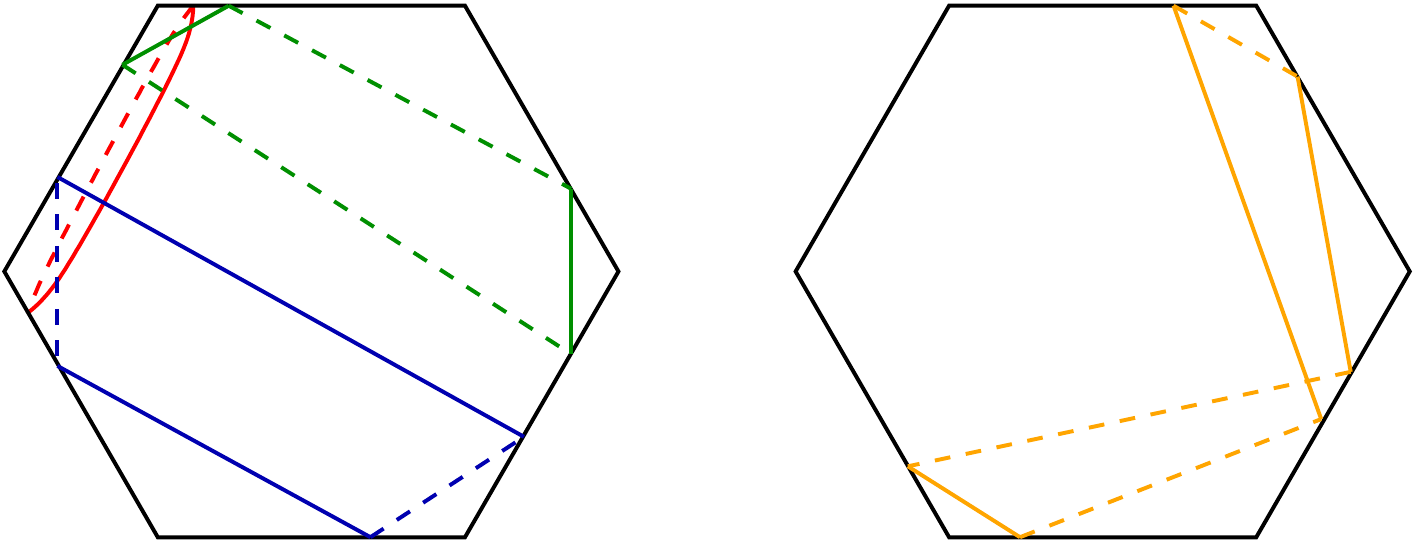}\\[5mm]
     \labellist
     \pinlabel $i$ [bl] at 230 245
     \pinlabel {$\eta_{\alpha_{i}}(\beta_{i,i+2})$} [l] at 660 170
     \endlabellist
     \includegraphics[height=4cm]{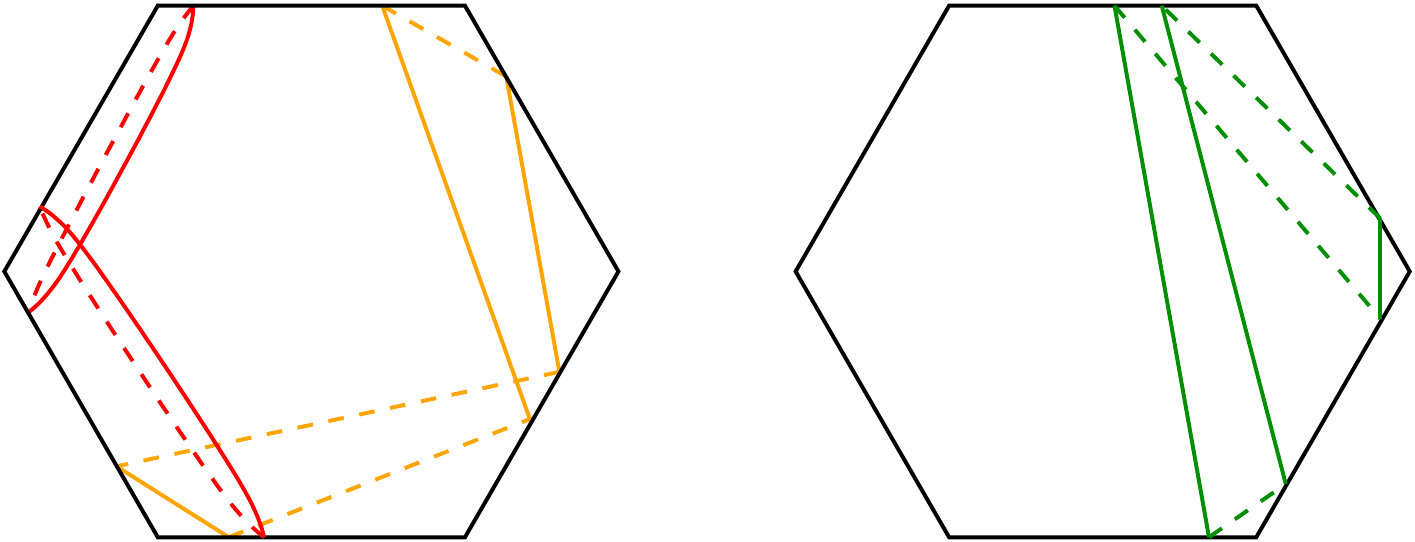}
     \caption{On top, the curves needed to uniquely determine the auxiliary curve $\delta$. On the bottom, the curves needed to uniquely determine $\eta_{\alpha_{i}}(\beta_{i,i+2})$ in the second case.}
     \label{fig:Sec2-6Fig3y4}
 \end{figure}
 
 \textit{Third case:} Suppose $j = i+2$ and $k = i+1$ (the case $i = j+2$ and $k =j+1$ is completely analogous). Then $\eta_{\alpha_{i+1}}(\beta_{i,i+2}) = \alpha_{i}$.
 
 Now, for $\eta_{\alpha_{i+1}}^{-1}(\beta_{i,i+2})$ we need again an auxiliary curve, which we define as follows (recall the indices are modulo $n$): $$\delta \ColonEqq \langle \{\alpha_{i+3}, \ldots, \alpha_{i-3}\} \cup \{\beta_{i-2,i}\} \cup \{\gamma_{i+1,i+3}\}\rangle.$$ See Figure \ref{fig:Sec2-6Fig5y6} for an example.
 
 Then, we have that $$\eta_{\alpha_{i+1}}^{-1}(\beta_{i,i+2}) = \langle \{\alpha_{i+3}, \ldots, \alpha_{i-2}\} \cup \{\delta\}\rangle \in \Yf{S}^{2}.$$ See Figure \ref{fig:Sec2-6Fig5y6} for an example.
 
 \begin{figure}[ht]
     \labellist
     \pinlabel $i$ [l] at 300 130
     \pinlabel $\delta$ [bl] at 625 220
     \endlabellist
     \centering
     \includegraphics[height=4cm]{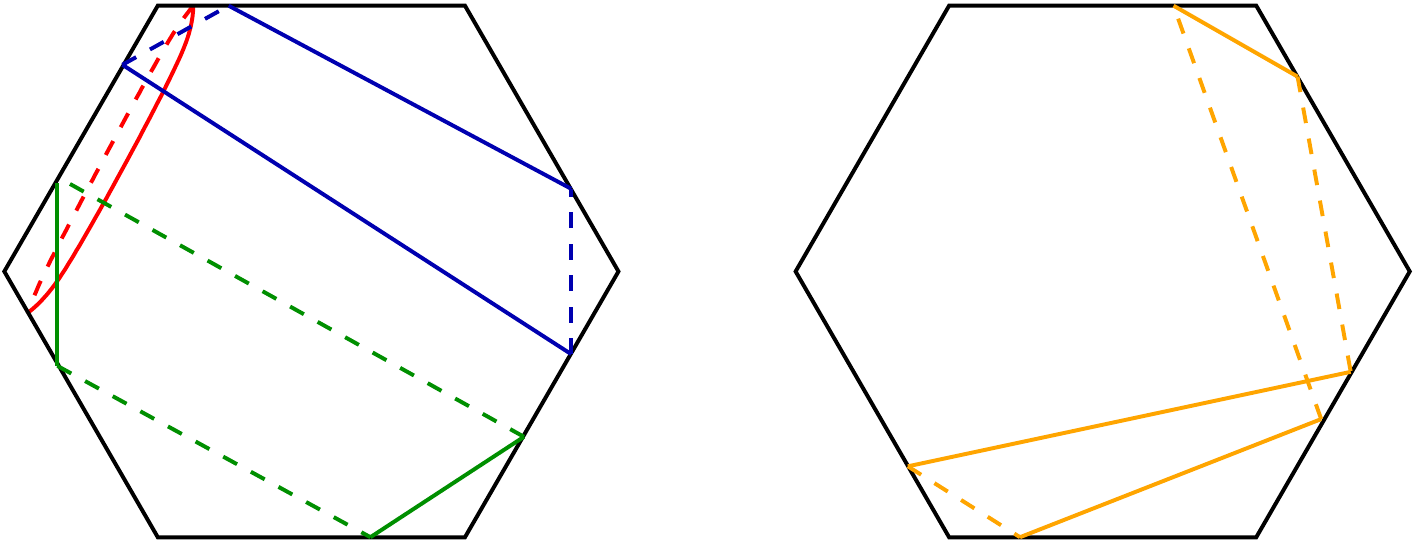}\\[5mm]
     \labellist
     \pinlabel $i$ [l] at 300 130
     \pinlabel {$\eta_{\alpha_{i+1}}^{-1}(\beta_{i,i+2})$} [l] at 660 170
     \endlabellist
     \includegraphics[height=4cm]{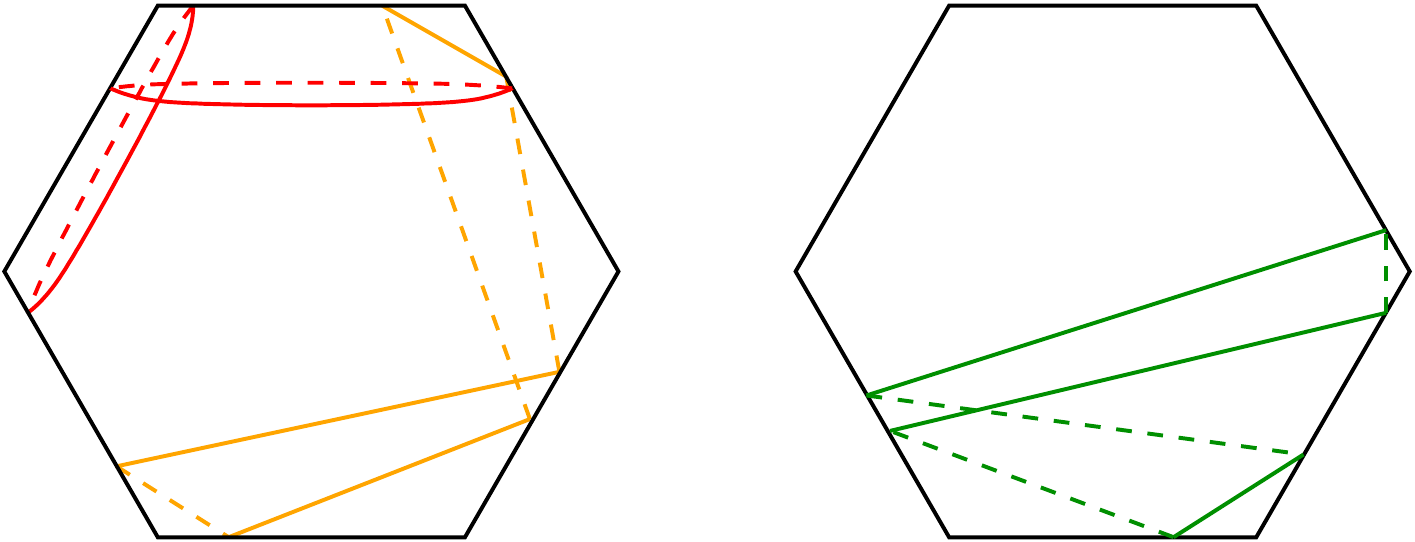}
     \caption{On top, the curves needed to uniquely determine the auxiliary curve $\delta$. On the bottom, the curves needed to uniquely determine $\eta_{\alpha_{i+1}}^{-1}(\beta_{i,i+2})$ in the second case.}
     \label{fig:Sec2-6Fig5y6}
 \end{figure}
 
 Thus, $\eta_{\Of}(\Dfone) \subset \Yf{S}^{2}$.
\subsection{Proof of Claim 3: $\eta_{\Of}^{\pm 1}(\Dftwo) \subset \Yf{S}^{3}$}\label{subsec2-6} 
 The proof of this claim follows from Claim 1 and Claim 2: If $\gamma \in \Dftwo$, then there exist sets $O \subset \Of$ and $D \subset \Dfone$ such that $\gamma = \langle O \cup D \rangle$. Thus, for all $\alpha_{i} \in \Of$, $\eta_{\alpha_{i}}^{\pm 1}(\gamma) = \langle \eta_{\alpha_{i}}^{\pm 1}(O) \cup \eta_{\alpha_{i}}^{\pm 1}(D)\rangle$. By Claims 1 and 2, this implies that $\eta_{\alpha_{i}}(\gamma) \in \Yf{S}^{3}$.
 
 The existence of these sets can be easily inferred from the following examples (see Figure \ref{fig:Sec2-7Fig1y2}):
 
 \begin{itemize}
  \item If $j \neq i+2$ and $i \neq j+2$ (both modulo $n$), we set $O = \{\alpha_{i+1}, \ldots, \alpha_{j-2}\} \cup \{\alpha_{j+1}, \ldots, \alpha_{i-2}\}$ and $D = \{\beta_{i+1,j+1}\}$.
  \item If $j = i+2$ (modulo $n$), we set $O = \{\alpha_{j+1}, \ldots, \alpha_{i-2}\}$ and $D = \{\beta_{i+1,j+1}\}$.
 \end{itemize}
 
 \begin{figure}[ht]
     \centering
     \labellist
     \pinlabel $i$ [bl] at 225 250
     \pinlabel $j$ [r] at 75 0
     \endlabellist
     \includegraphics[height=4cm]{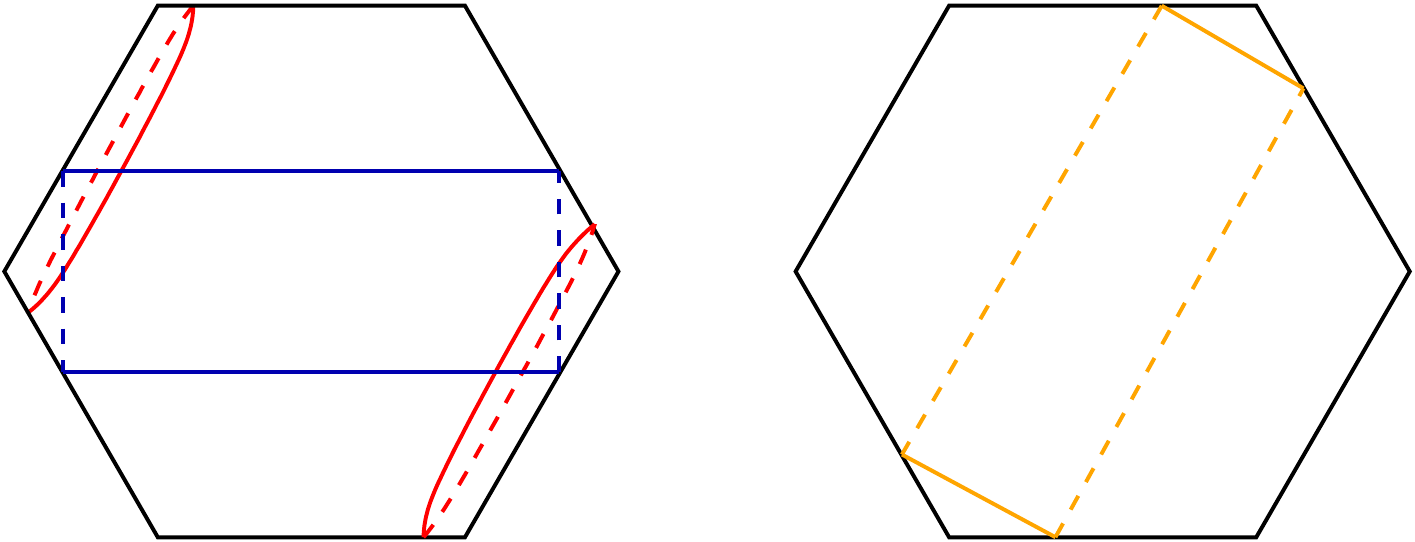}\\[5mm]
     \labellist
     \pinlabel $i$ [bl] at 225 250
     \pinlabel $j$ [l] at 220 0
     \endlabellist
     \includegraphics[height=4cm]{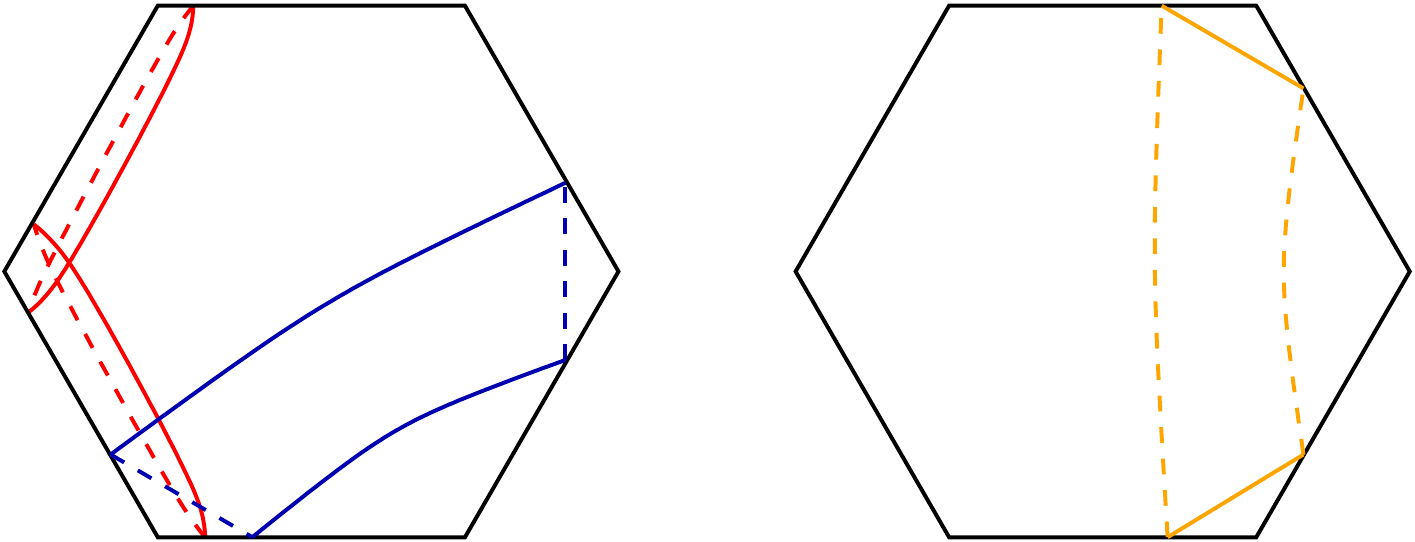}
     \caption{On the left, the sets $O$ (red) and $D$ (blue); on the right, the curve $\gamma_{i,j}$ they uniquely determine.}
     \label{fig:Sec2-7Fig1y2}
 \end{figure}
 
 The rest of the cases are analogous to one of the above.
\subsection{Proof of Theorem \ref{TeoB}}\label{subsec2-7}
 For the purposes of this work, we define the set $\Xf{S}$ as the set $\mathfrak{X}_{A}$ from Section 4 in \cite{Ara2}, which was proved to be rigid (see Theorem 4.2 in \cite{Ara2}).
 
 According to the notation above, we can define $\Xf{S}$ as follows: $$\Xf{S} \ColonEqq \Of^{1} \cup \Gf\cdot\Of.$$
 
 \begin{proof}[\textbf{Proof of Theorem \ref{TeoB}}]
  To prove Theorem \ref{TeoB} we only need to prove that $\Yf{S} \subset \Xf{S}^{1}$, since this would imply that (by Theorem \ref{TeoA}) $\ccomp{S} = \Yf{S}^{\omega} \subset \Xf{S}^{\omega}$.
  
  For $\gamma \in \Of$, it is obvious that $\gamma \in \Xf{S}$. Thus, we suppose that $\gamma \in \Dfone \cup \Dftwo$.
  
  If $\gamma \in \Dfone$, there exists $0 \leq i < j <n$ such that $\gamma = \beta_{i,j}$. Thus we have two possible cases:
  \begin{enumerate}
   \item Suppose that $j \neq i+2$ (modulo $n$). Then (see Figure \ref{fig:Sec2-8Fig1}) $$\beta_{i,j} = \langle \{\alpha_{i+1}, \ldots, \alpha_{j-2}\} \cup \{\alpha_{j+1}, \ldots \alpha_{i-2}\} \cup \{\eta_{\alpha_{i}}(\alpha_{i-1})\}\rangle.$$
   \item Suppose that $j = i+2$ (modulo $n$). Then $\beta_{i,j} = \eta_{\alpha_{i+1}}^{-1}(\alpha_{i})$. 
  \end{enumerate}
 
 \begin{figure}[ht]
     \centering
     \labellist
     \pinlabel $i$ [bl] at 225 250
     \pinlabel $j$ [r] at 75 0
     \endlabellist
     \includegraphics[height=4cm]{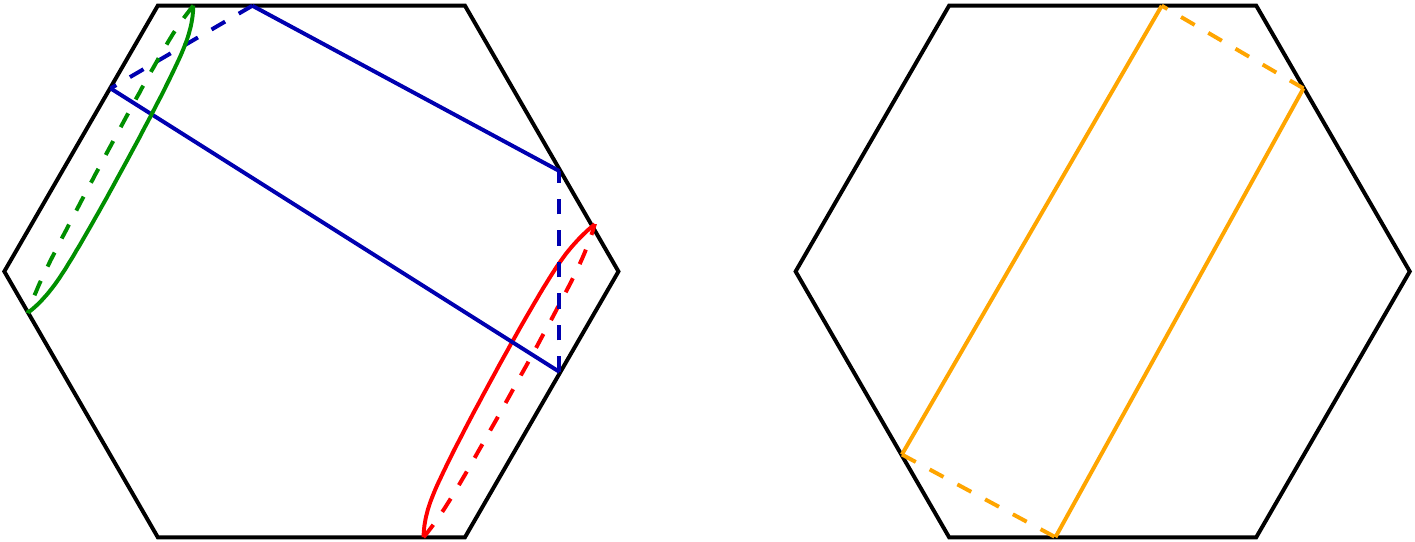}
     \caption{On the left, the curves needed to uniquely determine the curve $\beta_{i,j}$ on the right.}
     \label{fig:Sec2-8Fig1}
 \end{figure}
 
 If $\gamma \in \Dftwo$, there exists $0 \leq i < j <n$ such that $\gamma = \gamma_{i,j}$. Thus we have two possible cases, which are analogous to (1) and (2) above, simply substituting $\eta_{\alpha_{i}}(\alpha_{i-1})$ for $\eta_{\alpha_{i}}^{-1}(\alpha_{i-1})$ in case (1) and $\eta_{\alpha_{i+1}}^{-1}(\alpha_{i})$ for $\eta_{\alpha_{i+1}}(\alpha_{i})$ in case (2).
 
 Then, $\Yf{S} \subset \Xf{S}^{1}$, and therefore $\Xf{S}^{\omega} = \ccomp{S}$.
 \end{proof}
\section{Genus one case}\label{sec3}
 Given that $\ccomp{S_{1,2}} \cong \ccomp{S_{0,5}}$, Subsections \ref{subsec2-lc} and \ref{subsec2-7} already prove Theorems \ref{TeoB} and \ref{TeoA} for the twice-punctured torus. So, in this section we assume that $S = S_{1,n}$ with $n \geq 3$ (so that $\kappa(S) \geq 3$), unless otherwise stated.
 
 The structure of this section is as follows: In Section \ref{subsec3-1} we define $\Cf$ and $\Df$ which are the basis upon which we construct $\Yf{S}$; in Section \ref{subsec3-2} we define the set of auxiliary curves $\Af$, and we also define $\Yf{S}$; in Section \ref{subsec3-3} we give the proof of Theorem \ref{TeoA} pending the proof of a key lemma (Lemma \ref{KeyLemag1}); in Subsections \ref{subsec3-4}, \ref{subsec3-5}, \ref{subsec3-6} and \ref{subsec3-7}, we give the proof of the key lemma; finally, in Section \ref{subsec3-8} we recall from \cite{Ara2} the definition of $\Xf{S}$ and prove Theorem \ref{TeoB}.
\subsection{The basis of $\Yf{S}$}\label{subsec3-1}  
 Let $\Cf \ColonEqq \{\alpha_{0}^{0}, \ldots, \alpha_{0}^{n-1}, \alpha_{1}\}$ be a set of curves such that $\alpha_{0}^{i}$ is disjoint from $\alpha_{0}^{j}$ if $i \neq j$, and $i(\alpha_{1},\alpha_{0}^{i}) = 1$ for all $0 \leq i <n$. See Figure \ref{fig:Sec3-1Fig1}.
 
 Using $\Cf$, we number the punctures in $S$ in the following way: the $i$-th puncture is in the annulus bounded by $\alpha_{0}^{i-1}$ and $\alpha_{0}^{i}$. In Figure \ref{fig:Sec3-1Fig1}, this means that the $i$-th puncture is on the left of $\alpha_{0}^{i}$.
 
 \begin{figure}[ht]
     \centering
     \labellist
     \pinlabel $1$ [b] at 107 140
     \pinlabel $2$ [b] at 206 140
     \pinlabel $3$ [b] at 305 140
     \pinlabel $\alpha_{1}$ [l] at 405 70
     \pinlabel $\alpha_{0}^{0}$ [t] at 50 7
     \pinlabel $\alpha_{0}^{1}$ [t] at 149 7
     \pinlabel $\alpha_{0}^{2}$ [t] at 249 7
     \endlabellist
     \includegraphics[height=4cm]{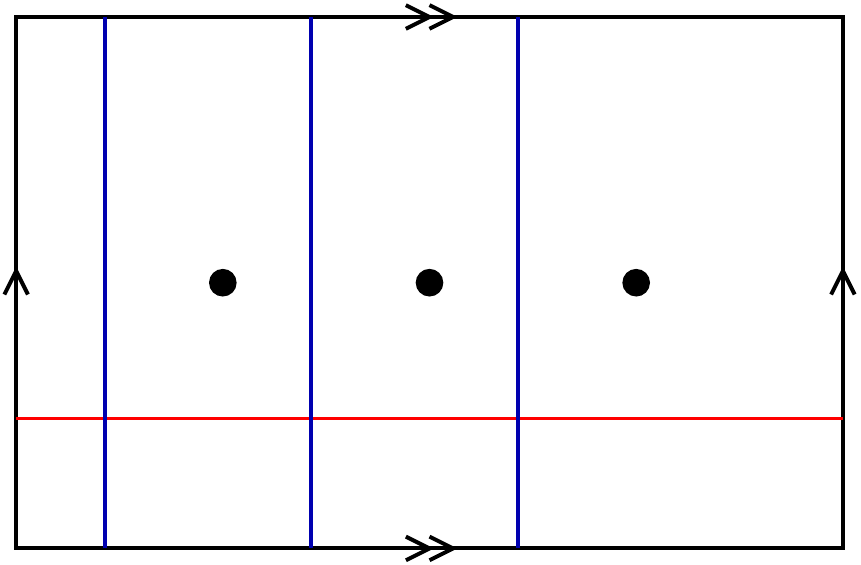}
     \caption{The curve $\alpha_{1}$ in red, and the curves $\alpha_{0}^{0}, \alpha_{0}^{1}, \alpha_{0}^{2}$ in blue.}
     \label{fig:Sec3-1Fig1}
 \end{figure}
 
 With this, we can define for each $0 \leq i <n$ the following curve (see Figure \ref{fig:Sec3-1Fig2}): $$\beta_{i} \ColonEqq \langle \Cf \backslash \{\alpha_{0}^{i}\}\rangle.$$
 
 \begin{figure}[ht]
     \centering
     \labellist
     \pinlabel $\beta_{1}$ [b] at 705 165
     \endlabellist
     \includegraphics[height=4cm]{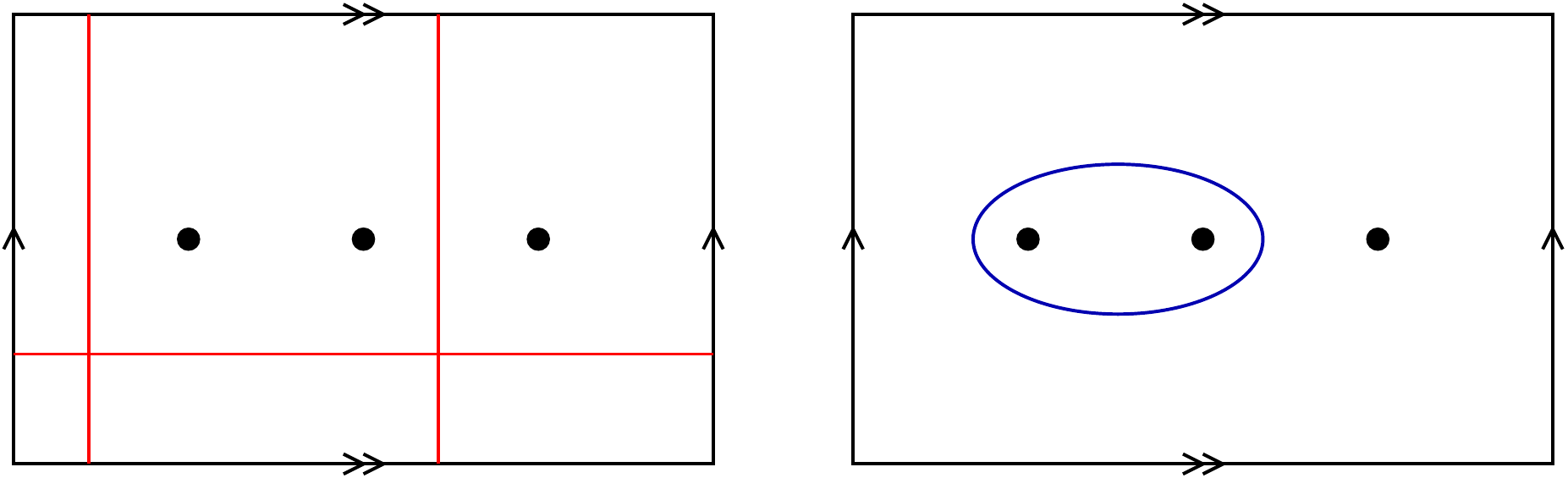}
     \caption{On the left, the curves needed to uniquely determine the curve $\beta_{1}$ on the right.}
     \label{fig:Sec3-1Fig2}
 \end{figure}
 
 Now, we define $\Df \ColonEqq \{\beta_{i}\}_{i = 0}^{n-1}$. Note that $\Df$ is a closed outer chain.
 
 Ideally we would like $\Yf{S}$ to be equal to the union of $\Cf$ and $\Df$; however, $(\Cf \cup \Df)^{2} = (\Cf \cup \Df)^{1}$. So, we need some auxiliary curves.
\subsection{Auxiliary curves}\label{subsec3-2}
 To define the auxiliary curves, first we must recall (from Subsection \ref{subsec2-3}) that if $\alpha$ is an outer curve on $S$, then $\eta_{\alpha}$ denotes the half-twist along $\alpha$.
 
 For each $0 \leq i <n$, we now define the following curves (with indices modulo $n$): $$\veps_{i}^{+} \ColonEqq \eta_{\beta_{i}}(\beta_{i-1}) \hspace{2cm} \veps_{i}^{-} \ColonEqq \eta_{\beta_{i}}^{-1}(\beta_{i-1}).$$
 
 See Figure \ref{fig:Sec3-2Fig1}.
 
 \begin{figure}[ht]
     \centering
     \labellist
     \pinlabel $i$ [tr] at 100 130
     \pinlabel $i$ [br] at 580 143
     \pinlabel $\veps_{i}^{+}$ [t] at 260 130
     \pinlabel $\veps_{i}^{-}$ [b] at 740 140
     \endlabellist
     \includegraphics[height=4cm]{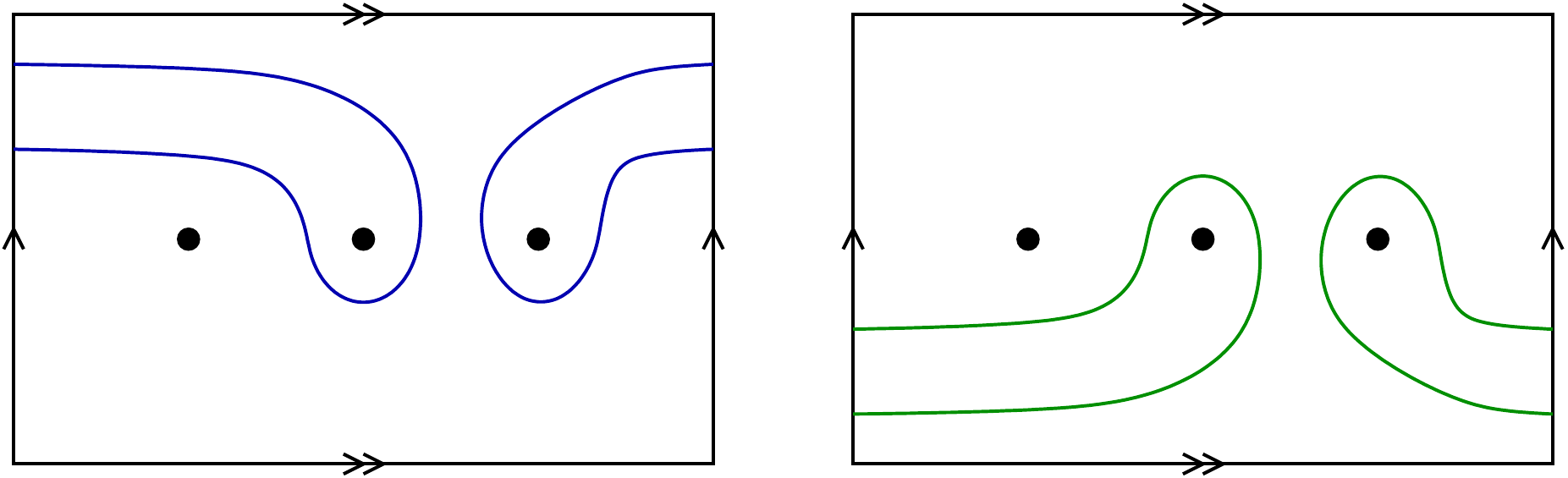}
     \caption{On the left the curve $\veps_{i}^{+}$; on the right the curve $\veps_{i}^{-}$.}
     \label{fig:Sec3-2Fig1}
 \end{figure}
 
 Thus, we can define the sets $\Af^{+} \ColonEqq \{\veps_{i}^{+}\}_{i = 0}^{n-1}$, $\Af^{-} \ColonEqq \{\veps_{i}^{-}\}_{i = 0}^{n-1}$, and $\Af \ColonEqq \Af^{+} \cup \Af^{-}$.
 
 Finally, we define $$\Yf{S} \ColonEqq \Cf \cup \Df \cup \Af.$$
\subsection{Proof of Theorem \ref{TeoA}}\label{subsec3-3}
 For the proof of Theorem \ref{TeoA}, we first recall some definitions and properties of Dehn twists along curves on $S$.
 
 Let $\alpha$ be a curve on $S$; we denote by $\tau_{\alpha}$ the (left) Dehn twist along $\alpha$. Similarly to Remark \ref{RemSym}, we have the following property.
 
 \begin{Rem}\label{RemSymtau}
  Note that if $\alpha$ and $\beta$ are curves that intersect once, then we have that $\tau_{\alpha}(\beta) = \tau_{\beta}^{-1}(\alpha)$. Once again, this fact can also be found in Proposition 3.9 in \cite{Ara2}.
 \end{Rem}
 
 Similarly to the notation around half-twists, if $C$ and $A$ are sets of curves on $S$, we denote by $\tau_{C}(A)$ the following set $$\tau_{C}(A) = \bigcup_{\gamma \in C} \tau_{\gamma}(A).$$
 
 Now, let $\Gf \ColonEqq \{\tau_{\alpha}^{\pm 1}: \alpha \in \Cf\} \cup \{\eta_{\beta}^{\pm 1}: \beta \in \Df\}$. It is a well-known fact (Theorem 4.14 in \cite{FarbMar}) that $\Gf$ is a symmetric generating set of $\Mod{S}$ (often called the Humphries-Lickorish generating set). As in the previous section if $A$ is a set of curves on $S$, we denote by $\Gf \cdot A$ the following set $\Gf \cdot A \ColonEqq \tau_{\Cf}^{\pm 1}(A) \cup \eta_{\Df}^{\pm 1}(A)$.
 
 As in the previous section, we state a key lemma for the proof of Theorem \ref{TeoA}, leaving its proof to the following subsections (see Subsections \ref{subsec3-4}, \ref{subsec3-5}, \ref{subsec3-6} and \ref{subsec3-7}).
 
 \begin{Lema}\label{KeyLemag1}
  Let $\Gf$ be as above. Then $\Gf \cdot \Yf{S} \subset \Yf{S}^{6}$.
 \end{Lema}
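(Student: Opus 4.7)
The plan is to parallel the genus-zero argument of Subsections \ref{subsec2-3}--\ref{subsec2-6}, proving the inclusion one layer of $\Yf{S} = \Cf \cup \Df \cup \Af$ at a time. As in that case, whenever a generator $f \in \Gf$ is disjoint from a target curve $\gamma$ the image equals $\gamma$, so only the intersecting pairs require analysis. The backbone of the argument is the functoriality of unique determination: if $\gamma = \langle A \rangle$ and $f$ is a mapping class, then $f(\gamma) = \langle f(A)\rangle$, which lets us propagate bounds one rigid expansion at a time. The final bound of $6$ will arise from stacking three such propagations, corresponding to the three layers $\Cf$, $\Df$ (built from $\Cf$ via $\beta_i = \langle \Cf \setminus\{\alpha_0^i\}\rangle$), and $\Af$ (built from $\Df$ via $\veps_i^{\pm} = \eta_{\beta_i}^{\pm 1}(\beta_{i-1})$), each stage costing a bounded number of expansions.

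First I would establish $\Gf \cdot \Cf \subset \Yf{S}^{k_1}$ for some small $k_1$. The nontrivial cases are $\tau_{\alpha_0^i}^{\pm 1}(\alpha_1)$, $\tau_{\alpha_1}^{\pm 1}(\alpha_0^i)$, and the half-twists $\eta_{\beta_j}^{\pm 1}$ acting on the elements of $\Cf$ they meet; by Remark \ref{RemSymtau} each Dehn-twist pair collapses to a single calculation, and by direct inspection each resulting curve is uniquely determined by a finite subset of $\Cf \cup \Df$ (with the possible assistance of some $\veps_k^{\pm}$), placing it in $\Yf{S}^{1}$ or $\Yf{S}^{2}$. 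Next I would treat $\Gf \cdot \Df$: since $\beta_i = \langle \Cf \setminus\{\alpha_0^i\}\rangle$, applying $f \in \Gf$ gives $f(\beta_i) = \langle f(\Cf \setminus\{\alpha_0^i\})\rangle$, and the previous step yields $f(\Cf \setminus\{\alpha_0^i\}) \subset \Yf{S}^{k_1}$, so $f(\beta_i) \in \Yf{S}^{k_1+1}$. It is worth noting that the ``worst'' images $\eta_{\beta_i}^{\pm 1}(\beta_{i-1})$ are already in $\Yf{S}$ by the very definition of $\Af$, keeping the counting tight.

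Finally, $\Gf \cdot \Af$ is handled the same way: for each $\veps_i^{\pm}$ one identifies a determining set $A_i^{\pm} \subset \Cf \cup \Df$ (possibly augmented by one auxiliary uniquely-determined curve) whose images under $f$ are controlled by the two previous steps, so $f(\veps_i^{\pm}) = \langle f(A_i^{\pm})\rangle$ sits at one further expansion level. Combining the three layers produces the overall bound $\Yf{S}^{6}$.

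The main obstacle will be the pairs where the generator and target intersect so heavily that no obvious subset of $\Yf{S}$ is simultaneously disjoint from the image. Specifically, $\tau_{\alpha_1}^{\pm 1}$ meets every $\alpha_0^i$ and most $\beta_j$, and the half-twists $\eta_{\beta_i}^{\pm 1}$ acting on $\veps_j^{\pm}$ at close indices (say $j \in \{i-1, i, i+1\}$) create configurations that resist a direct disjoint-support reduction. For those cases I would introduce auxiliary uniquely-determined curves produced by one or two preliminary expansion steps, in the spirit of the auxiliary $\delta$ used in Subsections \ref{subsec2-lc} and \ref{subsec2-5}, and then determine the image by combining them with suitable elements of $\Cf \cup \Df$. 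Each such auxiliary curve costs one additional expansion, which is precisely what pushes the genus-one bound from the naive $3$ of Lemma \ref{KeyLemag0} up to the $6$ stated here. Verifying the individual topological pictures and keeping careful track of the expansion level at each stage will be the routine but tedious heart of the proof.
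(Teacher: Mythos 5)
Your plan matches the paper's proof in all essentials: the paper likewise splits $\Gf\cdot\Yf{S}$ into claims according to the layers $\Cf\cup\Df$ and $\Af$, dismisses disjoint pairs, halves the work with Remarks \ref{RemSym} and \ref{RemSymtau}, and propagates bounds through $f(\langle A\rangle)=\langle f(A)\rangle$ together with auxiliary uniquely determined curves (the $\gamma_i^{\pm}$), arriving at the exponent $6$ by exactly the stacking you describe. The only discrepancy is harmless: $\alpha_1$ is in fact disjoint from every $\beta_j$ and every $\veps_j^{\pm}$, so the ``hard'' cases you attribute to $\tau_{\alpha_1}^{\pm1}$ reduce via Remark \ref{RemSymtau} to $\tau_{\alpha_0^i}^{\pm1}(\alpha_1)$, which the paper handles by the bootstrap $\alpha_1=\langle\Df\rangle$ (the reverse of your $\beta_i=\langle\Cf\setminus\{\alpha_0^i\}\rangle$ direction).
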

 
 \begin{proof}[\textbf{Proof of Theorem \ref{TeoA}}]
  We divide the proof into two parts: first we prove that all outer curves and non-separating curves on $S$ are elements in $\Yf{S}^{\omega}$; then we argue that any curve on $S$ is uniquely determined by a finite set of non-separating curves and outer curves, thus obtaining the desired result.
  
  \textit{First part:} Let $\gamma$ be either an outer curve or a non-separating curve on $S$. Recall that all outer curves on $S$ have the same topological type, and it is the same case for all non-separating curves. Then, there exists a curve $\delta \in \Yf{S}$ and a homeomorphism $h \in \Mod{S}$ such that $h(\delta) = \gamma$. Since $\Gf$ is a symmetric generating set of $\Mod{S}$, there exist elements $f_{1}, \ldots, f_{k} \in \Gf$ and powers $n_{1}, \ldots, n_{k} \geq 0$ such that $$f_{1}^{n_{1}} \circ \cdots \circ f_{k}^{n_{k}}(\delta) = \gamma.$$ By an iterated use of Lemma \ref{KeyLemag1}, this implies that $\gamma \in \Yf{S}^{6(n_{1} + \cdots + n_{k})}$. Thus, every outer curve and every non-separating curve on $S$ is an element of $\Yf{S}^{\omega}$.
  
  \textit{Second part:} Let $\gamma$ be a separating curve on $S$ that is not an outer curve. The topological type of $\gamma$ is then determined the punctures it bounds. Thus, up to homeomorphism, $\gamma$ is as in the example on Figure \ref{fig:Sec3-3Fig1} (right). Then, there exist finite sets $N$ and $O$ of non-separating and outer curves respectively, that uniquely determine $\gamma$. See Figure \ref{fig:Sec3-3Fig1} for an example. The first part of this proof implies that there exists $k \geq 0$ such that $N \cup O \subset \Yf{S}^{k}$. Thus, $\gamma \in \Yf{S}^{k+1}$ and therefore every curve on $S$ that is neither an outer curve nor a non-separating curve, is also an element of $\Yf{S}^{\omega}$.
  
  \begin{figure}[ht]
      \centering
      \includegraphics[height=4cm]{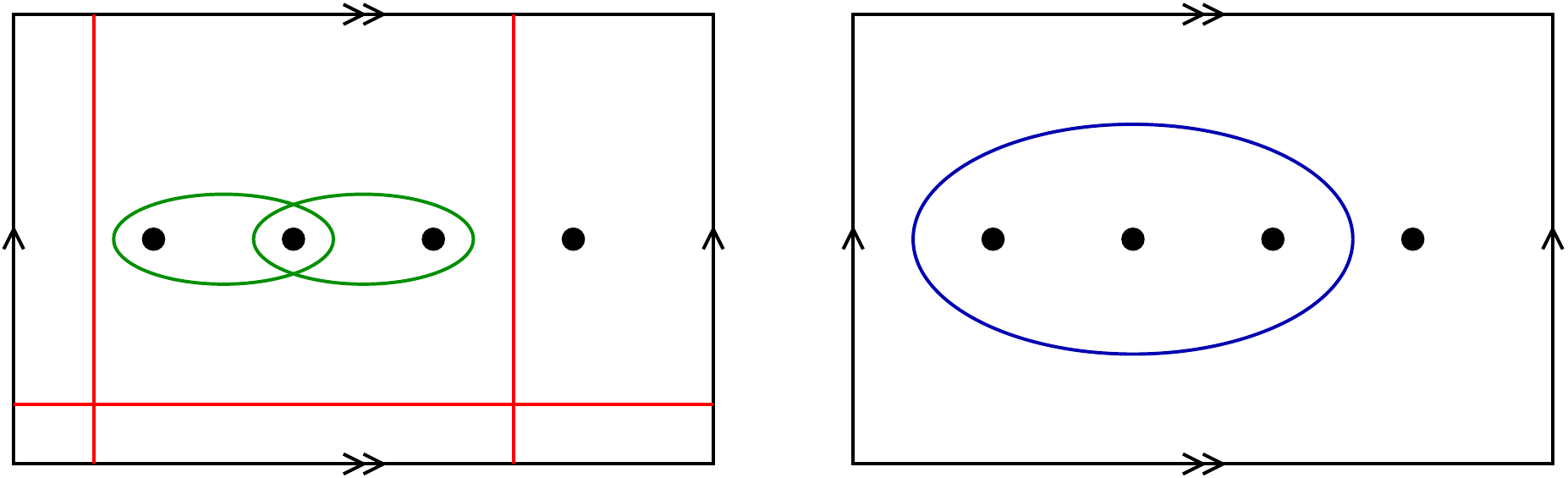}
      \caption{On the left, the set $N$ (red) and the set $O$ (green); on the right the non-outer separating curve they uniquely determine.}
      \label{fig:Sec3-3Fig1}
  \end{figure}
 \end{proof}
 
 Similarly to the previous section, 
 we prove Lemma \ref{KeyLemag1} by proving the following claims:\\[0.3cm]
 \textbf{Claim 1:} $\tau_{\Cf}^{\pm 1}(\Cf \cup \Df) \subset \Yf{S}^{3}$.\newline
 \textbf{Claim 2:} $\tau_{\Cf}^{\pm 1}(\Af) \subset \Yf{S}^{6}$.\newline
 \textbf{Claim 3:} $\eta_{\Df}^{\pm 1}(\Cf \cup \Df) \subset \Yf{S}^{1}$.\newline
 \textbf{Claim 4:} $\eta_{\Df}^{\pm 1}(\Af) \subset \Yf{S}^{3}$.
\subsection{Proof of Claim 1: $\tau_{\Cf}^{\pm 1}(\Cf \cup \Df) \subset \Yf{S}^{3}$}\label{subsec3-4}
 We divide this proof into two parts:
 
 \textbf{First part} $\tau_{\Cf}^{\pm 1}(\Df) \subset \Yf{S}^{2}$\textbf{:} Note that $\tau_{\alpha}(\beta) = \beta$ if $\alpha$ and $\beta$ are disjoint from each other. So, we only need to prove that $\tau_{\alpha_{0}^{i}}^{\pm 1}(\beta_{i}) \in \Yf{S}^{2}$; to prove this we need some auxiliary curves. 
 
 For each $0 \leq i <n$ we define the following curves (recall the subindices are modulo $n$) $$\gamma_{i}^{+} \ColonEqq \langle\{\beta_{i+1}, \ldots, \beta_{i-2}\} \cup \{\veps_{i}^{+}\} \cup \{\alpha_{1}\}\rangle \in \Yf{S}^{1},$$ $$\gamma_{i}^{-} \ColonEqq \langle \{\beta_{i+1}, \ldots, \beta_{i-2}\} \cup \{\veps_{i}^{-}\} \cup \{\alpha_{1}\}\rangle \in \Yf{S}^{1}.$$
 
 See Figure \ref{fig:Sec3-4Fig1y2} for examples.
 
 \begin{figure}[ht]
     \centering
     \labellist
     \pinlabel $i$ [t] at 206 127
     \pinlabel $i$ [t] at 683 127
     \pinlabel $\gamma_{i}^{+}$ [l] at 884 65
     \endlabellist
     \includegraphics[height=4cm]{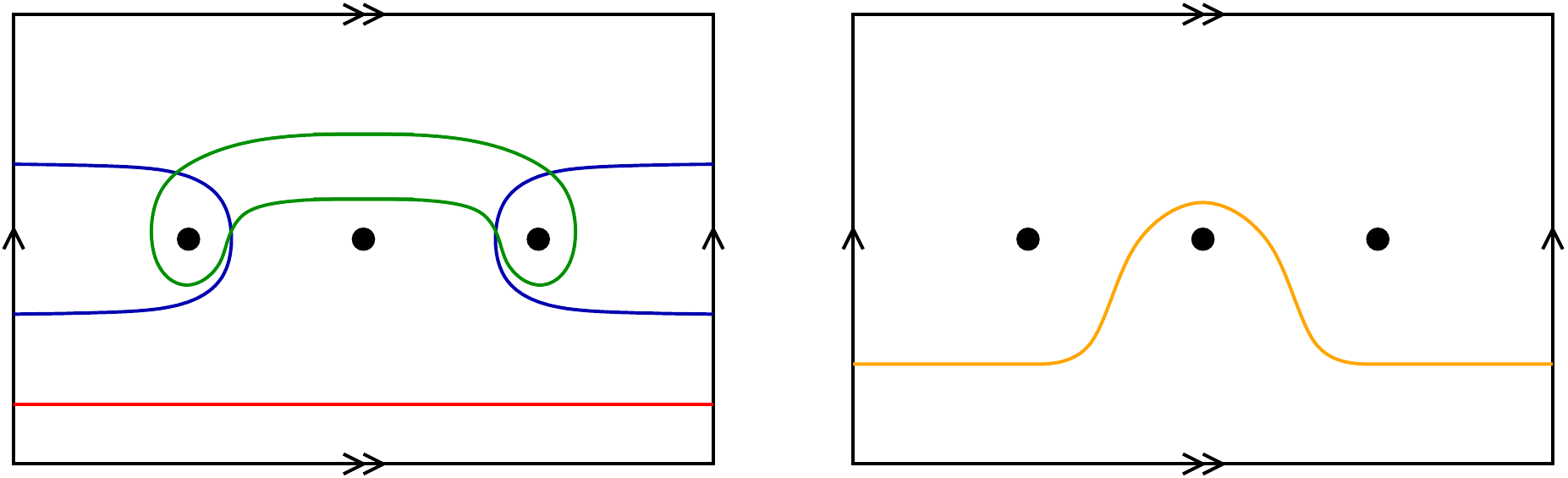}
     \labellist
     \pinlabel $i$ [b] at 206 147
     \pinlabel $i$ [b] at 683 147
     \pinlabel $\gamma_{i}^{-}$ [l] at 884 195
     \endlabellist
     \includegraphics[height=4cm]{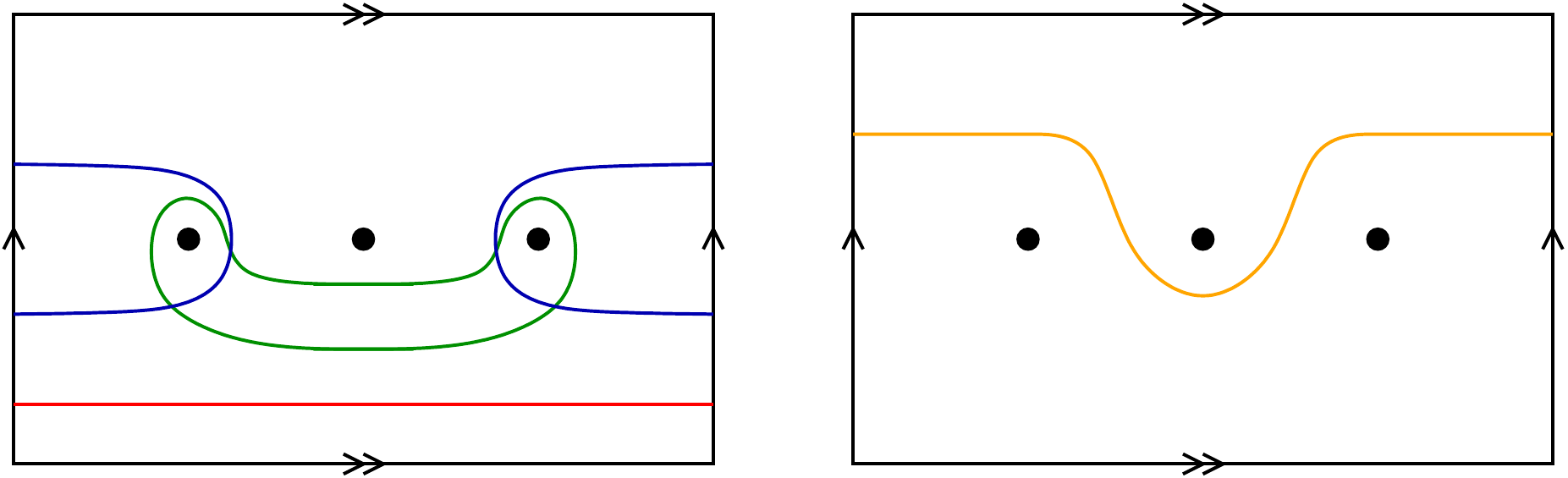}
     \caption{On the left, the curves needed to uniquely determine the curves $\gamma_{i}^{+}$ and $\gamma_{i}^{-}$.}
     \label{fig:Sec3-4Fig1y2}
 \end{figure}
 
 Now, we have the following (see Figure \ref{fig:Sec3-4Fig3y4}):
 $$\tau_{\alpha_{0}^{i}}(\beta_{i}) = \langle \{\gamma_{i}^{-},\gamma_{i+1}^{+}\} \cup \{\alpha_{0}^{i+1}, \ldots, \alpha_{0}^{i-1}\}\rangle \in \Yf{S}^{2},$$ $$\tau_{\alpha_{0}^{i}}^{-1}(\beta_{i}) = \langle \{\gamma_{i}^{+}, \gamma_{i+1}^{-}\} \cup \{\alpha_{0}^{i+1}, \ldots, \alpha_{0}^{i-1}\}\rangle \in \Yf{S}^{2}.$$
 
 \begin{figure}[ht]
     \centering
     \labellist
     \pinlabel $i$ [b] at 206 144
     \pinlabel $\tau_{\alpha_{0}^{i}}(\beta_{i})$ [b] at 782 145
     \endlabellist
     \includegraphics[height=4cm]{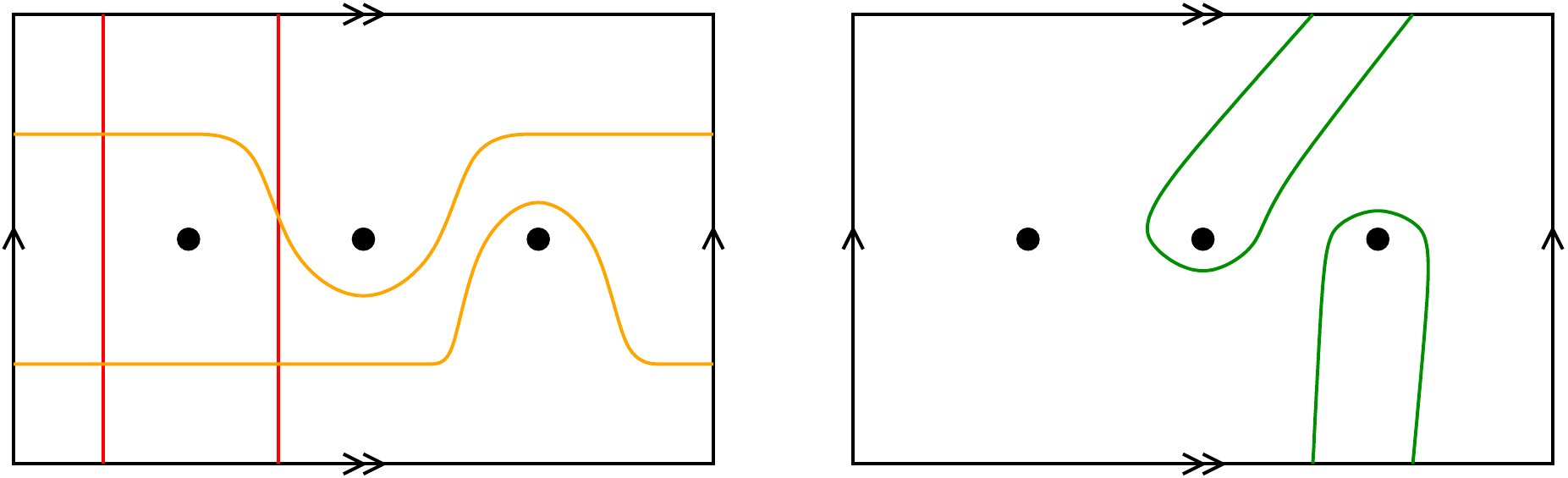}\\[5mm]
     \labellist
     \pinlabel $i$ [t] at 206 124
     \pinlabel $\tau_{\alpha_{0}^{i}}^{-1}(\beta_{i})$ [t] at 782 125
     \endlabellist
     \includegraphics[height=4cm]{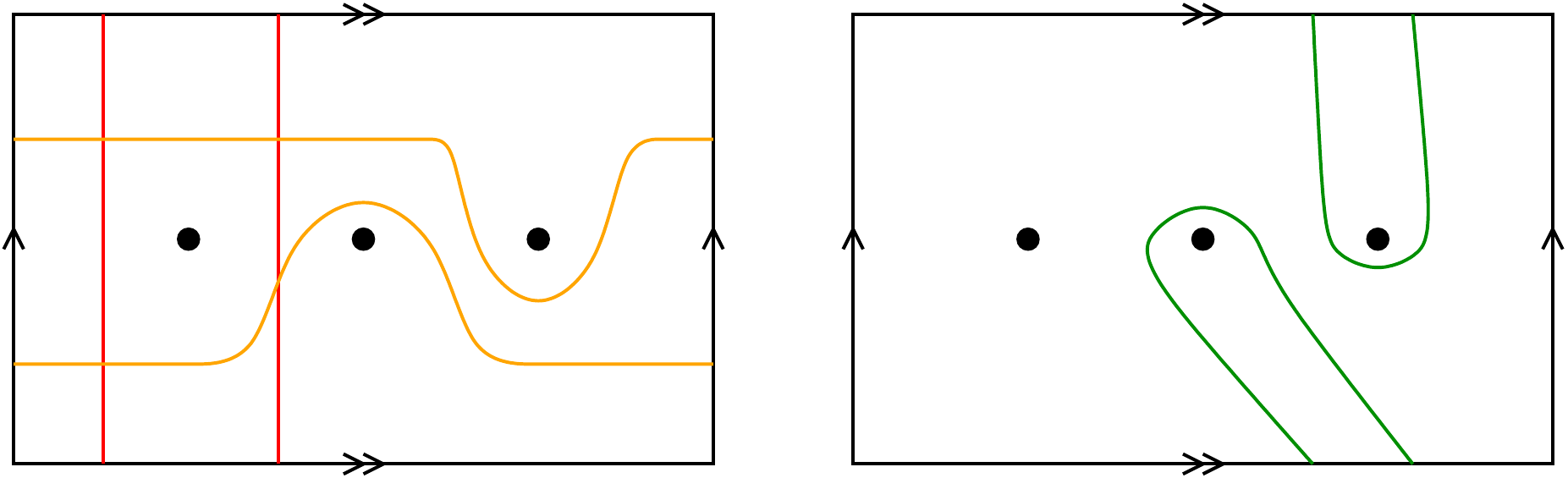}
     \caption{On the left, the curves needed to uniquely determine the curves on the right $\tau_{\alpha_{0}^{i}}(\beta_{i})$ (top) and $\tau_{\alpha_{0}^{i}}^{-1}(\beta_{i})$ (bottom).}
     \label{fig:Sec3-4Fig3y4}
 \end{figure}
 
 Thus, $\tau_{\Cf}^{\pm 1}(\Df) \subset \Yf{S}^{2}$.
 
 \textbf{Second part} $\tau_{\Cf}^{\pm 1}(\Cf) \subset \Yf{S}^{3}$\textbf{:} Recalling Remark \ref{RemSymtau} and that $\tau_{\alpha}(\beta) = \beta$ if $\alpha$ and $\beta$ are disjoint from each other, we only need to prove that $\tau_{\alpha_{0}^{i}}^{\pm1}(\alpha_{1}) \in \Yf{S}^{3}$ for each $0 \leq i <n$. 
 
 Note that $\alpha_{1} = \langle \Df \rangle$ (see Figure \ref{fig:Sec3-4Fig5}), thus $\tau_{\alpha_{0}^{i}}^{\pm1}(\alpha_{1}) = \langle \tau_{\alpha_{0}^{i}}^{\pm1}(\Df)\rangle$. This implies that $\tau_{\alpha_{0}^{i}}^{\pm1}(\alpha_{1}) \in \Yf{S}^{3}$ for each $0 \leq i <n$.
 
 \begin{figure}[ht]
     \centering
     \includegraphics[height=4cm]{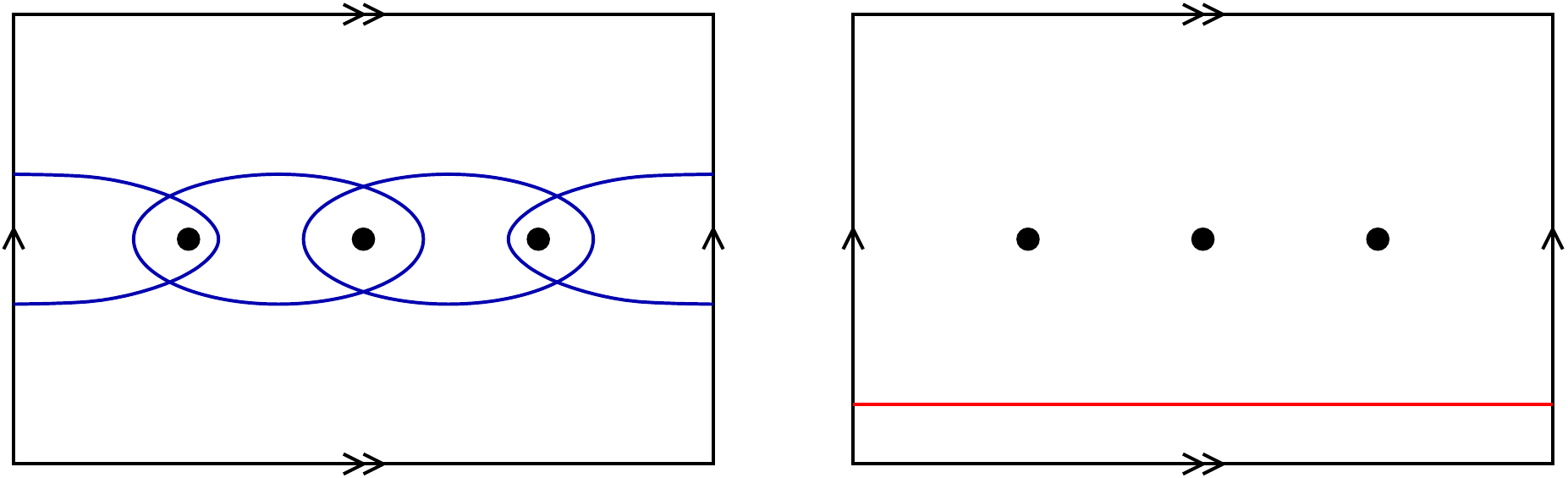}
     \caption{On the left, the set $\Df$; on the right the curve $\alpha_{1}$.}
     \label{fig:Sec3-4Fig5}
 \end{figure}
 
 Therefore, $\tau_{\Cf}^{\pm 1}(\Cf \cup \Df) \subset \Yf{S}^{3}$.
\subsection{Proof of Claim 2: $\tau_{\Cf}^{\pm 1}(\Af) \subset \Yf{S}^{6}$}\label{subsec3-5}
 As was done in the previous subsection, we only need to prove that for each $0 \leq i <n$ we have that $\tau_{\alpha_{0}^{i-1}}^{\pm 1}(\veps_{i}^{\pm}), \tau_{\alpha_{0}^{i}}^{\pm 1}(\veps_{i}^{\pm}) \in \Yf{S}^{6}$, since any other curve in $\Cf$ is disjoint from both $\veps_{i}^{+}$ and $\veps_{i}^{-}$.
 
 To do this, we divide the proof into two parts, according to the number of punctures in $S$.
 
 \textbf{First part, $n =3$:} Since we do not have enough punctures for this part, we uniquely determine $\veps_{i}^{\pm}$ by using the curves $\gamma_{i}^{\pm}$ from Subsection \ref{subsec3-4} twice.
 
 In the case for $\alpha_{0}^{i-1}$ recall and check that:
 $$\gamma_{i+1}^{\pm} := \langle \{\beta_{i+2}\} \cup \{\veps_{i+1}^{\pm}\} \cup \{\alpha_{1}\} \rangle, \text{ (see Figure \ref{fig:Sec3-4Fig1y2})}$$
 $$\gamma_{i}^{\pm} = \langle \{\beta_{i+1}\} \cup \{\alpha_{1}\} \cup \{\gamma_{i+1}^{\mp}\} \rangle, \text{ (see Figure \ref{fig:Sec3-5Fig1y2} top)}$$
 $$\veps_{i}^{\pm} = \langle \{\alpha_{0}^{i-2}\} \cup \{\alpha_{1}\} \cup \{\gamma_{i}^{\pm}\} \rangle. \text{ (see Figure \ref{fig:Sec3-5Fig1y2} bottom)}$$
 
 \begin{figure}[ht]
     \centering
     \labellist
     \pinlabel $i$ [tr] at 200 130
     \endlabellist
     \includegraphics[height=4cm]{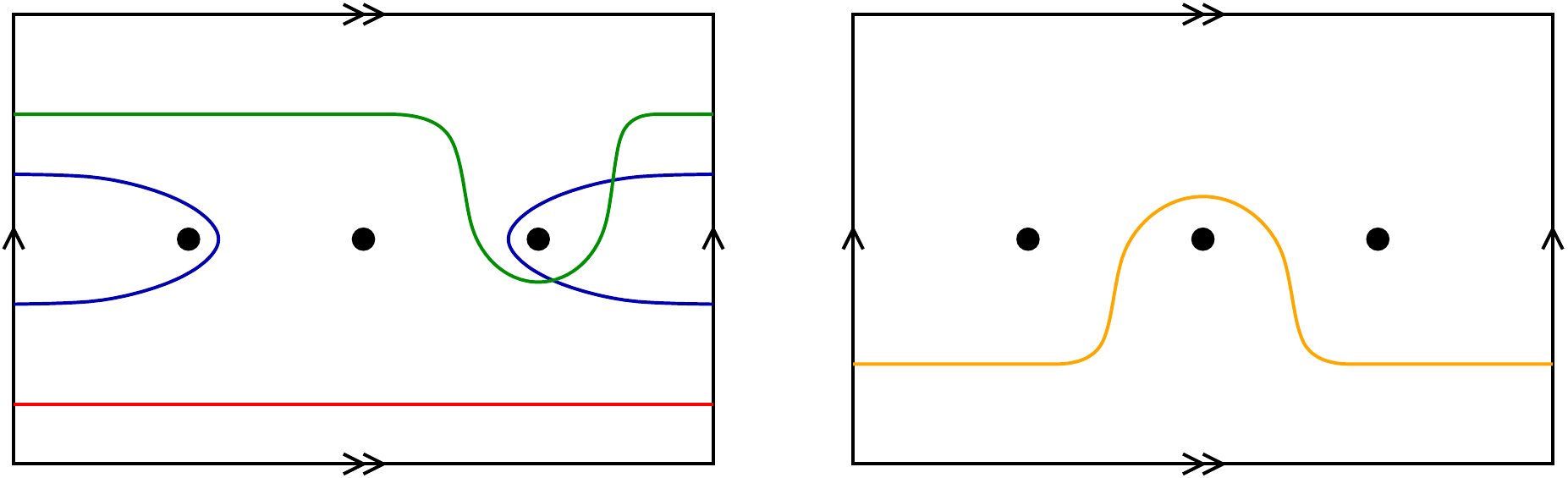}\\[5mm]
     \labellist
     \pinlabel $i$ [tr] at 200 130
     \endlabellist
     \includegraphics[height=4cm]{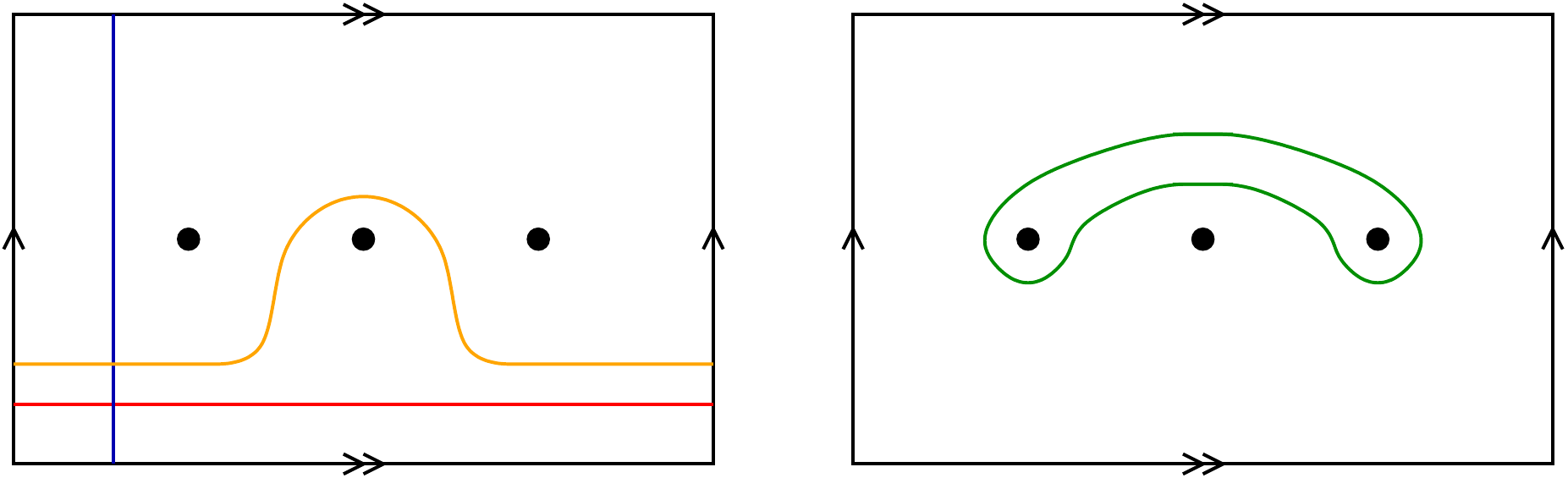}
     \caption{On the left, the curves needed to uniquely determine the curves $\gamma_{i}^{+}$ (top) and $\veps_{i}^{+}$ (bottom) on the right.}
     \label{fig:Sec3-5Fig1y2}
 \end{figure}
 
 Thus, by Claim 1 we have that $$\begin{array}{rl}
    \tau_{\alpha_{0}^{i-1}}^{\pm 1}(\gamma_{i+1}^{\pm})  & = \langle \tau_{\alpha_{0}^{i-1}}^{\pm 1}(\{\beta_{i+2}\}) \cup \tau_{\alpha_{0}^{i-1}}^{\pm 1}(\{\veps_{i+1}^{\pm}\}) \cup \tau_{\alpha_{0}^{i-1}}^{\pm 1}(\{\alpha_{1}\}) \rangle\\
      & = \langle \tau_{\alpha_{0}^{i-1}}^{\pm 1}(\{\beta_{i+2}\}) \cup \{\veps_{i+1}^{\pm}\} \cup \tau_{\alpha_{0}^{i-1}}^{\pm 1}(\{\alpha_{1}\}) \rangle \in \Yf{S}^{4},
 \end{array}$$ hence $$\tau_{\alpha_{0}^{i-1}}^{\pm 1}(\gamma_{i}^{\pm}) = \langle \tau_{\alpha_{0}^{i-1}}^{\pm 1}(\{\beta_{i+1}\}) \cup \tau_{\alpha_{0}^{i-1}}^{\pm 1}(\{\alpha_{1}\}) \cup \tau_{\alpha_{0}^{i-1}}^{\pm 1}(\{\gamma_{i+1}^{\mp})\} \rangle \in \Yf{S}^{5},$$ and finally $$\tau_{\alpha_{0}^{i-1}}^{\pm 1}(\veps_{i}^{\pm}) = \langle \tau_{\alpha_{0}^{i-1}}^{\pm 1}(\{\alpha_{0}^{i-2}\}) \cup \tau_{\alpha_{0}^{i-1}}^{\pm 1}(\{\alpha_{1}\}) \cup \tau_{\alpha_{0}^{i-1}}^{\pm 1}(\{\gamma_{i}^{\pm}\}) \rangle \in \Yf{S}^{6}.$$
 
 The case for $\alpha_{0}^{i}$ is completely analogous, substituting $\gamma_{i+1}^{\pm}$ by $\gamma_{i-1}^{\pm}$.
 \vspace{3mm}
 
 \textbf{Second part, $n \geq 4$:} We proceed similarly to the first case in Section \ref{subsec2-5} to prove that $\tau_{\alpha}(\veps_{i}^{\pm}) \in \Yf{S}^{4}$ with $\alpha = \alpha_{0}^{i-1}, \alpha_{0}^{i}$. We can uniquely determine $\veps_{i}^{\pm}$ as follows: We define the set $$C = \{\alpha_{0}^{i+1}, \ldots, \alpha_{0}^{i-2}\} \cup \{\alpha_{1}\},$$ and the curve $$\veps^{\pm} = \left\{
 \begin{array}{cl}
    \veps_{i+1}^{\pm}  & \text{ for the case $\alpha_{0}^{i-1}$,} \\
    \veps_{i-1}^{\pm}  & \text{ for the case $\alpha_{0}^{i}$.}
 \end{array}\right.$$
 
 Then we check that (see Figure \ref{fig:Sec3-5Fig3y4}) $$\veps_{i}^{\pm} = \langle C \cup \{\veps^{\mp}\}\rangle.$$
 
 \begin{figure}[ht]
     \centering
     \includegraphics[height=4cm]{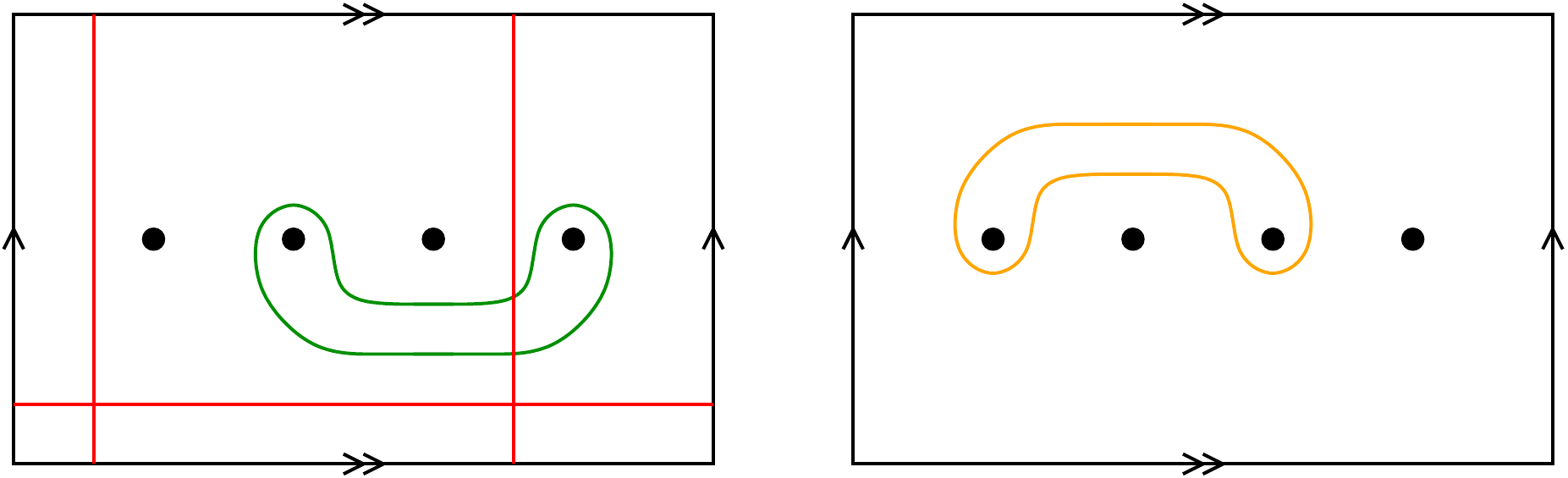}\\[5mm]
     \includegraphics[height=4cm]{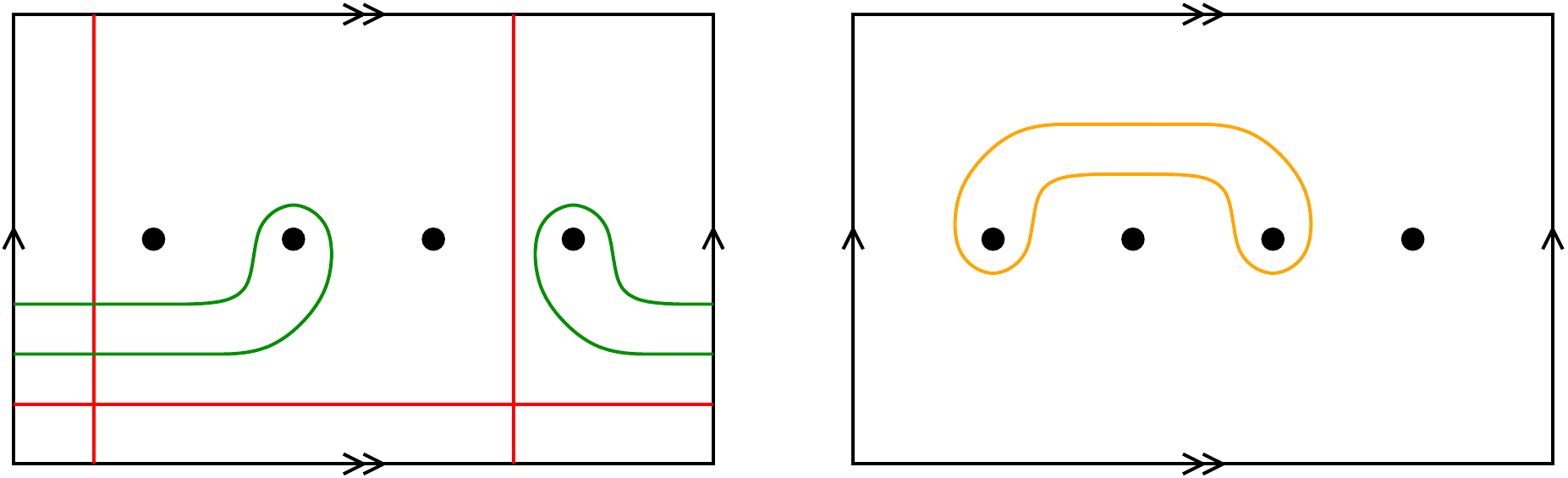}
     \caption{On the left, the curves needed to uniquely determine $\veps_{i}^{+}$ on the right, for the cases $\alpha_{0}^{i-1}$ (top) and $\alpha_{0}^{i}$ (bottom).}
     \label{fig:Sec3-5Fig3y4}
 \end{figure}
 
 
 Thus, by Claim 1 we have that $$\begin{array}{rl}
    \tau_{\alpha_{0}^{i-1}}(\veps_{i}^{\pm})  & = \langle \tau_{\alpha_{0}^{i-1}}(C) \cup \tau_{\alpha_{0}^{i-1}}(\{\veps_{i+1}^{\mp}\})\rangle \\
      &  = \langle \tau_{\alpha_{0}^{i-1}}(C) \cup \{\veps_{i+1}^{\mp}\}\rangle \in \Yf{S}^{4},
 \end{array}
  $$ $$\begin{array}{rl}
    \tau_{\alpha_{0}^{i}}(\veps_{i}^{\pm})  & = \langle \tau_{\alpha_{0}^{i}}(C) \cup \tau_{\alpha_{0}^{i}}(\{\veps_{j}\})\rangle \\
       &  = \langle \tau_{\alpha_{0}^{i}}(C) \cup \{\veps_{j}\}\rangle \in \Yf{S}^{4}.
  \end{array} $$ The cases for $\tau_{\alpha_{0}^{i-1}}^{-1}(\veps_{i}^{\pm}), \tau_{\alpha_{0}^{i}}^{-1}(\veps_{i}^{\pm}) \in \Yf{S}^{4}$ are analogous.
 \vspace{5mm}
 
 Therefore $\tau_{\Cf}^{\pm 1}(\Af) \subset \Yf{S}^{6}$.
\subsection{Proof of Claim 3: $\eta_{\Df}^{\pm 1}(\Cf \cup \Df) \subset \Yf{S}^{1}$}\label{subsec3-6}
 Recall that $\veps_{i}^{\pm} = \eta_{\beta_{i}}^{\pm1}(\beta_{i-1})$. Using this and Remark \ref{RemSym} we have that $\eta_{\Df}^{\pm 1}(\Df) \subset \Yf{S}$.
 
 Now, similarly to the previous subsections, we only need to prove that $\eta_{\beta_{i}}^{\pm 1}(\alpha_{0}^{i}) \in \Yf{S}^{1}$. This can be done as follows: We define a set $B \subset \Df$ as 
 $$ B = \left\{\begin{array}{cl}
    \varnothing  & \text{ if $n = 3$,} \\
    \{\beta_{i+2}, \ldots, \beta_{i-2}\}  & \text{if $n \geq 4$}.
 \end{array}\right.$$ Then we have (see Figure \ref{fig:Sec3-6Fig1ayb}) $$\eta_{\beta_{i}}(\alpha_{0}^{i}) = \langle \{\veps_{i}^{+},\veps_{i+1}^{-}\} \cup B \cup \{\alpha_{0}^{i+1}, \ldots, \alpha_{0}^{i-1}\}\rangle,$$ $$\eta_{\beta_{i}}^{-1}(\alpha_{0}^{i}) = \langle \{\veps_{i}^{-},\veps_{i+1}^{+}\} \cup B \cup \{\alpha_{0}^{i+1}, \ldots, \alpha_{0}^{i-1}\}\rangle.$$
 
 \begin{figure}[ht]
     \centering
     \labellist
     \pinlabel $i$ at 167 115
     \pinlabel $i$ at 645 155
     \endlabellist
     \includegraphics[height=4cm]{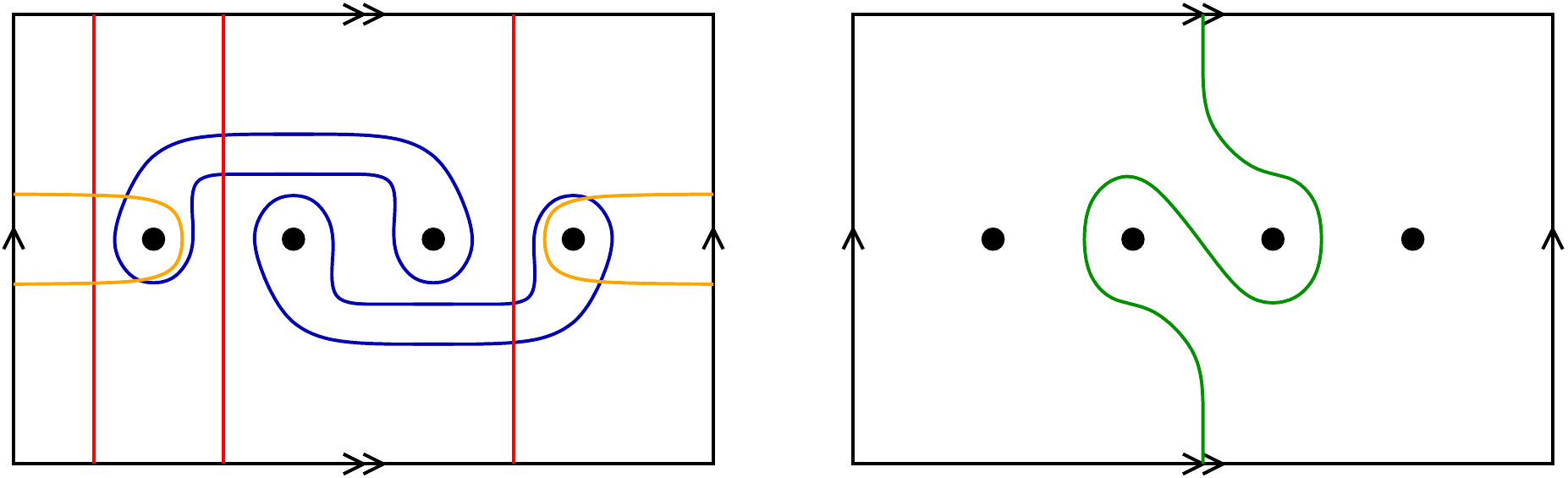}\\[5mm]
     \labellist
     \pinlabel $i$ at 167 155
     \pinlabel $i$ at 640 115
     \endlabellist
     \includegraphics[height=4cm]{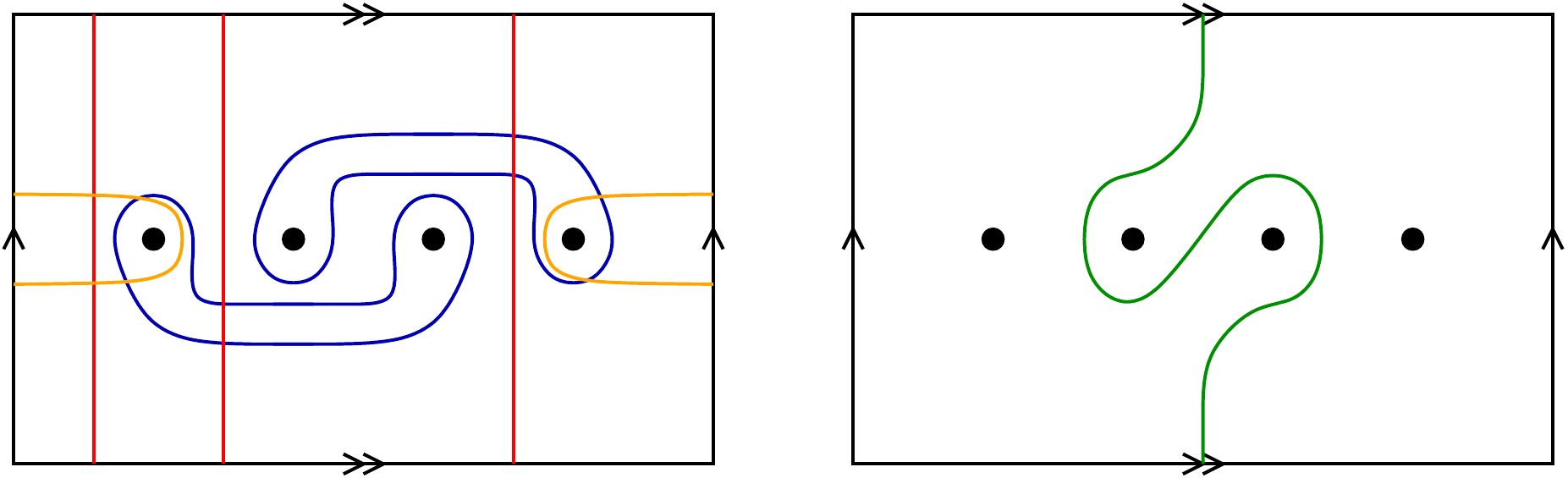}
     \caption{On the left, the curves needed to uniquely determine the curves $\eta_{\beta_{i}}(\alpha_{0}^{i})$ (top) and $\eta_{\beta_{i}}^{-1}(\alpha_{0}^{i})$ (bottom) on the right.}
     \label{fig:Sec3-6Fig1ayb}
 \end{figure}
 
 Therefore, $\eta_{\Df}^{\pm 1}(\Cf \cup \Df) \subset \Yf{S}^{1}$.
\subsection{Proof of Claim 4: $\eta_{\Df}^{\pm 1}(\Af) \subset \Yf{S}^{3}$}\label{subsec3-7}
 For this claim, similarly to the previous subsections, we only need to prove for $\veps = \veps_{i}^{+}, \veps_{i}^{-}$, that $\eta_{\beta_{i-2}}^{\pm 1}(\veps), \eta_{\beta_{i-1}}^{\pm 1}(\veps), \eta_{\beta_{i}}^{\pm 1}(\veps), \eta_{\beta_{i+1}}^{\pm 1}(\veps) \in \Yf{S}^{3}$.
 
 First note that by definition $\eta_{\beta_{i-1}}(\veps_{i}^{+}) = \eta_{\beta_{i-1}}^{-1}(\veps_{i}^{-}) = \beta_{i}  \in \Yf{S}$ and $\eta_{\beta_{i}}^{-1}(\veps_{i}^{+}) = \eta_{\beta_{i}}(\veps_{i}^{-}) = \beta_{i-1} \in \Yf{S}$.
 
 Also note that we can uniquely determine $\veps_{i}^{\pm}$ as in Subsection \ref{subsec3-5} as follows (see Figure \ref{fig:Sec3-7Fig0}): $$\veps_{i}^{\pm} = \langle \{\alpha_{1}\} \cup \{\alpha_{0}^{i+1}, \ldots, \alpha_{0}^{i-2}\} \cup \{\gamma_{i}^{\pm}\} \rangle,$$ hence, by Claim 3 and using that $\eta_{\beta_{i-2}}^{\pm 1}(\gamma_{i}^{+}) = \eta_{\beta_{i+1}}^{\pm 1}(\gamma_{i}^{+}) = \gamma_{i}^{+}$ and $\eta_{\beta_{i-2}}^{\pm 1}(\gamma_{i}^{-}) = \eta_{\beta_{i+1}}^{\pm 1}(\gamma_{i}^{-}) = \gamma_{i}^{-}$, we get that $\eta_{\beta_{i-2}}^{\pm 1}(\veps_{i}^{+}), \eta_{\beta_{i-2}}^{\pm 1}(\veps_{i}^{-}), \eta_{\beta_{i+1}}^{\pm 1}(\veps_{i}^{+}), \eta_{\beta_{i+1}}^{\pm 1}(\veps_{i}^{-}) \in \Yf{S}^{2}.$
 
 \begin{figure}[ht]
     \centering
     \labellist
     \pinlabel $i$ [t] at 87 125
     \pinlabel $i$ [t] at 563 125
     \endlabellist
     \includegraphics[height=4cm]{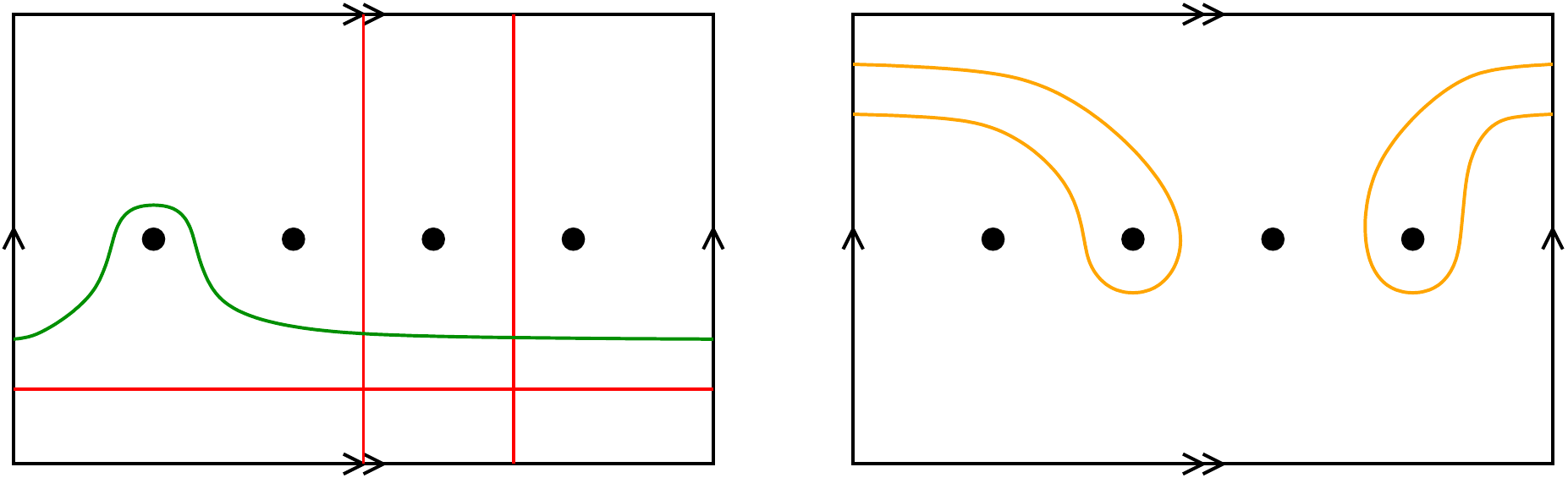}
     \caption{On the left, the curve needed to uniquely determine the curve $\veps_{i}^{+}$ on the right. The case for $\veps_{i}^{-}$ is analogous.}
     \label{fig:Sec3-7Fig0}
 \end{figure}
 
 
 Thus we only need to prove that $\eta_{\beta_{i-1}}^{-1}(\veps_{i}^{+})$, $\eta_{\beta_{i}}(\veps_{i}^{+})$, $\eta_{\beta_{i-1}}(\veps_{i}^{-})$, $\eta_{\beta_{i}}^{-1}(\veps_{i}^{-}) \in \Yf{S}^{3}$.
 
 All of these cases can be uniquely determined by an auxiliary curve $\delta \in \Yf{S}^{2}$ (a different one for each case) and the set $\Cf \backslash \{\alpha_{0}^{i-1}, \alpha_{0}^{i}\}$. See Figure \ref{fig:Sec3-7Fig1} for an example. 
 
 \begin{figure}[ht]
     \centering
     \labellist
     \pinlabel $i$ [br] at 194 120
     \pinlabel $i$ [br] at 675 100
     \endlabellist
     \includegraphics[height=4cm]{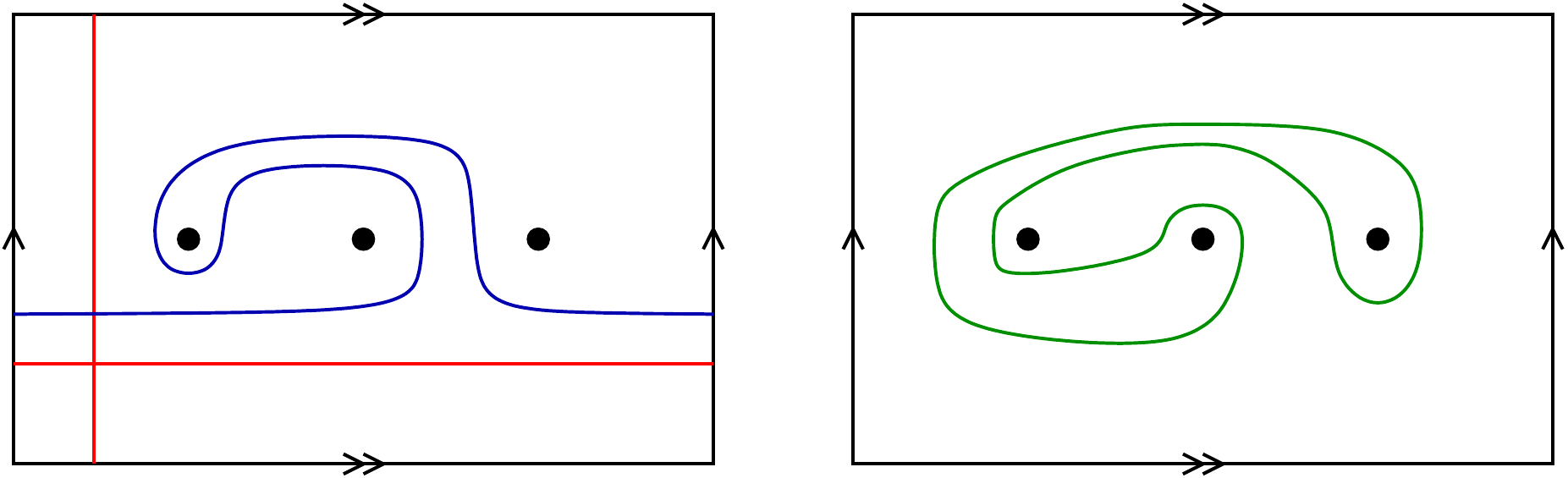}
     \caption{An example of the curve $\eta_{\beta_{i-1}}^{-1}(\veps_{i}^{+})$ (right) being uniquely determined by the set $\Cf \backslash \{\alpha_{0}^{i-1}, \alpha_{0}^{i}\}$ and the curve $\delta$ (left).}
     \label{fig:Sec3-7Fig1}
 \end{figure}
 
 The subsets of $\Yf{S}^{1}$ used to uniquely determine $\delta$ are different for each case and again we make use of the curves of the form $\gamma_{j}^{\pm}$ from the first part of Subsection \ref{subsec3-4}, but they are completely analogous to each other and can be inferred from the following examples and Figure \ref{fig:Sec3-7Fig2}:
 
 \begin{itemize}
  \item If $n = 3$, then we use the following curve to obtain $\eta_{\beta_{i-1}}^{-1}(\veps_{i}^{+})$: $$\delta \ColonEqq \langle \{\alpha_{1}\} \cup \{\veps_{i-1}^{-}\} \cup \{\gamma_{i+1}^{-}\}\rangle.$$
  \item If $n \geq 4$, then we use the following curve to obtain $\eta_{\beta_{i-1}}^{-1}(\veps_{i}^{+})$: $$\delta \ColonEqq \langle \{\alpha_{1}\} \cup \{\veps_{i-1}^{-}\} \cup \{\beta_{i+1}, \ldots, \beta_{i-3}\} \cup \{\gamma_{i+1}^{-}\}\rangle.$$
 \end{itemize}
 
 \begin{figure}[ht]
     \centering
     \labellist
     \pinlabel $i$ [r] at 200 135
     \pinlabel $i$ [r] at 675 135
     \endlabellist
     \includegraphics[height=35mm]{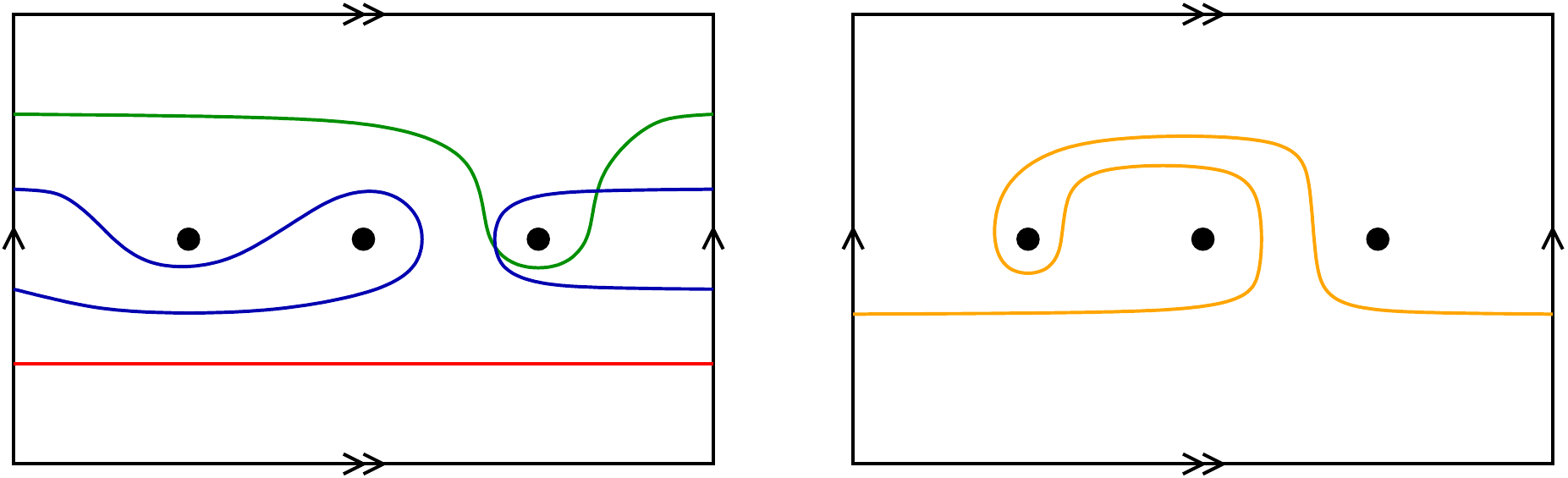}\\[5mm]
     \labellist
     \pinlabel $i$ [r] at 158 130
     \pinlabel $i$ [r] at 635 135
     \endlabellist
     \includegraphics[height=35mm]{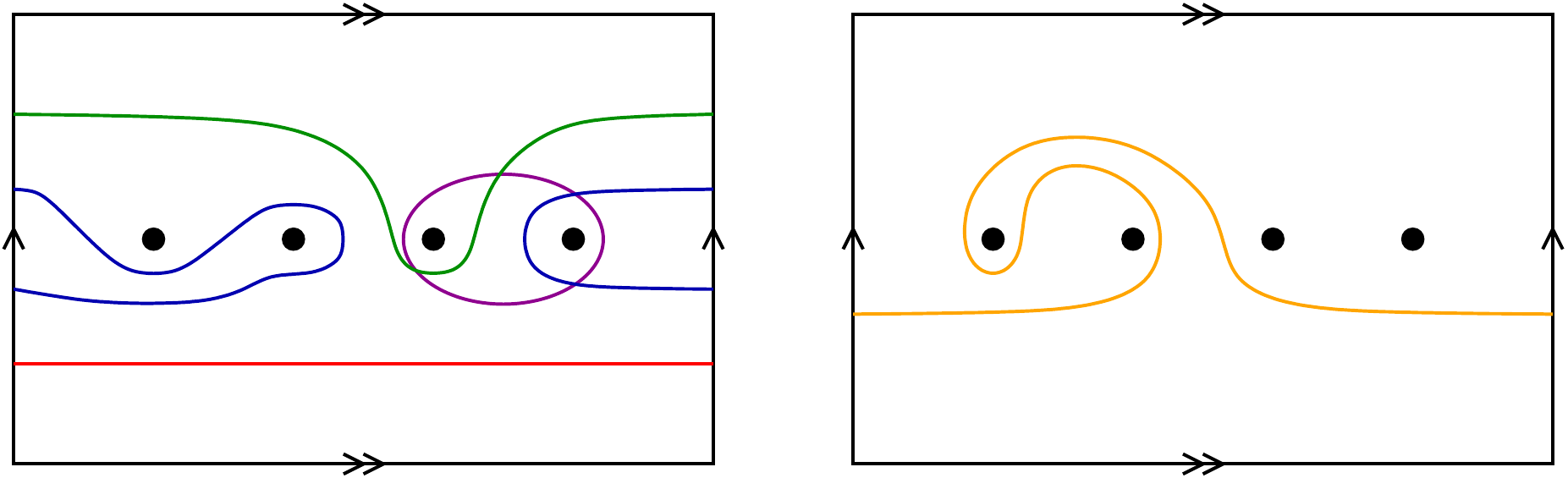}
     \caption{On the left, the curves needed to uniquely determine the curve $\delta$ on the right.}
     \label{fig:Sec3-7Fig2}
 \end{figure}
 
 Then, $\eta_{\Df}^{\pm 1}(\Af) \subset \Yf{S}^{3}$
\subsection{Proof of Theorem \ref{TeoB}}\label{subsec3-8}
 For the purposes of this work, we define the set $\Xf{S}$ as the set $\mathfrak{X}_{1}$ from Section 6 in \cite{Ara2}, which was proved to be rigid (see Proposition 6.2 in \cite{Ara2}). For this definition we need the curves $\gamma_{i}^{\pm}$ from Subsection \ref{subsec3-4} and the following auxiliary curves.
 
 If $n = 3$, for each $0 \leq i <n$ we define the following curves $\gamma_{i,i+1}^{+} \ColonEqq \gamma_{i+2}^{-}$ and $\gamma_{i,i+1}^{-} \ColonEqq \gamma_{i+2}^{+}$.
 
 If $n \geq 4$, for each $0 \leq i <n$ we define the following curves (see Figure \ref{fig:Sec3-8Fig1}): $$\gamma_{i,i+1}^{+} \ColonEqq \langle \{\alpha_{1}\} \cup \{\gamma_{i}^{+}, \gamma_{i+1}^{+}, \gamma_{i+2}^{-}\} \cup \{\beta_{i+2}, \ldots, \beta_{i-2}\}\rangle,$$ $$\gamma_{i,i+1}^{-} \ColonEqq \langle \{\alpha_{1}\} \cup \{\gamma_{i}^{-}, \gamma_{i+1}^{-}, \gamma_{i+2}^{-}\} \cup \{\beta_{i+2}, \ldots, \beta_{i-2}\}\rangle.$$
 
 \begin{figure}[ht]
     \centering
     \labellist
     \pinlabel $i$ [t] at 165 130
     \pinlabel $i$ [t] at 642 130
     \endlabellist
     \includegraphics[height=4cm]{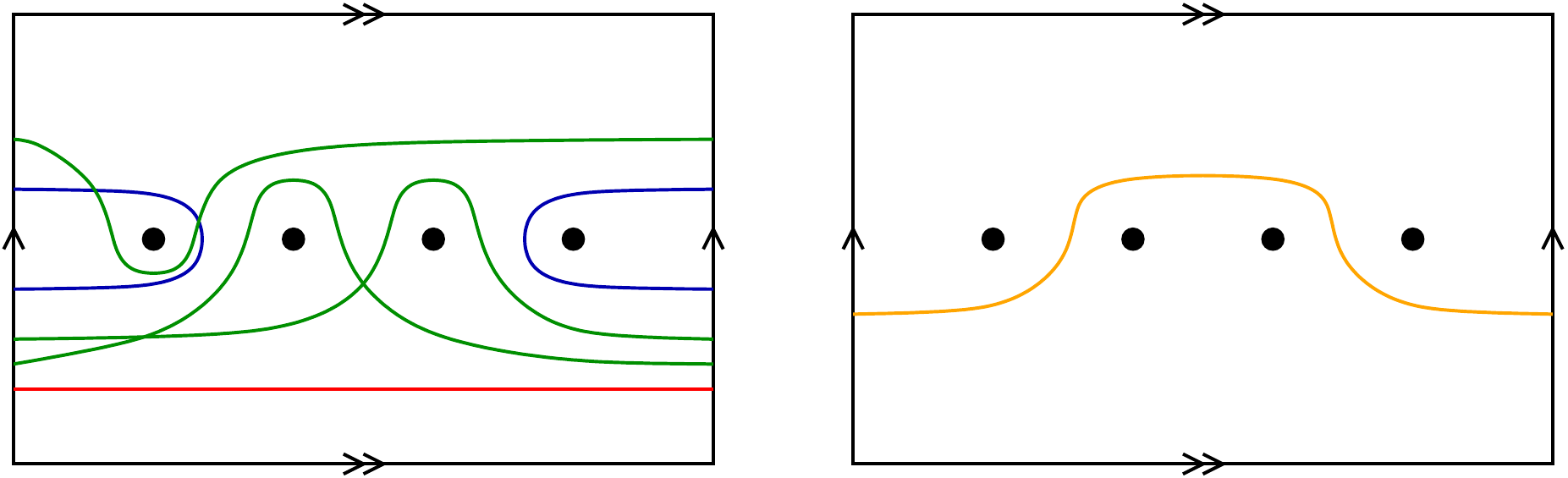}\\[5mm]
     \labellist
     \pinlabel $i$ [b] at 167 145
     \pinlabel $i$ [b] at 642 145
     \endlabellist
     \includegraphics[height=4cm]{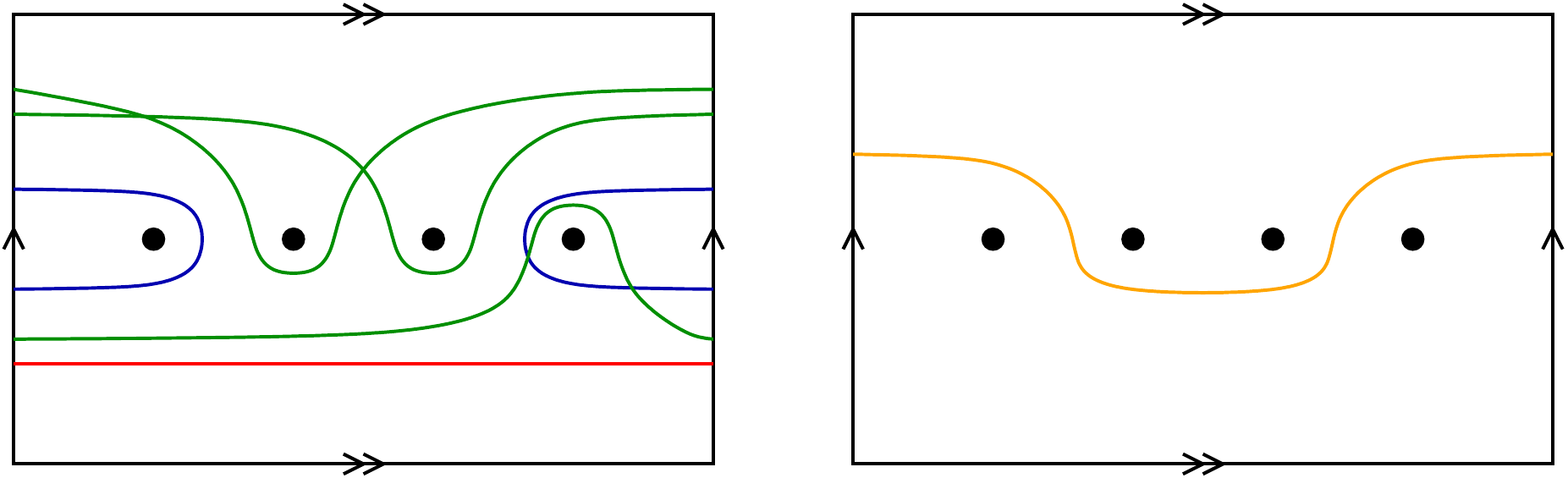}
     \caption{On the left, the curves needed to uniquely determine the curves $\gamma_{i,i+1}^{+}$ (top) and $\gamma_{i,i+1}^{-}$ (bottom) on the right.}
     \label{fig:Sec3-8Fig1}
 \end{figure}
 
 According to the notation above, we can define $\Xf{S}$ as follows: $$\Xf{S} \ColonEqq (\Cf \cup \Df)^{1} \cup \{\gamma_{i}^{\pm}\}_{i = 0}^{n-1} \cup \{\gamma_{i,i+1}^{\pm}\}_{i = 0}^{n-1}.$$
 \begin{proof}[\textbf{Proof of Theorem \ref{TeoB}}]
  For this proof, we proceed as in Subsection \ref{subsec2-7}, so we only need to prove that $\Yf{S} \subset \Xf{S}^{1}$.
  
  Given that $\Cf \cup \Df \subset \Xf{S}$, we only need to prove that $\Af \subset \Xf{S}^{1}$. For this, as in Subsection \ref{subsec3-7} note that for each $0 \leq i <n$ we have that (see Figure \ref{fig:Sec3-7Fig0}): $$\veps_{i}^{+} = \langle \{\gamma_{i}^{+}\} \cup (\Cf \backslash \{\alpha_{i-1}, \alpha_{i}\})\rangle,$$ $$\veps_{i}^{-} = \langle \{\gamma_{i}^{-}\} \cup (\Cf \backslash \{\alpha_{i-1}, \alpha_{i}\})\rangle.$$
  Thus, $\Yf{S} \subset \Xf{S}^{1}$, and therefore $\Xf{S}^{\omega} = \ccomp{S}$.
 \end{proof}
\section{Genus two case}\label{sec4}
 Given that $\ccomp{S_{2,0}}$ is isomorphic to $\ccomp{S_{0,6}}$, Subsections \ref{subsec2-3} and \ref{subsec2-7} prove Theorems \ref{TeoA} and \ref{TeoB} (respectively) for $S_{2,0}$. Thus, in this section we assume that $S = S_{2,n}$ with $n \geq 1$ (then $\kappa(S) \geq 4$).
 
 The structure of this section is as follows: In Subsection \ref{subsec4-1} we define $\Cf$, $\Bf$ and $\Yf{S}$; in Subsection \ref{subsec4-2} we define the sets of auxiliary curves $\Df$ used for the proof of Theorem \ref{TeoA}; in Subsection \ref{subsec4-3} we give the proof of Theorem \ref{TeoA} pending the proof of a key lemma (Lemma \ref{KeyLemag2}); in Subsection \ref{subsec4-4} we define the auxiliary curves $\Ef$ and $\Zf$ used to prove the key lemma; in Subsections \ref{subsec4-5}, \ref{subsec4-6}, \ref{subsec4-7} and \ref{subsec4-8}, we give the proof of the key lemma; finally, in Subsection \ref{subsec4-9} we recall from \cite{Ara2} the definition of $\Xf{S}$ and prove Theorem \ref{TeoB}.
\subsection{The definition of $\Yf{S}$}\label{subsec4-1}
 Let $C = \{\gamma_{1}, \ldots, \gamma_{k}\}$ be a set of curves for $k >1$. We say $C$ is a \textit{chain} if $i(\gamma_{i},\gamma_{j}) = 1$ for $|i-j| =1$, and $\gamma_{i}$ is disjoint from $\gamma_{j}$ otherwise. Note the similarities with an outer chain (defined in Subsection \ref{subsec2-1}). We define the \textit{length} of $C$ as its cardinality. See Figure \ref{fig:Sec4-1Fig1} for an example.
 
 \begin{figure}[ht]
     \centering
     \includegraphics[height=4cm]{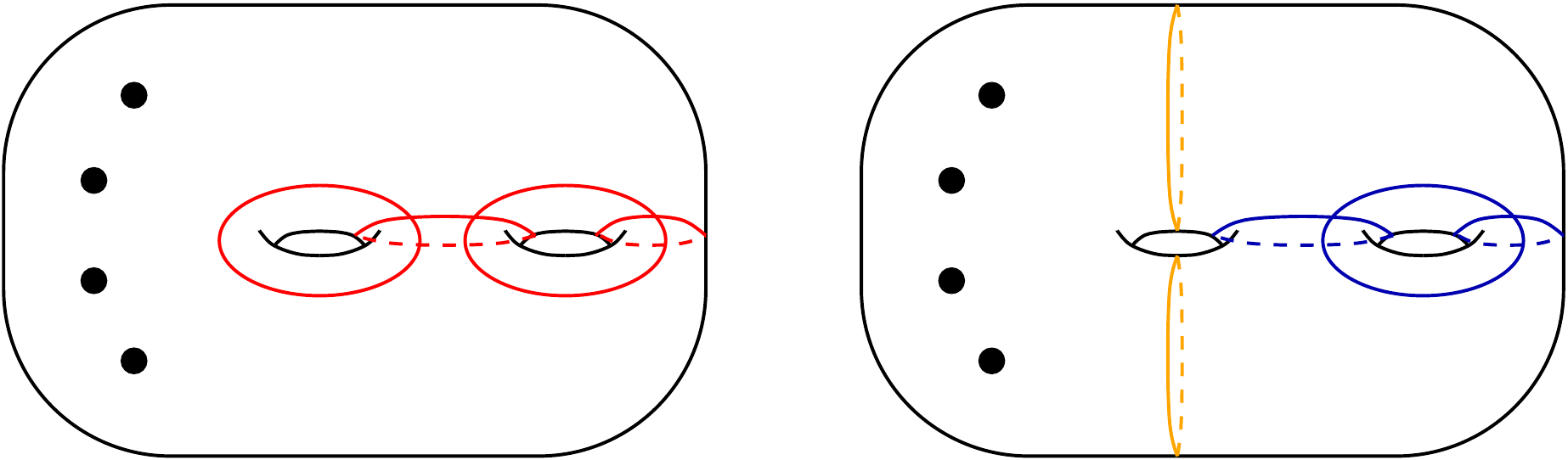}
     \caption{On the left, a chain of length 4; on the right, a chain of length 3 and its corresponding bounding pair.}
     \label{fig:Sec4-1Fig1}
 \end{figure}
 
 Note that if a chain $C$ has length $2j$, any (open) regular neighbourhood of $C$ is homeomorphic to $S_{j,1}$, while if $C$ has length $2j+1$, then any (open) regular neighbourhood of $C$ is homeomorphic to $S_{j,2}$.
 
 Given a chain $C$ of odd length, the \textit{bounding pair} associated to $C$ is defined as the set containing the boundary curves of a closed regular neighbourhood of $C$. See Figure \ref{fig:Sec4-1Fig1} for an example.
 
 Let $\Cf_{0} \ColonEqq \{\alpha_{1}, \ldots, \alpha_{5}\}$ be the chain depicted in Figure \ref{fig:Sec4-1Fig2}, and let $\Cf_{f} \ColonEqq \{\alpha_{0}^{0}, \ldots, \alpha_{0}^{n}\}$ be the multicurve also depicted in Figure \ref{fig:Sec4-1Fig2}. Note that for all $0 \leq i \leq n$ we have that $i(\alpha_{0}^{i},\alpha_{j}) = 1$ if $j = 1, 5$, and $\alpha_{0}^{i}$ is disjoint from $\alpha_{j}$ otherwise. Then, we define the set $\Cf \ColonEqq \Cf_{0} \cup \Cf_{f}$.
 
 \begin{figure}[ht]
     \centering
     \labellist
     \pinlabel $\alpha_{1}$ [bl] at 165 155
     \pinlabel $\alpha_{2}$ [b] at 250 135
     \pinlabel $\alpha_{3}$ [bl] at 305 155
     \pinlabel $\alpha_{4}$ [bl] at 400 125
     \pinlabel $\alpha_{5}$ [tl] at 110 238
     \pinlabel $\alpha_{0}^{0}$ [l] at 670 215
     \pinlabel $\alpha_{0}^{1}$ [bl] at 515 205
     \pinlabel $\alpha_{0}^{2}$ [bl] at 490 130
     \pinlabel $\alpha_{0}^{3}$ [tl] at 512 65
     \pinlabel $\alpha_{0}^{4}$ [l] at 670 25
     \endlabellist
     \includegraphics[height=4cm]{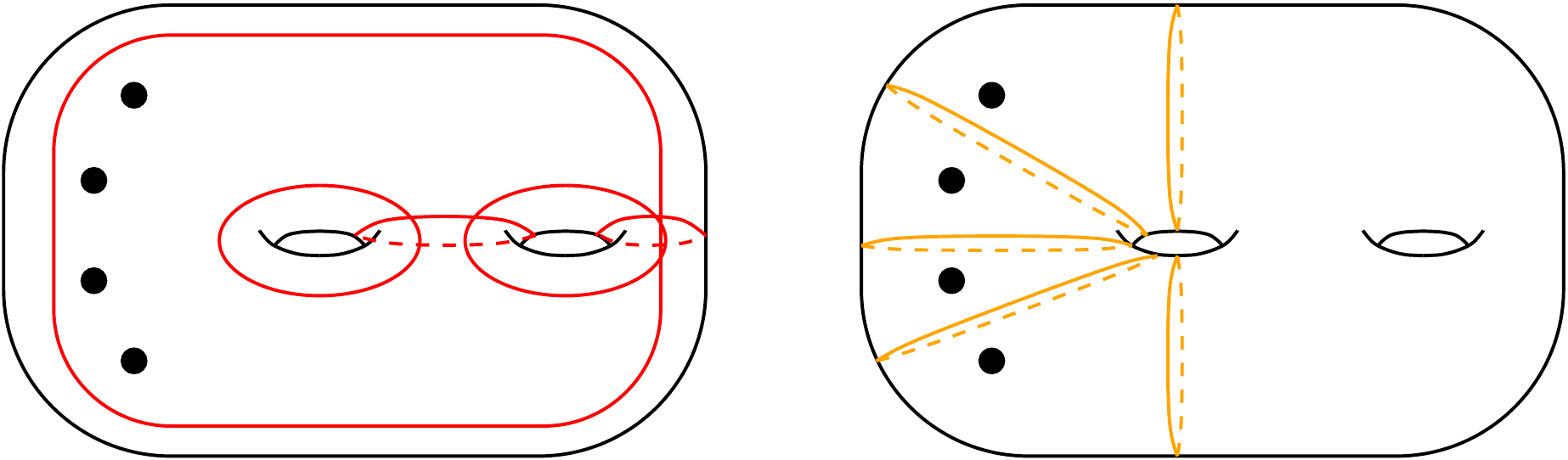}
     \caption{The curves in $\Cf_{0}$ on the right and the curves in $\Cf_{f}$ on the left.}
     \label{fig:Sec4-1Fig2}
 \end{figure}
 
 We can then fix several subsurfaces of $S$ using $\Cf$ as follows (see Figure \ref{fig:Sec4-1Fig3}): 
 \begin{itemize}
  \item Let $\Sigma_{o}^{+}$ be the closed subsurface of $S$ bounded by $\{\alpha_{1},\alpha_{3}, \alpha_{5}\}$ that is not compact, and $\Sigma_{o}^{-}$ be the compact subsurface of $S$ bounded by $\{\alpha_{1},\alpha_{3}, \alpha_{5}\}$.
  \item Define $S_{0}^{+}$ as the compact subsurface of $S$ bounded by $\{\alpha_{0}^{0}, \alpha_{2}, \alpha_{4}\}$, and $S_{0}^{-}$ as the noncompact subsurface of $S$ bounded by $\{\alpha_{0}^{0}, \alpha_{2}, \alpha_{4}\}$.
  \item For each $0 < i \leq n$, let $S_{i}^{+}$ be the closed subsurface of $S$ bounded by $\{\alpha_{0}^{i}, \alpha_{2}, \alpha_{4}\}$ that contains $S_{0}^{+}$. Let $S_{0}^{-}$ be the closed subsurface of $S$ bounded by $\{\alpha_{0}^{i}, \alpha_{2}, \alpha_{4}\}$ that does not contain $S_{0}^{+}$.
 \end{itemize}
 
 \begin{figure}[H]
     \centering
     \labellist
     \pinlabel $\Sigma_{o}^{+}$ at 250 165
     \pinlabel $\Sigma_{o}^{-}$ [bl] at 367 220
     \pinlabel $S_{0}^{+}$ at 780 200
     \pinlabel $S_{0}^{-}$ at 735 60
     \endlabellist
     \includegraphics[height=4cm]{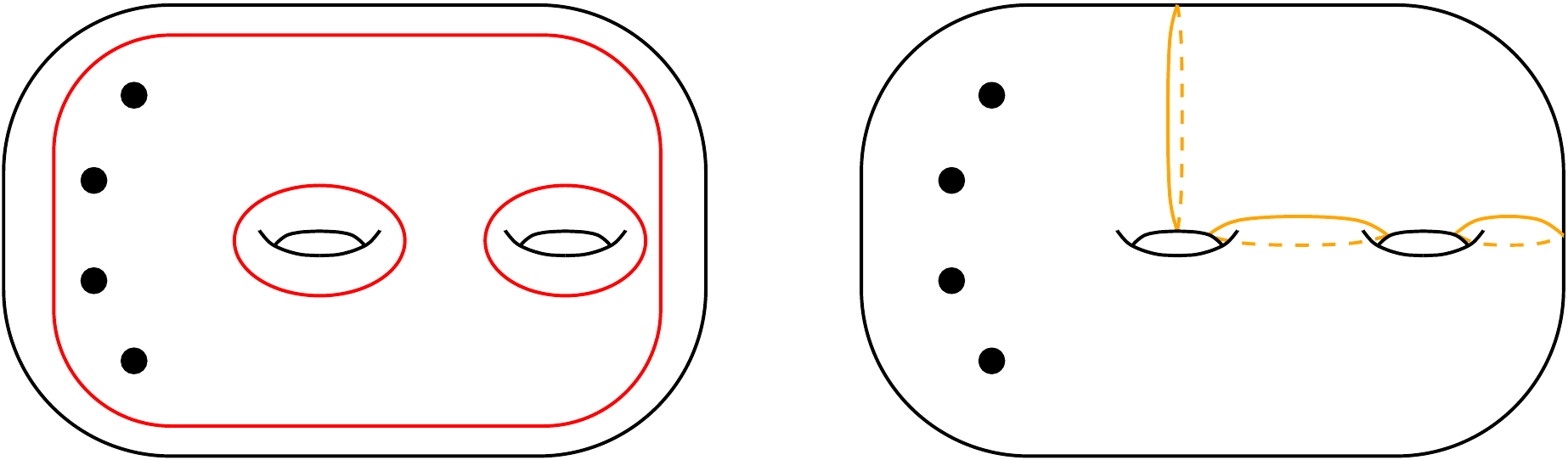}\\[5mm]
     \labellist
     \pinlabel $S_{2}^{+}$ at 250 190
     \pinlabel $S_{2}^{-}$ at 250 70
     \endlabellist
     \includegraphics[height=4cm]{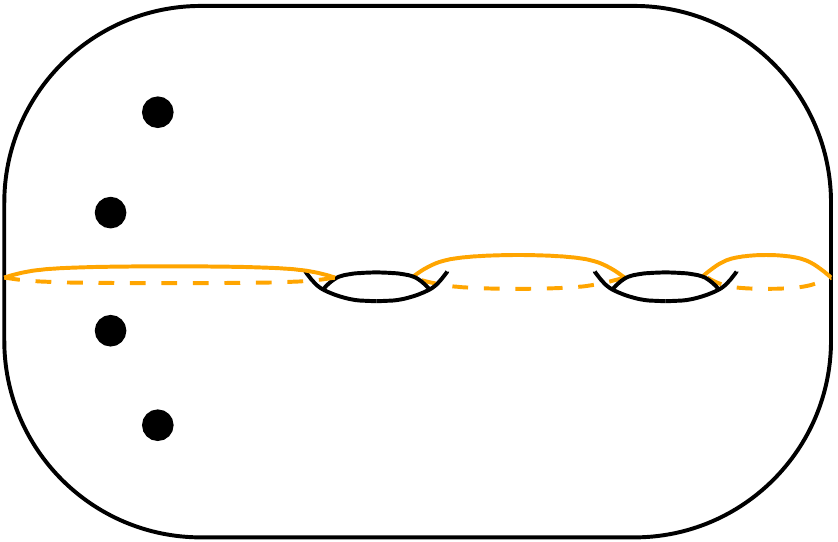}
     \caption{On the top left, the subsurfaces $\Sigma_{o}^{+}$ (punctured pair of pants at the front) and $\Sigma_{o}^{-}$ (pair of pants at the back); on the top right, the subsurfaces $S_{0}^{+}$ (pair of pants at the top right) and $S_{0}^{-}$ (punctured pair of pants at the bottom); on the bottom, the subsurfaces $S_{i}^{+}$ (punctured pair of pants at the top) and $S_{i}^{-}$ (possibly punctured pair of pants at the bottom).}
     \label{fig:Sec4-1Fig3}
 \end{figure}
 
 Let $C\subset \Cf$ be a chain of odd length such that $|C \cap \Cf_{f}| \leq 1$, and let $B(C)$ be the bounding pair associated to $C$. Then, note that the elements of $B(C)$ are either contained in $\Int(S_{i}^{+}) \cup \Int(S_{i}^{-})$ for some $i$, or contained in $\Int(\Sigma_{o}^{+}) \cup \Int(\Sigma_{o}^{-})$. We denote by $\beta_{C}^{+}$ the element of $B(C)$ contained in either $\Int(S_{i}^{+})$ for some $i$ or $\Sigma_{o}^{+}$; we denote by $\beta_{C}^{-}$ the element of $B(C)$ contained in either $\Int(S_{i}^{-})$ for some $i$ or $\Sigma_{o}^{-}$. See Figure \ref{fig:Sec4-1Fig4}.
 
 \begin{figure}[ht]
     \centering
     \includegraphics[height=4cm]{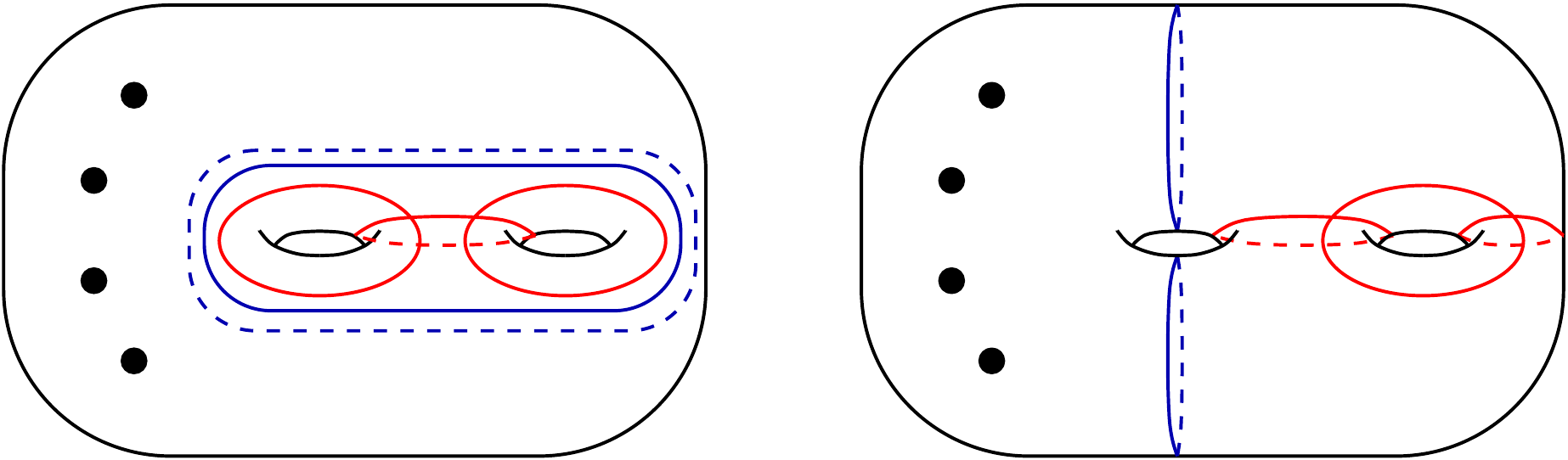}
     \caption{On the left, the set $C = \{\alpha_{1},\alpha_{2},\alpha_{3}\}$ and the curves $\beta_{C}^{+}$ (front) and $\beta_{C}^{-}$ (back); on the right, the set $C= \{\alpha_{2},\alpha_{3},\alpha_{4}\}$ and the curves $\beta_{C}^{+}$ (top) and $\beta_{C}^{-}$ (bottom).}
     \label{fig:Sec4-1Fig4}
 \end{figure}
 
 We define the set $\dsty \Bf \ColonEqq \{\beta_{C}^{+},\beta_{C}^{-} : C \subset \Cf$ is a chain such that $|C \cap \Cf_{f}| \leq 1\}$.
 
 Note that since we are considering only chains that contain at most one element of $\Cf_{f}$, then every curve in $\Bf$ is non-separating.
 
 Finally, we define $$\Yf{S} \ColonEqq \Cf \cup \Bf.$$
\subsection{Auxiliary curves for Theorem \ref{TeoA}}\label{subsec4-2}
 As a difference with the previous sections, here we use the auxiliary curves for the key lemma and the proof of Theorem \ref{TeoA}, instead of using them for the definition of $\Yf{S}$. 
 
 For each $0 < i < n$, we define the following outer curves (see Figure \ref{fig:Sec4-2Fig1}): $$\out{i} = \langle \Cf \backslash \{\alpha_{0}^{i}\}\rangle \in \Cf^{1}.$$
 
 Then, we define the following curve (see Figure \ref{fig:Sec4-2Fig1}): $$\Delta = \langle \Cf_{0} \cup \{\out{j}\}_{j=1}^{n-1}\rangle \in \Cf^{2}.$$
 
 \begin{figure}[ht]
     \centering
     \includegraphics[height=4cm]{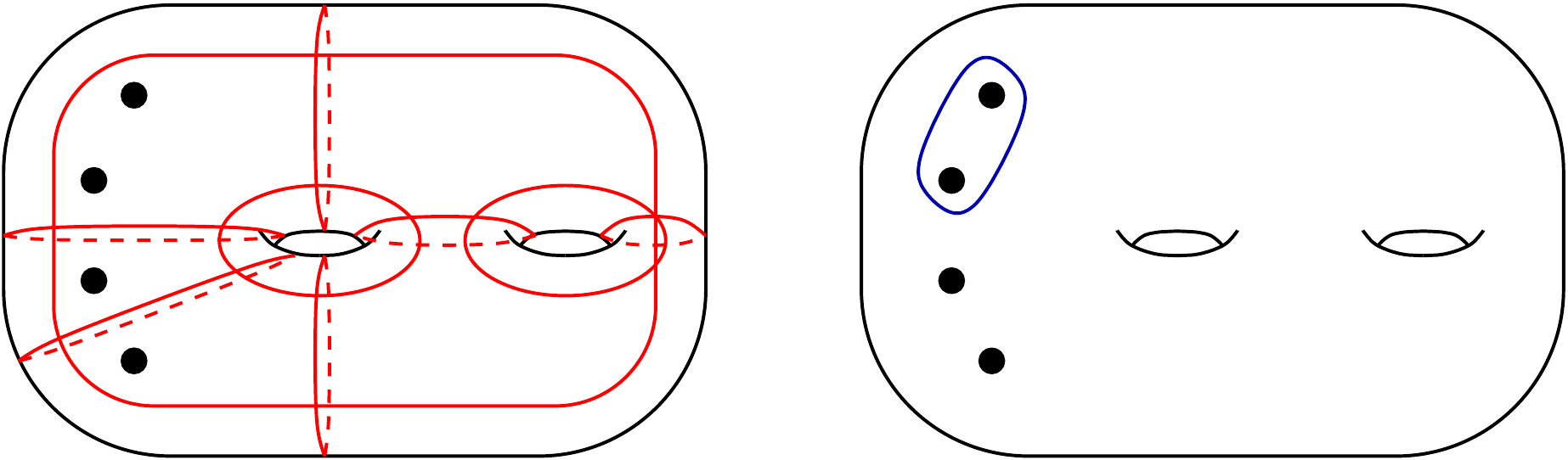}\\[5mm]
     \includegraphics[height=4cm]{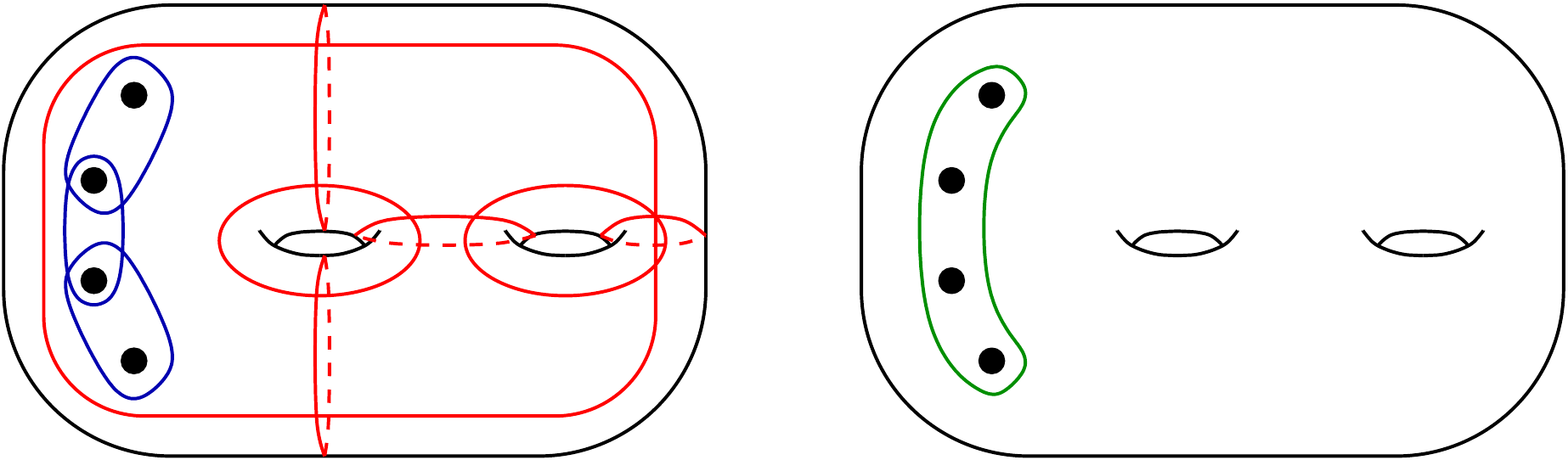}
     \caption{On the left, the curves needed to uniquely determine the curves $\delta_{1}$ (top) and $\Delta$ (bottom).}
     \label{fig:Sec4-2Fig1}
 \end{figure}
 
 With this, we can define the following set $$\Df \ColonEqq \{\out{i}: 0 < i < n\} \cup \{\Delta\} \subset \Cf^{2}$$
 
 Note that if $n = 1$, then $\Df = \varnothing$, and if $n = 2$ then $\Df = \{\out{1}\}$.
 
 
 
\subsection{Proof of Theorem \ref{TeoA}}\label{subsec4-3}
 Similarly to the previous sections, let $\Gf \ColonEqq \{\tau_{\alpha}^{\pm 1} : \alpha \in (\Cf \backslash \{\alpha_{5}\})\} \cup \{\eta_{\out{i}}^{\pm 1} : 0 < i < n\}$. It is a well-known fact (see Corollary 4.15 in \cite{FarbMar}) that $\Gf$ is a symmetric generating set of $\Mod{S}$ (often called the Humphries-Lickorish generating set). As before, if $A$ is a set of curves on $S$, we denote by $\Gf \cdot A$ the following set $\Gf \cdot A \ColonEqq \tau_{(\Cf \backslash \{\alpha_{5}\})}^{\pm 1}(A) \cup \eta_{\{\out{i} : 0 < i <n\}}^{\pm 1}(A)$.
 
 Now, we state the key lemma of this section, whose proof we leave to the following subections (see Subsections \ref{subsec4-5}, \ref{subsec4-6}, \ref{subsec4-7} and \ref{subsec4-8})
 
 \begin{Lema}\label{KeyLemag2}
  Let $\Gf$ be as above. Then $\Gf \cdot \Yf{S} \subset \Yf{S}^{11}$.
 \end{Lema}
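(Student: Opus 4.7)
The plan is to mirror the strategy used in Sections \ref{sec2} and \ref{sec3}, splitting the proof of Lemma \ref{KeyLemag2} into four claims according to the pair (generator type, target subset of $\Yf{S} = \Cf \cup \Bf$): namely
\begin{enumerate}
 \item $\tau_{\alpha}^{\pm 1}(\Cf) \subset \Yf{S}^{k_{1}}$,
 \item $\tau_{\alpha}^{\pm 1}(\Bf) \subset \Yf{S}^{k_{2}}$,
 \item $\eta_{\out{i}}^{\pm 1}(\Cf) \subset \Yf{S}^{k_{3}}$,
 \item $\eta_{\out{i}}^{\pm 1}(\Bf) \subset \Yf{S}^{k_{4}}$,
\end{enumerate}
for $\alpha \in \Cf \setminus \{\alpha_{5}\}$ and $0 < i < n$, with the exponents chosen so that $\max\{k_{1},k_{2},k_{3},k_{4}\} \leq 11$. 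In every claim the vast majority of target curves are disjoint from the twisting (or half-twisting) curve and are therefore fixed, so the real work lies in a short list of intersecting subcases.

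For the nontrivial subcases I would invoke the standard bootstrap: if $\beta = \langle B \rangle$ and $f \in \Mod{S}$, then $f(\beta) = \langle f(B) \rangle$, so once every element of $f(B)$ is known to lie in some $\Yf{S}^{k}$, we may conclude $f(\beta) \in \Yf{S}^{k+1}$. This is the mechanism that allows the four claims to be proved in sequence, each building on the previous ones. Claim 1 should be the easiest since both $\alpha$ and the target live in $\Cf$ and the local pictures around each chain neighbourhood in $\Cf_{0}$ and $\Cf_{f}$ are essentially the genus-one model of Subsection \ref{subsec3-4} glued along a pair of pants; Claim 2 then follows by writing each $\beta_{C}^{\pm} \in \Bf$ as the element of a bounding pair associated to a chain $C \subset \Cf$ and applying Claim 1 to that chain. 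Claims 3 and 4 are where the curves $\Ef$ and $\Zf$ from Subsection \ref{subsec4-4} enter: since each $\out{i}$ is itself produced by one expansion from $\Cf$, the half-twist $\eta_{\out{i}}^{\pm 1}$ applied to a curve typically lands in a region where the target is most easily identified as the unique curve disjoint from a combination of $\Cf$-curves, $\Bf$-curves, and the auxiliary curves in $\Ef \cup \Zf$; these auxiliary curves function exactly as the $\gamma_{i}^{\pm}$ did in Subsection \ref{subsec3-5}.

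The main obstacle will be controlling the expansion depth in the most entangled cases of Claim 4, where a half-twist $\eta_{\out{i}}^{\pm 1}$ acts on a curve $\beta_{C}^{\pm}$ whose supporting chain $C$ meets the subsurface $S_{i}^{\pm}$. In such cases we pay for: (i) producing $\out{i}$ and $\Delta$ from $\Cf$ (two expansions); (ii) describing $\beta_{C}^{\pm}$ via $\Cf$ plus an auxiliary curve (one or two expansions); (iii) applying $\eta_{\out{i}}^{\pm 1}$ to the whole generating family and recognising the image (several further expansions through the bootstrap). A plausible allocation consistent with the stated bound is $k_{1} \leq 4$, $k_{2} \leq 5$, $k_{3} \leq 8$ and $k_{4} = 11$. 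The careful bookkeeping for each specific intersection pattern will be the bulk of the argument; each such pattern must be reduced, by an explicit picture, to a finite set of curves already shown to be in some earlier $\Yf{S}^{k}$. Once all four claims are established, their union immediately yields $\Gf \cdot \Yf{S} \subset \Yf{S}^{11}$, which is the statement of the lemma.
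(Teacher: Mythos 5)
Your high-level architecture does match the paper's: the proof is indeed split into exactly four claims according to (Dehn twist vs.\ half-twist) applied to ($\Cf$ vs.\ $\Bf$), the mechanism in every nontrivial case is the bootstrap $f(\langle B\rangle) = \langle f(B)\rangle$, and the auxiliary sets $\Ef$ and $\Zf$ of Subsection \ref{subsec4-4} are what make the unique determinations possible. However, there are two genuine problems. First, your proposal defers the entire content of the lemma: all of the work consists of exhibiting the explicit auxiliary curves ($\gamma^{\pm}$, $\Delta_{1,i}$, $\Delta_{i,n}$, $\beta_{i}^{\pm}$, $\gamma_{\pm}^{i}$, the $\zeta^{j}$, the $\vout{k}$) and verifying, picture by picture, the unique-determination identities for each intersection pattern. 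Without those identities there is no proof, only a plan, and you acknowledge as much.

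Second, and more seriously, your allocation of difficulty is inverted, which strongly suggests the plan would break if you tried to execute it. The paper proves $\eta^{\pm 1}(\Cf) \subset \Yf{S}^{3}$ and $\eta^{\pm 1}(\Bf) \subset \Yf{S}^{6}$ for the half-twists — these are the \emph{easy} claims — while the Dehn-twist claims cost $\tau^{\pm 1}(\Cf) \subset \Yf{S}^{8}$ and $\tau^{\pm 1}(\Bf) \subset \Yf{S}^{11}$; it is Claim~2, not Claim~4, that attains the bound $11$. Your guess $k_{1} \leq 4$ for the twists on $\Cf$ is not supported by any mechanism: to uniquely determine $\tau_{\alpha_{i}}^{\pm 1}(\alpha_{i-1})$ for a curve deep in the chain $\Cf_{0}$ one must place a curve inside the one-holed torus filled by $\alpha_{i-1}$ and $\alpha_{i}$ that intersects the target correctly, and the only such curve available is $\tau_{\alpha_{i+1}}^{\mp 1}(\alpha_{i})$ — which itself must first be determined. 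This forces an iterative descent down the chain, accumulating one expansion per step, which is exactly why the paper lands at depth $7$ for $n=1$ and $8$ for $n \geq 2$. Your remark that Claim~1 ``should be the easiest'' because both curves live in $\Cf$, and that the picture is ``the genus-one model glued along a pair of pants,'' misses this cascade; the paper explicitly notes that in genus $2$ there is too little room for the genus-$\geq 3$ strategy and that each case must be listed. Likewise $\Ef$ enters Claims~2 and~4 (via $\tau^{\pm 1}(\Ef) \subset \Yf{S}^{10}$ and $\eta^{\pm 1}(\Ef) \subset \Yf{S}^{5}$), not only the half-twist claims as you suggest. The final bound $11$ happens to coincide with the paper's, but the intermediate claims as you state them are not the ones that can be proved.
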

 
 Recalling Proposition 3.7 in \cite{JHH1} (restated for our purposes), we obtain an immediate consequence.
 
 \begin{Prop}[3.7 in \cite{JHH1}]\label{Prop3-7}
  Let $h \in \Aut{\ccomp{S}}$ and $Y \subset \ccomp{S}$. If $h(Y) \subset \Yf{S}^{k}$ for some $k \geq 0$, then $h(Y^{m}) \subset \Yf{S}^{k+m}$ for all $m \geq 0$.
 \end{Prop}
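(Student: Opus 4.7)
The plan is to prove the proposition by induction on $m$, with the base case $m=0$ being immediate from the hypothesis (taking $Y^{0} = Y$), and the inductive step reducing to the key observation that automorphisms of $\ccomp{S}$ preserve the relation ``being uniquely determined by''.

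More precisely, the main lemma to establish is: if $h \in \Aut{\ccomp{S}}$ and $\alpha = \langle A \rangle$ for some $A \subset \ccomp{S}$, then $h(\alpha) = \langle h(A) \rangle$. This follows because $h$ is a bijection of the vertex set of $\ccomp{S}$ that preserves adjacency in both directions (hence preserves the relation of being disjoint, in the sense used throughout the paper), so the unique curve disjoint from every element of $A$ is sent to the unique curve disjoint from every element of $h(A)$.

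With this in hand, the induction runs as follows. Assume $h(Y^{m}) \subset \Yf{S}^{k+m}$. Let $\alpha \in V(Y^{m+1})$. If $\alpha \in V(Y^{m})$, then by hypothesis $h(\alpha) \in \Yf{S}^{k+m} \subset \Yf{S}^{k+m+1}$. Otherwise, by the definition of the first rigid expansion, there exists $A \subset V(Y^{m})$ with $\alpha = \langle A \rangle$. Applying the key observation and the inductive hypothesis, we obtain
\[
h(\alpha) \;=\; \langle h(A) \rangle, \qquad h(A) \subset h(V(Y^{m})) \subset V(\Yf{S}^{k+m}),
\]
so $h(\alpha)$ is uniquely determined by a subset of $V(\Yf{S}^{k+m})$, placing it in $V(\Yf{S}^{k+m+1})$ by the very definition of the rigid expansion.

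I do not expect any real obstacle here: the whole proof amounts to unpacking the definition of $Y^{m+1} = (Y^{m})^{1}$ and using that a graph automorphism commutes with the operation $\langle \cdot \rangle$ (interpreting unique determination inside $\ccomp{S}$ and inside $h(\ccomp{S}) = \ccomp{S}$ identically). The only mild subtlety is making sure the base case is stated with $Y^{0} = Y$ so that the indexing matches the statement ``$m \geq 0$''; this is consistent with the convention implicit in the definition of iterated rigid expansions given in Subsection \ref{subsec1-1}.
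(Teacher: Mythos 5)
Your proposal is correct and is essentially the standard argument (the paper itself defers to Proposition 3.7 of \cite{JHH1}, whose proof is exactly this induction): the whole content is the observation that an automorphism $h$ of $\ccomp{S}$, being a bijection preserving disjointness in both directions, satisfies $h(\langle A \rangle) = \langle h(A) \rangle$, after which the inductive step is just unpacking $Y^{m+1} = (Y^{m})^{1}$. No gaps.
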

 
 \begin{Cor}\label{CorKeyLemag2}
  For $\Gf$ and $\Df$ 
  as above, we have that $\Gf \cdot (\Yf{S} \cup \Df) \subset \Yf{S}^{13}$.
 \end{Cor}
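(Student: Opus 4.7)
The plan is to combine Lemma \ref{KeyLemag2} with Proposition \ref{Prop3-7}, using the fact that, by construction, $\Df$ already lives in a low rigid expansion of $\Yf{S}$.

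First I would observe that each generator $f \in \Gf$ is a Dehn twist or a half-twist, hence extends to an automorphism of $\ccomp{S}$, so Proposition \ref{Prop3-7} is applicable to it. Lemma \ref{KeyLemag2} tells us precisely that $f(\Yf{S}) \subset \Yf{S}^{11}$ for every such $f$; this already handles the $\Yf{S}$ part of the union and gives the containment $\Gf \cdot \Yf{S} \subset \Yf{S}^{11} \subset \Yf{S}^{13}$.

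Next, I would use the explicit description of $\Df$ from Subsection \ref{subsec4-2}: the outer curves $\out{i}$ lie in $\Cf^{1}$ and $\Delta$ lies in $\Cf^{2}$, so $\Df \subset \Cf^{2} \subset \Yf{S}^{2}$. Now I apply Proposition \ref{Prop3-7} with $Y = \Yf{S}$, $k = 11$, $m = 2$: combined with Lemma \ref{KeyLemag2}, it yields $f(\Yf{S}^{2}) \subset \Yf{S}^{11+2} = \Yf{S}^{13}$ for every $f \in \Gf$. Since $\Df \subset \Yf{S}^{2}$, this gives $f(\Df) \subset \Yf{S}^{13}$ and therefore $\Gf \cdot \Df \subset \Yf{S}^{13}$. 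Taking the union of the two containments completes the argument.

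There is no genuine obstacle here: the corollary is essentially a bookkeeping step that propagates the key lemma from $\Yf{S}$ to the slightly larger set $\Yf{S} \cup \Df$, and the only content is the monotonicity statement of Proposition \ref{Prop3-7}. The small edge cases $n=1$ (where $\Df = \varnothing$) and $n=2$ (where $\Df = \{\out{1}\}$) require no separate treatment, as in both cases $\Df \subset \Yf{S}^{2}$ is trivially verified.
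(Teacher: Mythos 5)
Your argument is exactly the one the paper intends: the corollary is stated as an immediate consequence of Lemma \ref{KeyLemag2} and Proposition \ref{Prop3-7}, and your use of $\Df \subset \Cf^{2} \subset \Yf{S}^{2}$ together with Proposition \ref{Prop3-7} (with $k=11$, $m=2$) is precisely that deduction. The proposal is correct and takes the same approach as the paper.
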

 
 \begin{proof}[\textbf{Proof of Theorem \ref{TeoA}}]
  The proof of Theorem \ref{TeoA} for this case is the same as the one for the case $g =1$, simply substituting ``Lemma \ref{KeyLemag1}'' for ``Corollary \ref{CorKeyLemag2}'', ``Figure \ref{fig:Sec3-3Fig1}'' for ``Figure \ref{fig:Sec4-3Fig1}'', and fixing the rigid expansions to multiples of $13$ instead of $6$.
 \end{proof}
 
 \begin{figure}[ht]
     \centering
     \includegraphics[height=4cm]{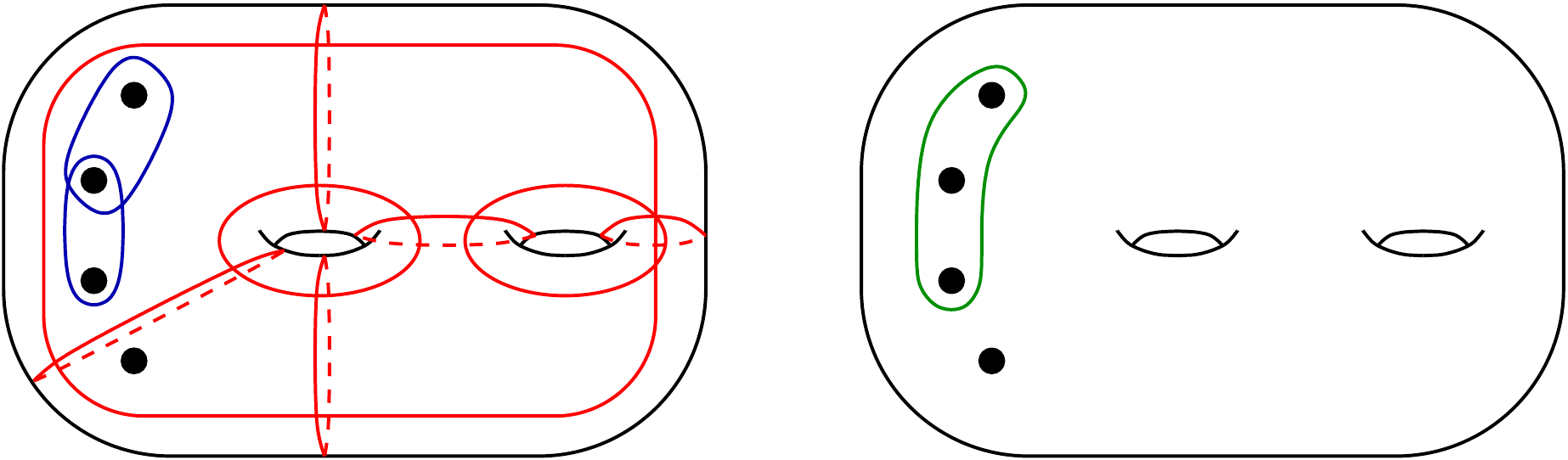}\\[3mm]
     \includegraphics[height=4cm]{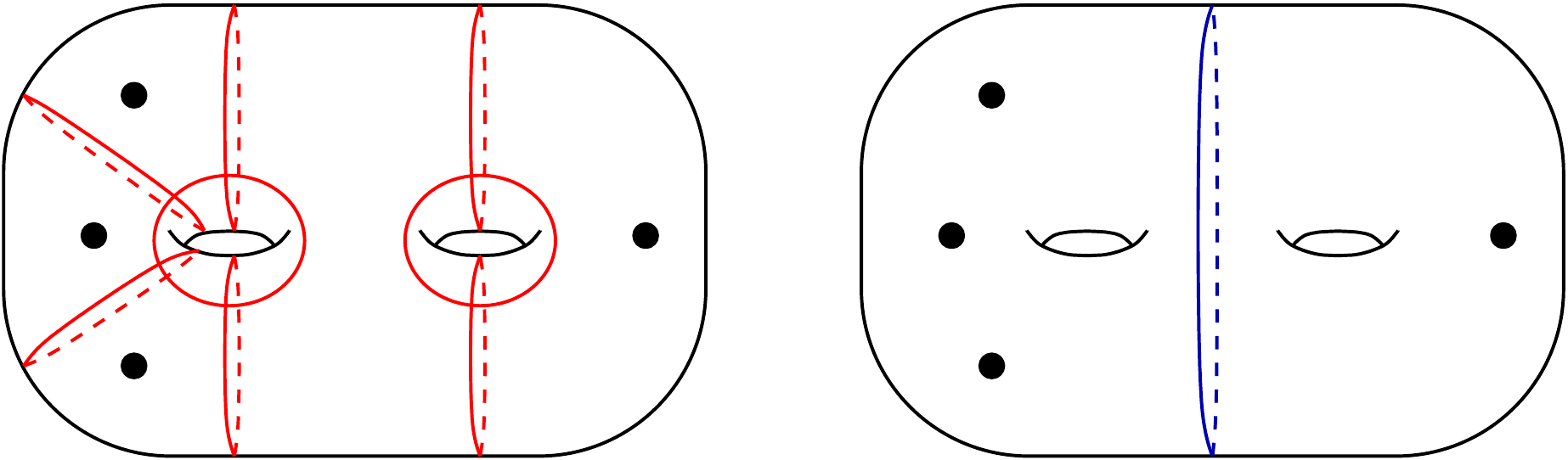}
     \caption{On the left, the curves needed to uniquely determine the non-outer separating curves on the right.}
     \label{fig:Sec4-3Fig1}
 \end{figure}
 
 As before, we divide the proof of Lemma \ref{KeyLemag2} into four claims:\\[0.3cm] 
 \textbf{Claim 1:} $\tau_{(\Cf \backslash \{\alpha_{5}\})}^{\pm 1}(\Cf) \subset \Yf{S}^{8}$.\newline
 \textbf{Claim 2:} $\tau_{(\Cf \backslash \{\alpha_{5}\})}^{\pm 1}(\Bf) \subset \Yf{S}^{11}$.\newline
 \textbf{Claim 3:} $\eta_{\{\out{i} : 0 < i <n\}}^{\pm 1}(\Cf) \subset \Yf{S}^{3}$.\newline
 \textbf{Claim 4:} $\eta_{\{\out{i} : 0 < i <n\}}^{\pm 1}(\Bf) \subset \Yf{S}^{6}$.\\[0.3cm]
 \indent In the following section we define the auxiliary curves used to prove these claims.
\subsection{Auxiliary curves for the claims}\label{subsec4-4}
 There are two types of auxiliary curves that are used throughout this section.
 
 The \textbf{first type} of curves (which are only defined for surfaces with $n \geq 2$ punctures) are defined for each $k = 2,4$ as follows (see Figure \ref{fig:Sec4-4Fig1}): $$\vout{k} \ColonEqq \langle \{\alpha_{0}^{i}\}_{i = 1}^{n-1} \cup (\Cf_{0} \backslash \{\alpha_{k}\})\rangle \in \Cf^{1}.$$
 \begin{figure}[H]
     \centering
     \includegraphics[height=4cm]{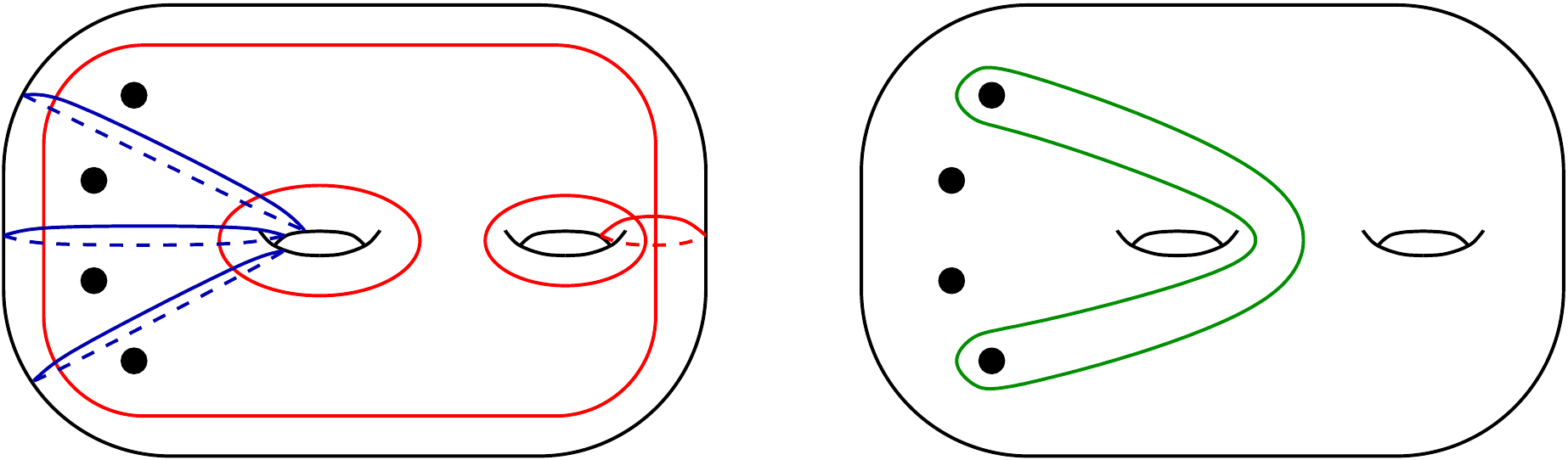}\\[3mm]
     \includegraphics[height=4cm]{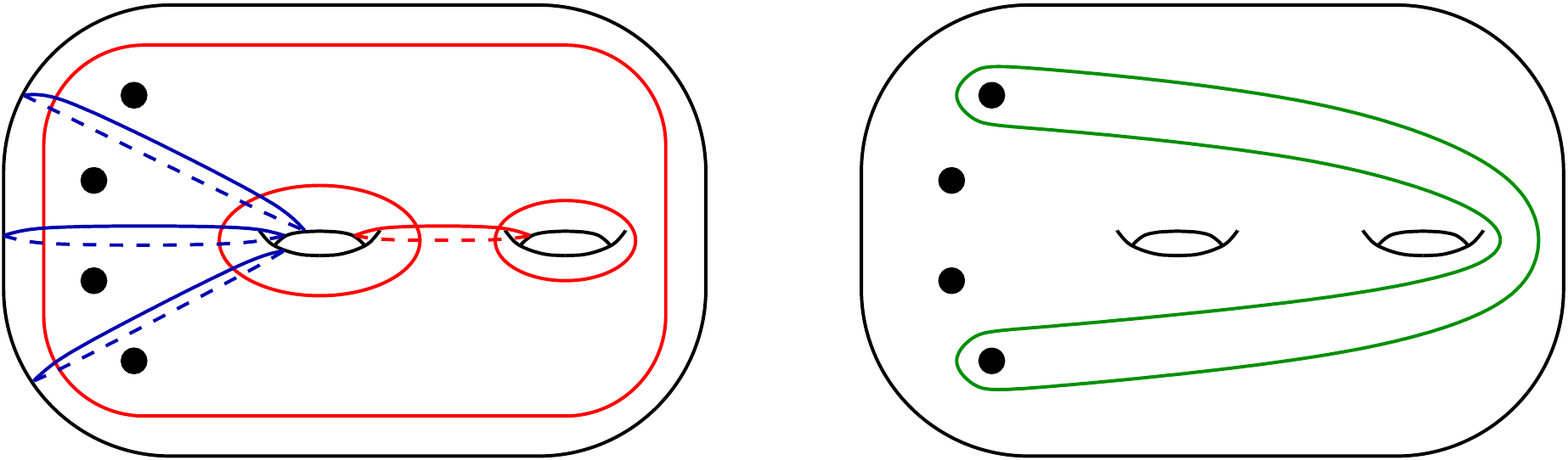}
     \caption{On the left, the curves needed to uniquely determine the curves $\vout{2}$ (top) and $\vout{4}$, on the right.}
     \label{fig:Sec4-4Fig1}
 \end{figure}
 %
 
 To define the second type of curve we must first define for each $0 \leq j \leq n$ the following chain: $$C_{j} \ColonEqq \{\alpha_{0}^{j},\alpha_{1},\alpha_{2}\}.$$
 
 Now, the \textbf{second type} of curve is defined for $i = 0$ as follows: $$\zeta^{0} \ColonEqq \langle \{\alpha_{1},\alpha_{3}, \alpha_{5}\} \cup \{\beta_{C_{j}}^{+}: 0 < j \leq n\}\rangle,$$ for each $0 < i < n+1$ as follows: $$\zeta^{i} = \langle \{\alpha_{1},\alpha_{3}, \alpha_{4}, \alpha_{5}\} \cup \{\beta_{C_{j}}^{+}: j < i\} \cup \{\beta_{C_{k}}^{-}: i \leq k\}\rangle,$$ and finally for $i = n+1$ as follows: $$\zeta^{n+1} \ColonEqq \langle \{\alpha_{1},\alpha_{3}, \alpha_{5}\} \cup \{\beta_{C_{j}}^{-}: 0 \leq j < n\}\rangle.$$ See Figure \ref{fig:Sec4-4Fig2} for examples.
 
 \begin{figure}[H]
     \centering
     \includegraphics[height=38mm]{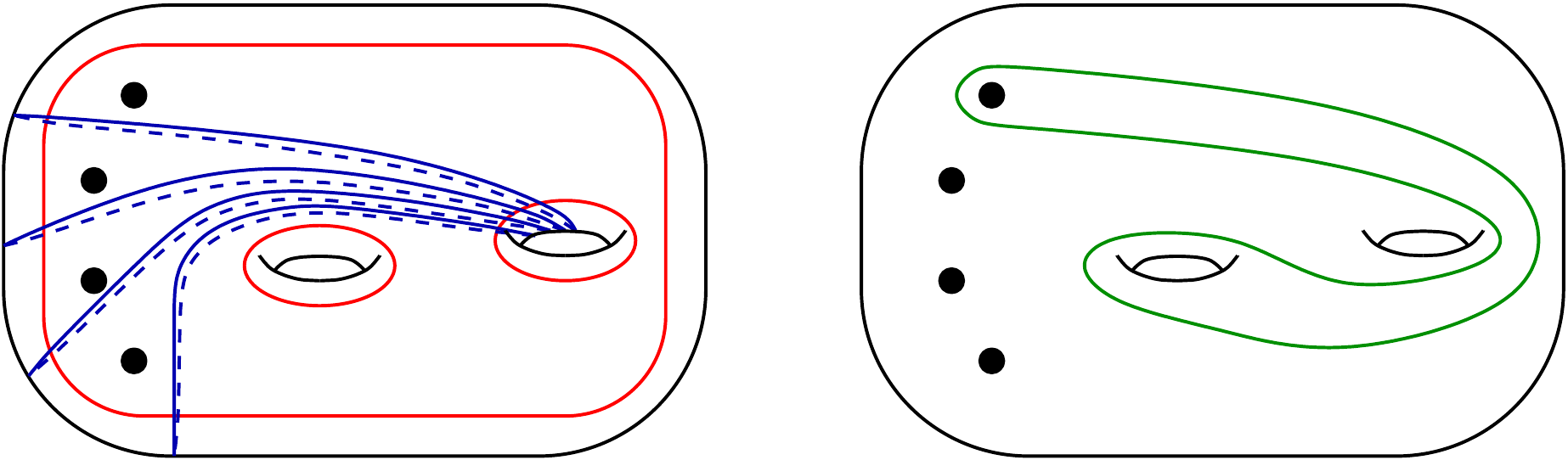}\\[3mm]
     \includegraphics[height=38mm]{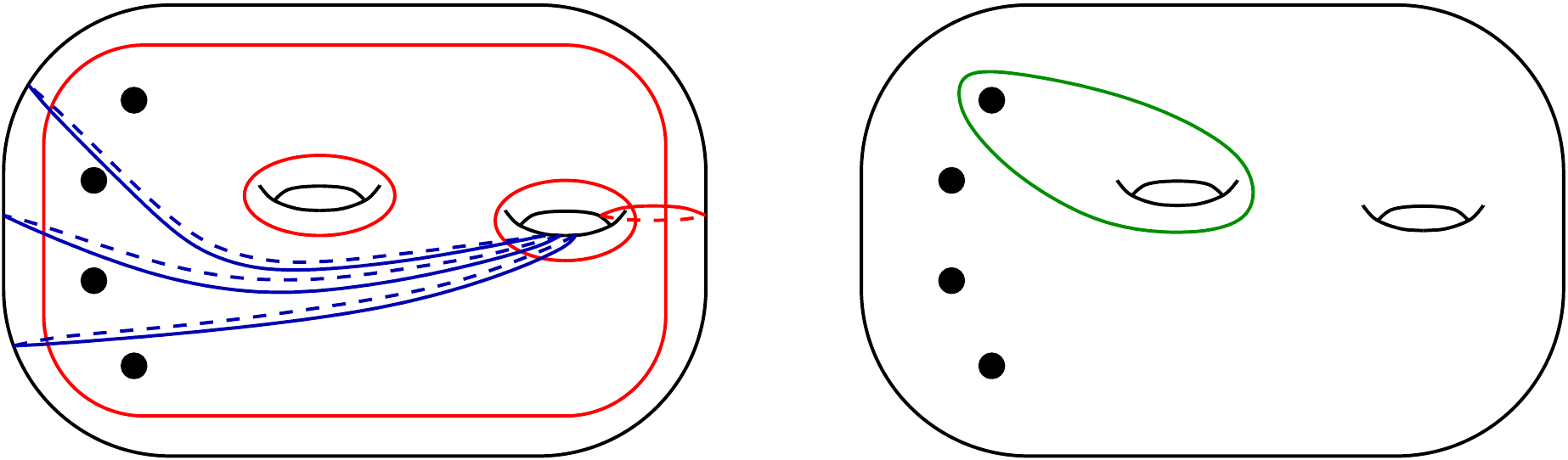}\\[3mm]
     \includegraphics[height=38mm]{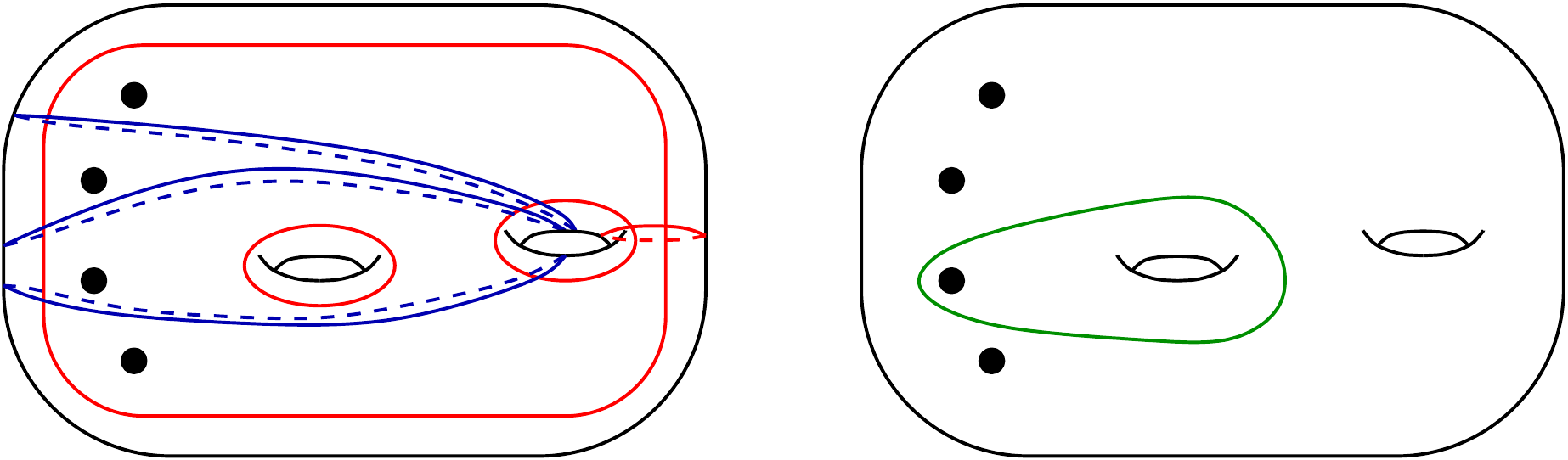}\\[3mm]
     \includegraphics[height=38mm]{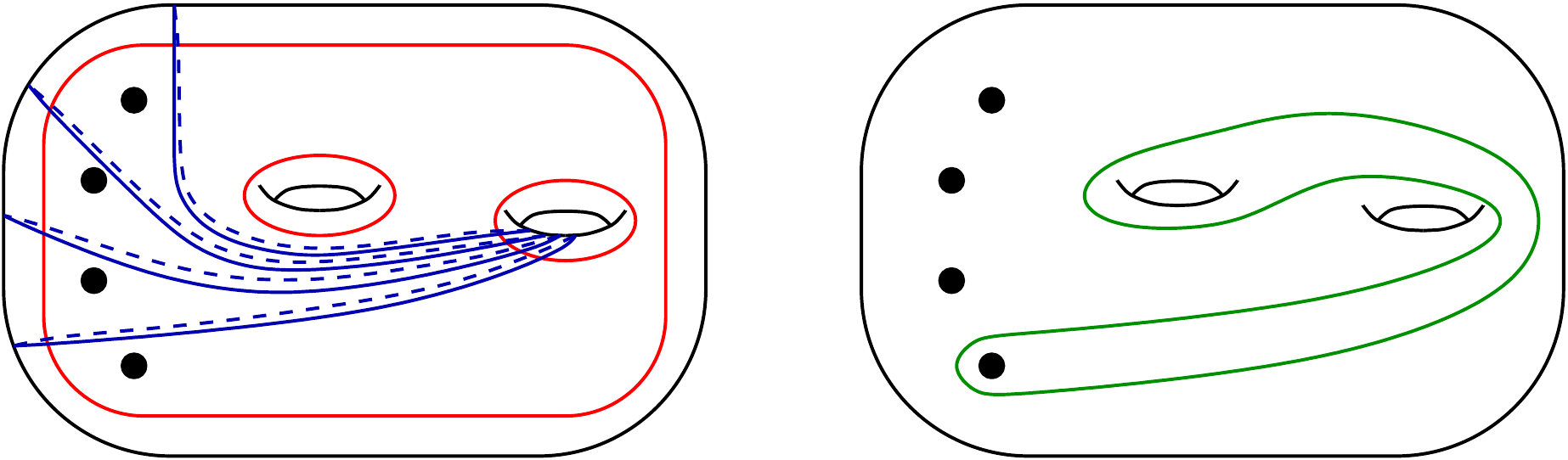}
     \caption{On the left, the curves needed to uniquely determine the curves $\zeta^{0}$ (top), $\zeta^{1}$ (upper middle), $\zeta^{3}$ (lower middle) and $\zeta^{n+1}$, on the right.}
     \label{fig:Sec4-4Fig2}
 \end{figure}
 
 Note that for all $0 \leq i \leq n+1$ we have that $\zeta^{i} \in \Yf{S}^{1}$.
 
 Finally, we define the sets $$\Ef := \left\{\begin{array}{cl}
    \Df  & \text{if $n = 1$}, \\
    \Df \cup \{\veps_{2}, \veps_{4}\}  & 
 \end{array} \right.$$ (thus $\Ef \subset \Cf^{2}$), and $$\Zf := \{\zeta^{j}\}_{j=0}^{n+1}$$ (thus $\Zf \subset \Yf{S}^{1}$).
 

\subsection{Proof of Claim 1: $\tau_{(\Cf \backslash \{\alpha_{5}\})}^{\pm 1}(\Cf) \subset \Yf{S}^{8}$}\label{subsec4-5}
 Given that $S$ has genus 2, the strategy used in \cite{JHH1} for genus at least 3 (find two ``standard'' cases of the claim and then use the action of a subgroup of $\Mod{S}$) is actually more complicated than listing exactly each case of $\tau_{\alpha}^{\pm 1}(\alpha^{\prime})$ for $\alpha, \alpha^{\prime} \in \Cf$ (there is too little space in a genus 2 surface). Also, using the fact that $\tau_{\alpha}^{\pm 1}(\alpha^{\prime}) = \alpha^{\prime}$ if and only if $i(\alpha,\alpha^{\prime}) = 0$, and recalling Remark \ref{RemSymtau}, we only need to prove that $\tau_{\alpha_{i}}^{\pm 1}(\alpha_{i-1}) \in \Yf{S}^{k}$ for $0 \leq i \leq 5$ (modulo 6), where $\alpha_{0} = \alpha_{0}^{i}$ for each case of $0 \leq i \leq n$ and $k \leq 8$.
 
 The \textbf{general strategy} to prove that $\tau_{\alpha_{i}}(\alpha_{i-1}) \in \Yf{S}^{8}$ (and $\tau_{\alpha_{i}}^{- 1}(\alpha_{i-1}) \in \Yf{S}^{8}$) is the following:
 \begin{itemize}
     \item For $n = 1$, we first prove that $\tau^{\pm 1}_{\alpha_{5}}(\alpha_{4}) \in \Yf{S}^{2}$. We do this is a similar fashion as in \cite{JHH1}. Then, for $i<5$ we use elements in $\Yf{S}$ to fill the complement of the one-holed torus induced by $\alpha_{i}$ and $\alpha_{i-1}$, and then use $\tau^{\mp 1}_{\alpha_{i+1}}(\alpha_{i})$ to uniquely determine $\tau^{\pm 1}_{\alpha_{i}}(\alpha_{i-1})$. This yields that $\tau^{\pm 1}_{\alpha_{i}}(\alpha_{i-1}) \in \Yf{S}^{7}$ for all $i$.
     \item For $n \geq 2$, the strategy is almost the same with the exception of needing particular auxiliary curves and $\Df$, hence getting that $\tau^{\pm 1}_{\alpha_{5}}(\alpha_{4}) \in \Yf{S}^{3}$. This causes that in the end $\tau^{\pm 1}_{\alpha_{i}}(\alpha_{i-1}) \in \Yf{S}^{8}$ for all $i$.
 \end{itemize}
 
 \begin{Rem}\label{Remg2-TCE}
  Note that, since $\Ef \subset \Cf^{2}$, due to Proposition \ref{Prop3-7} we have that $$\tau_{(\Cf \backslash \{\alpha_{5}\})}^{\pm 1}(\Ef) \subset \Yf{S}^{10}.$$
 \end{Rem}
 
 Now, for the purpose of facilitating the exposition to the reader, we illustrate the case $n=1$ and for the case $n\geq 2$ we introduce the auxiliary curves and instruct the necessary changes from the previous case.
 
\subsubsection{Case $n = 1$}
 First, let $\gamma^{\pm}$ be the following curves (see Figure \ref{fig:Sec4-5Fig1}): $$\gamma^{+} = \langle \{\alpha_{0}^{0}, \alpha_{0}^{1}, \alpha_{3}\} \cup \{\beta_{\{\alpha_{5}, \alpha_{0}^{1}, \alpha_{1}\}}^{+}\} \rangle \in \Yf{S}^{1},$$ $$\gamma^{-} = \langle \{\alpha_{0}^{0}, \alpha_{0}^{1}, \alpha_{3}\} \cup \{\beta_{\{\alpha_{5}, \alpha_{0}^{0}, \alpha_{1}\}}^{+}\} \rangle \in \Yf{S}^{1}.$$
 
 \begin{figure}[ht]
     \centering
     \includegraphics[height=35mm]{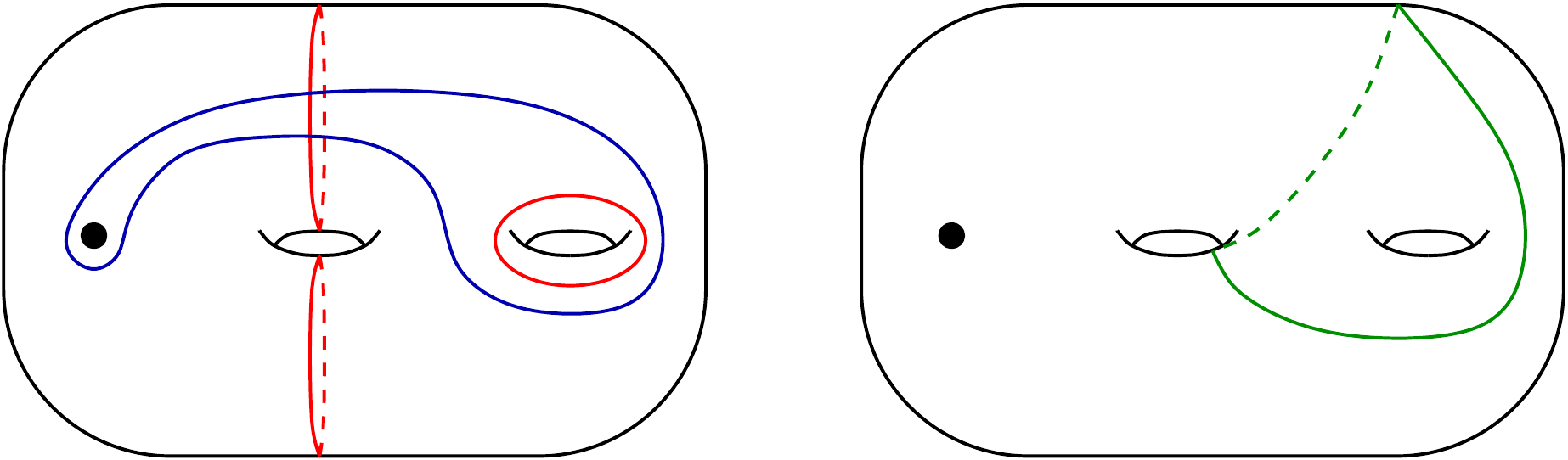}\\[3mm]
     \includegraphics[height=35mm]{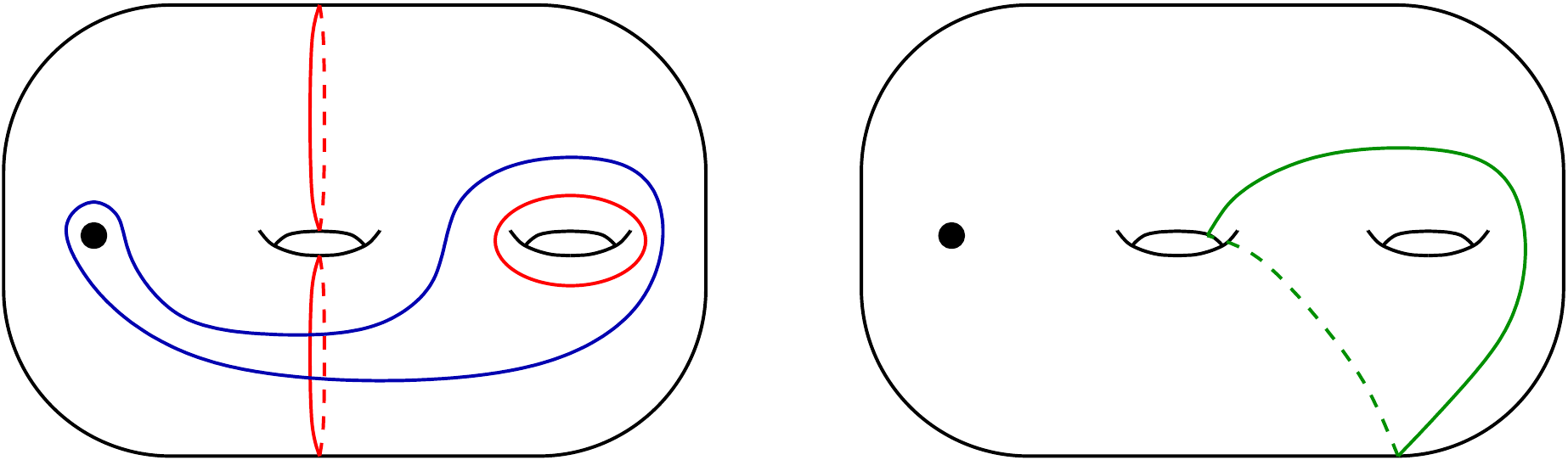}
     \caption{On the left, the curves needed to uniquely determine the curves $\gamma^{+}$ (top) and $\gamma^{-}$ (bottom), on the right.}
     \label{fig:Sec4-5Fig1}
 \end{figure}
 
 With these curves we get (see Figure \ref{fig:Sec4-5Fig2}) $$\tau^{\pm 1}_{\alpha_{5}}(\alpha_{4}) = \langle \{\alpha_{1}, \alpha_{2}\} \cup \{\beta_{\{\alpha_{3},\alpha_{4},\alpha_{5}\}}^{+}\} \cup \{\gamma^{\pm}\} \rangle \in \Yf{S}^{2}.$$
 
 \begin{figure}[ht]
     \centering
     \includegraphics[height=35mm]{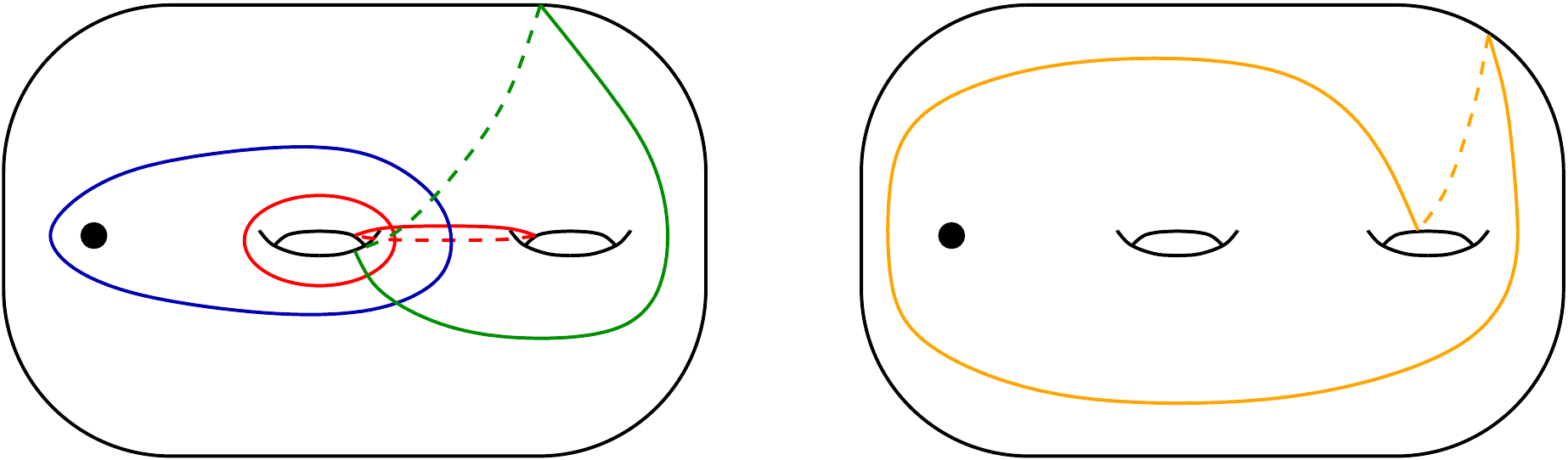}
     \caption{On the left, the curves needed to uniquely determine the curve $\tau_{\alpha_{5}}(\alpha_{4})$, on the right.}
     \label{fig:Sec4-5Fig2}
 \end{figure}
 
 For $\tau^{\pm 1}_{\alpha_{i}}(\alpha_{i-1})$ with $i < 5$, let $A$ be as in Table \ref{tab:ConjuntoA} and we get $$\tau^{\pm 1}_{\alpha_{i}}(\alpha_{i-1}) = \langle A \cup \{\tau^{\mp 1}_{\alpha_{i+1}}(\alpha_{i})\} \rangle \in \Yf{S}^{j},$$ where $i + j = 7$ and $\alpha_{0} = \alpha_{0}^{k}$ for each $k = 0, 1$. See Figure \ref{fig:Sec4-5Fig3} for examples.
 
 \begin{table}[H]
     \centering
     \begin{tabular}{|c|l|}\hline
        i  & $A$ \\\hline
        4  & $\{\alpha_{0}^{0}, \alpha_{0}^{1}, \alpha_{1}\}$ \\\hline
        3  & $\{\alpha_{0}^{0}, \alpha_{0}^{1}, \alpha_{5}\}$ \\\hline
        2  & $\{\alpha_{4}, \alpha_{5}, \beta_{\{\alpha_{1}, \alpha_{2}, \alpha_{3}\}}^{+}\}$ \\\hline
        1  & $\{\alpha_{3}, \alpha_{4}, \beta_{\{\alpha_{5}, \alpha_{0}^{0}, \alpha_{1}\}}^{+}\}$ for $k = 0$, \\\hline
          & $\{\alpha_{3}, \alpha_{4}, \beta_{\{\alpha_{1}, \alpha_{0}^{1}, \alpha_{5}\}}^{+}\}$ for $k = 1$, \\\hline
        0  & $\{\alpha_{2}, \alpha_{3}, \beta_{\{\alpha_{1},\alpha_{0}^{0}, \alpha_{5}\}}^{+}\}$ for $k = 0$,\\\hline
          & $\{\alpha_{2}, \alpha_{3}, \beta_{\{\alpha_{1},\alpha_{0}^{1}, \alpha_{5}\}}^{+}\}$ for $k = 1$.\\\hline
     \end{tabular}
     \caption{The set $A$ for uniquely determining $\tau^{\pm 1}_{\alpha_{i}}(\alpha_{i-1})$ for $n = 1$.}
     \label{tab:ConjuntoA}
 \end{table}
 
 \begin{figure}[H]
     \centering
     \includegraphics[height=4cm]{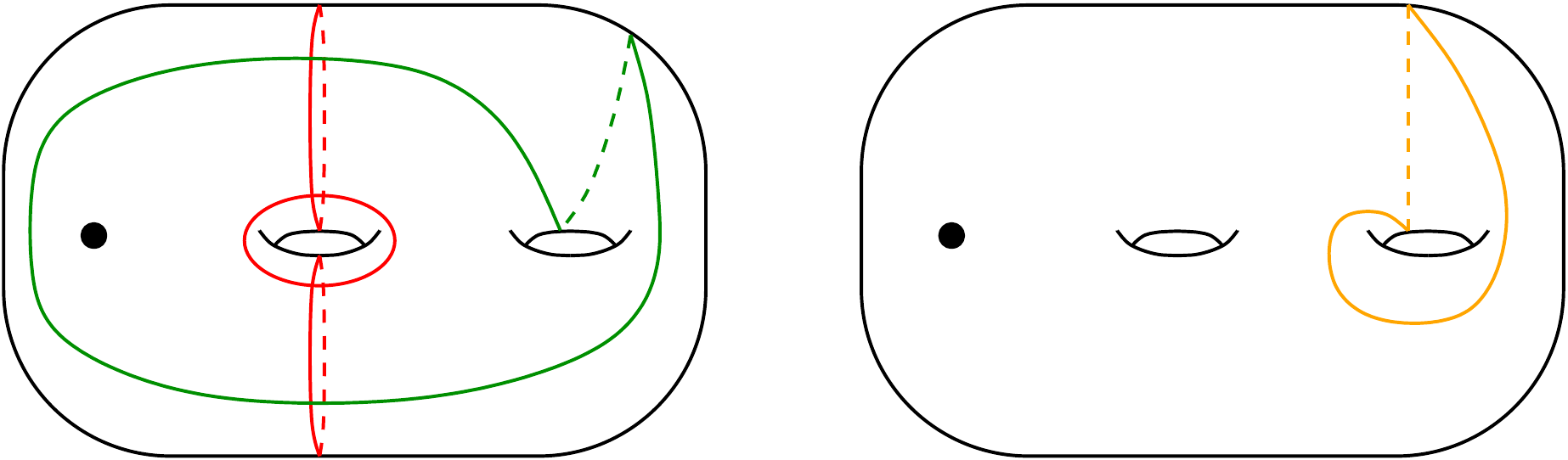}\\[5mm]
     \includegraphics[height=4cm]{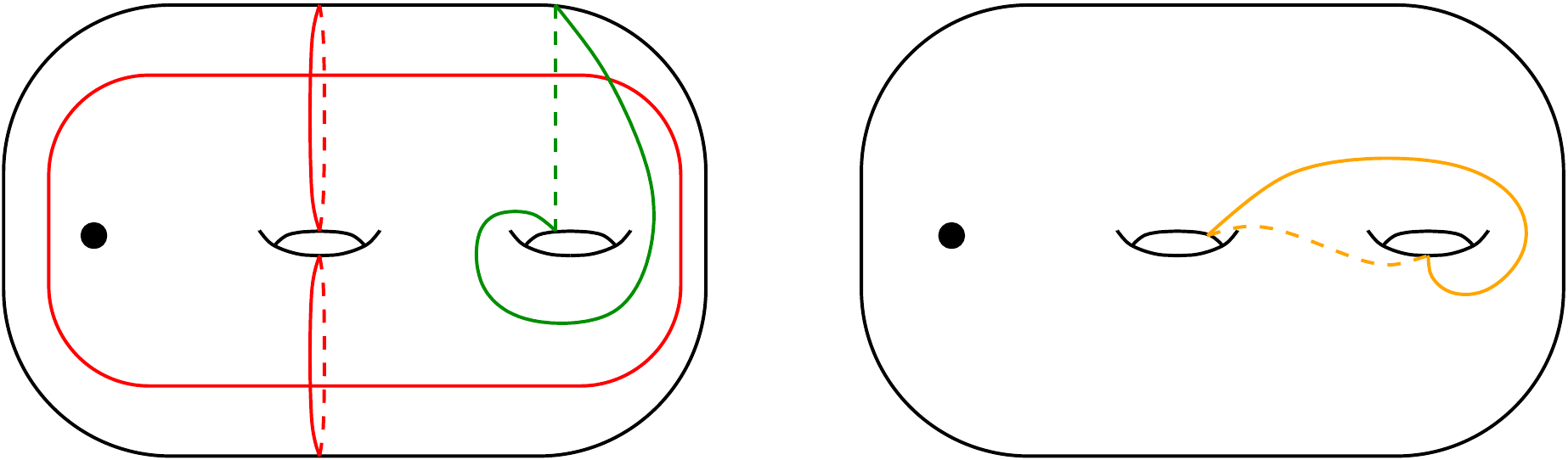}
     \caption{On the left, the curves needed to uniquely determine the curves $\tau^{-1}_{\alpha_{4}}(\alpha_{3})$ (top) and $\tau_{\alpha_{3}}(\alpha_{2})$ (bottom), on the right.}
     \label{fig:Sec4-5Fig3}
 \end{figure}
 
\subsubsection{Case $n \geq 2$}
 For this case we first define the following auxiliary curves. For let $\Delta_{1,n} = \Delta \in \Yf{S}^{2}$ (see Subsection \ref{subsec4-2}); then for $1 < i < n$ we define $\Delta_{1,i}$ and $\Delta_{i,n}$ as in Table \ref{tab:AuxDeltain} (see Figure \ref{fig:Sec4-5Fig4}).
 \begin{table}[ht]
     \centering
     \begin{tabular}{|c|l|}\hline
        $\Delta_{1,2}$  & $\delta_{1} \in \Cf^{1}$ \\\hline
        $\Delta_{n-1,n}$  & $\delta_{n-1} \in \Cf^{1}$\\\hline
        $\Delta_{1,i}$ & $\langle (\Cf \backslash \{\alpha_{0}^{1}, \ldots, \alpha_{0}^{i-1}\}) \cup \{\delta_{1}, \ldots \delta_{i-1}\}\rangle \in \Yf{S}^{2}$\hspace{2mm} for $2 < i < n-1$,\\\hline
        $\Delta_{i,n}$ & $\langle (\Cf \backslash \{\alpha_{0}^{i}, \ldots, \alpha_{0}^{n-1}\}) \cup \{\delta_{i}, \ldots \delta_{n-1}\}\rangle \in \Yf{S}^{2}$\hspace{2mm} for $2 < i < n-1$,\\\hline
     \end{tabular}
     \caption{The auxiliary curves $\Delta_{1,i}$ and $\Delta_{i,n}$.}
     \label{tab:AuxDeltain}
 \end{table}
 Note that $\Df_{1,i}, \Df_{i,n} \subset \Yf{S}^{2}$.
 
 \begin{figure}[ht]
     \centering
     \includegraphics[height=4cm]{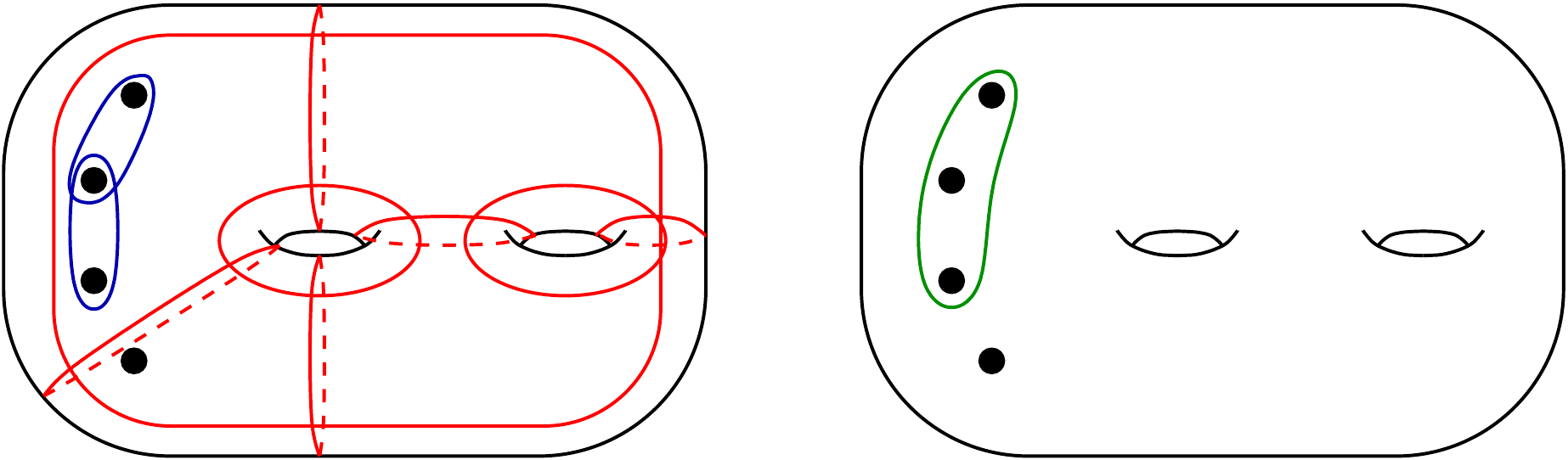}\\[5mm]
     \includegraphics[height=4cm]{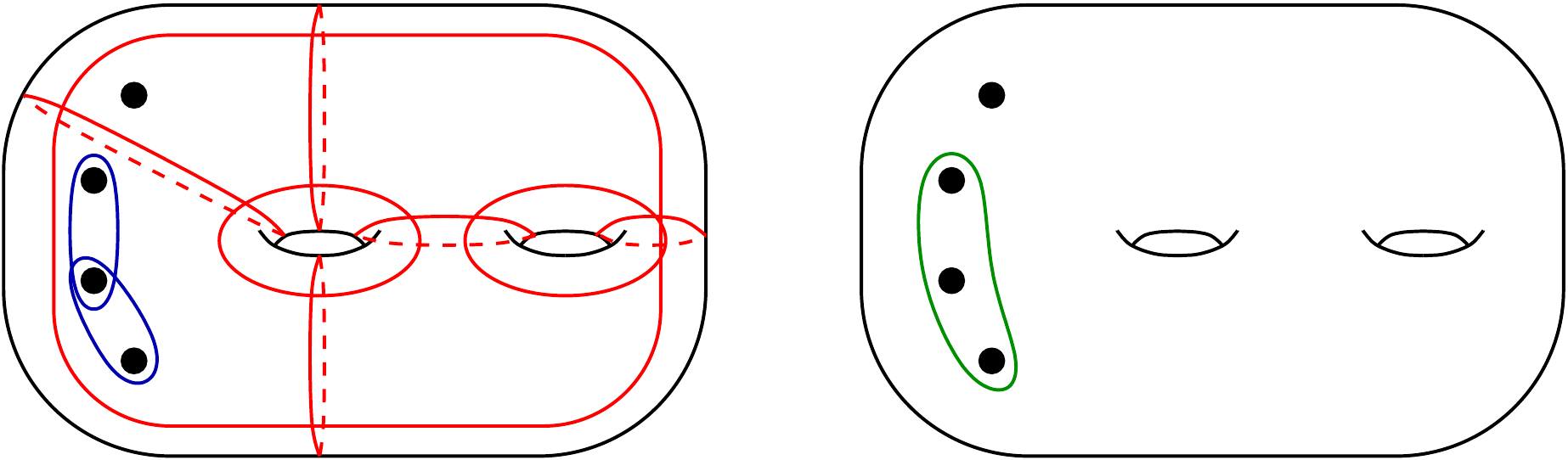}
     \caption{On the left, the curves needed to uniquely determine the curves $\Delta_{1,3}$ (top) and $\Delta_{2,4}$ (bottom).}
     \label{fig:Sec4-5Fig4}
 \end{figure}
 
 With these curves, we define the sets 
 $$\Df_{1,i} := \left\{
 \begin{array}{cl}
    \varnothing  & \text{if } i = 1, \\
    \{\delta_{1}\}  & \text{if } i = 2,\\
    \{\delta_{1}, \ldots, \delta_{i-1}\} \cup \{\Delta_{1,i}\} & \text{if } i \geq 3.
 \end{array}\right.$$
 $$\Df_{i,n} := \left\{
 \begin{array}{cl}
    \varnothing  & \text{if } i = n, \\
    \{\delta_{i-1}\}  & \text{if } i = n-1,\\
    \{\delta_{i}, \ldots, \delta_{n-1}\} \cup \{\Delta_{i,n}\} & \text{if } i \leq n-2.
 \end{array}\right.$$
 
 Using these sets, we define the following auxiliary curves (see Figure \ref{fig:Sec4-5Fig5}) $$\beta_{i}^{+} = \langle (\Cf \backslash \{\alpha_{0}^{0}, \ldots, \alpha_{0}^{i-1}, \alpha_{2}, \alpha_{4}\}) \cup \Df_{1,i} \rangle \in \Yf{S}^{3},$$ $$\beta_{i}^{-} = \langle (\Cf \backslash \{\alpha_{0}^{i}, \ldots, \alpha_{0}^{n}, \alpha_{2}, \alpha_{4}\}) \cup \Df_{i,n} \rangle \in \Yf{S}^{3}.$$
 
 \begin{figure}[ht]
     \centering
     \includegraphics[height=4cm]{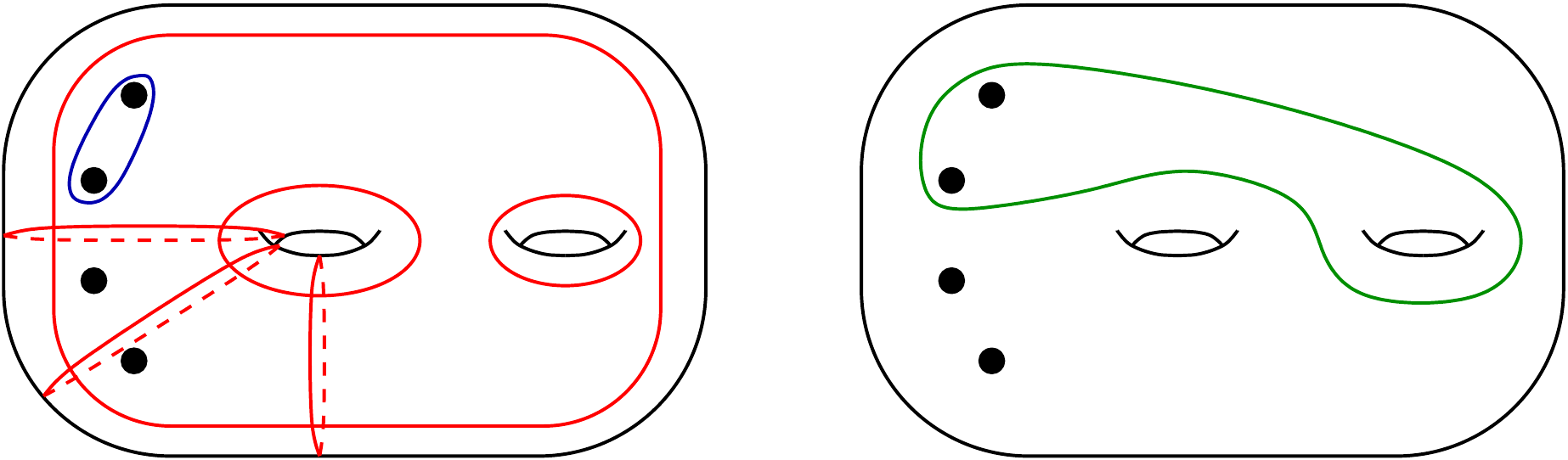}
     \caption{On the left, the curves needed to uniquely determine $\beta_{2}^{+}$, on the right.}
     \label{fig:Sec4-5Fig5}
 \end{figure}
 
 Now we can prove the claim for this case. Note that we need only substitute the set $\{\alpha_{0}^{0},\alpha_{0}^{1}\}$ for $\Cf_{f}$ in the unique determination of the curves $\gamma^{\pm}$ of Case $n=1$, to obtain analogous curves (again denoted by $\gamma^{\pm}$) for Case $n \geq 2$. Then, for the unique determination of $\tau_{\alpha_{5}}^{\pm 1}(\alpha_{4})$ we proceed as in Case $n=1$, adding the set $\Df$ and using these new $\gamma^{\pm}$. This implies that $\tau_{\alpha_{5}}^{\pm 1}(\alpha_{4}) \in \Yf{S}^{3}$
 
 Similarly, to uniquely determine $\tau_{\alpha_{4}}^{\pm 1}(\alpha_{3})$ and $\tau_{\alpha_{3}}^{\pm 1}(\alpha_{2})$ we also substitute in the set $A$ (see Table \ref{tab:ConjuntoA}) the set $\{\alpha_{0}^{0}, \alpha_{0}^{1}\}$ for $\Cf_{f}$. Thus, $\tau_{\alpha_{4}}^{\pm 1}(\alpha_{3}) \in \Yf{S}^{4}$ and $\tau_{\alpha_{3}}^{\pm 1}(\alpha_{2}) \in \Yf{S}^{5}$. For $\tau_{\alpha_{2}}^{\pm 1}(\alpha_{1})$ we add in $A$ the set $\Df$ and get $\tau_{\alpha_{2}}^{\pm 1}(\alpha_{1}) \in \Yf{S}^{6}$.
 
 For $\tau_{\alpha_{1}}^{\pm 1}(\alpha_{0}^{0})$ and $\tau_{\alpha_{1}}^{\pm 1}(\alpha_{0}^{n})$, we substitute in $A$ the curves $\beta_{C}^{+}$ for $\beta_{1}^{-}$ and $\beta_{n}^{+}$ respectively, and add the set $\Df$. Hence, $\tau_{\alpha_{1}}^{\pm 1}(\alpha_{0}^{0}), \tau_{\alpha_{1}}^{\pm 1}(\alpha_{0}^{n}) \in \Yf{S}^{7}$. The curves $\tau_{\alpha_{1}}^{\pm 1}(\alpha_{0}^{i})$ for $1 \leq i \leq n-1$ are a little trickier: we substitute $\beta_{C}^{+}$ for the set $\{\beta_{i}^{+}, \beta_{i+1}^{-}\}$ and add the sets $\Df_{1,i}$ and $\Df_{i+1,n}$. See Figure \ref{fig:Sec4-5Fig6} for an example. Then, $\tau_{\alpha_{1}}^{\pm 1}(\alpha_{0}^{i}) \in \Yf{S}^{7}$ for all $0 \leq i \leq n$.
 
 \begin{figure}[ht]
     \centering
     \includegraphics[height=4cm]{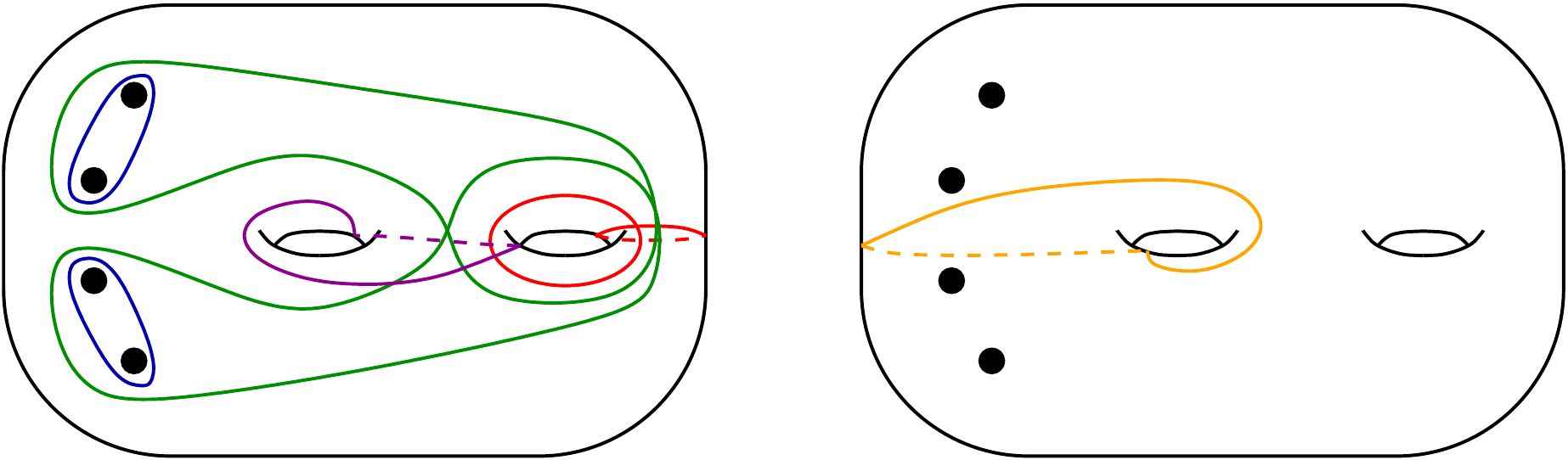}
     \caption{On the left, the curves needed to uniquely determine $\tau_{\alpha_{1}}(\alpha_{0}^{2})$, on the right.}
     \label{fig:Sec4-5Fig6}
 \end{figure}
 
 Proceeding as above, for the curves $\tau_{\alpha_{0}^{0}}^{\pm 1}(\alpha_{5})$ and $\tau_{\alpha_{0}^{n}}^{\pm 1}(\alpha_{5})$ we substitute for $\beta_{1}^{-}$ and $\beta_{n}^{+}$ respectively while adding $\Df$ to $A$ for all of them, and for the curves $\tau_{\alpha_{0}^{i}}^{\pm 1}(\alpha_{5})$ (with $0 < i < n)$ we substitute in $A$ the curve $\beta_{C}^{+}$ for the set $\{\beta_{i}^{+}, \beta_{i+1}^{-}\}$ and add the sets $\Df_{1,i}$ and $\Df_{i+1,n}$. See Figure \ref{fig:Sec4-5Fig7}. Therefore, $\tau_{\alpha_{0}^{i}}^{\pm 1}(\alpha_{5}) \in \Yf{S}^{8}$ for all $0 \leq i \leq n$.
 
 \begin{figure}[ht]
     \centering
     \includegraphics[height=4cm]{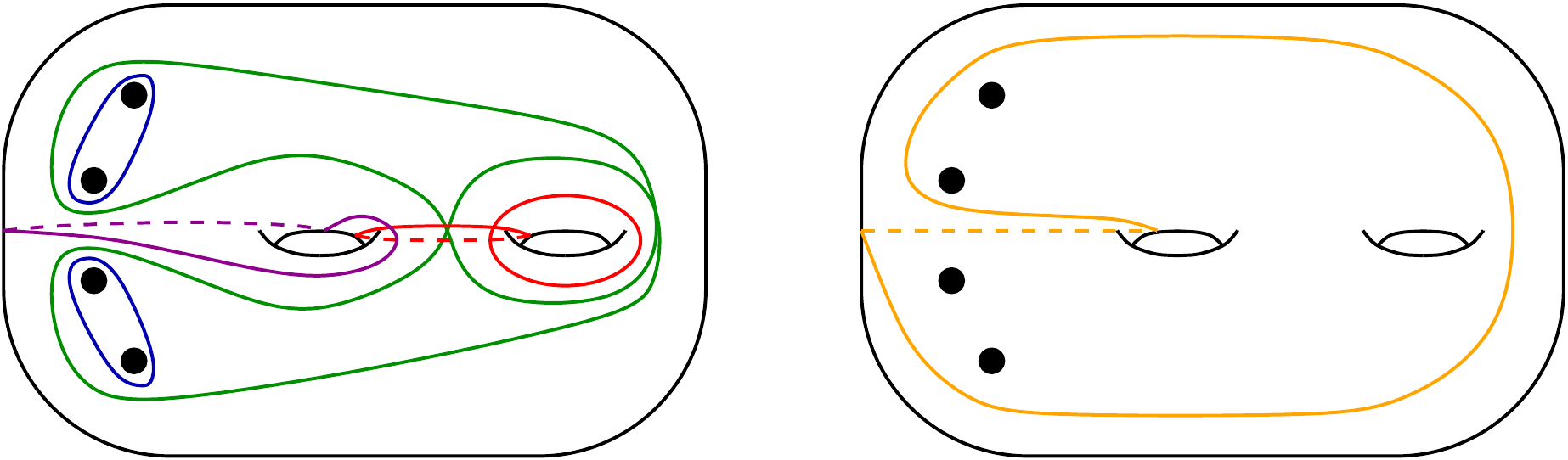}
     \caption{On the left, the curves needed to uniquely determine $\tau_{\alpha_{0}^{2}}(\alpha_{5})$, on the right.}
     \label{fig:Sec4-5Fig7}
 \end{figure}
 
\subsection{Proof of Claim 2: $\tau_{(\Cf \backslash \{\alpha_{5}\})}^{\pm 1}(\Bf) \subset \Yf{S}^{11}$}\label{subsec4-6}
 Similarly to Subsections \ref{subsec2-5} (first case), \ref{subsec2-6} and \ref{subsec3-5}, the proof of this claim follows from the results in Subsection \ref{subsec4-5}: Let $\beta \in \Bf$ and $\alpha \in \Cf \backslash \{\alpha_{5}\}$. Since $S$ has genus $2$, we can uniquely determine $\beta$ using sets $C \subset \Cf$ and $E \subset \Ef$. Then we have $\tau_{\alpha}^{\pm 1}(\beta) = \langle \tau_{\alpha}^{\pm 1}(C \cup E)\rangle$, which by the results in Subsection \ref{subsec4-5} and Remark \ref{Remg2-TCE} is an element in $\Yf{S}^{11}$.
 
 The existence of these sets can be easily inferred from the following examples (see Figure \ref{fig:Sec4-6Fig1}):
 \begin{itemize}
  \item For $C = \{\alpha_{1},\alpha_{2},\alpha_{3}\}$: Let $D = \varnothing$ if $n =1$ and $D= \{\out{j}\}_{j=1}^{n-1} \cup \{\vout{4}\}$ if $n \geq 2$. Then we have
  $$\beta_{C}^{+} = \langle \{\alpha_{1},\alpha_{2},\alpha_{3}\} \cup \{\alpha_{5}\} \cup D \rangle,$$
  $$\beta_{C}^{-} = \alpha_{5}.$$
  \item For $C = \{\alpha_{0}^{i},\alpha_{1},\alpha_{2}\}$ with $0 < i < n$: 
  $$\beta_{C}^{+} = \langle \{\alpha_{0}^{i},\alpha_{1},\alpha_{2}\} \cup \{\alpha_{0}^{j}: i < j\} \cup \{\alpha_{4}\} \cup \{\out{j}: j <i\} \cup \{\vout{4}\}\rangle,$$
  $$\beta_{C}^{-} = \langle \{\alpha_{0}^{i},\alpha_{1},\alpha_{2}\} \cup \{\alpha_{0}^{j}: j < i\} \cup \{\alpha_{4}\} \cup \{\out{j}: i <j\} \cup \{\vout{4}\}\rangle.$$
  \item For $C = \{\alpha_{0}^{n},\alpha_{1},\alpha_{2}\}$: 
  $$\beta_{C}^{+} = \langle \{\alpha_{0}^{n},\alpha_{1},\alpha_{2}\} \cup \{\alpha_{4}\} \cup \Df\rangle,$$
  $$\beta_{C}^{-} = \alpha_{4}.$$
  \item For $C = \{\alpha_{5},\alpha_{0}^{i},\alpha_{1}\}$ with $0 <i<n$: 
  $$\beta_{C}^{+} = \langle \{\alpha_{5},\alpha_{0}^{i},\alpha_{1}\} \cup \{\alpha_{3}\} \cup \{\out{j}: j \neq i\} \cup \{\vout{2},\vout{4}\}\rangle,$$
  $$\beta_{C}^{-} = \alpha_{3}.$$
  \item For $C = \{\alpha_{5},\alpha_{0}^{0},\alpha_{1}\}$:
  $$\beta_{C}^{+} = \langle \{\alpha_{5},\alpha_{0}^{0},\alpha_{1}\} \cup \{\alpha_{3}\} \cup \Df \rangle,$$
  $$\beta_{C}^{-} = \alpha_{3}.$$
  \item For $C = \{\alpha_{4},\alpha_{5},\alpha_{0}^{i}\}$ with $0 <i<n$: 
  $$\beta_{C}^{+} = \langle \{\alpha_{4},\alpha_{5},\alpha_{0}^{i}\} \cup \{\alpha_{0}^{j}: i <j\} \cup \{\alpha_{2}\} \cup \{\out{j}: j <i\} \cup \{\vout{2}\}\rangle,$$
  $$\beta_{C}^{-} = \langle \{\alpha_{4},\alpha_{5},\alpha_{0}^{i}\} \cup \{\alpha_{0}^{j}: j <i\} \cup \{\alpha_{2}\} \cup \{\out{j}: i <j\} \cup \{\vout{2}\}\rangle.$$
  \item For $C = \{\alpha_{4},\alpha_{5},\alpha_{0}^{n}\}$:
  $$\beta_{C}^{+} = \langle \{\alpha_{4},\alpha_{5},\alpha_{0}^{n}\} \cup \{\alpha_{2}\} \cup \Df\rangle,$$
  $$\beta_{C}^{-} = \alpha_{2}.$$
  \item For $C = \{\alpha_{3},\alpha_{4},\alpha_{5}\}$: $$\beta_{C}^{+} = \langle \{\alpha_{3},\alpha_{4},\alpha_{5}\} \cup \{\alpha_{1}\}  \cup \{\out{j}\}_{j=1}^{n-1} \cup \{\vout{2}\}\rangle,$$
  $$\beta_{C}^{-} = \alpha_{1}.$$
 \end{itemize}
 
 \begin{figure}[H]
     \centering
     \includegraphics[height=37mm]{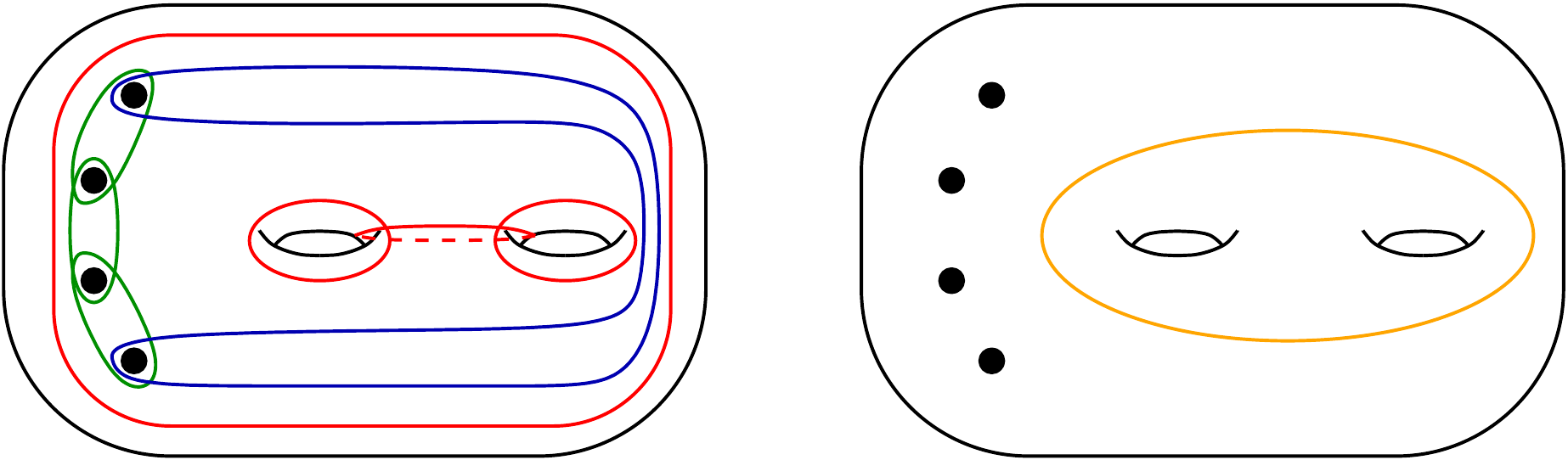}\\[2mm]
     \includegraphics[height=37mm]{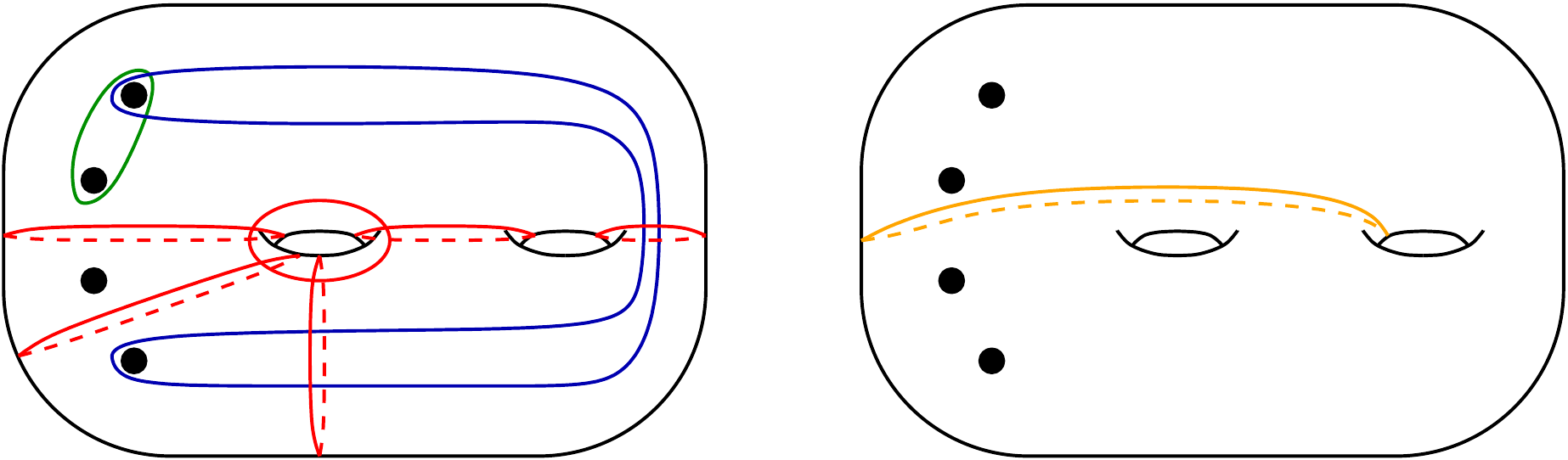}\\[2mm]
     \includegraphics[height=37mm]{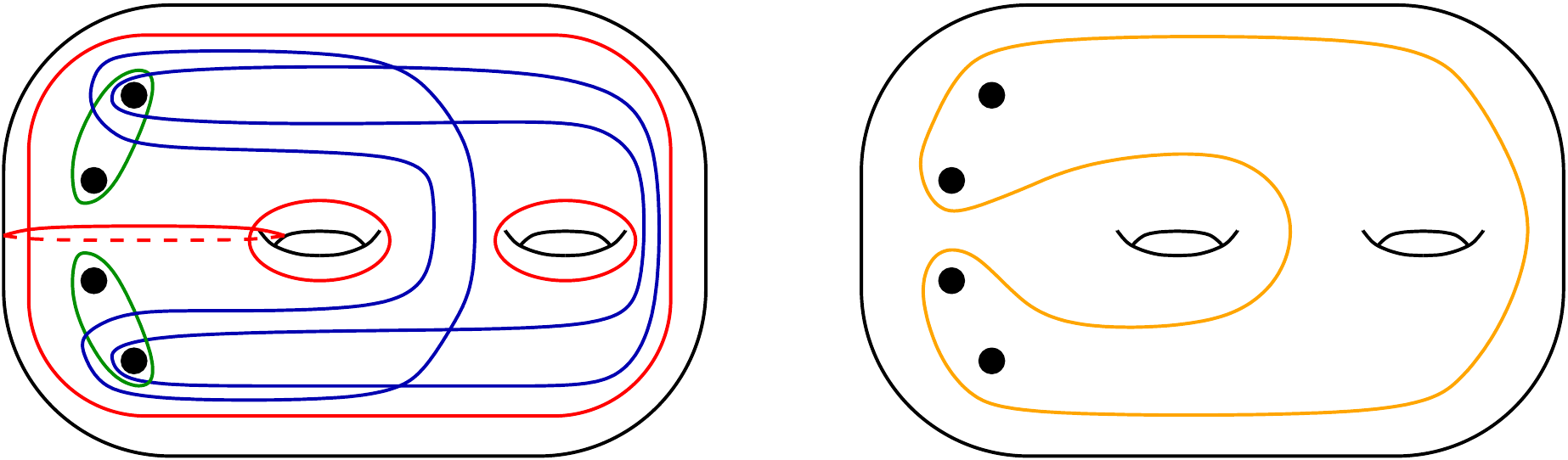}\\[2mm]
     \includegraphics[height=37mm]{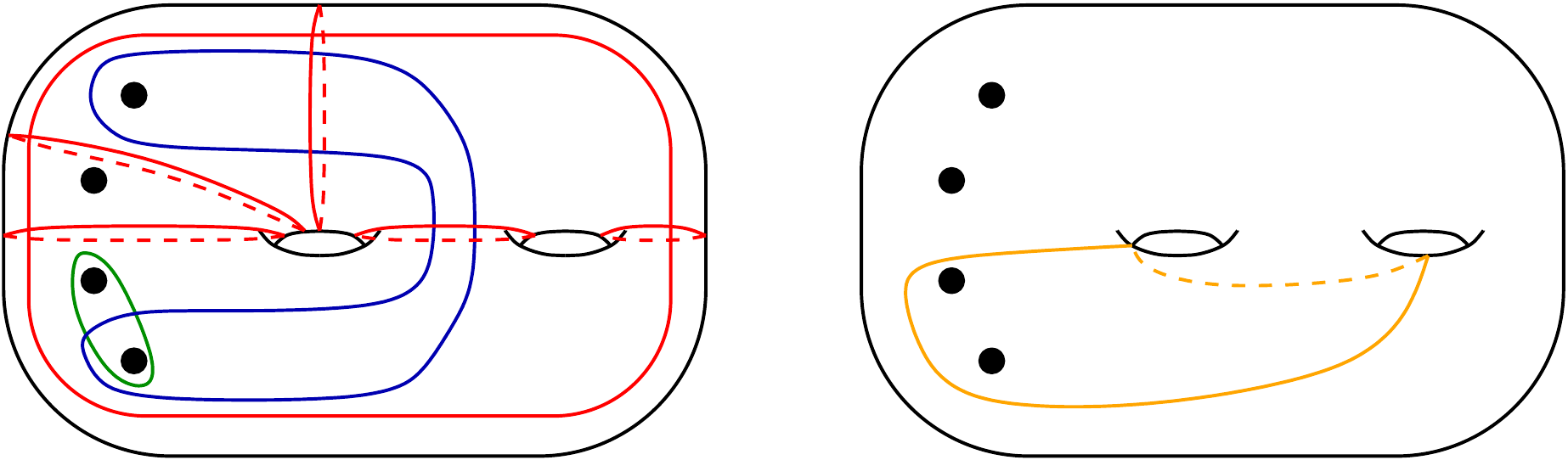}
     \caption{On the left, the curves needed to uniquely determine the curves $\beta_{C}^{+}$ for $C=\{\alpha_{1},\alpha_{2},\alpha_{3}\}$ (top), $C=\{\alpha_{0}^{2},\alpha_{1},\alpha_{1},\alpha_{2}\}$ (upper middle), $C=\{\alpha_{5},\alpha_{0}^{2},\alpha_{1}\}$ (lower middle) and $\beta_{C}^{-}$ for $C=\{\alpha_{4},\alpha_{5},\alpha_{0}^{2}\}$ (bottom), on the right.}
     \label{fig:Sec4-6Fig1}
 \end{figure}
\subsection{Proof of Claim 3: $\eta_{\{\out{i} : 0 < i <n\}}^{\pm 1}(\Cf) \subset \Yf{S}^{3}$}\label{subsec4-7}
 In this subsection we assume that $n \geq 2$ (otherwise there would be no outer curves).
 
 To prove this claim, first note that we only need to prove that $\eta_{\out{i}}^{\pm 1}(\alpha_{0}^{i}) \in \Yf{S}^{3}$ (all other curves in $\Cf$ are disjoint from $\out{i}$). To do so, we define the following auxiliar curves.
 
 For each $1 \leq i \leq n$, we define the following curves (see Figure \ref{fig:Sec4-7Fig1}): 
 $$\gamma_{+}^{i} = \langle \{\alpha_{0}^{j}: j <i\} \cup \{\alpha_{1},\alpha_{2}\} \cup \{\alpha_{4}\} \cup \{\zeta^{k}: i < k \leq n\}\rangle,$$
 $$\gamma_{-}^{i} = \langle \{\alpha_{0}^{j}: i < j\} \cup \{\alpha_{1}, \alpha_{2}\} \cup \{\alpha_{4}\} \cup \{\zeta^{k} : 1 \leq k \leq i\} \rangle.$$
 Note that for all $1 \leq i \leq n$ we have that $\gamma_{+}^{i}, \gamma_{-}^{i} \in \Yf{S}^{2}$.
 
 \begin{figure}[ht]
     \centering
     \includegraphics[height=4cm]{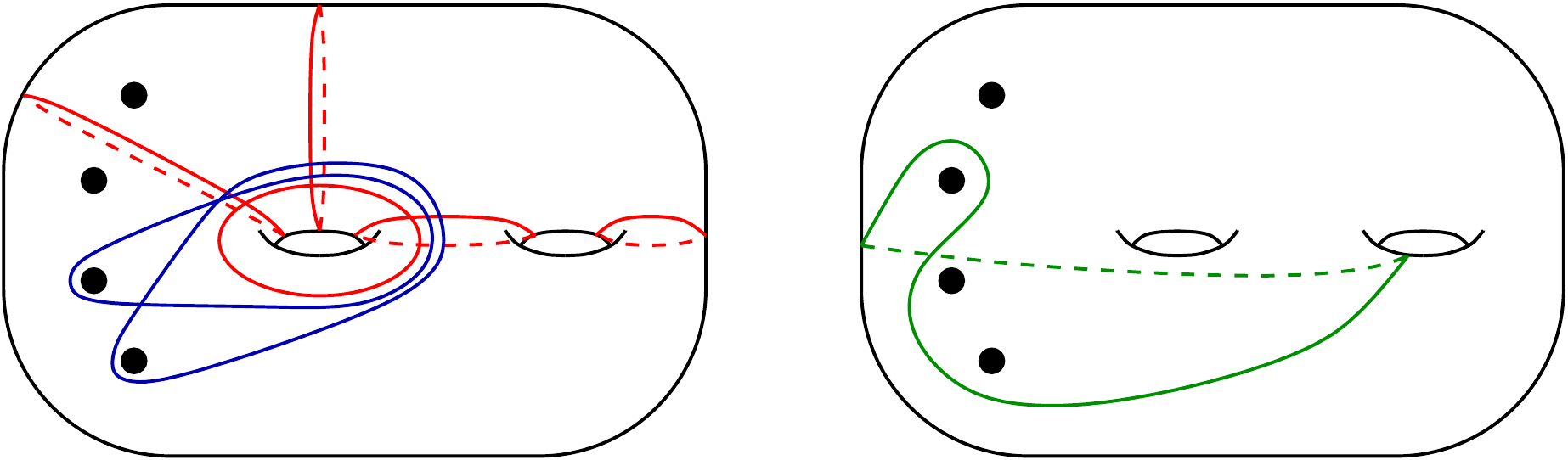}\\[5mm]
     \includegraphics[height=4cm]{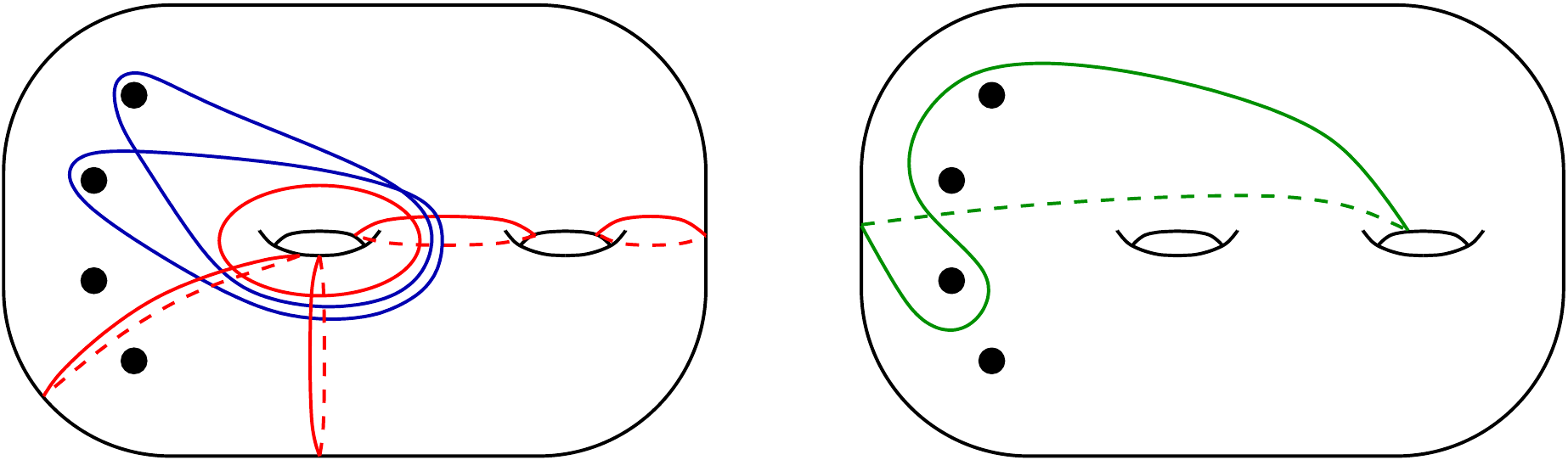}
     \caption{On the left, the curves needed to uniquely determine the curves $\gamma_{+}^{2}$ (top) and $\gamma_{-}^{2}$ (bottom), on the right.}
     \label{fig:Sec4-7Fig1}
 \end{figure}
 
 Finally, for each $1 \leq i <n$, we have that (see Figure \ref{fig:Sec4-7Fig2}): $$\eta_{\out{i}}(\alpha_{i}) = \langle (\Cf \backslash \{\alpha_{0}^{i}, \alpha_{1}, \alpha_{5}\}) \cup \{\gamma_{+}^{i}\}\rangle,$$
 $$\eta_{\out{i}}^{-1}(\alpha_{i}) = \langle (\Cf \backslash \{\alpha_{0}^{i}, \alpha_{1}, \alpha_{5}\}) \cup \{\gamma_{-}^{i}\}\rangle.$$
 
 Therefore, $\eta_{\{\out{i} : 0 < i <n\}}^{\pm 1}(\Cf) \subset \Yf{S}^{3}$
 
 \begin{figure}[ht]
     \centering
     \includegraphics[height=4cm]{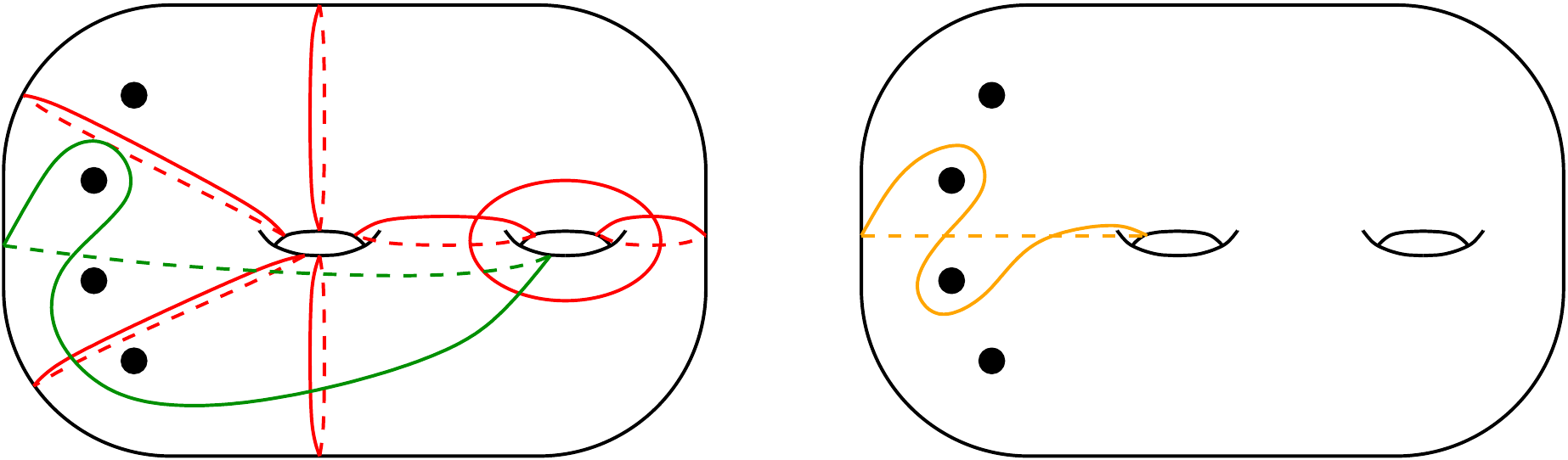}
     \caption{On the left, the curves needed to uniquely determine $\eta_{\out{i}}(\alpha_{2})$, on the right.}
     \label{fig:Sec4-7Fig2}
 \end{figure}
 
 \begin{Rem}\label{Remg2-HCE}
  Again (as in Subsection \ref{subsec4-5}), since $\Ef \subset \Cf^{2}$, we have that $$\eta_{\{\out{i} : 0 < i <n\}}^{\pm 1}(\Ef) \subset \Yf{S}^{5}.$$
 \end{Rem} 
\subsection{Proof of Claim 4: $\eta_{\{\out{i} : 0 < i <n\}}^{\pm 1}(\Bf) \subset \Yf{S}^{6}$}\label{subsec4-8}
 In this subsection we assume that $n \geq 2$ (otherwise there would be no outer curves).
 
 Analogously to Subsection \ref{subsec4-6}, this claim follows from the results in Subsection \ref{subsec4-7}: Let $\beta \in \Bf$ and $\delta \in \{\out{i} : 0 < i <n\}$; since $\beta = \langle C \cup E\rangle$ with $C \subset \Cf$ and $E \subset \Ef$ (as in Subsection \ref{subsec4-6}), we have that $\eta_{\delta}^{\pm 1}(\beta) = \langle \eta_{\delta}^{\pm 1}(C \cup E)\rangle$, which by the results in Subsection \ref{subsec4-7} and Remark \ref{Remg2-HCE} is contained in $\Yf{S}^{6}$.
 
 For the existence of these sets, we refer the reader to the examples and figures in Subsection \ref{subsec4-6}.
\subsection{Proof of Theorem \ref{TeoB}}\label{subsec4-9}
 Similarly to Subsections \ref{subsec2-7} and \ref{subsec3-8}, here we define the set $\Xf{S}$ as the set $\mathfrak{X}_{1}$ from Section 5 in \cite{Ara2}, which was proved to be rigid (see Lemma 5.2 in \cite{Ara2}).
 
 Let $X$ be the set defined as $\Cf$ union the set of all boundary curves of regular neighbourhoods of sets $A$ with one of the following forms:
 \begin{enumerate}
  \item $A = \{\alpha_{0}^{i},\alpha_{0}^{j},\alpha_{k}\}$ where $0 \leq i \leq j \leq n$ and $k = 1, 5$.
  \item $A = \{\alpha_{0}^{i}, \alpha_{0}^{j}, \alpha_{k}, \alpha_{k+1}\}$ where $0 \leq i \leq j \leq n$ and $k = 1, 4$.
  \item $A = \{\alpha_{i}, \alpha_{i+1}, \ldots, \alpha_{j}\}$, where $I = \{i, i+1, \ldots, j\}$ is an integer subinterval of $\{0, \ldots, 5\}$ modulo $6$ (We assume that $\alpha_{0}$ denotes $\alpha_{0}^{k}$ for some $k$). If the length of the interval $I$ is odd, then we require that both $i$ and $j$ are even.
 \end{enumerate}
 
 Then, we define $\Xf{S}$ as the union of $X^{1}$ and the set of all boundary curves of regular neighbourhoods of sets $A = \{\alpha_{i}, \alpha_{i+1}, \ldots, \alpha_{j}\}$, where $0 < i \leq j \leq 3$, and both $i$ and $j$ are odd.
 
 Finally, note that $\Yf{S} \subset \Xf{S}$, which immediatly implies Theorem \ref{TeoB}.
\section{Rigidity}\label{sec5}
In this section we prove Theorems \ref{TeoD} and \ref{TeoE}, i.e. we prove that using only topological restrictions on the surfaces, all graph morphisms between the corresponding curve graphs are geometric. The ideas and techniques used in this section are essentially those used in \cite{JHH2}.

To prove Theorems \ref{TeoD} and \ref{TeoE}, we first need the following definitions and lemmas.

Let $S = S_{g,n}$ be such that $\kappa(S) = 3g -3 +n \geq 2$. A \textit{multicurve} $M$ on $S$ is a set of curves on $S$ corresponding in $\ccomp{S}$ to a complete subgraph. Note that $|M| \leq \kappa(S)$. If the equality holds, then $M$ is called a \textit{pants decomposition of} $S$, since $S \backslash M$ would then be equal to the disjoint union of surfaces homeomorphic to $S_{0,3}$.

\begin{Lema}\label{Multicurves}
 Let $S_{1} = S_{g_{1},n_{1}}$ and $S_{2} = S_{g_{2},n_{2}}$ be such that $\kappa(S_{1}), \kappa(S_{2}) \geq 2$, and let $\varphi: \ccomp{S_{1}} \to \ccomp{S_{2}}$ be a graph morphism. Then $\varphi$ maps multicurves to multicurves of the same cardinality. Moreover, $\kappa(S_{1}) \leq \kappa(S_{2})$.
\end{Lema}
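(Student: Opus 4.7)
The plan is to exploit the key fact that in the curve graph, an edge necessarily has two \emph{distinct} endpoints, since adjacency is defined to require the two isotopy classes to be different. This ``no loops'' property is precisely what forces a graph morphism $\varphi$ to be injective on multicurves, and the rest of the lemma follows by testing $\varphi$ against a pants decomposition.

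First I would take a multicurve $M = \{c_{1}, \ldots, c_{k}\}$ on $S_{1}$. By definition of multicurve, the $c_{i}$'s induce a complete subgraph on $k$ vertices in $\ccomp{S_{1}}$. Since $\varphi$ is a graph morphism, every edge $\{c_{i}, c_{j}\}$ maps to an edge $\{\varphi(c_{i}), \varphi(c_{j})\}$ in $\ccomp{S_{2}}$; in particular the images $\varphi(c_{1}), \ldots, \varphi(c_{k})$ are pairwise adjacent and thus span a complete subgraph, so $\varphi(M)$ is itself a multicurve on $S_{2}$.

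The crucial step is showing $|\varphi(M)| = |M|$. Suppose for contradiction that $\varphi(c_{i}) = \varphi(c_{j})$ for some $i \neq j$. Since $c_{i}$ and $c_{j}$ are distinct vertices of the clique spanned by $M$, they span an edge in $\ccomp{S_{1}}$. The morphism $\varphi$ would then map this edge to the pair $\{\varphi(c_{i}),\varphi(c_{i})\}$, which is not an edge of $\ccomp{S_{2}}$ because an edge requires two \emph{distinct} endpoints (recall the definition given in the Preliminaries, where $\alpha$ disjoint from $\beta$ means $i(\alpha,\beta)=0$ \textbf{and} $\alpha \neq \beta$). This contradicts $\varphi$ being a graph morphism, so $\varphi|_{M}$ is injective and $|\varphi(M)| = |M|$.

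For the final inequality $\kappa(S_{1}) \leq \kappa(S_{2})$, I would pick a pants decomposition $P$ of $S_{1}$, which has cardinality exactly $\kappa(S_{1})$. By the previous paragraphs, $\varphi(P)$ is a multicurve on $S_{2}$ of cardinality $\kappa(S_{1})$; but every multicurve on $S_{2}$ has at most $\kappa(S_{2})$ elements, so $\kappa(S_{1}) \leq \kappa(S_{2})$. I do not anticipate any real obstacle here: the entire argument is a careful unwinding of the definitions of ``edge'' and ``graph morphism'' in the curve graph setting, together with the basic fact that pants decompositions realize the maximal clique size.
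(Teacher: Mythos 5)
Your proposal is correct and follows essentially the same argument as the paper: the image of a clique is a clique of the same size because edges in $\ccomp{S_{2}}$ have distinct endpoints, and the complexity inequality follows by applying this to a pants decomposition, which realizes the maximal multicurve cardinality. The paper states this more tersely, but your explicit unwinding of why $\varphi|_{M}$ must be injective is exactly the content behind its one-line justification.
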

\begin{proof}
 The first part follows directly from $\varphi$ being a graph morphism, and a multicurve $M$ being a complete graph in $\ccomp{S_{1}}$. The second part follows from a pants decomposition $P$ of $S$ having maximal cardinality, and the first part of the lemma.
\end{proof}

\begin{Rem}\label{PantsToPants}
Note that this lemma implies that if $\kappa(S_{1}) \geq \kappa(S_{2})$, and the hypothesis of the lemma are satisfied, then $\kappa(S_{1}) = \kappa(S_{2})$. In particular, pants decompositions of $S_{1}$ would then be mapped to pants decompositions of $S_{2}$.
\end{Rem}

Let $\alpha$ and $\beta$ be curves on $S = S_{g,n}$. We say $\alpha$ and $\beta$ are \textit{Farey neighbours} if their open regular neighbourhood $N(\alpha,\beta)$ on $S$ has complexity one, i.e. is homeomorphic to either $S_{0,4}$ or $S_{1,1}$. In particular, we say $\alpha$ and $\beta$ are \textit{spherical} (respectively \textit{toroidal}) Farey neighbours if $N(\alpha,\beta)$ is homeomorphic to $S_{0,4}$ (respectively $S_{1,1}$). This terminology is inspired by the fact that both $\ccomp{S_{0,4}}$ and $\ccomp{S_{1,1}}$ (with a slightly modified definition, see \cite{FarbMar}) are isomorphic to the Farey graph (the 1-skeleton of the ideal triangulation of the hyperbolic plane); thus, if $\alpha$ and $\beta$ are Farey neighbours, then they are adjacent (neighbours) in $\ccomp{N(\alpha,\beta)}$.

Now, let $\varphi: \ccomp{S_{1}} \to \ccomp{S_{2}}$ be a graph morphism. We say $\varphi$ is \textit{spherical Farey} (respectively \textit{toroidal Farey}) if for all curves $\alpha$ and $\beta$ on $S_{1}$ that are spherical (respectively toroidal) Farey neighbours, they are mapped to intersecting curves, i.e. $i(\varphi(\alpha), \varphi(\beta)) \neq 0$. We say $\varphi$ is a \textit{Farey map} either if it is both spherical Farey and toroidal Farey when $g_{1} \geq 1$, or if it is sphereical Farey when $g_{1} = 0$. Note that being spherical/toroidal Farey (or both) is a generalisation of the concept of superinjectivity, see \cite{Irmak1}, \cite{Irmak2}, \cite{BehrMar} and \cite{Irmak3}.

\begin{Lema}\label{Farey}
 Let $S_{1} = S_{g_{1},n_{1}}$ and $S_{2} = S_{g_{2},n_{2}}$ be such that $\kappa(S_{1}) \geq \kappa(S_{2}) \geq 2$, and let $\varphi: \ccomp{S_{1}} \to \ccomp{S_{2}}$ be a graph morphism. If $S_{1}$ is not homeomorphic to $S_{1,2}$ nor $S_{2,0}$, then $\varphi$ is a Farey map.
\end{Lema}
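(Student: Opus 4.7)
The plan is to exploit the fact that $\varphi$ preserves pants decompositions. Given a pair of Farey neighbors $\alpha, \beta$ on $S_1$, I would first extend them to a near-pants decomposition: let $M$ be the union of the essential boundary components of the regular neighborhood $N(\alpha,\beta)$ together with a pants decomposition of $\overline{S_1 \setminus N(\alpha,\beta)}$. An Euler characteristic count gives $|M| = \kappa(S_1) - 1$, and both $M \cup \{\alpha\}$ and $M \cup \{\beta\}$ are pants decompositions of $S_1$. By Lemma \ref{Multicurves} together with Remark \ref{PantsToPants} we have $\kappa(S_1) = \kappa(S_2)$, so $\varphi(M) \cup \{\varphi(\alpha)\}$ and $\varphi(M) \cup \{\varphi(\beta)\}$ are pants decompositions of $S_2$.

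Next, I would assume for contradiction that $i(\varphi(\alpha), \varphi(\beta)) = 0$. The easy case is $\varphi(\alpha) \neq \varphi(\beta)$: here $\varphi(M) \cup \{\varphi(\alpha), \varphi(\beta)\}$ is a multicurve on $S_2$ of cardinality $\kappa(S_1) + 1 > \kappa(S_2)$, contradicting Lemma \ref{Multicurves}. Thus the only remaining case to handle is $\varphi(\alpha) = \varphi(\beta)$.

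For the equality case, the plan is to produce an auxiliary curve $\alpha'$ on $S_1$ which is a Farey neighbor of $\alpha$ in some complexity-1 subsurface (possibly different from $N(\alpha,\beta)$) and which is disjoint from $\beta$. Given such an $\alpha'$, the multicurve $\{\alpha', \beta\}$ together with Lemma \ref{Multicurves} forces $\varphi(\alpha') \neq \varphi(\beta) = \varphi(\alpha)$ and $\varphi(\alpha')$ disjoint from $\varphi(\alpha)$. Applying the easy case above with the Farey pair $(\alpha, \alpha')$ in place of $(\alpha, \beta)$ then produces the required contradiction.

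The main obstacle is the construction of $\alpha'$, and this is precisely where the hypothesis that $S_1$ is neither $S_{1,2}$ nor $S_{2,0}$ enters. I would proceed by a case analysis on the type of the Farey pair (spherical versus toroidal) and on the homeomorphism type of $S_1$, using the explicit families of outer and non-separating curves from the constructions in Sections \ref{sec2}, \ref{sec3} and \ref{sec4} to locate $\alpha'$ in a complexity-1 subsurface that meets $\alpha$ but lies in the complement of $\beta$. In every allowed surface enough room remains to carry out this construction; the excluded surfaces $S_{1,2}$ and $S_{2,0}$ are precisely the borderline cases where, for certain Farey pairs, no such $\alpha'$ exists, which is why they must be removed from the hypothesis.
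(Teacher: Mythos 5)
Your proposal follows essentially the same route as the paper: both use Lemma \ref{Multicurves} and Remark \ref{PantsToPants} on the two pants decompositions $M\cup\{\alpha\}$ and $M\cup\{\beta\}$ to reduce to the dichotomy ``equal or intersecting,'' and both rule out $\varphi(\alpha)=\varphi(\beta)$ via an auxiliary curve that is disjoint from one curve of the pair and a Farey neighbour of the other, which is exactly where the exclusion of $S_{1,2}$ and $S_{2,0}$ enters. Your cardinality count $\kappa(S_{2})+1>\kappa(S_{2})$ is a clean way to phrase the paper's ``complexity argument,'' and your deferral of the explicit construction of the auxiliary curve to a case analysis matches the level of detail the paper itself gives.
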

\begin{proof}
 Let $\alpha$ and $\beta$ be curves on $S_{1}$ such that they are Farey neighbours. Then, there exists a multicurve $M$ on $S_{1}$ such that $M_{\alpha} = M \cup \{\alpha\}$ and $M_{\beta} = M \cup \{\beta\}$ are pants decompositions of $S_{1}$. By Lemma \ref{Multicurves} and Remark \ref{PantsToPants}, we know that $\kappa(S_{1}) = \kappa(S_{2})$, and that $\varphi(M_{\alpha})$ and $\varphi(M_{\beta})$ are pants decompositions of $S_{2}$. Since $\varphi(M_{\alpha})$ and $\varphi(M_{\beta})$ differ by exactly one curve, a complexity argument can be used to conclude that $\varphi(\alpha)$ and $\varphi(\beta)$ are contained in a subsurface $N$ of $S_{2}$ with $\kappa(N) = 1$.
 
 Thus, either $\varphi(\alpha) = \varphi(\beta)$ or $i(\varphi(\alpha),\varphi(\beta)) \neq 0$.
 
 To prove that $\varphi(\alpha) \neq \varphi(\beta)$ we use an auxiliary curve: Let $\gamma$ be a curve on $S_{1}$ such that $\alpha$ and $\gamma$ are disjoint, and $\beta$ is a Farey neighbour of $\gamma$. Using the same argument as above, we can deduce that either $\varphi(\beta) = \varphi(\gamma)$ or $i(\varphi(\beta),\varphi(\gamma)) \neq 0$. But none of these options can be true if $\varphi(\alpha) = \varphi(\gamma)$. Hence, $i(\varphi(\alpha), \varphi(\beta)) \neq 0$.
\end{proof}

Note that, if $S_{1}$ is homeomorphic to either $S_{1,2}$ or $S_{2,0}$, while we can prove that $\varphi$ is toroidal Farey using the same proof, in both cases there exist curves $\alpha$ and $\beta$ that are spherical Farey neighbours, such that any curve $\gamma$ disjoint from one of them, cannot be a Farey neighbour of the other. See Figure \ref{fig:Sec5-0Fig1}.

\begin{figure}[ht]
    \centering
    \labellist
    \pinlabel $\beta$ [b] at 222 218
    \pinlabel $\alpha$ [tr] at 185 175 
    \endlabellist
    \includegraphics[height=4cm]{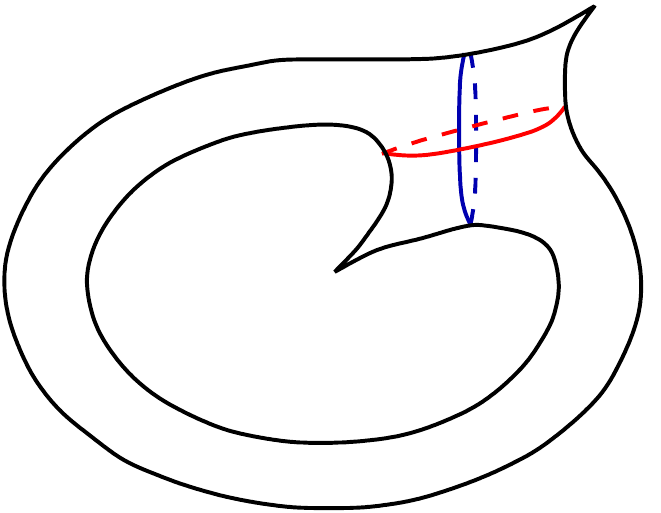} \hspace{3mm}
    \labellist
    \pinlabel $\beta$ [b] at 220 208
    \pinlabel $\alpha$ [tr] at 180 180
    \endlabellist
    \includegraphics[height=4cm]{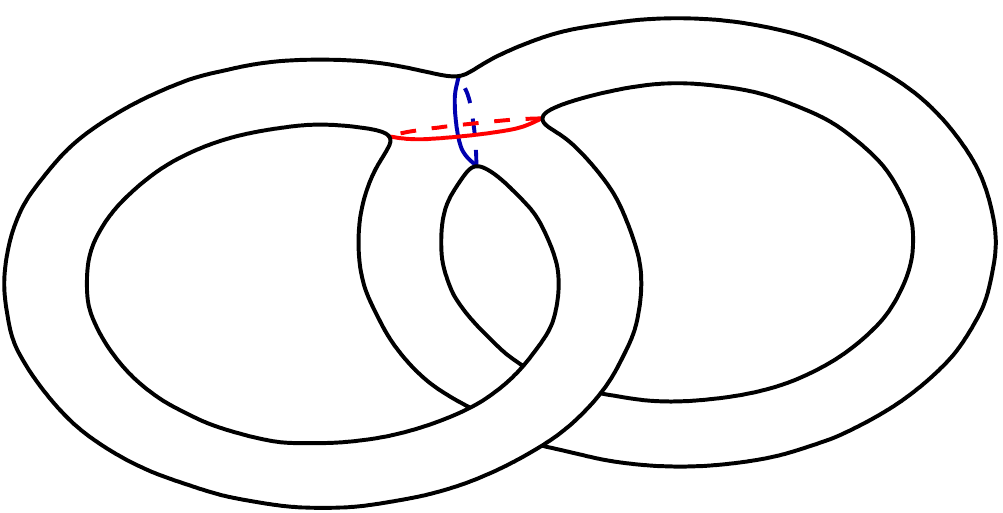}
    \caption{Examples of curves $\alpha$ and $\beta$ on $S_{1,2}$ (left) and $S_{2,2}$ (right) such that $\alpha$ and $\beta$ are spherical Farey neighbours, and any $\gamma$ disjoint from any of them cannot be a Farey neighbour of the other.}
    \label{fig:Sec5-0Fig1}
\end{figure}

Armed with Lemma \ref{Farey}, to prove Theorem \ref{TeoD} we only need Corollary C in \cite{JHH2}, which for the sake of completeness we rewrite here for the particular case of $\ccomp{S_{g,n}}$.
\begin{Corno}[C in \cite{JHH2}]
 Let $Y < \ccomp{S_{g,n}}$ be an rigid subgraph of $\ccomp{S_{g,n}}$, and $\phi: Y^{\omega} \to \ccomp{S_{g,n}}$ a graph morphism such that $\phi|_{Y}$ is locally injective. Then $\phi$ is the restriction to $Y^{\omega}$ of an automorphism of $\ccomp{S_{g,n}}$, unique up to the pointwise stabilizer of $Y$ in $\Aut{\ccomp{S_{g,n}}}$.
\end{Corno}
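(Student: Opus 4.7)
The plan is to combine two ingredients already laid out in the paper: the very definition of a rigid subgraph, which produces a candidate automorphism of $\ccomp{S_{g,n}}$ matching $\phi$ on $Y$, and Theorem \ref{ThmBJHH2}, which automatically propagates any such agreement on $Y$ to the entire rigid expansion $Y^{\omega}$. Since $Y^{\omega}$ is by construction the union of iterated rigid expansions of $Y$, once we have a candidate automorphism on the base $Y$, the rest of the extension is essentially forced.

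In more detail, I would first view $\phi|_{Y}$ as a locally injective map from the rigid subgraph $Y$ into $\ccomp{S_{g,n}}$ and apply the definition of rigidity to obtain an automorphism $h\in\Aut{\ccomp{S_{g,n}}}$ with $h|_{Y}=\phi|_{Y}$ (a (possibly orientation-reversing) homeomorphism of $S_{g,n}$ when $(g,n)\neq(1,2)$). Next, I would feed this $h$ together with $\phi$ into Theorem \ref{ThmBJHH2}: the hypotheses that $\phi$ is a graph morphism from $Y^{\omega}$ to $\ccomp{S_{g,n}}$ and that $h|_{Y}=\phi|_{Y}$ are in place by construction, so the conclusion $h|_{Y^{\omega}}=\phi$ says precisely that $\phi$ is the restriction to $Y^{\omega}$ of an automorphism of $\ccomp{S_{g,n}}$.

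For the uniqueness clause, if $h_{1},h_{2}\in\Aut{\ccomp{S_{g,n}}}$ both restrict to $\phi$ on $Y^{\omega}$, then $h_{1}^{-1}\circ h_{2}$ fixes $Y$ pointwise and hence lies in the pointwise stabilizer of $Y$ in $\Aut{\ccomp{S_{g,n}}}$; conversely, for any $\sigma$ in that stabilizer, $h_{1}\circ\sigma$ agrees with $h_{1}$ on $Y$ and therefore, by another application of Theorem \ref{ThmBJHH2}, also on all of $Y^{\omega}$, so composing by such a $\sigma$ does not change the restriction to $Y^{\omega}$. The only non-trivial step here is the invocation of Theorem \ref{ThmBJHH2}, whose proof is in \cite{JHH2}; the rest is a direct unpacking of the definitions of rigidity, graph morphism, and pointwise stabilizer, so I do not expect any genuine obstacle beyond that black-box use.
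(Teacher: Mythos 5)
Your proposal is correct and is essentially the paper's own argument: the paper explicitly describes this corollary as ``a direct consequence of Theorem \ref{ThmBJHH2} and the definition of rigidity'' (deferring details to \cite{JHH2}), which is precisely your two-step scheme of first producing the automorphism $h$ from rigidity of $Y$ and then propagating the agreement to $Y^{\omega}$ via Theorem \ref{ThmBJHH2}. Your treatment of the uniqueness clause, using Theorem \ref{ThmBJHH2} once more to show that composing with an element of the pointwise stabilizer of $Y$ does not change the restriction to $Y^{\omega}$, is also the intended unpacking.
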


We recall Theorem \ref{TeoD} from the Introduction:
\begin{Teono}[\ref{TeoD}]
 Let $S = S_{g,n}$ with $\kappa(S) \geq 2$ and let $\varphi: \ccomp{S} \to \ccomp{S}$ be a graph morphism. Then $\varphi$ is an automorphism of $\ccomp{S}$.
\end{Teono}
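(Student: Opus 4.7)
The plan is to deduce Theorem~\ref{TeoD} from Theorem~\ref{TeoB} together with Corollary~C of \cite{JHH2}, after verifying that $\varphi$ restricts to a locally injective map on the Aramayona--Leininger rigid set $\Xf{S}$.

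The first step is to handle the two surfaces excluded from the hypothesis of Lemma~\ref{Farey}, namely $S_{1,2}$ and $S_{2,0}$. Since $\ccomp{S_{1,2}} \cong \ccomp{S_{0,5}}$ and $\ccomp{S_{2,0}} \cong \ccomp{S_{0,6}}$, any endomorphism of either of these curve graphs transports via the isomorphism to an endomorphism of $\ccomp{S_{0,5}}$ or $\ccomp{S_{0,6}}$, and an automorphism of the target pulls back to an automorphism of the source. Hence it suffices to prove the theorem when $S$ is neither $S_{1,2}$ nor $S_{2,0}$, which I assume from here on.

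Under this assumption Lemma~\ref{Farey} (with $S_{1}=S_{2}=S$) tells us that $\varphi$ is a Farey map, and Lemma~\ref{Multicurves} ensures that $\varphi$ preserves the cardinality of every multicurve. I would then check that $\varphi|_{\Xf{S}}$ is locally injective. Let $\alpha,\beta$ be distinct vertices of $\Xf{S}$ lying in the star of some $\gamma\in\Xf{S}$. If $\alpha$ and $\beta$ have disjoint representatives (including the degenerate cases $\alpha=\gamma$ or $\beta=\gamma$), then $\{\alpha,\beta,\gamma\}$ is a multicurve and Lemma~\ref{Multicurves} immediately yields $\varphi(\alpha)\neq\varphi(\beta)$. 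If instead $\alpha$ and $\beta$ intersect, I would argue by contradiction: assuming $\varphi(\alpha)=\varphi(\beta)$, the fact that $\alpha\neq\beta$ implies $\mathrm{adj}(\alpha)\neq\mathrm{adj}(\beta)$, so one can locate a curve $\delta$ disjoint from $\alpha$ and from $\gamma$ but meeting $\beta$; a further local modification of $\delta$ inside $S\setminus\gamma$ ensures it may be chosen to be a spherical or toroidal Farey neighbour of $\beta$. The edge $\{\delta,\alpha\}$ forces $\varphi(\delta)$ disjoint from $\varphi(\alpha)=\varphi(\beta)$, whereas the Farey map property gives $i(\varphi(\delta),\varphi(\beta))\neq 0$, a contradiction.

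Once local injectivity of $\varphi|_{\Xf{S}}$ is established, rigidity of $\Xf{S}$ combined with Theorem~\ref{TeoB} (which gives $\Xf{S}^{\omega}=\ccomp{S}$) places us exactly in the hypotheses of Corollary~C of \cite{JHH2}: the map $\varphi=\varphi|_{\Xf{S}^{\omega}}$ is the restriction of an automorphism of $\ccomp{S}$, hence $\varphi$ itself is such an automorphism. The main obstacle I expect is the intersecting case of local injectivity, because the existence of a Farey-neighbour witness $\delta$ for every pair $(\alpha,\beta)$ in the star of $\gamma$ inside $\Xf{S}$ must be verified uniformly across the genus $0$, $1$ and $2$ descriptions of $\Xf{S}$ recalled in Subsections~\ref{subsec2-7}, \ref{subsec3-8} and~\ref{subsec4-9}; this step should mirror the auxiliary-curve constructions already carried out in \cite{JHH2} and in the earlier sections of this paper.
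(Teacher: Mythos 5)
Your proposal is correct and follows essentially the same route as the paper: reduce $S_{1,2}$ and $S_{2,0}$ to $S_{0,5}$ and $S_{0,6}$ via the hyperelliptic isomorphism, use Lemma~\ref{Farey} to make $\varphi$ a Farey map, verify (local) injectivity of $\varphi|_{\Xf{S}}$, and conclude with Theorem~\ref{TeoB} and Corollary~C of \cite{JHH2}. The only cosmetic difference is in the injectivity check: the paper observes that any two distinct vertices of $\Xf{S}$ are, by construction, either disjoint, Farey neighbours, or linked by a witness curve $\gamma$ (disjoint from one, a Farey neighbour of the other), which gives global injectivity directly, whereas your contradiction argument imposes the unnecessary extra requirement that the witness $\delta$ also be disjoint from the star's centre.
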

\begin{proof}[\textbf{Proof of Theorem \ref{TeoD}}]
 As was mentioned in the Introduction, since Theorem \ref{TeoD} has already been proved by the author for the case $g \geq 3$, we assume that $g \leq 2$. For now, let $S$ be different from $S_{1,2}$ and $S_{2,0}$. Due to Theorem \ref{TeoB} (from this work) and Corollary C (from \cite{JHH2}), we simply need to prove that $\varphi|_{\Xf{S}}$ is locally injective.
 
 Let $\alpha$ and $\beta$ be different curves in $\Xf{S}$. By construction, one of the following situations holds:
 
 \begin{enumerate}
  \item $\alpha$ is disjoint from $\beta$.
  \item $\alpha$ and $\beta$ are Farey neighbours.
  \item There exists a curve $\gamma$ on $S$ such that $\alpha$ is disjoint from $\gamma$, and $\beta$ is a Farey neighbour of $\gamma$.
 \end{enumerate}
 
 Since $\varphi$ is a graph morphism, by Lemma \ref{Farey} $\varphi$ is a Farey map, and thus $\varphi(\alpha) \neq \varphi(\beta)$ if either (1) or (2) holds. If (3) holds, then $\varphi(\alpha)$ is disjoint from $\varphi(\gamma)$, and $i(\varphi(\beta),\varphi(\gamma)) \neq 0$, thus $\varphi(\alpha) \neq \varphi(\beta)$. This implies that $\varphi|_{\Xf{S}}$ is injective (and in particular locally injective) as desired.
 
 If $S$ is homeomorphic to either $S_{1,2}$ or $S_{2,0}$, let $\Sigma$ be homeomorphic to either $S_{0,5}$ or $S_{0,6}$, respectively. Then there exists an isomorphism $\psi: \ccomp{S} \to \ccomp{\Sigma}$ (induced by the hyperelliptic involution). Hence $\psi \circ \varphi \circ \psi^{-1}$ is a graph endomorphism of $\ccomp{\Sigma}$, and by the argument above is an automorphism of $\ccomp{\Sigma}$. Thus $\varphi$ is an automorphism of $\ccomp{S}$.
\end{proof}

Now, as was mentioned in the Introduction, to prove Theorem \ref{TeoE} we simply need to prove the prove the following theorem.

\begin{Teo}\label{TopRig}
 Let $S_{1} = S_{g_{1},n_{1}}$ and $S_{2} = S_{g_{2},n_{2}}$ be such that $\kappa(S_{1}) \geq \kappa(S_{2}) \geq 4$ with $(g_{1},n_{1}) \neq (0,7)$, and $\varphi: \ccomp{S_{1}} \to \ccomp{S_{2}}$ be a graph morphism. Then, $S_{1}$ is homeomorphic to $S_{2}$.
\end{Teo}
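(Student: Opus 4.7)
The plan is to reduce the statement to showing $g_1 = g_2$. By Lemma \ref{Multicurves} and Remark \ref{PantsToPants}, the hypothesis $\kappa(S_1) \geq \kappa(S_2)$ already gives $\kappa(S_1) = \kappa(S_2)$ and that any pants decomposition of $S_1$ is sent by $\varphi$ to a pants decomposition of $S_2$. Since $\kappa(S_i) = 3g_i - 3 + n_i$, it suffices to establish $g_1 = g_2$ in order to force $n_1 = n_2$ and hence $S_1 \cong S_2$. Moreover, because $\kappa(S_1) \geq 4$ excludes $S_1 \in \{S_{1,2}, S_{2,0}\}$, Lemma \ref{Farey} applies and $\varphi$ is a Farey map, which is the structural property I will leverage throughout.

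To detect $g_i$ combinatorially, I would work with the dual graph of a pants decomposition. For a pants decomposition $Q$ of a surface $S = S_{g,n}$, let $G_Q$ be the dual multigraph: one vertex for each pair of pants of $S\setminus Q$ and one edge for each curve of $Q$, with loops allowed when the two sides of a curve lie in the same pair of pants. A direct Euler-characteristic count yields $|V(G_Q)| = 2g-2+n$ and $|E(G_Q)| = \kappa(S)$, so (since $G_Q$ is connected) the first Betti number of $G_Q$ equals $g$. Fixing a pants decomposition $P$ of $S_1$, its image $\varphi(P)$ is a pants decomposition of $S_2$, and the aim is to show $G_P \cong G_{\varphi(P)}$. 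I would do this by characterizing the edge-adjacency relation of $G_P$ purely in terms of disjointness and Farey data in $\ccomp{S_1}$: two curves $\alpha,\beta\in P$ share a pair of pants if and only if there exists a curve $\gamma\notin P$ which is disjoint from $P\setminus\{\alpha,\beta\}$ and is simultaneously a Farey neighbour of $\alpha$ and of $\beta$. Because $\varphi$ preserves disjointness (as a graph morphism) and maps Farey pairs to intersecting pairs lying in a complexity-one subsurface (as a Farey map by Lemma \ref{Farey}), this criterion transfers from $P$ to $\varphi(P)$, yielding $G_P \cong G_{\varphi(P)}$ and hence $g_1 = g_2$.

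The main obstacle will be to make the combinatorial characterization of ``sharing a pair of pants'' tight: one must accommodate self-loops in $G_P$ (corresponding to curves whose two sides lie in the same pair of pants), multi-edges (pairs of curves that co-bound two distinct pants), and the cases in which $\alpha$ or $\beta$ is an outer curve, where the choice of the witness $\gamma$ and the type of Farey neighbour (spherical vs. toroidal) have to be arranged compatibly with the Farey property of $\varphi$. The hypothesis $(g_1,n_1)\neq(0,7)$ is necessary because at the extremal complexity $\kappa = 4$ the three candidates $S_{0,7}$, $S_{1,4}$, $S_{2,1}$ admit dual graphs with very few vertices and edges, and the Farey data visible on $\ccomp{S_{0,7}}$ alone is too coarse to prevent a graph morphism into $\ccomp{S_{1,4}}$ or $\ccomp{S_{2,1}}$; outside this degenerate configuration, the genus-detection scheme above can be pushed through.
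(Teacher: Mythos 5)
Your reduction to $g_{1} = g_{2}$ is sound, and the invariant you pick --- the first Betti number of the dual multigraph $G_{P}$ of a pants decomposition, which indeed equals the genus --- is a correct one. But the central step, $G_{P} \cong G_{\varphi(P)}$ \emph{as multigraphs}, is exactly where the proof has to happen, and your proposed mechanism does not deliver it. The witness criterion you state (existence of $\gamma$ disjoint from $P\setminus\{\alpha,\beta\}$ and Farey-adjacent to both $\alpha$ and $\beta$) characterizes only the \emph{simple} adjacency relation; this is precisely Lemma \ref{Adjacency} of the paper (and even there, the direction ``$\alpha,\beta$ not adjacent $\Rightarrow$ $\varphi(\alpha),\varphi(\beta)$ not adjacent'' needs a separate argument with two mutually disjoint witnesses, which you omit). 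The Betti number, however, is not an invariant of the simple adjacency graph: it depends on loops (a curve both of whose sides lie in the same pair of pants) and multi-edges (two curves cobounding two distinct pairs of pants). Detecting a loop at $\alpha$ amounts to finding a curve disjoint from $P\setminus\{\alpha\}$ that is a \emph{toroidal} Farey neighbour of $\alpha$, and knowing that $\varphi$ sends toroidal Farey neighbours to toroidal (rather than spherical) Farey neighbours is essentially equivalent to the genus-detection problem you are trying to solve. Lemma \ref{Farey} only gives that Farey neighbours map to intersecting curves inside a complexity-one subsurface; it says nothing about whether that subsurface is $S_{0,4}$ or $S_{1,1}$. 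So the plan is circular at its key point; you flag this as ``the main obstacle'' but do not resolve it, and resolving it is the entire content of the theorem.

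For comparison, the paper sidesteps the multigraph altogether. It first shows that $\varphi$ preserves the three topological types of curves: non-outer separating (Lemma \ref{Non-outer}, via cut vertices of $\acomp{P}$), non-separating (Lemma \ref{Nonsep}, via the number of curves a vertex can be adjacent to), and outer (Lemma \ref{Outer}). It then distinguishes the genera directly: a pants decomposition consisting solely of separating curves exists only when $g=0$; for a pants decomposition consisting solely of non-separating curves, $\acomp{P}$ is a cycle graph if and only if $g=1$ (Lemma \ref{CharacGenus1}); and in the remaining genus-$2$ case a counting argument with a maximal multicurve of outer curves forces $n_{1}=n_{2}$. All of these inputs are recoverable from the simple adjacency graph together with curve types, which is how the circularity above is avoided. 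To salvage your route you would need to supply a $\varphi$-invariant characterization of loops and multi-edges of $G_{P}$ --- in effect, reproving the curve-type lemmas in another guise.
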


To prove this theorem, we are going to use (almost standard) techniques and geometric results. These techniques are heavily inspired by those in \cite{Shack} and are practically identical to those in \cite{JHH2} (to the point that in many cases we simply give a sketch of the proof and a reference).
\vspace{5mm}

Let $S = S_{g,n}$ with $\kappa(S) \geq 4$, and $P$ be a pants decomposition of $S$. We say a closed subsurface $\Sigma$ of $S$ is \textit{induced by} $P$ if all its boundary curves are pairwise non-isotopic, and all its boundary curves are elements of $P$. We say $\alpha, \beta \in P$ are \textit{adjacent with respect to} $P$ if there exists a closed subsurface $\Sigma$ induced by $P$, whose interior is homeomorphic to $S_{0,3}$ and such that $\alpha$ and $\beta$ are boundary curves of $\Sigma$. With this, we define the \textit{adjacency graph of} $P$, denoted by $\acomp{P}$, as the simplicial graph whose vertex set is $P$, and two vertices span an edge if they are adjacent to each other with respect to $P$.

Note that if $S_{1} = S_{g_{1},n_{1}}$ and $S_{2} = S_{g_{2},n_{2}}$ is such that $\kappa(S_{1}) \geq \kappa(S_{2}) \geq 4$, and $\varphi: \ccomp{S_{1}} \to \ccomp{S_{2}}$ is a graph morphism, then for every pants decomposition $P$ of $S_{1}$, by Remark \ref{PantsToPants}, there exists an induced map $\varphi_{P}: \acomp{P} \to \acomp{\varphi(P)}$. The following Lemma proves that $\varphi_{P}$ is a graph isomorphism.

\begin{Lema}[cf. Lemma 7 in \cite{Shack} and Lemma 3.6 in \cite{JHH2}]\label{Adjacency}
 Let $S_{1} = S_{g_{1},n_{1}}$ and $S_{2} = S_{g_{2},n_{2}}$ be such that $\kappa(S_{1}) \geq \kappa(S_{2}) \geq 4$, $\varphi: \ccomp{S_{1}} \to \ccomp{S_{2}}$ be a graph morphism, and $P$ be a pants decomposition of $S_{1}$. Then, for all $\alpha, \beta \in P$, $\alpha$ and $\beta$ are adjacent with respect to $P$ if and only if $\varphi(\alpha)$ and $\varphi(\beta)$ are adjacent with respect to $\varphi(P)$. In particular, the induced map $\varphi_{P}$ is a graph isomorphism.
\end{Lema}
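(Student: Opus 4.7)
The plan is to obtain a combinatorial characterization of adjacency inside $\ccomp{S_1}$ that is preserved by $\varphi$ in both directions.

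First, I would record the preliminary reductions. Since $\kappa(S_1) \geq 4$, neither $S_1$ nor $S_2$ coincides with $S_{1,2}$ or $S_{2,0}$, so Lemma \ref{Farey} applies and $\varphi$ is a Farey map. By Lemma \ref{Multicurves} and Remark \ref{PantsToPants}, $\kappa(S_1) = \kappa(S_2)$, the image $\varphi(P)$ is a pants decomposition of $S_2$, and $\varphi|_{P}$ is a bijection from $P$ onto $\varphi(P)$. Consequently $\varphi_{P}$ is automatically a vertex bijection between the adjacency graphs, so only the edge equivalence needs to be proved. For $\gamma \in P$, I denote by $N_{\gamma}$ the unique complexity-$1$ component of $S_1$ cut along $P \setminus \{\gamma\}$ that contains $\gamma$ in its interior.

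Next, I would establish the following characterization: two curves $\alpha, \beta \in P$ are adjacent with respect to $P$ if and only if there exist $\alpha^*, \beta^* \in \ccomp{S_1}$ such that
\begin{enumerate}
\item[(i)] $\alpha^*$ is a Farey neighbor of $\alpha$ and disjoint from $P \setminus \{\alpha\}$;
\item[(ii)] $\beta^*$ is a Farey neighbor of $\beta$ and disjoint from $P \setminus \{\beta\}$;
\item[(iii)] $\alpha^*$ and $\beta^*$ are Farey neighbors of each other.
\end{enumerate}
The ``if'' direction is immediate: conditions (i) and (ii) confine $\alpha^*, \beta^*$ to $N_{\alpha}, N_{\beta}$ respectively, while (iii) forces $\alpha^* \cap \beta^* \neq \varnothing$; hence $N_{\alpha} \cap N_{\beta}$ is non-empty, which is equivalent to $\alpha, \beta$ cobounding a pair of pants. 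The ``only if'' direction requires an explicit topological construction: given a pair of pants $\Pi$ cobounded by $\alpha$ and $\beta$, one produces arcs of $\alpha^*$ and $\beta^*$ inside $\Pi$ together with the adjacent pants pieces $\Pi_{\alpha}, \Pi_{\beta}$, so that their regular neighborhood has complexity $1$, ensuring they are simultaneously Farey neighbors of $\alpha$, $\beta$ and of each other.

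With the characterization in place, the equivalence in the lemma follows because (i)--(iii) involve only disjointness (preserved by $\varphi$ as a graph morphism), Farey-neighbor pairs (preserved by Lemma \ref{Farey}), and membership in a pants decomposition (preserved by Remark \ref{PantsToPants}). The forward implication is then immediate by applying $\varphi$ to the witnesses produced in $S_1$. For the reverse implication, I would apply the characterization in $\ccomp{S_2}$ with respect to $\varphi(P)$ to obtain candidate witnesses in $S_2$, and combine this with a Farey graph rigidity argument: the restriction of $\varphi$ to the Farey subgraph $\ccomp{N_{\alpha}}$ is an injective graph morphism into $\ccomp{N_{\varphi(\alpha)}}$ preserving Farey pairs, and therefore, following Shackleton's argument, surjective; this lifts the witnesses back to $S_1$, and the non-emptiness of $N_{\alpha} \cap N_{\beta}$ forced by (iii) provides the needed adjacency of $\alpha, \beta$ in $P$.

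The main obstacle will be the constructive ``only if'' direction of the characterization, requiring a case analysis according to the combinatorial type of $\Pi$ (whose third boundary may be a third curve of $P$, a puncture, or a repeated copy of $\alpha$ or $\beta$ when the ambient subsurface is of type $S_{1,1}$) and to whether the pieces $N_{\alpha}, N_{\beta}$ are of type $S_{0,4}$ or $S_{1,1}$. A subsidiary technical point is the Farey rigidity argument used in the reverse implication, which follows the line of Lemma 7 of \cite{Shack} and Lemma 3.6 of \cite{JHH2}; the complexity hypothesis $\kappa(S_1) \geq 4$ ensures the Farey map property, so this step runs through in essentially the same way as in those references.
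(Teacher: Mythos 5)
Your forward implication is essentially the paper's argument: choose witnesses $\gamma_{\alpha} \subset N_{\alpha}$ and $\gamma_{\beta} \subset N_{\beta}$ that are Farey neighbours of each other, apply Lemma \ref{Farey} to get $i(\varphi(\gamma_{\alpha}),\varphi(\gamma_{\beta})) \neq 0$, and use the confinement of the images to the complexity-one pieces $N_{\varphi(\alpha)}$ and $N_{\varphi(\beta)}$ to conclude that these pieces overlap, i.e.\ that $\varphi(\alpha)$ and $\varphi(\beta)$ cobound a pair of pants of $\varphi(P)$. One imprecision worth flagging: Lemma \ref{Farey} does \emph{not} show that Farey neighbours map to Farey neighbours, only to intersecting curves contained in a common complexity-one subsurface, so your conditions (i)--(iii) are not literally ``preserved'' by $\varphi$; for the forward direction this is harmless, since intersection plus confinement is all you use.

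The reverse implication is where the proposal has a genuine gap. You propose to apply the characterization in $\ccomp{S_{2}}$ and then ``lift the witnesses back'' using surjectivity of the restriction of $\varphi$ to the Farey graph of $N_{\alpha}$. Nothing available at this stage gives you that: $\varphi$ is an arbitrary graph morphism between curve graphs of a priori different surfaces; Lemma \ref{Farey} controls only images of Farey pairs and does not even show that the restriction to the curves contained in $N_{\alpha}$ is injective, let alone a morphism of Farey graphs (in a complexity-one piece all distinct curves intersect, so ``intersecting'' carries no information there); and surjectivity onto a Farey graph is essentially the kind of rigidity statement that the whole of Section \ref{sec5} is building towards, so invoking it here is circular. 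The paper sidesteps this entirely by proving the contrapositive in the forward direction: if $\alpha$ and $\beta$ are \emph{not} adjacent with respect to $P$, then $N_{\alpha}$ and $N_{\beta}$ are disjoint and the witnesses $\gamma_{\alpha}$, $\gamma_{\beta}$ can be chosen disjoint from each other; their images are then disjoint, yet each crosses $\varphi(\alpha)$, resp.\ $\varphi(\beta)$, and is confined to $N_{\varphi(\alpha)}$, resp.\ $N_{\varphi(\beta)}$. If $\varphi(\alpha)$ and $\varphi(\beta)$ were adjacent, both images would have to enter the shared pair of pants as essential arcs based at $\varphi(\alpha)$ and at $\varphi(\beta)$ respectively, and two such arcs necessarily intersect --- a contradiction. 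Replace your lifting step with this argument; the rest of your set-up (the reduction via Lemma \ref{Multicurves} and Remark \ref{PantsToPants}, and the construction of the intersecting witnesses in the adjacent case) is sound.
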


\begin{proof}[Sketch of proof]
 Let $\gamma_{\alpha}$ and $\gamma_{\beta}$ be Farey neighbours of $\alpha$ and $\beta$, respectively, such that $P_{\alpha} = (P \backslash \{\alpha\}) \cup \{\gamma_{\alpha}\}$ and $P_{\beta} = (P \backslash \{\beta\}) \cup \{\gamma_{\beta}\}$ are pants decompositions. Then by Remark \ref{PantsToPants} $\varphi(P)$, $\varphi(P_{\alpha})$ and $\varphi(P_{\beta})$ are also pants decompositions.
 
 If $\alpha$ and $\beta$ are adjacent with respect to $P$, we can choose $\gamma_{\alpha}$ and $\gamma_{\beta}$ as Farey neighbours of each other. Then, $i(\varphi(\alpha),\varphi(\gamma_{\alpha})) \neq 0$, $i(\varphi(\beta),\varphi(\gamma_{\beta})) \neq 0$, and $i(\varphi(\gamma_{\alpha},\varphi(\gamma_{\beta}) \neq 0$ by Lemma \ref{Farey}. But this would be impossible if $\varphi(\alpha)$ and $\varphi(\beta)$ were not adjacent with respect to $\varphi(P)$. Thus, $\varphi(\alpha)$ and $\varphi(\beta)$ are adjacent with respect to $\varphi(P)$.
 
 If $\alpha$ and $\beta$ are not adjacent with respect to $P$, then $\gamma_{\alpha}$ and $\gamma_{\beta}$ are disjoint. Then, $i(\varphi(\alpha),\varphi(\gamma_{\alpha})) \neq 0$, $i(\varphi(\beta),\varphi(\gamma_{\beta})) \neq 0$, and $\varphi(\gamma_{\alpha})$ and $\varphi(\gamma_{\beta})$ are disjoint, by Lemma \ref{Farey} and $\varphi$ being a graph morphism. But this would be impossible if $\varphi(\alpha)$ and $\varphi(\beta)$ were adjacent with respect to $\varphi(P)$. Thus, $\varphi(\alpha)$ and $\varphi(\beta)$ are not adjacent with respect to $\varphi(P)$.
\end{proof}

Recall that an outer curve $\alpha$ is a separating curve on $S$ such that $S \backslash \{\alpha\}$ has a connected component homeomorphic to $S_{0,3}$.

\begin{Lema}[cd. Lemma 9 in \cite{Shack}, and Lemma 3.7 in \cite{JHH2}]\label{Non-outer}
 Let $S_{1} = S_{g_{1},n_{1}}$ and $S_{2} = S_{g_{2},n_{2}}$ be such that $\kappa(S_{1}) \geq \kappa(S_{2}) \geq 4$, and $\varphi: \ccomp{S_{1}} \to \ccomp{S_{2}}$ be a graph morphism. Then, $\varphi$ maps non-outer separating curves to non-outer separating curves.
\end{Lema}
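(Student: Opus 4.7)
My plan is to reduce the question to a combinatorial property of the adjacency graph of a pants decomposition containing $\alpha$, and then transfer that property via the isomorphism from Lemma \ref{Adjacency}. Since $\kappa(S_1) \geq 4$, I extend $\alpha$ to a pants decomposition $P$ of $S_1$. By Remark \ref{PantsToPants}, $\varphi(P)$ is a pants decomposition of $S_2$ with $\kappa(S_2)=\kappa(S_1)\geq 4$; by Lemma \ref{Adjacency}, the induced map $\varphi_P \colon \acomp{P} \to \acomp{\varphi(P)}$ is a graph isomorphism sending $\alpha$ to $\varphi(\alpha)$.

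The heart of the argument is a purely combinatorial characterization of non-outer separating in terms of the pair $(\acomp{P}, \alpha)$. The basic form: a curve $\gamma \in P$ is non-outer separating if and only if $P \setminus \{\gamma\}$ admits a partition $Q_1 \sqcup Q_2$ into two non-empty parts with (a) no edge of $\acomp{P}$ joining $Q_1$ to $Q_2$, and (b) $\gamma$ having at least one neighbor in each of $Q_1$ and $Q_2$. For the forward direction, let $X_1, X_2$ be the two components of $S \setminus \gamma$; setting $Q_i = P \cap X_i$ makes both parts non-empty by positive complexity of the $X_i$, makes (a) hold because each pants of the decomposition lies entirely on one side of $\gamma$, and makes (b) hold because the pants on each side meeting $\gamma$ must carry another curve of $P$ (else that side would itself be a pair of pants). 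For the backward direction, I rule out the two alternatives: if $\gamma$ were non-separating, $S \setminus \gamma$ would be connected and a walk in the dual graph of $P$ from one side of $\gamma$ to the other would produce a path in $\acomp{P} \setminus \{\gamma\}$ joining any two neighbors of $\gamma$, contradicting the combination of (a) and (b); if $\gamma$ were outer, all its neighbors would lie in the unique non-outer-side pants and be pairwise adjacent there, again contradicting (a)+(b), with $\kappa(S) \geq 4$ ruling out the degenerate case that the non-outer side is itself a pair of pants. Applying the characterization to $\alpha$, transferring through the isomorphism $\varphi_P$, and applying it in reverse to $\varphi(\alpha)$ inside $\acomp{\varphi(P)}$ finishes the proof.

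The main obstacle is that the characterization must be made robust against pants of the decomposition that are not induced by $P$: if a pants containing $\gamma$ has a boundary curve appearing with multiplicity greater than one (a self-gluing), it contributes no edges to $\acomp{P}$ via that pants. In particular, when one side of $\gamma$ is homeomorphic to $S_{1,1}$, the unique pants on that side is self-glued, and condition (b) fails for the natural partition even though $\gamma$ is genuinely non-outer separating. One must therefore either choose $P$ carefully to avoid such degeneracies whenever possible and handle the $S_{1,1}$-side case separately using auxiliary adjacency data, or refine the combinatorial characterization to account for such isolated configurations. The same care is required in the non-separating subcase of the backward direction, whenever non-induced intermediate pants appear along the dual-graph walk and must be bypassed through an alternative route of induced pants.
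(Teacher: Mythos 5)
Your approach is the same as the paper's: the paper's one-line sketch rests on the claim that $\alpha \in P$ is a non-outer separating curve if and only if it is a cut vertex of $\acomp{P}$, and then transfers this through the isomorphism $\varphi_{P}$ of Lemma \ref{Adjacency}; your partition condition (a)$+$(b) is exactly that cut-vertex property. Your handling of the backward direction is sound as sketched: for $\kappa(S) \geq 4$ a complementary pair of pants that is not induced is necessarily a handle (an $S_{1,1}$ obtained by self-gluing one curve of $P$), hence a dead end of the dual graph consisting of one loop and one ordinary edge, so a minimal dual-graph walk never passes through it and the bypassing you describe always works; similarly the outer case only degenerates when $\kappa(S) \leq 2$.

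The genuine gap is the one you flag yourself but do not close. If $\gamma$ is separating and one component of $S \setminus \gamma$ is homeomorphic to $S_{1,1}$, then for \emph{every} pants decomposition $P$ containing $\gamma$ the pants on that side is the non-induced handle with boundary circles $\gamma, \beta, \beta$, so $\gamma$ has no neighbour on that side, $\beta$ is an isolated vertex of $\acomp{P}$, and $\gamma$ is not a cut vertex; no choice of $P$ repairs this, so the forward direction of your characterization is false for these curves, which exist whenever $g_{1} \geq 1$ and therefore cannot be dismissed. A separate argument is required, e.g.: for $\kappa \geq 4$ the isolated vertices of $\acomp{P}$ are exactly the cores of such handles, so $\varphi(\beta)$ is again the core of a handle of $\varphi(P)$; replacing $\beta$ by a Farey neighbour $\beta'$ inside the handle and invoking Lemma \ref{Farey} then forces $\varphi(\beta')$ into the handle containing $\varphi(\beta)$ and lets one identify $\varphi(\gamma)$ with its separating boundary. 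Until something of this sort is written down the proof is incomplete for these curves --- a point on which the paper's own sketch is equally silent.
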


\begin{proof}[Sketch of proof]
 This follows from the fact that $\varphi_{P}$ is an isomorphism and noting that if $P$ is a pants decomposition and $\alpha \in P$, then $\alpha$ is an non-outer separating curve if and only if the vertex corresponding to $\alpha$ in $\acomp{P}$ is a cut vertex.
\end{proof}

\begin{Lema}[cf. Lemma 10 in \cite{Shack}, and Lemma 3.8 in \cite{JHH2}]\label{Nonsep}
 Let $S_{1} = S_{g_{1},n_{1}}$ and $S_{2} = S_{g_{2},n_{2}}$ be such that $\kappa(S_{1}) \geq \kappa(S_{2}) \geq 4$, and $\varphi: \ccomp{S_{1}} \to \ccomp{S_{2}}$ be a graph morphism. Then, $\varphi$ maps non-separating curves to non-separating curves.
\end{Lema}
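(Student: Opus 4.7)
The plan is to mirror the strategy of Lemma \ref{Non-outer}: identify a graph-theoretic invariant of pants-decomposition adjacency graphs that is preserved by the isomorphism $\varphi_{P}$ of Lemma \ref{Adjacency} and that distinguishes non-separating curves from outer separating curves. Since the characterization in the proof of Lemma \ref{Non-outer} is in fact an equivalence ($\alpha$ is non-outer separating if and only if $v_{\alpha}$ is a cut vertex of $\acomp{P}$, and being a cut vertex is a graph-isomorphism invariant), it suffices to rule out $\varphi(\alpha)$ being outer separating whenever $\alpha$ is non-separating in $S_{1}$.

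The relevant invariant will be the valence of $v_{\alpha}$ in $\acomp{P}$. A direct inspection of cases shows that for any outer separating curve $\gamma$ in any pants decomposition $P'$, $v_{\gamma}$ has valence at most $2$ in $\acomp{P'}$: the outer side of $\gamma$ is a pair of pants whose only $P'$-boundary circle is $\gamma$ itself (with the other two boundary components being puncture boundaries), contributing no adjacencies, and the other side is a pair of pants with at most two other $P'$-curves on its boundary, contributing at most two adjacencies to $v_{\gamma}$.

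The matching step is to construct, for each non-separating $\alpha$ on $S_{1}$, a pants decomposition $P \ni \alpha$ of $S_{1}$ such that $v_{\alpha}$ has valence at least $3$ in $\acomp{P}$. The idea is to cut $S_{1}$ along $\alpha$ to obtain $S_{1}' \cong S_{g_{1}-1,n_{1}+2}$ with two distinguished boundary circles $\alpha', \alpha''$; note $\kappa(S_{1}') = \kappa(S_{1}) - 1 \geq 3$. I then select a pants decomposition of $S_{1}'$ in which $\alpha'$ lies in a pair of pants with boundary circles $\{\alpha', \beta_{1}, \beta_{2}\}$ (no interior puncture), and $\alpha''$ lies in a different pair of pants with boundary circles $\{\alpha'', \beta_{j}, \beta_{k}\}$ satisfying $\{\beta_{j}, \beta_{k}\} \neq \{\beta_{1}, \beta_{2}\}$; the available complexity suffices for this. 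Regluing $\alpha$ yields a pants decomposition $P$ of $S_{1}$ in which $\alpha$ bounds two distinct pairs of pants, each having three distinct $P$-curves on its boundary, and consequently $v_{\alpha}$ has at least three distinct neighbors in $\acomp{P}$.

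Assembling the pieces, $\varphi_{P}$ preserves valences (being a graph isomorphism) and thus forces $v_{\varphi(\alpha)}$ to have valence at least $3$ in $\acomp{\varphi(P)}$, which rules out $\varphi(\alpha)$ being outer separating by the valence bound; the equivalence from Lemma \ref{Non-outer} forbids $\varphi(\alpha)$ from being non-outer separating; therefore $\varphi(\alpha)$ is non-separating. The main obstacle is producing the required pants decomposition in the extremal cases $\kappa(S_{1}) = 4$, namely $S_{1} \in \{S_{1,4}, S_{2,1}\}$, where $\kappa(S_{1}') = 3$ and the combinatorics of the three decomposition curves on the cut surface must be specified by hand; for all higher complexities the construction is routine. (Note that if $g_{1} = 0$ then $S_{1}$ has no non-separating curves and the lemma is vacuous.)
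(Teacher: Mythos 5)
Your argument is correct and is essentially the paper's own proof: the paper's sketch rests on exactly the same two observations (an outer curve has valence at most $2$ in $\acomp{P}$ for every pants decomposition $P$ containing it, while a non-separating curve admits some $P$ in which it has valence at least $3$), combined with Lemma \ref{Adjacency} and Lemma \ref{Non-outer} to exclude the separating possibilities for the image. You simply fill in the cut-and-reglue construction of the valence-$\geq 3$ decomposition, which the paper leaves implicit.
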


\begin{proof}[Sketch of proof]
 Again, this follows from Lemma \ref{Non-outer}, the fact that $\varphi_{P}$ is an isomorphism, and noting two things:
 \begin{enumerate}
  \item If $\alpha$ is an outer curve, then for all pants decompositions $P$ with $\alpha \in P$, $\alpha$ can adjacent to at most 2 different curves with respect to $P$.
  \item If $\alpha$ is a non-separating curve, twe can always choose a pants decomposition $P$ with $\alpha \in P$, such that $\alpha$ is adjacen to at least three different curves with respect to $P$.
 \end{enumerate}
\end{proof}

Let $S = S_{g,n}$ be such that $\kappa(S) \geq 4$, and $\alpha$ and $\beta$ be disjoint curves on $S$. The set $\{\alpha,\beta\}$ is a \textit{peripheral pair} if $\alpha$ and $\beta$ together bound a closed subsurface of $S$ homeomorphic to a once-punctured annulus. Let $\gamma$ be a curve disjoint from $\alpha$ and $\beta$; we say $\alpha$, $\beta$ and $\gamma$ \emph{bound a pair of pants} if they are the only boundary curves of a closed subsurface of $S$, whose interior is homeomorphic to $S_{0,3}$.

\begin{Rem}\label{Peripheral}
 Let $S = S_{g,n}$ be such that $\kappa(S) \geq 4$, $P$ be a pants decomposition of $S$, and $\alpha,\beta,\gamma \in P$. Note that we have the following:
 
 \begin{enumerate}
  \item  If $\alpha$ is adjacent with respect to $P$ to only $\beta$ and $\gamma$, and these three curves are non-separating, then $\{\alpha, \beta\}$ and $\{\alpha,\gamma\}$ are peripheral pairs.
  \item If $\alpha$, $\beta$ and $\gamma$ are adjacent to each other with respect to $P$, then either together they bound a pair of pants, or they are non-separating curves that look up to homeomorphism like Figure \ref{fig:Sec5-0Fig2}.
  \item (Using previous point) If $\alpha$, $\beta$ and $\gamma$ are adjacent to each other with respect to $P$, and two of them are separating curves, then the three of them are separating curves.
 \end{enumerate}
\end{Rem}

\begin{figure}[ht]
    \centering
    \labellist
    \pinlabel $\alpha$ [t] at 155 208
    \pinlabel $\beta$ [bl] at 65 85
    \pinlabel $\gamma$ [br] at 250 90
    \endlabellist
    \includegraphics[height=4cm]{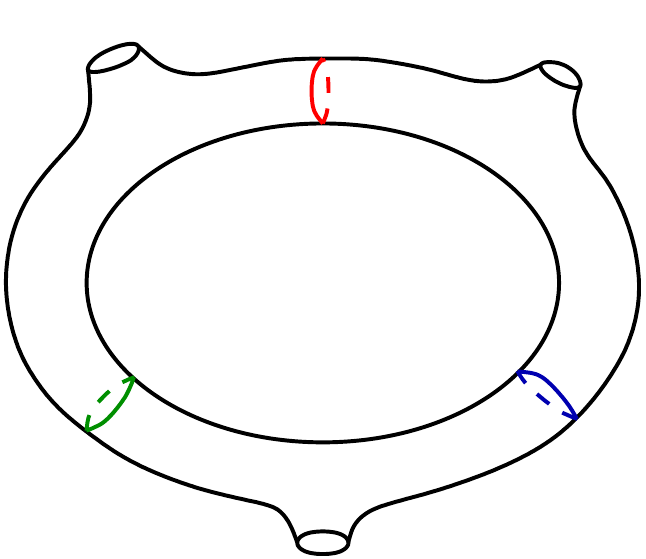}
    \caption{Curves $\alpha$, $\beta$, $\gamma$ adjacent to each other with respect to a pants decomposition, as in Remark \ref{Peripheral} (2).}
    \label{fig:Sec5-0Fig2}
\end{figure}

\begin{Lema}[cf. Lemma 11 in \cite{Shack}, and Lemma 3.10 in \cite{JHH2}]\label{Outer}
 Let $S_{1} = S_{g_{1},n_{1}}$ and $S_{2} = S_{g_{2},n_{2}}$ be such that $\kappa(S_{1}) \geq \kappa(S_{2}) \geq 4$, and $\varphi: \ccomp{S_{1}} \to \ccomp{S_{2}}$ be a graph morphism. Then, $\varphi$ maps outer curves to outer curves.
\end{Lema}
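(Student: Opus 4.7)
My plan is to follow the strategy already used for Lemmas \ref{Non-outer} and \ref{Nonsep}: fix a pants decomposition $P$ of $S_{1}$ containing $\alpha$, exploit the adjacency-graph isomorphism $\varphi_{P}:\acomp{P}\to\acomp{\varphi(P)}$ from Lemma \ref{Adjacency}, and show that the local pants structure around an outer curve is preserved. Let $Q_{0}$ be the outer pair of pants (with boundary $\alpha$ and two punctures of $S_{1}$) and let $Q_{1}$ be the other pair of pants containing $\alpha$, whose boundary is $\{\alpha,\beta_{1},\beta_{2}\}$ for some $\beta_{1},\beta_{2}\in P$. A first short argument rules out $\beta_{1}=\beta_{2}$: if they coincided, then after identification $Q_{0}\cup Q_{1}$ would be a closed subsurface homeomorphic to $S_{1,2}$, which, by connectedness of $S_{1}$, would force $S_{1}\cong S_{1,2}$, contradicting $\kappa(S_{1})\geq 4$. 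So $\alpha$ has valence exactly $2$ in $\acomp{P}$, adjacent to $\beta_{1}$ and $\beta_{2}$, and the triple $\alpha,\beta_{1},\beta_{2}$ bounds $Q_{1}$.

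Transferring along $\varphi_{P}$, the curves $\varphi(\alpha),\varphi(\beta_{1}),\varphi(\beta_{2})$ are pairwise distinct and mutually adjacent in $\acomp{\varphi(P)}$, and $\varphi(\alpha)$ has valence $2$. By Remark \ref{Peripheral}(2), the triangle they form either bounds a pair of pants $Q_{1}'$ in $\varphi(P)$, or realises the configuration of Figure \ref{fig:Sec5-0Fig2}. In the Figure \ref{fig:Sec5-0Fig2} case, $\varphi(\alpha)$ would lie in two distinct pairs of pants of $\varphi(P)$ whose third boundary curves fall outside $\{\varphi(\beta_{1}),\varphi(\beta_{2})\}$; each such third curve would be a new neighbour of $\varphi(\alpha)$ in $\acomp{\varphi(P)}$, pushing its valence to at least $3$, a contradiction. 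Hence $\varphi(\alpha),\varphi(\beta_{1}),\varphi(\beta_{2})$ bound a pair of pants $Q_{1}'$ in $\varphi(P)$.

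The final step is to analyse the pair of pants $Q_{0}'$ on the side of $\varphi(\alpha)$ opposite $Q_{1}'$ and show that its only boundary curve in $\varphi(P)$ is $\varphi(\alpha)$, with its two remaining boundary circles being punctures of $S_{2}$; that is exactly the outer configuration. The valence-$2$ constraint on $\varphi(\alpha)$ eliminates every configuration of $Q_{0}'$ in which a new curve of $\varphi(P)$ appears as a boundary of $Q_{0}'$. Wrapping configurations (where two boundary circles of $Q_{0}'$ are identified to the same curve of $\varphi(P)$) are ruled out immediately, since a curve wrapped by $Q_{0}'$ has both of its local sides occupied by $Q_{0}'$, leaving no room for $Q_{1}'$ to be adjacent to it. The genuinely delicate situation, which I expect to be the main obstacle, is the one in which $Q_{0}'$ shares a second boundary curve with $Q_{1}'$, i.e.\ $Q_{0}'=\{\varphi(\alpha),\varphi(\beta_{i}),\text{puncture}\}$ for some $i$. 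I plan to rule this out by applying the whole analysis to $\beta_{i}$ in place of $\alpha$: this forces the valence of $\beta_{i}$ in $\acomp{P}$ to be $2$, and a case-by-case topological accounting of the admissible arrangements of the pair of pants on the opposite side of $\beta_{i}$ in $P$ leaves only options that collapse $S_{1}$ into a low-complexity closed surface (for instance $S_{1,3}$, with $\kappa=3$), contradicting $\kappa(S_{1})\geq 4$. Once that sub-case is excluded, the only surviving configuration of $Q_{0}'$ is the outer one, so $\varphi(\alpha)$ is outer in $S_{2}$.
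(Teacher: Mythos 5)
Your setup (valence of $\alpha$ in $\acomp{P}$ is exactly $2$, transported by the isomorphism $\varphi_{P}$) is fine, but the two steps that carry the real content both have gaps, and they are gaps at precisely the configurations the paper's proof is engineered to avoid. First, your dismissal of the Figure \ref{fig:Sec5-0Fig2} alternative is wrong: it is not true that the two pairs of pants of $\varphi(P)$ containing $\varphi(\alpha)$ must have third boundary \emph{curves}. Their third boundary components can be punctures, i.e.\ $\{\varphi(\alpha),\varphi(\beta_{1})\}$ and $\{\varphi(\alpha),\varphi(\beta_{2})\}$ can both be peripheral pairs while $\varphi(\beta_{1})$ and $\varphi(\beta_{2})$ are adjacent via a third pair of pants; then $\varphi(\alpha)$ still has valence $2$, the triple does not bound a pair of pants, and $\varphi(\alpha)$ is non-separating. (This is realizable whenever $g_{2}\geq 1$ and $n_{2}\geq 2$, e.g.\ on $S_{1,4}$, and it is exactly the scenario the paper fights in its $(g_{1},n_{1})=(1,4)$ case, where it is killed by a separate argument: the two peripheral pairs force $\varphi(\beta_{1})$ and $\varphi(\beta_{2})$ to cobound a twice-punctured annulus, which together with their adjacency in $\acomp{\varphi(P)}$ forces some $\varphi(\delta)$ to be separating, contradicting Lemma \ref{Nonsep}.) Your valence count alone cannot see this.

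Second, the ``delicate sub-case'' $Q_{0}'=\{\varphi(\alpha),\varphi(\beta_{i}),\text{puncture}\}$ is not closed by your plan. Running your analysis on $\beta_{i}$ only tells you that $\beta_{i}$ has valence $2$ in $\acomp{P}$, and this does \emph{not} collapse $S_{1}$ to low complexity: $\beta_{i}$ can itself be an outer curve (on $S_{0,n}$, take $\alpha$ and $\beta_{1}$ outer curves cobounding a pair of pants with a non-outer $\beta_{2}$ --- both have valence $2$ and $n$ is arbitrary). In that situation your argument merely reduces the statement for $\alpha$ to the same unproved statement for $\beta_{i}$, which is circular. The paper avoids both problems by \emph{choosing} the auxiliary curves: when $(g_{1},n_{1})\neq(1,4)$ it picks $\beta,\gamma$ to be non-outer \emph{separating} curves bounding a pair of pants with $\alpha$, so Lemma \ref{Non-outer} pins their images down as separating and Remark \ref{Peripheral}~(3) immediately forces $\varphi(\alpha)$ to be separating, hence outer; the surface $S_{1,4}$, where no such choice exists, gets the bespoke peripheral-pair argument above. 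To salvage your route you would need to import both of those ideas; as written, the proof does not go through.
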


\begin{proof}
 Let $\alpha$ be an outer curve of $S_{1}$. By the argument in the proof of Lemma \ref{Non-outer}, we know that $\varphi(\alpha)$ cannot be a non-outer separating curve. This implies that if $\varphi(\alpha)$ were not an outer curve it would have to be non-separating. Now, we divide the proof into two cases:
 
 \textit{Case} $(g_{1},n_{1}) \neq (1,4)$: Let $\beta$ and $\gamma$ be non-outer separating curves such that $\alpha$, $\beta$ and $\gamma$ bound a pair of pants. Let also $P$ be a pants decomposition of $S_{1}$ with $\alpha,\beta,\gamma \in P$. By Lemma \ref{Non-outer} we know that $\varphi(\beta)$ and $\varphi(\gamma)$ are non-outer separating curves, and by Lemma \ref{Adjacency} we know that $\varphi(\alpha)$, $\varphi(\beta)$ and $\varphi(\gamma)$ are adjacent to each other with respect to $\varphi(P)$. Finally by Remark \ref{Peripheral} (3) we know that $\varphi(\alpha)$ has to be a separating curve. Thus, $\varphi(\alpha)$ has to be an outer curve.
 
 \textit{Case} $(g_{1},n_{1}) = (1,4)$: Let $P$ be a pants decomposition of $S_{1}$ such that $\alpha \in P$, $P \backslash \{\alpha\}$ is composed solely of non-separating curves, and $\alpha$ has exactly two curves adjacent to it with respect to $P$, say $\beta$ and $\gamma$. If we suppose that $\varphi(\alpha)$ is a non-separating curve, by Remark \ref{Peripheral} (1), we have that $\{\varphi(\alpha),\varphi(\beta)\}$ and $\{\varphi(\alpha),\varphi(\gamma)\}$ are peripheral pairs. In particular $\varphi(\beta)$ and $\varphi(\gamma)$ bound a twice-punctured annulus. But $\beta$ and $\gamma$ are adjacent to each other with respect to $P$, thus by Lemma \ref{Adjacency} we have that $\varphi(\beta)$ and $\varphi(\gamma)$ are adjacent with respect to $\varphi(P)$. This implies that there exists a closed subsurface $\Sigma$ with interior homeomorphic to $S_{0,3}$, induced by $\varphi(P)$ with $\varphi(\beta)$ and $\varphi(\gamma)$ as two of its boundary curves. Note that $\Sigma$ cannot be a once-punctured annulus (this would imply that $\kappa(S_{2}) = 3$), thus there exists a curve $\delta \in P \backslash \{\alpha,\beta,\gamma\}$ such that $\varphi(\delta)$ is the other boundary curve of $\Sigma$. However, this construction would imply that $\varphi(\delta)$ is a separating curve, which contradicts Lemma \ref{Nonsep}. Therefore, $\varphi(\alpha)$ has to be an outer curve.
\end{proof}

\begin{Lema}\label{CharacGenus1}
 Let $S=S_{g,n}$ be such that $\kappa(S) \geq 4$, and $P$ be a pants decomposition composed solely of non-separating curves. Then, $g = 1$ if and only if $\acomp{P}$ is a cycle graph.
\end{Lema}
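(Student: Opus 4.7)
The plan is to translate both directions into combinatorics of how the pants in $S \setminus P$ relate to $P$ and to the punctures. Each pants $Q$ in the complement of $P$ has three boundary \emph{slots}, each being either a curve in $P$ or (a peripheral curve around) a puncture of $S$; call the multiset of slots the \emph{signature} of $Q$. An edge in $\acomp{P}$ corresponds to a pair of distinct curves from $P$ that appear together on the boundary of some pants.

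For the forward direction ($g = 1 \Rightarrow \acomp{P}$ is a cycle), I would fix an arbitrary $\alpha \in P$ and cut $S_{1,n}$ along $\alpha$ to obtain $S_{0,n+2}$ with two distinguished new punctures $p_\pm$. Then $P \setminus \{\alpha\}$ is a pants decomposition of $S_{0,n+2}$ whose dual graph is a tree $T$ with $n$ internal vertices (one per pants). The key observation is that a curve $\beta \in P \setminus \{\alpha\}$ is non-separating in $S_{1,n}$ if and only if $\beta$ separates $p_+$ from $p_-$ in $S_{0,n+2}$; equivalently, the $\beta$-edge of $T$ lies on the unique $p_+$-to-$p_-$ path. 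Hence every internal edge of $T$ lies on this spine, and a quick count of internal vertices and edges forces $T$ to be a caterpillar whose spine passes through all $n$ pants, with exactly one puncture hanging off each. Regluing $\alpha$ turns $T$ into the dual graph of $P$ in $S_{1,n}$: an $n$-cycle in which every pants has signature (two distinct curves, one puncture). Reading off the induced edges, $\acomp{P}$ is immediately an $n$-cycle.

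For the backward direction ($\acomp{P}$ cycle $\Rightarrow g = 1$), the plan is a short counting argument exploiting that $|P| \geq 4$, $\acomp{P}$ is triangle-free, and every vertex of $\acomp{P}$ has degree two. The no-triangle condition excludes pants with three distinct curve-boundaries; the degree-two condition excludes pants with a self-adjacency (such a curve would see only the third slot as a potential neighbour in $\acomp{P}$, and hence have degree $\leq 1$). The only remaining signatures are (2 curves, 1 puncture), (1 curve, 2 punctures), and (3 punctures), with counts $n_1, n_2, n_3$ respectively. Counting pants, punctures, curve-slots, and edges of $\acomp{P}$ yields a small linear system that pins down $n_2 = 0$ and $n_3 = 1 - g$, forcing $g \leq 1$. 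The case $g = 0$ is then ruled out because $S_{0,n}$ has no non-separating curves at all, and the assumption that $P$ consists of non-separating curves with $|P| \geq 4$ would be vacuous. Hence $g = 1$.

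I expect the main obstacle to be the forward direction's structural claim that $T$ is a caterpillar whose spine passes through every internal vertex. This step marries the topological input (non-separating-in-$S_{1,n}$ translates exactly to an edge-separation condition in $T$) with the combinatorial fact that once every internal edge of a tree lies on a single leaf-to-leaf path, all internal vertices must lie on that path. Once this is in place, the remaining work on both sides is Euler-characteristic bookkeeping.
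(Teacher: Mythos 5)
Your proposal is correct, and it splits naturally against the paper's proof. The forward direction is in substance the same argument: the paper realizes $S_{1,n}$ as an $n$-punctured annulus $A$ with its two boundary circles glued along $\alpha_{0}$, and notes that each remaining curve must separate the two boundary circles of $A$ (else it would separate $S$), so the curves are linearly ordered and consecutive ones cobound once-punctured annuli; your cut-open dual tree with its $p_{+}$-to-$p_{-}$ spine is the same picture in combinatorial form, and your justification that every internal vertex must lie on the spine (an off-spine pants would reach the spine through an internal, hence spine, edge) is actually more explicit than the paper's ``up to relabelling''. The backward direction is where you genuinely diverge: the paper applies its Remark \ref{Peripheral}(1) to conclude that consecutive curves of the cycle are peripheral pairs bounding once-punctured annuli $A_{i}$, and then reconstructs $S$ by gluing the $A_{i}$ cyclically to read off $g=1$; you instead classify the signatures of the complementary pants (triangle-freeness of a cycle of length $\kappa \geq 4$ kills three-curve pants, the degree-two condition kills self-gluings) and solve the pants/puncture/curve-slot/edge count to get $n_{2}=0$ and $n_{3}=1-g\geq 0$, hence $g\leq 1$, with $g=0$ excluded because spheres carry no non-separating curves. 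Your counting route is self-contained and avoids leaning on Remark \ref{Peripheral}(1), which the paper states without proof; the paper's route yields the explicit homeomorphism type of the pair $(S,P)$ rather than just the genus. One minor quibble: under the paper's definition of ``induced by $P$'', a pants self-glued along $\alpha$ is not induced at all, so $\alpha$ would have degree $0$ rather than $\leq 1$ in $\acomp{P}$ --- either way it contradicts degree two, so your exclusion stands.
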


\begin{proof}
 If $g = 1$, then $P = \{\alpha_{0}, \ldots, \alpha_{n-1}\}$. Then we can have a homeomorphism $f: S \to A/\sim$ where $A$ is an $n$-punctured annulus and $\sim$ identifies its boundaries, such that the image of the boundaries under the quotient is $\alpha_{0}$. This implies that every $\alpha_{i}$ with $i \neq 0$ is mapped to an essential curve in $A$ (that is not a boundary curve); moreover, since for all $i \in \mathbb{Z}/n\mathbb{Z}$ we have that $\alpha_{i}$ is a non-separating curve, it follows that $f(\alpha_{i})$ cannot be a separating curve of $A$. Thus, for all $i \neq 0$, $f(\alpha_{i})$ and ``$f(\alpha_{0})$'' bound a $k$-punctured annulus in $A$ for some $k > 0$. Hence (up to relabelling), for all $i \in \mathbb{Z}/n\mathbb{Z}$, $f(\alpha_{i})$ and $f(\alpha_{i+1})$ bound a once-punctured annulus. This implies that (up to relabelling) for all $i \in \mathbb{Z}/n\mathbb{Z}$, $\{\alpha_{i},\alpha_{i+1}\}$ is a peripheral pair, and then $P$ is (up to homeomorphism) the pants decomposition from Figure \ref{fig:Sec5-0Fig3}, for which $\acomp{P}$ is a cycle graph.
 
 \begin{figure}[ht]
     \centering
     \labellist
     \pinlabel $\alpha_{0}$ [t] at 35 5
     \pinlabel $\alpha_{1}$ [t] at 104 5
     \pinlabel $\alpha_{2}$ [t] at 169 5
     \pinlabel $\alpha_{3}$ [t] at 235 5
     \pinlabel $\alpha_{4}$ [t] at 297 5
     \pinlabel $\alpha_{0}$ [t] at 728 250
     \pinlabel $\alpha_{1}$ [r] at 837 171
     \pinlabel $\alpha_{2}$ [br] at 793 43
     \pinlabel $\alpha_{3}$ [bl] at 661 43
     \pinlabel $\alpha_{4}$ [l] at  617 171
     \endlabellist
     \includegraphics[height=4cm]{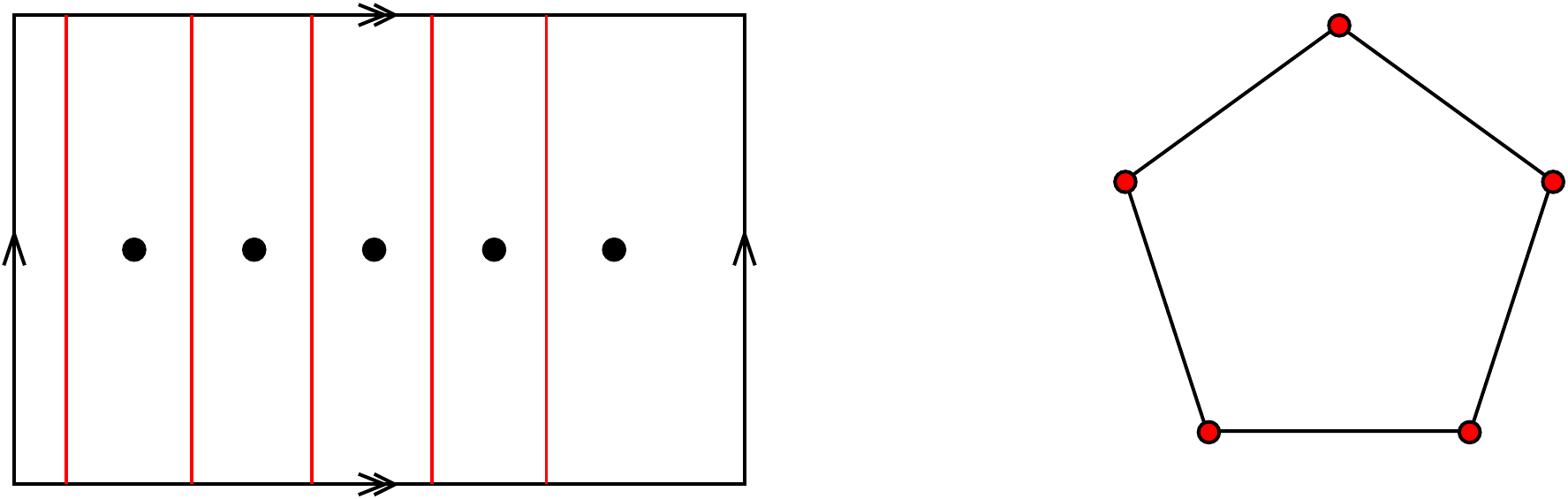}
     \caption{On the left, a punctured torus with a pants decomposition composed solely of non-separating curves; on the right its corresponding adjacency graph (a cycle graph).}
     \label{fig:Sec5-0Fig3}
 \end{figure}
 
 If $\acomp{P}$ is a cycle graph, we can label the curves in $P$ so that $P = \{\alpha_{0}, \ldots, \alpha_{\kappa}\}$ (with $\kappa = \kappa(S)$), and for all $i \in \mathbb{Z}/\kappa\mathbb{Z}$, the vertices corresponding to $\alpha_{i}$ and $\alpha_{i+1}$ are adjacent in $\acomp{P}$ (see the right side of Figure \ref{fig:Sec5-0Fig3}).  By Remark \ref{Peripheral} (1), this implies that for all $i \in \mathbb{Z}/\kappa\mathbb{Z}$, $\{\alpha_{i},\alpha_{i+1}\}$ is a peripheral pair, and thus bound a once-punctured annulus $A_{i}$ in $S$. The result follows from reconstructing $S$ using the annuli $A_{i}$.
\end{proof}

\begin{proof}[\textbf{Proof of Theorem \ref{TopRig}}]
 Since this theorem was already proved by the author in the case $g_{1} \geq 3$ (Lemma 3.16 in \cite{JHH2}), let $g_{1} \leq 2$. Then, we divide the proof into three cases depending on the genus of $S_{1}$:\\
 \textit{Case} $g_{1} = 0$: Let $P$ be a pants decomposition of $S_{1}$. Since $g_{1}=0$, this implies that $P$ is composed solely of separating curves. By Remark \ref{PantsToPants} and Lemmas \ref{Non-outer} and \ref{Outer}, we have that $\varphi(P)$ is a pants decomposition of $S_{2}$ composed solely of separating curves, which is only possible if $g_{2} = 0$. The result follows from the fact that $\kappa(S_{1}) = \kappa(S_{2})$ and the Classification of Surfaces.\\
 \textit{Case} $g_{1} = 1$: Let $P$ be a pants decomposition of $S_{1}$ composed solely of non-separating curves. By Lemma \ref{CharacGenus1} we have that $\acomp{P}$ is a cycle graph. This is preserved by $\varphi$ due to Lemma \ref{Adjacency}. Hence, by Remark \ref{PantsToPants} and Lemmas \ref{Adjacency} and \ref{Nonsep}, $\varphi(P)$ is a pants decomposition of $S_{2}$ composed solely of non-separating curves, such that $\acomp{\varphi(P)}$ is a cycle graph. Then, by Lemma \ref{CharacGenus1}, $g_{2} = 1$. As before, the result follows from the fact that $\kappa(S_{1}) = \kappa(S_{2})$ and the Classification of Surfaces.\\
 \textit{Case} $g_{1} = 2$: Let $P$ be a pants decomposition composed solely of non-separating curves, whose adjacency graph is not a cycle graph by Lemma \ref{CharacGenus1}. By Remark \ref{PantsToPants} and Lemmas \ref{Adjacency} and \ref{Nonsep}, we have that $\varphi(P)$ is a pants decomposition of $S_{2}$ composed solely of non-separating curves, with $\acomp{\varphi(P)}$ not a cycle graph. This implies that $g_{2} \geq 2 = g_{1}$. Since $\kappa(S_{1}) = \kappa(S_{2})$, this implies that $n_{1} \geq n_{2}$.
 
 If $n_{1} = 1$, the result then follows from $\kappa(S_{1}) = \kappa(S_{2})$ and the Classification of Surfaces.
 
 If $n_{1} > 1$, let $M$ be a multicurve of cardinality $\lfloor \frac{n_{1}}{2} \rfloor$, composed solely of outer curves. By Lemmas \ref{Multicurves} and \ref{Outer}, $\varphi(M)$ is a multicurve of $S_{2}$ with cardinality $\lfloor \frac{n_{1}}{2} \rfloor$ and composed solely of outer curves. This implies that $2 \lfloor \frac{n_{1}}{2} \rfloor \leq n_{2}$. Thus, either $n_{1} = n_{2}$, or $n_{1} = n_{2} + 1$; but the latter case leads us to a contradiction when substituting in $\kappa(S_{1}) = \kappa(S_{2})$. Therefore $n_{1} = n_{2}$ and the result follows from the fact that $\kappa(S_{1}) = \kappa(S_{2})$ and the Classification of Surfaces.
\end{proof}
\bibliographystyle{plain}
\bibliography{bibliography}
\vfill
\noindent Jesús Hernández Hernández\\
\begin{tabular}{l}
    \href{mailto:jhdez@matmor.unam.mx}{\texttt{jhdez@matmor.unam.mx}}\\
    \href{https://sites.google.com/site/jhdezhdez/}{\texttt{https://sites.google.com/site/jhdezhdez/}}\\
    Centro de Ciencias Matemáticas\\
    Universidad Nacional Autónoma de México\\
    Morelia, Mich. 58190\\
    México
\end{tabular}
\end{document}